\numberwithin{equation}{section}
\newcommand{\bbE}{\mathbb{E}}
\newcommand{\bbN}{\mathbb{N}}
\newcommand{\bbP}{\mathbb{P}}
\newcommand{\bbR}{\mathbb{R}}
\newcommand{\bbS}{\mathbb{S}}
\newcommand{\bbT}{\mathbb{T}}
\newcommand{\cA}{\mathcal{A}}
\newcommand{\cB}{\mathcal{B}}
\newcommand{\cC}{\mathcal{C}}
\newcommand{\cF}{\mathcal{F}}
\newcommand{\cJ}{\mathcal{J}}
\newcommand{\cK}{\mathcal{K}}
\newcommand{\cL}{\mathcal{L}}
\newcommand{\cM}{\mathcal{M}}
\newcommand{\cO}{\mathcal{O}}
\newcommand{\cP}{\mathcal{P}}
\newcommand{\cR}{\mathcal{R}}
\newcommand{\cW}{\mathcal{W}}
\newcommand{\cX}{\mathcal{X}}
\newcommand{\cZ}{\mathcal{Z}}
\newcommand{\bA}{{\mathbf A}}
\newcommand{\bB}{{\mathbf B}}
\newcommand{\bC}{{\mathbf C}}
\newcommand{\bP}{{\mathbf P}}
\newcommand{\vvec}{\mathbf{v}}
\newcommand{\wvec}{\mathbf{w}}
\newcommand{\xvec}{\mathbf{x}}
\newcommand{\yvec}{\mathbf{y}}
\newcommand{\zvec}{\mathbf{z}}
\newcommand{\fvec}{\mathbf{f}}
\newcommand{\xivec}{\boldsymbol{\xi}}
\newcommand{\zerovec}{\mathbf{0}} 
\newcommand{\bPhi}{\mathbf{\Phi}}
\newcommand{\bPsi}{\mathbf{\Psi}}
\newcommand{\bD}{\mathbf{D}}
\newcommand{\Amat}{\mathbf{A}}
\newcommand{\Bmat}{\mathbf{B}}
\newcommand{\Cmat}{\mathbf{C}}
\newcommand{\Imat}{\mathbf{I}}
\newcommand{\Mmat}{\mathbf{M}}
\newcommand{\Rmat}{\mathbf{R}}
\newcommand{\Smat}{\mathbf{S}}
\newcommand{\Sigmamat}{\boldsymbol{\Sigma}}
\newcommand{\T}{\top} 
\newcommand{\norm}[2]{     \| #1       \|_{ #2 }}
\newcommand{\normiii}[2]{\vert\kern-0.25ex\vert\kern-0.25ex\vert #1 \vert\kern-0.25ex \vert\kern-0.25ex\vert_{ #2 }}
\newcommand{\Normiii}[2]{\left\vert\kern-0.25ex\left\vert\kern-0.25ex\left\vert #1 \right\vert\kern-0.25ex\right\vert\kern-0.25ex\right\vert_{ #2 }}
\newcommand{\scalar}[2]{     ( #1       )_{ #2 }}
\newcommand{\white}{\cW}
\newcommand{\rd}{\mathrm{d}}
\newcommand{\from}{\colon}
\newcommand{\GP}{\cZ}
\newcommand{\eps}{\varepsilon}
\newcommand{\normal}{\mathsf{N}}
\newtheorem{lemma}{Lemma}[section]
\newtheorem{proposition}[lemma]{Proposition}
\newtheorem{theorem}[lemma]{Theorem}
\newtheorem{assumption}[lemma]{Assumption}
\theoremstyle{remark}
\newtheorem{remark}[lemma]{Remark}
\theoremstyle{definition}
\newtheorem{definition}[lemma]{Definition}
\newtheorem{example}[lemma]{Example}
\DeclareFontFamily{OT1}{pzc}{}
\DeclareFontShape{OT1}{pzc}{m}{it}{<-> s * [1.10] pzcmi7t}{}
\DeclareMathAlphabet{\mathpzc}{OT1}{pzc}{m}{it}
\newcommand{\ra}{{\hat{r}}}
\begin{document}

\title[Multilevel approximation of GRFs: compression, estimation, kriging]
	{Multilevel approximation of Gaussian random fields:
         Covariance compression, estimation \\ and spatial prediction}

\author[H.~Harbrecht, L.~Herrmann, K.~Kirchner, and Ch.~Schwab]{Helmut Harbrecht 
		\and Lukas Herrmann \and \\ 
        Kristin Kirchner \and Christoph Schwab}

\address[H.~Harbrecht]{Department of Mathematics and Computer Science\\
University of Basel\\
Spiegelgasse 1, 4051 Basel\\
Switzerland}
\email{helmut.harbrecht@unibas.ch}

\address[L.~Herrmann]{Johann Radon Institute for Computational and Applied Mathematics\\
Austrian Academy of Sciences\\
Altenbergerstrasse 69, 4040 Linz\\
Austria}
\email{lukas.herrmann@ricam.oeaw.ac.at}

\address[K.~Kirchner]{
	Delft Institute of Applied Mathematics \\
	Delft University of Technology \\
	P.O.~Box 5031, 2600 GA Delft \\
	The Netherlands}
\email{k.kirchner@tudelft.nl}

\address[Ch.~Schwab]{
	Seminar for Applied Mathematics \\
	ETH Z\"urich \\
	R\"amistrasse 101, CH-8092 Z\"urich \\ 
	Switzerland}
\email{christoph.schwab@sam.math.ethz.ch}


\thanks{Acknowledgment. 
LH, KK and CS acknowledge helpful discussions with
Sara van de Geer. 
This paper was conceived and written in large parts 
at SAM, D-MATH, ETH Z\"urich.}


\begin{abstract}
Centered Gaussian random fields (GRFs) 
indexed by 
compacta such as smooth, bounded domains in Euclidean space or
smooth, compact and orientable manifolds
are determined by their covariance operators.
We consider centered GRFs 
given sample-wise as variational solutions to 
\emph{coloring} operator 
equations driven by spatial white noise,
with pseudodifferential coloring operator being
elliptic, self-adjoint and positive from the H\"ormander class.
This includes the Mat\'ern class of GRFs as a special case.
Using microlocal tools and 
biorthogonal multiresolution analyses on the manifold,
we prove that
the precision and covariance operators, respectively,
may be identified with bi-infinite matrices and
finite sections may be diagonally preconditioned
rendering the condition number independent of 
the dimension $p$ of this section.
We prove that 
a tapering strategy by thresholding as e.g.\ in 
[Bickel, P.J.\ and Levina, E.
    Covariance regularization by thresholding,
    Ann. Statist., 36 (2008), 2577--2604]
applied on finite sections 
of the bi-infinite precision and covariance matrices
results in \emph{optimally numerically sparse} approximations.
Numerical sparsity signifies that
only asymptotically linearly many nonzero matrix entries 
are sufficient to approximate the original section 
of the bi-infinite covariance or precision matrix 
using this tapering strategy to arbitrary precision.
This tapering strategy is non-adaptive 
and the locations of these nonzero matrix entries 
are known a priori. 
The tapered covariance or precision matrices may
also be optimally diagonal preconditioned. 
Analysis of the relative size of the entries
of the tapered covariance matrices motivates 
novel, multilevel Monte Carlo (MLMC) oracles for covariance estimation,
in sample complexity that scales log-linearly 
with respect to the number $p$ of parameters.
This extends
[Bickel, P.J.\ and Levina, E. Regularized Estimation of Large Covariance
Matrices, Ann.\ Stat., 36 (2008), pp.\ 199--227]
to estimation of (finite sections of) 
pseudodifferential covariances for GRFs by this fast MLMC method.
Assuming at hand sections of the bi-infinite
covariance matrix in wavelet coordinates,
we propose and analyze a novel 
\emph{compressive algorithm for simulating and kriging of GRFs}.
The complexity (work and memory vs.\ accuracy)
of these three algorithms 
scales near-optimally in terms of the number of parameters $p$ 
of the sample-wise approximation of the GRF in Sobolev scales.
\end{abstract}

\keywords{Mat\'{e}rn covariance, multilevel Monte Carlo methods, kriging, wavelets.}

\subjclass[2010]{Primary: 
	62M20, 
	65C60; 
	secondary: 
	62M09, 
	65C05.} 

\date{}

\maketitle

\section{Introduction}
\label{section:intro}

\subsection{Background and problem formulation}
\label{sec:Backgnd}

Several methodologies in uncertainty quantification 
and data assimilation require 
the storage 
of the covariance matrix $\bC$ 
or the precision matrix $\bP=\bC^{-1}$ 
corresponding to an underlying statistical model 
as well as computations involving these matrices. 
Explicit examples include 
simulations,  
predictions and   
Bayesian or likelihood-based inference in spatial statistics. 
Here, one of the main computational challenges 
is to handle large datasets, 
as the covariance and precision matrices $\bC,\bP$ are, 
in general, densely populated and, for this reason, the 
computational cost for predictions or inference 
is cubic 
in the number of observations. 

A widely used class of statistical models 
is that of Gaussian processes, 
which are uniquely defined by their mean 
and covariance structure. 
These Gaussian processes 
may be indexed by subsets $\cX$ of $\bbR^n$, such as  
bounded Euclidean domains and
surfaces (or, more generally, manifolds), 
and also by graphs. 
In the former case, 
methods to cope with the computational 
challenges named above include  
low-rank approximations 
such as, e.g., fixed-rank kriging, 
predictive processes, 
and process convolutions 
\cite{banerjee2008gaussian, cressie2008fixed, higdon2002space}. 
Furthermore, approaches which 
reduce the computational cost 
by exploiting sparsity have been considered in the literature. 
More precisely, both sparse approximations of the 
covariance matrix 
$\bC_{ij} = \bbE[\GP(x_i) \GP(x_j)]$ 
(aka.\ \emph{covariance tapering} \cite{furrer2006covariance}) 
and of the precision matrix \cite{datta2016hierarchical} 
for a random field $\GP$ 
have been proposed 
and used for statistical applications. 
Alternatively, 
one can approximate  
the random field $\GP$ 
by a finite dimensional basis expansion, 
\begin{equation}\label{eq:intro:expansion}  
\GP(x) 
= 
\sum\limits_{j=1}^p z_j \varphi_j(x), 
\qquad 
x\in\cX. 
\end{equation}
Here, it is the choice of the basis functions $\{\varphi_j\}_{j=1}^p$ 
that will determine the sparsity pattern 
of the covariance and precision matrices of the stochastic weights, 
$\bC_{ij} = \bbE[z_i z_j]$ 
as well as the 
corresponding computational cost. 
For instance, 
in the stochastic partial differential equation (SPDE) approach
as proposed in \cite{Lindgren2011}, 
the Gaussian random field (GRF) $\GP$ on $\cX\subset\bbR^n$ 
is modeled as the solution of 
a white noise driven SPDE and its precision operator 
is, in general, a fractional power of an elliptic second-order 
differential operator. In the case that this power 
is an integer, the precision operator is local, 
which facilitates sparsity of $\bP$ if 
the functions $\{\varphi_j\}_{j=1}^p$ in \eqref{eq:intro:expansion} are chosen, 
e.g., as a finite element basis. 
In the general (fractional-order) case, the covariance and precision operators 
for the SPDE approach are non-local and more 
sophisticated methods have to be 
exploited for computational efficiency~\cite{Bolin2020,HKS1}. 
Note also that in the case that $\cX$ is a manifold 
the fractional-order covariance and precision operators can be seen 
as pseudodifferential operators. 
As an alternative to the finite element method,  
multiresolution approximations of the process 
have been suggested, where the basis functions 
$\{\varphi_j\}_{j=1}^p$ in \eqref{eq:intro:expansion}  
originate from a multiresolution analysis (MRA), 
see \cite{katzfuss2017multi, nychka2015multiresolution}. 
This approach seems to perform well 
(see also the comparison in \cite{HeatonEtal2019}); 
however, to the best of our knowledge 
no error bounds for these approximations 
have been derived and, therefore, 
they need to be adjusted 
for each specific model. 

In the context of graph-based data, 
significant attention has been directed in recent years
at computational and statistical modeling in high dimensional settings, 
see e.g.\ \cite{JJSvdG,UhlerGGrMod}.
Here, Gaussian random fields play an important role, 
where the precision operator is a (regularized)
discrete, fractional graph Laplacian.
It is known that 
for large data, i.e., 
in the (high-dimensional) 
large graph limit, 
the graph Laplacian converges 
to a (pseudo)differential operator $\cP$, 
see \cite{StuartGrphLim2020}. 

In the infinite-dimensional setting,
for a compact Riemannian manifold $\cX=\cM$, we consider 
GRFs $\GP$ obtained by ``coloring'' white noise on 
the Hilbert space $L^2(\cM)$ with 
the compact inverse of a pseudodifferential operator $\cA$ 
that is a positive, self-adjoint unbounded operator on $L^2(\cM)$. 
Then, the corresponding covariance and precision operators $\cC$ and $\cP$  
are pseudodifferential operators, and we prove that
$\cC=\cA^{-2}$ and $\cP=\cA^2$.
The connection of this setting to the above, 
is facilitated through \emph{biorthogonal Riesz bases (wavelet bases)} 
$\bPsi$ and $\widetilde{\bPsi}$ of $L^2(\cM)$, which give rise to  
\emph{equivalent, bi-infinite matrix representations $\bC, \bP \in \bbR^{\bbN\times \bbN}$} 
of $\cC$ and $\cP$. 
Finite sections of these bi-infinite matrices with $p$ parameters, 
i.e., $\bC\approx\bC_p \in\bbR^{p\times p}$ 
and $\bP\approx\bP_p \in\bbR^{p\times p}$, 
correspond to approximate representations of the GRF 
as in \eqref{eq:intro:expansion}, where 
the basis functions $\{\varphi_j \}_{j=1}^p$ 
are those functions of the wavelet basis $\bPsi$ 
corresponding to the finite set of indices used 
to generate $\bC_p, \bP_p$.

\subsection{Contributions}
\label{sec:Contr}

In this work we establish 
\emph{optimal numerical sparsity and optimal preconditioning} of both, 
the precision operator $\cP$ and the covariance operator $\cC$ 
when represented in the wavelet bases $\bPsi$. 
Specifically, our compression analysis reveals 
\emph{universal a-priori tapering patterns}  
for finite sections $\bC_p,\bP_p\in\bbR^{p\times p}$ 
of both, the possibly bi-infinite 
covariance and the precision matrices 
$\bC = \cC(\bPsi)(\bPsi)$, $\bP = \cP(\bPsi)(\bPsi) \in \bbR^{\bbN\times \bbN}$. 
We prove that, in the above general setting,
the number of nonvanishing coefficients in the 
numerically tapered matrices 
$\bC^\eps_p, \bP^\eps_p \in\bbR^{p\times p}$ scales linearly with $p$ 
at a certified accuracy $\eps>0$ compared to $\bC_p, \bP_p$. 
In addition, we prove \emph{diagonal preconditioning} renders
the condition numbers of the family of $p$-sections
$\{\bP_p\}_{p\geq 1}$, $\{\bC_p\}_{p\geq 1}$ of $\bP$ and $\bC$, 
uniformly bounded with respect to $p\in \bbN$.

The sparsity bounds for these wavelet matrix representations 
are closely related to corresponding
compression estimates for wavelet representation of 
elliptic pseudodifferential operators \cite{DHSWaveletBEM2007}.
Our setting accommodates 
elliptic, self-adjoint pseudodifferential 
coloring operators $\cA$ including, in particular, 
the Mat\'{e}rn class of GRFs on compact manifolds,
but extending substantially beyond these. 
In particular, stationarity of $\GP$ is not required. 

These results on sparsity and preconditioning 
of $\bC_p, \bP_p$ give rise to several applications
which are developed in Section~\ref{section:application}.  
Firstly, 
in Section \ref{subsec:simulation},
we consider the efficient numerical simulation 
of the GRF $\GP$ by combining our results on 
sparsity and preconditioning of the approximate covariance 
matrix $\bC^\eps_p$ with an algorithm to compute the matrix square root 
based on a contour integral \cite{Hale2008}. 
We furthermore propose and analyze 
a wavelet-based numerical covariance estimation algorithm 
for the $p\times p$ section $\bC_p$ of the covariance matrix $\bC$.
The proposed method is of multilevel Monte Carlo type: 
Given i.i.d.\ 
realizations of the GRF~$\GP$ in wavelet coordinates
with different, sample-dependent spatial resolution, 
our multilevel, wavelet-based sampling strategy 
resulting in an approximate covariance matrix 
$\widetilde{\bC}_p\in\bbR^{p\times p}$ 
will require essentially $\cO(p)$ data, memory and work.
As a final application, 
we consider 
spatial prediction (aka.\ kriging) for the GRF $\GP$ 
in Section \ref{subsec:krging}.  
Assuming at hand an approximate covariance matrix 
$\widetilde{\bC}_p$ in a wavelet-based 
multiresolution representation, 
we prove 
(cf.\ Remark \ref{rmk:OK+p}) that  
approximate kriging, consistent to the order of spatial resolution
and subject to $K$ noisy observation functionals,
can be achieved in $\cO(K+p)$ work and memory.

\subsection{Outline}
\label{sec:Outline}

This paper is structured as follows.
Section~\ref{section:GRFs-manifolds} introduces the abstract setting
of GRFs on smooth, compact manifolds, pseudodifferential
coloring operators and the corresponding estimates for the Schwartz 
kernels of these operators.
Section~\ref{section:results} recapitulates key technical results
on wavelet compression of pseudodifferential operators on manifolds,
with particular attention to numerical compression and 
multilevel preconditioning of covariance and precision matrices 
resulting as finite sections of the 
\emph{equivalent, bi-infinite matrix representations of the covariance and
	precision operators}. 
Section~\ref{section:application} presents several major applications
of the proposed wavelet compression framework for computational
simulation. 
Specifically, 
Section~\ref{subsec:simulation} discusses
a functional-integral based algorithm 
of essentially linear $\cO(p)$ work and memory 
for approximating the square root of the covariance matrix.
Section~\ref{sec:MLMCCovEst} presents a 
multilevel covariance estimation algorithm 
from i.i.d.\ samples of a GRF, of essentially $\cO(p)$ complexity,
and 
Section~\ref{subsec:krging} a novel, sparse kriging 
algorithm for GRFs resulting from 
pseudodifferential coloring of white~noise.
Section~\ref{sec:numexp}
then presents a suite of numerical experiments for the 
simulation and estimation of GRFs on manifolds 
of dimension $n=1$ and $n=2$. 
We also comment on the use of the Cholesky decomposition
in connection with wavelet coordinates to achieve efficient
numerical simulation.
Section~\ref{sec:Concl} summarizes the main results, 
and indicates further applications and extensions of the 
sparsity and preconditioning results. 

Finally, this work contains four appendices: 
Appendix~\ref{appendix:PDO_review} briefly recapitulates 
the H\"ormander calculus of pseudodifferential operators on manifolds, 
Appendix~\ref{appendix:wavelets} reviews construction and properties
of MRAs on smooth, compact manifolds, 
Appendix~\ref{appendix:ColShft} presents 
(Whittle--)Mat\'ern covariance models \cite{Lindgren2011,matern1960} 
as particular instances of the general theory, 
and Appendix~\ref{appendix:sample_numbers} 
provides the justification for the work--accuracy 
relation for the multilevel Monte Carlo algorithm 
in Section~\ref{sec:MLMCCovEst}. 

\subsection{Notation}
\label{sec:Notat}

For an open domain $G\subset \bbR^n$, 
the support of a real-valued function 
$\phi\from G\to \bbR$ is denoted by 
$\operatorname{supp}(\phi):=\overline{\{x \in G : \phi(x) \neq 0\}}$, 
where the closure is taken in the ambient space~$\bbR^n$. 
If for some subset $G'\subset G\subset\bbR^n$, there exists a compact set $G''$
such that $G'\subset G'' \subset G$, we say that $G'$ is compactly included in $G$
and write $G'\subset\subset G$. 
The space of all smooth real-valued functions in $G$ 
is given by $C^\infty(G)$, and 
$C^\infty_0(G)\subset C^\infty(G)$ is the subspace of all  
smooth functions $\phi$ with $\operatorname{supp}(\phi)\subset\subset G$. 
For a smoothness order $s \in [0,\infty)$, $H^s(G)$ is the 
Sobolev--Slobodeckij space. 

For a smooth, compact Riemannian manifold $\cM$ 
The geodesic distance on $\cM$ 
will be denoted by  $\operatorname{dist}(\,\cdot\,, \,\cdot\,)$. 
For any $q\in [1,\infty)$, $s\in [0,\infty)$,
the function spaces $L^q(\cM)$ and $H^s(\cM)$
denote the $q$-integrable functions with respect to the intrinsic measure on $\cM$
and the Sobolev--Slobodeckij spaces, respectively. 
We write $\langle \,\cdot\,, \,\cdot\, \rangle$
for the duality pairing with respect to the spaces $H^s(\cM)$, 
where we shall not explicitly include the dependence on $s$. 
For (pseudodifferential) operators on function spaces on $\cM$, 
we shall use calligraphic symbols. 
Particular such pseudodifferential operators are 
the coloring operator $\cA$, as well as
the covariance and precision operators $\cC$, $\cP$.
A generic (pseudodifferential) operator shall often 
be denoted by~$\cB$.

For a vector $\vvec\in\bbR^n$ or a square-summable sequence 
$\vvec\in\ell^2(\cJ)$ 
indexed by a countable set $\cJ$, 
we define $\| \vvec \|_2 := \sqrt{\sum_j v_j^2}$ . 
We shall also use the same notation for 
the operator norm induced by 
$\| \,\cdot\, \|_2$ (note that 
for $\bbR^n$ this defines a matrix norm on $\bbR^{n\times n}$). 
The spectrum and condition number of a matrix $\Amat$  
or an operator $\Amat$ on $\ell^2(\cJ)$ 
with respect to the norm $\| \,\cdot\, \|_2$ 
is denoted by $\sigma(\Amat)$ 
and $\operatorname{cond}_2(\Amat)$, respectively .
In addition, 
$\|\Amat\|_{\rm HS}$ denotes the Frobenius norm 
if $\Amat\in\bbR^{n\times n}$ and the Hilbert--Schmidt norm 
in the more general case that $\Amat\from\ell^2(\cJ)\to\ell^2(\cJ)$. 

For any two sequences $(a_k)_{k\in\bbN}$ and $(b_k)_{k\in\bbN}$, we write 
$a_k\lesssim b_k$, if there exists a constant $C>0$  
independent of $k$, such that $a_k\leq C b_k$ for all $k$. 
Analogously, we write $b_k \gtrsim a_k$, and $a_k \simeq b_k$ 
whenever both relations hold, $a_k \lesssim b_k$
and $a_k \gtrsim b_k$.

Throughout this manuscript, we let 
$(\Omega,\cF,\bbP)$ be a complete probability space 
with expectation operator $\bbE[\,\cdot\,]$. 
For two random vectors or random sequences 
$\vvec,\wvec$ on $(\Omega,\cF,\bbP)$, the notation 
$\vvec \overset{d}{=} \wvec$ 
indicates that $\vvec$ and $\wvec$ have identical distribution, 
and $\vvec\sim\normal(\mathbf{m},\bC)$ 
denotes a Gaussian distribution 
with mean $\mathbf{m}$ and covariance~$\bC$.  

\section{Gaussian random fields on manifolds}
\label{section:GRFs-manifolds}

We first give a concise presentation 
of the Gaussian random fields (GRFs)
of interest and of the basic setup. 
A GRF $\GP$ considered in this work 
on the probability space $(\Omega,\cF,\bbP)$, 
is centered and indexed by a 
compact Riemannian manifold $\cM$ of dimension $n\in\bbN$. 
Specifically we assume, 
$(\GP(x))_{x\in\cM}$ is a family 
of $\cF$-measurable $\bbR$-valued random variables 
such that for all finite sets $\{x_1,, \ldots, x_m\}\subset\cM$ 
the random vector $(\GP(x_1), \ldots,\GP(x_m))^\top$ 
is centered  Gaussian,
and such that 
the mapping $\GP\from\cM\times\Omega\to\bbR$ 
is $\mathscr{B}(\cM)\otimes\cF$-measurable. 
Here, $\mathscr{B}(\cM)$ denotes the Borel $\sigma$-algebra 
generated by a topology on $\cM$ 
with respect to a distance~$\operatorname{dist}(\,\cdot\,, \,\cdot\,):\cM\times \cM\to \bbR$ 
which may be chosen, e.g., as the geodesic distance on $\cM$. 
In this case, the covariance kernel  
$k\from \cM\times\cM\to\bbR$, $k(x,x'):=\bbE[\GP(x)\GP(x')]$ 
is a symmetric and positive definite function. 
Furthermore, we suppose that 
$(\cM, \mathscr{B}(\cM))$ is equipped with the surface measure $\mu$ 
induced by the first fundamental form, 
see \cite[Def.~1.73]{Aubin1998Riemannian} 
for a definition, 
see also Subsection~\ref{sec:manifolds:SobSpc} in Appendix~\ref{appendix:PDO_review}.  
The precise assumptions on the manifold are 
spelled out in Assumption~\ref{ass:cM-and-cA}\ref{ass:cM-and-cA_I} below. 
For a recap on notation and definitions pertaining to smooth manifolds, 
the reader is referred to 
Appendix~\ref{sec:DiffGeo}.

Specifically, we consider a GRF $\GP$ generated by a linear 
\emph{coloring} (elliptic pseudodifferential) \emph{operator} 
$\cA \in OPS^{\ra}_{1,0}(\cM)$ of order $\ra > n/2$ 
via the white noise driven 
stochastic (pseudo) differential equation (SPDE)
\begin{equation}\label{eq:WhNois}
\cA \GP = \cW \quad \text{on} \quad \cM.  
\end{equation}
Here and throughout,  
$\cW$ denotes white noise on the Hilbertian Lebesgue space 
$L^2(\cM)$, 
i.e., it is an $L^2(\cM)$-valued weak random variable, 
cf.~\cite[Chap.~6.4]{Balakrishnan1981},
with characteristic function  
$L^2(\cM) \ni \phi 
\mapsto \bbE[ \exp( i\scalar{\phi, \cW}{L^2(\cM)}) ]  
= 
\exp\bigl( -\tfrac{1}{2}\norm{\phi}{L^2(\cM)}^2 \bigr)$. 
Due to $\ra > n/2$, the  
Kolmogorov--Chentsov continuity theorem 
ensures that there exists a modification of $\GP$ in \eqref{eq:WhNois} 
whose realizations are continuous on $\cM$, $\bbP$-a.s. 
For some of our arguments, 
we assume that $\{ \widetilde{\gamma}_i \}_{i=1}^{M}$ 
is a smooth atlas of $\cM$ so that
$\widetilde{\gamma}_i\from G\to\widetilde{\gamma}_i(G)=:\widetilde{\cM}_i \subset \cM$ 
is diffeomorphic for some open set $G\subset \bbR^n$
and $\cM = \bigcup_{i=1}^M \widetilde{\cM}_i$. 
Furthermore, we 
let $\{ \chi_i \}_{i=1}^{M}$ be a smooth partition of unity 
corresponding to the atlas $\{ \widetilde{\gamma}_i \}_{i=1}^{M}$, i.e., 
for all $i=1,\ldots,M$, 
the function 
$\chi_i \from \cM \to [0,1]$ is smooth and 
compactly supported in $\widetilde{\cM}_i$, 
and 
$\sum_{i=1}^M \chi_i = 1$. 
For every $r\in\bbR$, 
the operator class $OPS^r_{1,0}(\cM)$ is then defined through local coordinates 
and we will therefore first introduce the class $OPS^r_{1,0}(G)$ 
for an open set $G\subset \bbR^n$. 
To this end, suppose that the \emph{symbol} $b\in C^\infty(G\times\bbR^n)$
satisfies that,
for every compact set $K\subset \subset G$
and for any $\alpha,\beta \in \bbN_0^n$, 
there exists a constant $C_{K,\alpha,\beta}>0$
such that
\begin{equation}\label{eq:est_symbol}
\forall x\in K, 
\;\; 
\forall\xi\in \bbR^n: 
\quad
\bigl| D^\beta_x D^\alpha_\xi b(x,\xi) \bigr| 
\leq 
C_{K,\alpha,\beta} (1+|\xi|)^{r-|\alpha|}. 
\end{equation}
The class of
pseudodifferential operators $OPS^{r}_{1,0}(G)$ consists 
then of maps 
\begin{equation}\label{eq:OPS-symbol}
B \from 
C^\infty_0(G)\to C^\infty(G) , 
\quad 
(Bf)(x)
:= 
\int_{\xi\in \bbR^n} b(x,\xi) \hat{f}(\xi) \exp(ix\cdot\xi) \, \rd\xi,
\end{equation}
where we note that the Fourier transform $\hat{f}$ 
of $f$ is well defined, since 
$f$ has compact support in $G$.
For the manifold $\cM$, 
the operator class $OPS^r_{1,0}(\cM)$ results by localization 
using coordinate charts of $\cM$, 
i.e., an operator $\cB\from C^\infty(\cM) \to C^\infty(\cM)$ 
belongs to $OPS^r_{1,0}(\cM)$ 
if all of 
the \emph{transported operators} do, 
i.e., $B_{i,i'} \in OPS^{r}_{1,0}(G)$ 
for all $i,i' = 1,\ldots,M$, where
\begin{equation}\label{eq:OPS_transported}
B_{i,i'}f :=
\bigl[ \bigl( \cB [ \chi_i ( f\circ\widetilde{\gamma}_i^{-1} ) ] \bigr)\chi_{i'} \bigr]
\circ\widetilde{\gamma}_{i'}, 
\quad 
f\in C_0^\infty(G), 
\quad 
i,i'=1,\ldots,M
.
\end{equation}
We refer to Section~\ref{sec:PDO} in Appendix~\ref{appendix:PDO_review}
for further details. 
There, also elements of  
the \emph{H\"ormander pseudodifferential operator calculus} 
of $OPS^r_{1,0}(\cM)$ are reviewed in Section~\ref{sec:PDOCalc}.
The Laplace--Beltrami operator on $\cM$ is denoted by $\Delta_{\cM}$. 
It is a second-order, elliptic differential operator on $\cM$
(e.g., \cite[Chap.~4]{Aubin1998Riemannian}) 
and, therefore, 
an element of $OPS^2_{1,0}(\cM)$.
For $s>0$, the Hilbertian Sobolev space 
$H^s(\cM)$ may thus be defined by 
(see, e.g., \cite[Chap.~2]{Aubin1998Riemannian})
\begin{equation}\label{eq:def:Sobolev_sp}
H^s(\cM)
:= 
(1-\Delta_{\cM})^{-s/2} \left( L^2(\cM) \right), 
\quad 
\| v \|_{H^s(\cM)} 
:= 
\bigl\| (1-\Delta_{\cM})^{s/2} v  \bigr\|_{L^2(\cM)}
, 
\end{equation}
see also Subsection~\ref{sec:manifolds:SobSpc}. 
For $s>0$, $H^{-s}(\cM)$ 
denotes the dual space of $H^s(\cM)$ 
(with respect to the identification $L^2(\cM) \cong L^2(\cM)^*$).

Existence and uniqueness of 
a solution $\GP$ to the SPDE \eqref{eq:WhNois} 
are ensured if the manifold~$\cM$ and the coloring operator 
$\cA$ in \eqref{eq:WhNois} satisfy 
certain regularity and positivity assumptions. 
These conditions are summarized below. 
\begin{assumption}\label{ass:cM-and-cA}
	\
	
	\begin{enumerate}[label={\normalfont{(\Roman*)}}, leftmargin=0.75cm]
		\item \label{ass:cM-and-cA_I}
		The manifold $\cM$ is a smooth, closed, bounded and connected  
		orientable Riemannian manifold of dimension $n$ 
		immersed into Euclidean space
		${\mathbb R}^D$ for some $D > n$.  
		In particular, $\cM$ has no boundary $\partial \cM = \emptyset$.
		\item \label{ass:cM-and-cA_II}
		The operator $\cA\in OPS^{\ra}_{1,0}(\cM)$ for some $\ra > n/2$ 
		is self-adjoint and positive 
		in the sense that there exists a constant $a_- > 0$ such that
		\[  
		\forall w \in H^{\ra/2}(\cM): 
		\quad 
		\langle \cA w, w \rangle \geq a_-\|w\|_{H^{\ra/2}(\cM)}^2.
		\]
	\end{enumerate} 
\end{assumption}

Under Assumptions~\ref{ass:cM-and-cA}\ref{ass:cM-and-cA_I}--\ref{ass:cM-and-cA_II}, 
the operator $\cA\in OPS^{\ra}_{1,0}(\cM)$ 
is a bijective, continuous mapping from 
$H^{s}(\cM)$ to $H^{s-\ra}(\cM)$ for any $s\in\bbR$ 
(see Proposition~\ref{prop:OPS_algebra_prop}\ref{item:range})
and, therefore, 
$\GP$ in \eqref{eq:WhNois} is well-defined. 
Moreover, the mapping properties of $\cA$ 
imply regularity of the GRF $\GP$: 
Since $\cW \in H^{-n/2-\varepsilon}(\cM)$ ($\bbP$-a.s.)
for any $\varepsilon > 0$, 
\begin{equation}\label{eq:ZPathReg}
\GP \in H^{s}(\cM),
\quad \text{for every} \quad 
s <  \ra -n/2 
\quad 
\text{($\bbP$-a.s.)},
\end{equation}
and, for any integrability $q\in (0,\infty)$, $0\leq s < \ra -n/2$,
\begin{equation}\label{eq:ZPathRegp}
\bbE\bigl[ \| \GP \|_{H^s(\cM)}^q \bigr] < \infty .
\end{equation} 
This follows, e.g., as in \cite[Lem.~3]{CK2020} and \cite[Lem.~2.2]{HKS1} 
using the asymptotic behavior  
of the eigenvalues of $\cA\in OPS_{1,0}^{\ra}(\cM)$ (Weyl's law).
\begin{example}\label{ex:whittle-matern} 
	In models of \emph{Whittle--Mat\'ern type} 
	(see also Appendix~\ref{appendix:ColShft}), 
	the pseudodifferential 
	operator~$\cA$ in \eqref{eq:WhNois}
	takes the form 
	$\cA = (\cL +\kappa^2)^\beta$ 
	with base (pseudo)differential
	operator $\cL\in OPS^{\bar{r}}_{1,0}(\cM)$ 
	for some $\beta,\bar{r}>0$.   
	In particular, $\kappa\in C^{\infty}(\cM)$ 
	determines the local correlation scale of the GRF~$\GP$.
	For any $\phi\in C^\infty(\cM)$, the multiplier with $\phi$, 
	i.e., the operator $f\mapsto\phi f$, 
	is an element of  
	$OPS^{0}_{1,0}(\cM)$.  
	For this reason, $\cA\in  OPS^{\ra}_{1,0}(\cM)$ with $\ra = \beta\bar{r} > 0$, 
	see Propositions~\ref{prop:OPS_algebra_prop} and~\ref{prop:OPS_real_powers}
	in Appendix~\ref{appendix:PDO_review}. 
	Explicit examples include  
	the SPDE-based extensions of GRFs 
	with  
	Mat\'ern covariance structure \cite{matern1960} 
	to the torus $\cM=\bbT^n$ or the sphere $\cM=\bbS^n$, 
	where $\cL = -\Delta_\cM$, 
	$\bar{r}=2$,  and $\kappa >0$ is constant,  
	see e.g.~\cite{Lindgren2011}. 
\end{example} 

The covariance operator 
$\cC\from L^2(\cM) \to L^2(\cM)$ 
of the GRF $\GP$ in~\eqref{eq:WhNois} 
is defined through the relation  
\[ 
(\cC v, w)_{L^2(\cM)} 
= 
\bbE\bigl[(\GP, v)_{L^2(\cM)} (\GP, w)_{L^2(\cM)} \bigr] 
\quad 
\forall v,w\in L^2(\cM). 
\]
If it exists, we define
the precision operator $\cP:=\cC^{-1}$ corresponding to $\GP$. 
The operators $\cC, \cP$ inherit several properties 
from the coloring operator $\cA$ in \eqref{eq:WhNois}.
\begin{proposition}\label{prop:C-and-P-ops} 
	Let $\ra > n/2$ and suppose that $\cM$ and $\cA \in OPS^{\ra}_{1,0}(\cM)$
	satisfy Assumptions~\ref{ass:cM-and-cA}\ref{ass:cM-and-cA_I}--\ref{ass:cM-and-cA_II}. 
	The covariance operator $\cC$ of the GRF $\GP$
	in \eqref{eq:WhNois} is then 
	\begin{equation}\label{eq:def:covariance}
	\cC= \cA^{-2} 
	\in 
	OPS^{-2\ra}_{1,0}(\cM)
	\end{equation}
	and, for every $s\in\bbR$, 
	$\cC \from H^{s}(\cM) \to H^{s+2\ra}(\cM)$ is an isomorphism. 
	Furthermore, under these assumptions, 
	the covariance operator $\cC$ in \eqref{eq:def:covariance} is   
	self-adjoint, (strictly) positive definite and compact  
	on $L^2(\cM)$, with a finite trace.
	
	Vice versa, 
	the precision operator $\cP$ of the GRF $\GP$ 
	in \eqref{eq:WhNois} is 
	\begin{equation}\label{eq:def:precision}
	\cP = \cA^2 
	\in 
	OPS^{2\ra}_{1,0}(\cM) 
	\end{equation}
	and, for every $s\in \bbR$
	it is an isomorphism as a mapping 
	$\cP \from  H^{s}(\cM) \to H^{s-2\ra}(\cM)$. 
	The precision operator $\cP  = \cA^2$ is a  
	self-adjoint, positive definite, 
	unbounded operator on $L^2(\cM)$, 
	whose spectrum  is discrete and accumulates only at $\infty$. 
\end{proposition}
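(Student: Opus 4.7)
The plan is to derive every claim directly from the identification $\cC=\cA^{-2}$ combined with the pseudodifferential calculus and the Sobolev mapping properties of $\cA$ guaranteed by Assumption~\ref{ass:cM-and-cA}. First, I would observe that under \ref{ass:cM-and-cA_II} the coloring operator $\cA\in OPS^{\ra}_{1,0}(\cM)$ is an isomorphism $H^s(\cM)\to H^{s-\ra}(\cM)$ for every $s\in\bbR$ (this is Proposition~\ref{prop:OPS_algebra_prop}\ref{item:range}), so $\cA^{-1}$ exists and $\GP=\cA^{-1}\cW$ is the well-defined solution of \eqref{eq:WhNois}. For $v,w\in L^2(\cM)$, the defining identity of $\cC$ together with the white noise covariance $\bbE[(\cW,\phi)(\cW,\psi)]=(\phi,\psi)_{L^2(\cM)}$ and the self-adjointness of $\cA^{-1}$ yields
\[
(\cC v,w)_{L^2(\cM)}
=\bbE\bigl[(\cA^{-1}\cW,v)_{L^2(\cM)}(\cA^{-1}\cW,w)_{L^2(\cM)}\bigr]
=(\cA^{-1}v,\cA^{-1}w)_{L^2(\cM)}
=(\cA^{-2}v,w)_{L^2(\cM)},
\]
so $\cC=\cA^{-2}$.

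Next I would invoke the H\"ormander calculus on $\cM$: since $\cA$ is self-adjoint and positive, Proposition~\ref{prop:OPS_real_powers} (real powers of elliptic positive operators) gives $\cA^{-2}\in OPS^{-2\ra}_{1,0}(\cM)$ and, iterating Proposition~\ref{prop:OPS_algebra_prop}\ref{item:range}, $\cC=\cA^{-1}\circ\cA^{-1}$ is an isomorphism $H^{s}(\cM)\to H^{s+2\ra}(\cM)$ for every $s\in\bbR$. Self-adjointness of $\cC$ on $L^2(\cM)$ is inherited from that of $\cA$, and strict positivity follows from $(\cC v,v)_{L^2(\cM)}=\|\cA^{-1}v\|_{L^2(\cM)}^{2}>0$ for $v\neq 0$ since $\cA^{-1}$ is injective.

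Compactness of $\cC$ on $L^2(\cM)$ is then obtained by factoring
\[
\cC:\;L^{2}(\cM)\xrightarrow{\cC}H^{2\ra}(\cM)\hookrightarrow L^{2}(\cM),
\]
the second arrow being compact by the Rellich--Kondrachov theorem on the smooth, closed manifold $\cM$. For the trace-class property, I would use Weyl's asymptotic law for positive elliptic pseudodifferential operators of order $\ra$: the eigenvalues of $\cA$ grow like $\lambda_k(\cA)\simeq k^{\ra/n}$, hence the eigenvalues of $\cC=\cA^{-2}$ decay like $\lambda_k(\cC)\simeq k^{-2\ra/n}$, and summability $\sum_{k}\lambda_{k}(\cC)<\infty$ follows precisely from the standing assumption $\ra>n/2$ (so $2\ra/n>1$). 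The main technical point here is the invocation of Weyl's law in the pseudodifferential, manifold setting, but this is classical and already used in \eqref{eq:ZPathReg}--\eqref{eq:ZPathRegp}.

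Finally, for $\cP$ I would simply invert the identification $\cC=\cA^{-2}$: since $\cA^{2}$ is a two-sided inverse of $\cA^{-2}$ on each scale $H^{s}(\cM)$, we have $\cP=\cC^{-1}=\cA^{2}\in OPS^{2\ra}_{1,0}(\cM)$ by Proposition~\ref{prop:OPS_algebra_prop}, and it is an isomorphism $H^{s}(\cM)\to H^{s-2\ra}(\cM)$. Self-adjointness and positivity of $\cP$ on its natural domain follow from those of $\cA$; unboundedness on $L^{2}(\cM)$ is immediate from $2\ra>0$ together with Weyl's law for $\cA$. Discreteness of the spectrum with accumulation only at $\infty$ is a direct consequence of the spectral theorem applied to the compact, self-adjoint, strictly positive operator $\cC$: its spectrum is discrete with $0$ as the only accumulation point, so $\sigma(\cP)=\{\lambda^{-1}:\lambda\in\sigma(\cC)\setminus\{0\}\}$ is discrete and accumulates only at $\infty$.
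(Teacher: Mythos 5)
Your proposal is correct and follows essentially the same route as the paper: identify $\cC=\cA^{-2}$, apply Propositions~\ref{prop:OPS_real_powers} and~\ref{prop:OPS_algebra_prop}\ref{item:range} for the symbol class and Sobolev mapping properties, obtain compactness from Rellich via the factorization through $H^{2\ra}(\cM)$, and get the trace-class property and the spectral statements for $\cP$ from Weyl's law together with the spectral mapping theorem and $\lambda_j(\cP)=1/\lambda_j(\cC)$. The only difference is that you explicitly verify $\cC=\cA^{-2}$ from the white-noise covariance identity, a step the paper's proof takes for granted; this is a harmless (indeed welcome) addition.
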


\begin{proof}
	Assumption \ref{ass:cM-and-cA} implies that $\cA \in OPS^{\ra}_{1,0}(\cM)$ 
	is boundedly invertible. 
	For this reason, by 
	Proposition~\ref{prop:OPS_real_powers}, 
	the covariance operator 
	$\cC = \cA^{-2} \in OPS^{-2\ra}_{1,0}(\cM)$ is well-defined,
	self-adjoint, and positive definite. 
	By Proposition~\ref{prop:OPS_algebra_prop}\ref{item:range}, 
	continuity of 
	$\cC\from H^{s}(\cM)\to H^{s+2\ra}(\cM)$ 
	for all $s\in \bbR$ follows. 
	In particular, the choice $s=0$ 
	shows that $\cC \from L^2(\cM) \to L^2(\cM)$ is compact
	due to the compactness of the embedding 
	$H^{2r}(\cM) \subset L^2(\cM)$ for any $r>0$ which, in turn, 
	is a consequence of the assumed compactness of $\cM$ and of Rellich's theorem.
	
	To verify that $\cC$ has a finite trace on $L^2(\cM)$, 
	we let $\{\lambda_j(\cC)\}_{j\in\bbN}$ and 
	$\{\lambda_j(\cA)\}_{j\in\bbN}$  
	denote the eigenvalues 
	of $\cC$ and $\cA$, respectively, and we note that 
	self-adjointness of $\cA$ (stipulated in Assumption \ref{ass:cM-and-cA}) 
	and the spectral mapping theorem
	imply, for all $j\in \mathbb{N}$, the asymptotic behavior
	$\lambda_j(\cC) = \lambda_j(\cA)^{-2} \simeq j^{-2\ra/n}$ 
	for $j\to \infty$.
	Since $\{ j^{-2\ra/n} \}_{j\in\bbN} \in \ell^1(\bbN)$ 
	if and only if $2\ra/n > 1$, the claim follows.
	
	The assertions for $\cP$ can be shown along the same lines by 
	using that the eigenvalues of $\cP$ 
	are given by $\lambda_j(\cP) = 1/\lambda_j(\cC)$.
\end{proof}

An important relation between GRFs obtained by 
``pseudodifferential coloring'' 
of white noise as in \eqref{eq:WhNois}, 
their covariance operators, and their covariance kernels  
is established in the classical \emph{Schwartz kernel theorem}, 
see e.g.~\cite[Thm.~5.2.1]{HorI}.
Every continuous function 
$k \in  C(X_1 \times X_2)$
on the Cartesian product of
two open, nonempty sets 
$X_1, X_2 \subset \bbR^n$
defines an integral operator 
$\cK \from C(X_2)\to C(X_1)$ 
via 
\[
(\cK\phi)(x_1) = \int_{{X_2}} k(x_1, x_2) \phi(x_2) \, \rd x_2 
\quad 
\forall x_1 \in X_1. 
\]
This definition may be extended to the case that 
$k$ is a \emph{generalized function}
and~$\phi$ is smooth and compactly supported,
cf.\ \cite[Eq.~(5.2.1)]{HorI}.
Suppose now that $G\subset\bbR^n$ is open and consider a generic 
$B \in OPS^r_{1,0}(G)$.
By the Schwartz kernel theorem, cf.~\cite[Thm.~5.2.1]{HorI}, 
the pseudodifferential operator $B$ 
admits a distributional Schwartz kernel $k_B$. 
We (formally)\footnote{The derivation is rigorous, when 
	understood ``in the sense of distributions''.} 
calculate for $u,v\in C^\infty_0(G)$ with integrals understood as 
oscillatory integrals 
\begin{align*} 
\langle k_B , u\otimes v\rangle 
&\textstyle 
= \int_{\operatorname{supp}(v)} v(x) b(x,D) u(x) \, \rd x 
\\
&\textstyle 
=
\int_{{\operatorname{supp}(v)}} 
\int_{{\bbR^n}} b(x,\xi) \exp(i x\cdot \xi) v(x) \hat{u}(\xi) \, \rd\xi \, \rd x 
\\
&\textstyle 
= 
(2\pi)^{-n} 
\int_{\operatorname{supp}(v)} 
\int_{\bbR^n} 
\int_{\operatorname{supp}(u)}
b(x,\xi) \exp(i(x-y)\cdot \xi) v(x) u(y) \, \rd y \, \rd\xi \, \rd x .
\end{align*}
In the sense of distributions we thus obtain 
\[
\textstyle 
k_B (x,x-y) = (2\pi)^{-n} \int_{\bbR^n} b(x,\xi) \exp(i(x-y)\cdot \xi) \, \rd \xi ,
\]
so that for $w\in \bbR^n$ and for $\alpha \in \bbN_0^n$
\begin{equation}\label{eq:KDeriv}
\textstyle 
w^\alpha k_B(x,w) 
= 
(2\pi)^{-n} 
\int_{\bbR^n} \exp(iw\cdot \xi) D^\alpha_\xi b(x,\xi) \, \rd\xi
\end{equation}
with
$b(x,\xi) = \int_{G} \exp(-i w\cdot\xi) k_B (x,w) \, \rd w$
and $w^\alpha := \prod_{i=1}^n w_i^{\alpha_i}$. 
Since $b(x,\xi)$ satisfies~\eqref{eq:est_symbol},
the integral in~\eqref{eq:KDeriv} is absolutely convergent for 
$r-|\alpha| < -n$, i.e., $|\alpha| > n+r$. 
On the compact manifold $\cM$, a corresponding result holds 
by repeating the preceding calculation 
in coordinate charts 
$\{\widetilde{\gamma}_i\}_{i=1}^M$ of (a finite atlas of)~$\cM$.

\begin{proposition}\label{prop:CZEst}
	Let $\cB\in OPS^r_{1,0}(\cM)$ with 
	corresponding Schwartz kernel $k_\cB$.  
	In addition, for $i,i'=1,\ldots,M$, 
	let $B_{i,i'}\in OPS^{r}_{1,0}(G)$ 
	be defined according to~\eqref{eq:OPS_transported}, 
	and denote the corresponding Schwartz kernel by $k_{B_{i,i'}}$.
	
	Then, for every $\alpha, \beta \in \bbN^n_0$ with 
	$n+r+|\alpha|+|\beta| > 0$, 
	there exist constants $c_{\alpha,\beta}>0$ such that, 
	for all $i,i'=1,\ldots, M$,  
	\begin{equation}\label{eq:KAest}
	\forall x^*,y^*\in 
	\widetilde{\cM}_{i,i'}^{\cap}, x^* \neq y^*:
	\;\; 
	\bigl| \partial^\alpha_x\partial^\beta_y k_{B_{i,i'} } (x,y) \bigr| 
	\leq 
	c_{\alpha,\beta} \operatorname{dist}(x^*,y^*)^{-(n+r+|\alpha|+|\beta|)} ,
	\end{equation}
	where we 
	used the notation 
	$\widetilde{\cM}_{i,i'}^{\cap} := \widetilde{\cM}_{i} \cap \widetilde{\cM}_{i'}$,  
	$x := \widetilde{\gamma}_{i}^{-1}(x^*)$ and $y := \widetilde{\gamma}_{i'}^{-1}(y^*)$.
	
	In particular, 
	$k_{\cB}(\,\cdot\,, \,\cdot\,) \in C^\infty(\cM\times \cM \backslash \bigtriangleup)$
	where $\bigtriangleup = \{ (x^*,x^*) : x^* \in \cM \}$. 
\end{proposition}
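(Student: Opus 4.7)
The plan is to prove the estimate in local coordinates on the open set $G\subset\bbR^n$ for the transported pseudodifferential operators $B_{i,i'}\in OPS^r_{1,0}(G)$, and then transfer the result back to the manifold. Since the coordinate charts $\widetilde{\gamma}_i\from G\to\widetilde{\cM}_i$ are diffeomorphisms and $\widetilde{\cM}_{i,i'}^{\cap}$ is relatively compact, the Euclidean distance $|x-y|$ between the preimages $x=\widetilde{\gamma}_{i'}^{-1}(x^*)$, $y=\widetilde{\gamma}_i^{-1}(y^*)$ is equivalent to the geodesic distance $\operatorname{dist}(x^*,y^*)$ up to multiplicative constants depending only on the atlas. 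It therefore suffices to establish $\bigl|\partial^\alpha_x\partial^\beta_y k_{B_{i,i'}}(x,y)\bigr|\lesssim |x-y|^{-(n+r+|\alpha|+|\beta|)}$ for each pair $(i,i')$.

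Starting from the oscillatory-integral representation of the kernel already derived in the excerpt,
$$
k_{B_{i,i'}}(x,y)=(2\pi)^{-n}\int_{\bbR^n}b_{i,i'}(x,\xi)\,e^{i(x-y)\cdot\xi}\,\rd\xi ,
$$
and writing $w:=x-y$, differentiation under the integral sign via Leibniz yields
$$
\partial^\alpha_x\partial^\beta_y k_{B_{i,i'}}(x,y)=(2\pi)^{-n}\sum_{\gamma\leq\alpha}\binom{\alpha}{\gamma}\int_{\bbR^n}(D^\gamma_x b_{i,i'})(x,\xi)(i\xi)^{\alpha-\gamma}(-i\xi)^{\beta}e^{iw\cdot\xi}\rd\xi ,
$$
where each effective symbol $(D^\gamma_x b_{i,i'})(x,\xi)\,\xi^{\alpha-\gamma+\beta}$ satisfies a bound of the form \eqref{eq:est_symbol} with order at most $r+|\alpha|+|\beta|$.

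The technical heart of the proof is extracting the sharp exponent $|w|^{-(n+r+|\alpha|+|\beta|)}$ from these oscillatory integrals. The naive identity \eqref{eq:KDeriv} in the excerpt, which comes from repeated plain integration by parts in $\xi$, only yields $|w|^{-N}$ for every $N>n+r+|\alpha|+|\beta|$ and thus loses the sharp exponent. I would instead fix a cutoff $\chi\in C_0^\infty(\bbR^n)$ with $\chi\equiv 1$ near the origin, set $\chi_w(\xi):=\chi(|w|\xi)$, and split each oscillatory integral into a low-frequency piece carrying the factor $\chi_w\,b_{i,i'}$ (supported in $|\xi|\lesssim|w|^{-1}$) and a high-frequency piece with the complementary factor. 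On the low-frequency piece, the symbol bound delivers directly
$$
\int_{|\xi|\lesssim 1/|w|}(1+|\xi|)^{r+|\alpha|+|\beta|}\,\rd\xi\lesssim |w|^{-(n+r+|\alpha|+|\beta|)},
$$
which is precisely where the hypothesis $n+r+|\alpha|+|\beta|>0$ enters. On the high-frequency piece I would perform $N$ integrations by parts with the first-order operator $L=-i\,|w|^{-2}\,w\cdot\nabla_\xi$ satisfying $L e^{iw\cdot\xi}=e^{iw\cdot\xi}$, choosing $N>n+r+|\alpha|+|\beta|$. A Leibniz expansion produces terms in which either all $\xi$-derivatives fall on $b_{i,i'}$ (yielding an integrand bounded by $(1+|\xi|)^{r+|\alpha|+|\beta|-N}$ on $|\xi|\gtrsim|w|^{-1}$), or at least one derivative falls on $\chi_w$ (yielding a factor $|w|^k$ from $|\partial^k\chi_w|\lesssim|w|^k$, localized to the shell $|\xi|\sim|w|^{-1}$ of volume $\sim|w|^{-n}$). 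Combined with the overall prefactor $|w|^{-N}$ produced by $L^N$, every term collapses to the single bound $|w|^{-(n+r+|\alpha|+|\beta|)}$.

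Since the preceding estimate is valid for arbitrary orders $(\alpha,\beta)$, the kernel $k_{B_{i,i'}}$ is of class $C^\infty$ on $\widetilde{\cM}_{i,i'}^\cap\times\widetilde{\cM}_{i,i'}^\cap$ away from its diagonal, and the corresponding smoothness of $k_{\cB}$ on $\cM\times\cM\setminus\bigtriangleup$ follows by assembling the local kernels through the smooth partition of unity $\{\chi_i\}_{i=1}^M$. The main obstacle is the dyadic cutoff analysis in the third step: the plain integration-by-parts bookkeeping behind \eqref{eq:KDeriv} is insufficient, and one has to track carefully how the supports of $\chi_w$ and $1-\chi_w$ interact with the $|\xi|$-growth of the symbol derivatives so that every Leibniz term produces exactly the exponent $-(n+r+|\alpha|+|\beta|)$ rather than an arbitrarily weaker $-N$.
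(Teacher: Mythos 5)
Your argument is correct, and it is genuinely more than what the paper itself writes down: the paper only records the formal oscillatory-integral representation and the identity \eqref{eq:KDeriv}, observes that the resulting integral converges absolutely once $|\alpha|>n+r$, and then defers the sharp estimate \eqref{eq:KAest} to the literature (the cited Lem.~3.0.2--3.0.3 of Schneider), which is exactly where the frequency-splitting argument you describe is carried out. You correctly diagnose that the plain repeated integration by parts behind \eqref{eq:KDeriv} only yields $|w|^{-N}$ for integer $N>n+r$ and so misses the sharp (generally non-integer) exponent $n+r+|\alpha|+|\beta|$; your cutoff at the scale $|\xi|\sim|w|^{-1}$ recovers it, with the hypothesis $n+r+|\alpha|+|\beta|>0$ entering precisely in the low-frequency integral and the high-frequency piece handled by $N>n+r+|\alpha|+|\beta|$ applications of $L=-i|w|^{-2}w\cdot\nabla_\xi$; the Leibniz bookkeeping (terms hitting $\chi_w$ being localized to the shell $|\xi|\sim|w|^{-1}$ of volume $\sim|w|^{-n}$) closes as you claim. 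Two small points you may wish to record explicitly: for $r\geq -n$ the oscillatory integral defines $k_{B_{i,i'}}$ only modulo a smoothing remainder (coming from reduction to a properly supported operator), whose $C^\infty$ kernel satisfies \eqref{eq:KAest} trivially on the compact overlap $\widetilde{\cM}_{i,i'}^{\cap}$, so the estimate is unaffected; and the reduction from $\operatorname{dist}(x^*,y^*)$ to the chart-coordinate Euclidean distance $|x-y|$ (note the paper's convention is $x=\widetilde{\gamma}_i^{-1}(x^*)$, $y=\widetilde{\gamma}_{i'}^{-1}(y^*)$, the reverse of what you wrote) is the same compactness argument the paper invokes in the remark following the proposition.
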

The kernel estimates \eqref{eq:KAest} are in principle known.
For a detailed derivation of \eqref{eq:KAest},
we refer, e.g., to \cite[Lem.~3.0.2, 3.0.3]{RSchneider98}.
\begin{remark} 
	The kernel bound \eqref{eq:KAest} is stated with respect to the 
	distance $\operatorname{dist}$ which could be either the geodetic distance
	intrinsic to $\cM$ or also the Euclidean distance of the points
	$x^*, y^*$ immersed via $\cM$ into $\bbR^D$. This follows 
	directly from our assumptions on $\cM$, in particular its compactness.
	The numerical values of the constants $c_{\alpha,\beta}>0$ in \eqref{eq:KAest} will, 
	of course, depend on the precise notion of distance employed in \eqref{eq:KAest}.
\end{remark}

\begin{remark} \label{rmk:analyticK}
	In the case that $\cM$ and the coefficients of $\cA$ in \eqref{eq:WhNois} are 
	\emph{analytic}, the kernel estimates \eqref{eq:KAest} hold
	\emph{with explicit dependence of the constants}
	$c_{\alpha,\beta}$ on the differentiation orders $|\alpha|$, $|\beta|$.
	This follows from an analytic version of the pseudodifferential calculus
	which was developed in \cite{PDOGevrey1967}.
	It implies 
	that the covariance kernel  
	is \emph{asymptotically smooth} in the sense of \cite{WHackbHmatBook}.
	This, in turn, mathematically justifies 
	low-rank compressed, numerical approximations of covariance matrices
	in ${\mathcal H}$-matrix format, as described in 
	\cite{WHackbHmatBook} and, in connection with GRFs on manifolds,
	in \cite{Doelz2017}. The presently proposed, wavelet-based 
	compression results and \eqref{eq:KAest} hold also for finite
	differentiability of the covariance function
	in greater generality.
\end{remark} 

\section{Covariance/precision preconditioning and compression}
\label{section:results}

We consider a GRF $\GP$ 
indexed by a compact Riemannian manifold $\cM$ 
as described in Assumption~\ref{ass:cM-and-cA}\ref{ass:cM-and-cA_I}. 
We assume that $\GP$ is colored via the
white noise driven SPDE \eqref{eq:WhNois}   
with coloring operator $\cA \in OPS^{\ra}_{1,0}(\cM)$ 
satisfying 
Assumption~\ref{ass:cM-and-cA}\ref{ass:cM-and-cA_II}.  
We recall from Example~\ref{ex:whittle-matern} that 
the coloring operator $\cA$ can 
possibly be obtained as a fractional power 
of a shifted base elliptic (pseudo)differential operator
$\cL\in OPS^{\bar{r}}_{1,0}(\cM)$.  
This \emph{Whittle--Mat\'{e}rn} scenario is detailed 
in Appendix~\ref{appendix:ColShft}.
The covariance and precision operators 
in \eqref{eq:def:covariance} and \eqref{eq:def:precision}
of the GRF $\GP$  
allow for \emph{equivalent, bi-infinite matrix representations}
\begin{equation}\label{eq:BiInfMat}
\bC = \cC(\bPsi)(\bPsi) \in \bbR^{\bbN\times \bbN} 
\quad 
\text{and} 
\quad 
\bP = \cP(\bPsi)(\bPsi) \in \bbR^{\bbN\times \bbN} 
\end{equation}
when represented with respect to a MRA $\bPsi$ 
as introduced in Subsection~\ref{subsec:results:MRAs} below.

For a suitable choice 
of the 
MRA $\bPsi$ we will show  the following.
\begin{enumerate}[label=\arabic*., leftmargin=0.75cm]
	\item 
	Diagonal preconditioning renders the condition numbers 
	of arbitrary sections 
	of the bi-infinite matrices $\bC$ and $\bP$ 
	in \eqref{eq:BiInfMat} 
	uniformly bounded 
	with respect to the number of active indices. 
	\item 
	The covariance and precision operators 
	admit numerically sparse representations 
	with respect to the MRA $\bPsi$.
\end{enumerate}
These are our main findings on 
the compression of the covariance matrix $\bC$ 
and the precision matrix $\bP$ and they are detailed in
Subsections~\ref{subsec:results:precon}--\ref{subsec:results:sparse}. 

\subsection{Multiresolution analysis on manifolds}
\label{subsec:results:MRAs} 

We let $\{V_j\}_{j > j_0}$ 
be a sequence of nested, linear subspaces 
$V_j \subset V_{j+1} \subset \ldots \subset L^2(\cM)$.  
We then say that the family $\{V_j\}_{j > j_0}$ 
has  
\emph{regularity} $\gamma > 0$ and 
\emph{(approximation) order} $d\in \bbN$ if
\begin{equation}\label{eq:VjOrdReg}
\begin{split} 
\gamma 
&= 
\sup\left\{ s\in \bbR: 
V_j \subset H^{s}(\cM) \; \forall j > j_0 \right\} , 
\\ 
d 
&= 
\sup\Bigl\{ s\in \bbR: 
\inf_{v_j \in V_j } \| v - v_j \|_{L^2(\cM)} 
\lesssim 
2^{-js} \| v \|_{H^s(\cM)} 
\; 
\forall v\in H^{s}(\cM) 
\;
\forall j > j_0 \Bigr\} .  
\hspace{-0.3cm}  
\end{split}
\end{equation}
We shall suppose that the subspaces $\{V_j\}_{j > j_0}$ 
are $H^{r/2}(\cM)$-conforming, i.e.,  
that in \eqref{eq:VjOrdReg} we have $\gamma > \max\{ 0,r/2 \}$ 
for some fixed order $r\in\bbR$.

We furthermore assume that $\dim(V_j) = \cO(2^{nj})$
and, for each $j>j_0$, the space~$V_j$ is spanned by
a \emph{single-scale} basis $\bPhi_j$, i.e., 
\begin{equation}\label{eq:bPhij}
\forall j > j_0:
\quad 
V_j = \operatorname{span}\bPhi_j, 
\quad \text{where}\quad  
\bPhi_j:= \{ \phi_{j,k}: k\in \Delta_j \}. 
\end{equation}
Here, the index set 
$\Delta_j$ describes the spatial localization
of elements in $\bPhi_j$. 
We associate with these bases \emph{dual single-scale bases} 
defined by 
\begin{equation}\label{eq:bPhij-tilde}
\forall j > j_0: 
\quad   
\widetilde{\bPhi}_j := \{ \widetilde{\phi}_{j,k}: k \in \Delta_j \},
\;\;\; 
\text{with} 
\quad 
\langle \phi_{j,k}, \widetilde{\phi}_{j,k'}\rangle = \delta_{k,k'} 
\quad 
\forall k,k'\in \Delta_j. 
\end{equation} 
The vector spaces   
$\widetilde{V}_j := \operatorname{span}\widetilde{\bPhi}_j$,
$j>j_0$, 
are also nested, $\widetilde{V}_j \subset\widetilde{V}_{j+1} \subset 
\ldots \subset L^2(\cM)$, and 
the family $\{\widetilde{V}_j \}_{j>j_0}$ 
provides regularity $\widetilde{\gamma} > 0$ 
and approximation order $\widetilde{d}$. In particular, having the dual basis 
at hand, we can define the projector onto $V_j$ by 
\begin{equation}\label{eq:def_projector_Q_j}
\forall v\in L^2(\cM):
\quad
Q_j v := 
\sum_{k\in \Delta_j}
\langle 
v, \widetilde{\phi}_{j,k}
\rangle
\phi_{j,k} . 
\end{equation}
We refer to Appendix~\ref{appendix:wavelets} 
for a summary of basic properties of the bases 
$\bPhi_j$ and $\widetilde{\bPhi}_j$ 
and 
for a brief description how they can be constructed on manifolds. 

Given single-scale bases $\bPhi_j$ and $\widetilde{\bPhi}_j$,  
set  
$\nabla_j := \Delta_{j+1}\backslash \Delta_j$.
One then can construct 
\emph{biorthogonal complement bases} 
\begin{equation}\label{eq:bPsij}
\bPsi_j = \{\psi_{j,k} : k\in \nabla_j \} 
\quad 
\text{and}
\quad  
\widetilde{\bPsi}_j = \{\widetilde{\psi}_{j,k} : k\in \nabla_j \} , 
\qquad 
j>j_0, 
\end{equation} 
satisfying the \emph{biorthogonality relation}
\begin{equation}\label{eq:Biorth}
\langle \psi_{j,k} , \widetilde{\psi}_{j',k'} \rangle 
= \delta_{(j,k),(j',k')} 
= 
\begin{cases} 
1, 
& \text{if } j=j'\text{ and } k = k', 
\\
0,
& 
\text{otherwise},  
\end{cases} 
\end{equation}
such that 
\begin{equation}\label{eq:SupPsi}
\operatorname{diam} (\operatorname{supp} \psi_{j,k})
\simeq 2^{-j}, 
\quad 
j>j_0, 
\end{equation}
see Appendix~\ref{appendix:wavelets}.
For $j>j_0$, define 
$W_j := \operatorname{span} \bPsi_j$ and
$\widetilde{W}_j := \operatorname{span} \widetilde{\bPsi}_j$.
The biorthogonality \eqref{eq:Biorth} implies that, 
for all $j > j_0$, 
\[ 
V_{j+1} = W_j \oplus V_j, 
\qquad 
\widetilde{V}_{j+1} =  \widetilde{W}_j \oplus \widetilde{V}_j, 
\qquad 
\widetilde{V}_j \perp W_j,
\qquad 
V_j \perp \widetilde{W}_j . 
\]
In what follows, we use the convention 
\[
W_{j_0} := V_{j_0 + 1}, 
\quad 
\widetilde{W}_{j_0} := \widetilde{V}_{j_0+1}, 
\quad 
\text{and} 
\quad\; 
\bPsi_{j_0} := \bPhi_{j_0+1},  
\quad 
\widetilde{\bPsi}_{j_0} := \widetilde{\bPhi}_{j_0+1}. 
\]

As explained in Appendix~\ref{appendix:wavelets}, 
a biorthogonal dual pair $\bPsi, \widetilde{\bPsi}$
of wavelet bases is now obtained
from the union of the coarsest single-scale basis 
and the complement bases, i.e., 
\[
\bPsi = \bigcup_{j\geq j_0} \bPsi_{j}, 
\qquad 
\widetilde{\bPsi} = \bigcup_{j\geq j_0} \widetilde{\bPsi}_j. 
\]
We refer to $\bPsi$, resp.\ 
to $\widetilde{\bPsi}$, as primal, resp.\ dual, 
\emph{multiresolution analysis} (MRAs).
Here and throughout, 
all basis functions 
in $\bPsi$ and $\widetilde{\bPsi}$
are assumed to be normalized in $L^2(\cM)$.
Furthermore, they satisfy the 
\emph{vanishing moment property}:
\begin{equation}\label{eq:VanMom}
|\langle v, \psi_{j,k}\rangle| 
\lesssim 
2^{-j(\widetilde{d}+n/2)} 
\sup\nolimits_{|\alpha|=\widetilde{d}, \, x\in \operatorname{supp}( \psi_{j,k}) } 
|\partial^\alpha v(x)|
\quad 
\forall 
(j,k)\in \cJ. 
\end{equation}
Here, 
the countable index set is defined by 
\begin{equation}\label{eq:cJindex} 
\cJ 
:= 
\{(j,k) : j \geq j_0, \; k\in\nabla_j\}, 
\end{equation} 
where we set $\nabla_{j_0} := \Delta_{j_0+1}$, 
and the constant implied in $\lesssim$ 
in \eqref{eq:VanMom} independent of $(j,k)\in \cJ$.
A corresponding property holds for the 
duals $\widetilde{\psi}_{j,k}$.
We note that the biorthogonality allows constructions 
of $\bPsi, \widetilde{\bPsi}$ with 
$\widetilde{d} \gg d$, which will be crucial in effective 
compression of covariance operators.

The second key property of the 
multiresolution bases $\bPsi, \widetilde{\bPsi}$ 
is that they comprise \emph{Riesz bases} 
for a range of Sobolev spaces on $\cM$ 
and corresponding \emph{norm equivalences} hold:  
For all $v\in H^{t}(\cM)$, we have
\begin{equation}\label{eq:Riesz} 
\begin{split} 
\| v \|_{H^t(\cM)}^2 
& \simeq 
\sum_{j\geq j_0} \sum_{k\in \nabla_j} 2^{2jt} 
|\langle v, \widetilde{\psi}_{j,k} \rangle|^2, 
\qquad 
t\in (-\widetilde{\gamma}, \gamma),
\\ 
\| v \|_{H^t(\cM)}^2 
&\simeq 
\sum_{j\geq j_0} 
\sum_{k\in \nabla_j} 
2^{2jt} 
|\langle v, \psi_{j,k}\rangle|^2,
\qquad  
t\in (-\gamma, \widetilde{\gamma}).
\end{split} 
\end{equation}

\subsection{Covariance and precision operator preconditioning}
\label{subsec:results:precon} 

Recall the index set $\cJ$ from \eqref{eq:cJindex}. 
For $\lambda = (j,k)\in\cJ$, we set $|\lambda|:=j$. 
Furthermore, $\bD^s$ denotes
the bi-infinite diagonal matrix
\begin{equation}\label{eq:def:bD} 
\bD^s 
:= \operatorname{diag}
\bigl( 2^{s|\lambda|} 
: \lambda \in \cJ \bigr), 
\quad 
s\in\bbR. 
\end{equation} 
The next result is based on Proposition~\ref{prop:DiagPC} 
in Appendix~\ref{appendix:wavelets}. 
\begin{proposition}\label{prop:precon:C} 
	Let $\GP$ be a GRF 
	indexed by a manifold $\cM$ 
	which is defined through 
	the white noise driven SPDE 
	\eqref{eq:WhNois}.  
	Assume that the manifold $\cM$ 
	and the coloring operator 
	$\cA\in OPS^{\ra}_{1,0}(\cM)$ 
	in \eqref{eq:WhNois}
	satisfy 
	Assumptions~\ref{ass:cM-and-cA}\ref{ass:cM-and-cA_I}--\ref{ass:cM-and-cA_II}. 
	Let $\bPsi$ be a Riesz basis for $L^2(\cM)$ 
	which, properly rescaled, is a MRA in $H^{s}(\cM)$ 
	for $-\ra \leq s \leq 0$ 
	such that the norm equivalences \eqref{eq:Riesz}
	hold with 
	$\widetilde{\gamma} > \ra$ and $\gamma > 0$.
	
	Then, the bi-infinite matrix representation $\bC$ 
	for the covariance operator 
	\eqref{eq:def:covariance} 
	in the MRA $\bPsi$, see \eqref{eq:BiInfMat}, 
	satisfies the following: 
	\begin{enumerate}[leftmargin=0.75cm]
		\item\label{prop:precon:Cfull}  
		The bi-infinite matrix representation 
		$\bC$ is symmetric positive definite,  
		and it induces a self-adjoint, 
		positive definite, compact operator on $\ell^2(\cJ)$.
		Furthermore, there exist constants $0<c_- \leq c_+ < \infty$ such that
		$\sigma(\bD^{\ra} \bC \bD^{\ra}) \subset [c_-,c_+]$ 
		and 
		$\operatorname{cond}_2(\bD^{\ra} \bC \bD^{\ra}) \simeq 1$, 
		with $\bD^{\ra}$ defined according to \eqref{eq:def:bD}.  
		\item\label{prop:precon:CLambda}  
		For every index set $\Lambda \subset \cJ$ with $p=\#(\Lambda)<\infty$, 
		the $\Lambda$-section of $\bC$, 
		$\bC_\Lambda = \{ \bC_{\lambda, \lambda'} 
		: \lambda, \lambda'\in \Lambda \} \in \bbR^{p\times p}$,  
		is symmetric, positive definite and 
		it satisfies 
		$\sigma(\bD^{\ra}_\Lambda \bC_\Lambda \bD^{\ra}_\Lambda) \subset [c_-,c_+]$. 
		Here, 
		$\bD^{\ra}_\Lambda := 
		\{ \bD^{\ra}_{\lambda, \lambda'} 
		: \lambda, \lambda'\in \Lambda \} \in \bbR^{p\times p}$. 
	\end{enumerate}
\end{proposition}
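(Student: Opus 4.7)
The plan is to express the quadratic form $\vvec^\T \bC \vvec$ in terms of the bilinear form of $\cC$ acting on the function $v = \sum_{\lambda\in\cJ} v_\lambda\, \psi_\lambda$, and then deploy two independent norm equivalences: one coming from the operator $\cC$ (its mapping properties from Proposition~\ref{prop:C-and-P-ops}) and one coming from the Riesz basis property of $\bPsi$ (the second line of \eqref{eq:Riesz}). Combining these will immediately produce the diagonally preconditioned spectral bounds, with everything else (symmetry, positive definiteness, compactness) reading off of the corresponding properties of $\cC$.

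First I would set up the identification. By the convention $\bC = \cC(\bPsi)(\bPsi)$, for any finitely supported $\vvec\in\ell^2(\cJ)$ one has $\vvec^\T\bC\vvec = \langle \cC v, v\rangle$ with $v := \sum_\lambda v_\lambda \psi_\lambda$. Symmetry of $\bC$ then follows from self-adjointness of $\cC$, and positivity of the quadratic form from positive definiteness of $\cC$ combined with the fact that $\vvec\mapsto v$ is an isomorphism $\cS\from\ell^2(\cJ)\to L^2(\cM)$ (Riesz basis property). Writing $\bC = \cS^*\cC\,\cS$ then transfers self-adjointness, positive definiteness and compactness of $\cC$ on $L^2(\cM)$ (established in Proposition~\ref{prop:C-and-P-ops}) to $\bC$ on $\ell^2(\cJ)$.

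Next, I would establish the key two-sided bound
\[
\langle \cC v, v \rangle \simeq \| v \|_{H^{-\ra}(\cM)}^2, \qquad v \in H^{-\ra}(\cM).
\]
The upper bound is the continuity of $\cC\from H^{-\ra}(\cM)\to H^{\ra}(\cM)$ combined with duality. For the lower bound, I would use that $\cA\from L^2(\cM)\to H^{-\ra}(\cM)$ is a topological isomorphism (Assumption~\ref{ass:cM-and-cA}\ref{ass:cM-and-cA_II} together with Proposition~\ref{prop:OPS_algebra_prop}\ref{item:range}), so that $\|v\|_{H^{-\ra}(\cM)}\simeq \|\cA^{-1}v\|_{L^2(\cM)}$; self-adjointness of $\cA$ then gives $\langle \cC v,v\rangle = \langle \cA^{-2}v,v\rangle = \|\cA^{-1}v\|_{L^2(\cM)}^2$, completing the equivalence. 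The norm equivalence \eqref{eq:Riesz} applied at $t=-\ra$, which is admissible because $\ra<\widetilde\gamma$ (hypothesis) and $-\ra<0<\gamma$, yields $\|v\|_{H^{-\ra}(\cM)}^2 \simeq \|\bD^{-\ra}\vvec\|_2^2$. Chaining these gives $\vvec^\T \bC \vvec \simeq \|\bD^{-\ra}\vvec\|_2^2$, i.e., with the substitution $\vvec = \bD^{\ra}\wvec$,
\[
\wvec^\T \bD^{\ra}\bC \bD^{\ra} \wvec \simeq \|\wvec\|_2^2,
\]
from which $\sigma(\bD^{\ra}\bC\bD^{\ra})\subset[c_-,c_+]$ and $\operatorname{cond}_2(\bD^{\ra}\bC\bD^{\ra})\simeq 1$ are immediate (Proposition~\ref{prop:DiagPC}). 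This settles part~\ref{prop:precon:Cfull}.

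Part~\ref{prop:precon:CLambda} is obtained by restricting the very same estimate to $\vvec\in\ell^2(\cJ)$ supported on $\Lambda$: identifying such $\vvec$ with a vector in $\bbR^p$, one has $\vvec^\T \bC_\Lambda \vvec = \vvec^\T \bC \vvec$, and the constants $c_\pm$ produced above are independent of $\Lambda$, giving the claimed uniform inclusion $\sigma(\bD^{\ra}_\Lambda \bC_\Lambda \bD^{\ra}_\Lambda)\subset[c_-,c_+]$. The main subtlety to watch in the plan is bookkeeping the regularity indices: we must use the primal norm equivalence at exactly $t=-\ra$, which is why the hypotheses are cast as $\widetilde\gamma>\ra$ rather than merely $\widetilde\gamma > \ra/2$ that would suffice for $\cA$ alone; the rest is essentially unwinding Proposition~\ref{prop:C-and-P-ops} and \eqref{eq:Riesz}.
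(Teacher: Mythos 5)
Your proposal is correct and follows essentially the same route as the paper: symmetry, positive definiteness and compactness are read off from Proposition~\ref{prop:C-and-P-ops} via the Riesz-basis identification, the coercivity $\vvec^\T\bC\vvec\gtrsim\|\bD^{-\ra}\vvec\|_2^2$ comes from the mapping properties of $\cA$ (equivalently $\cC^{-1/2}$) combined with \eqref{eq:Riesz} at $t=-\ra$, and the $\Lambda$-sections are handled by restriction. The only cosmetic difference is that you inline the two-sided equivalence $\langle\cC v,v\rangle=\|\cA^{-1}v\|_{L^2(\cM)}^2\simeq\|v\|_{H^{-\ra}(\cM)}^2$ where the paper delegates the upper spectral bound to Proposition~\ref{prop:DiagPC} with $r=-2\ra$ (whose proof is exactly this argument); note also that the relevant norm equivalence for $v=\vvec^\T\bPsi$ is the \emph{first} line of \eqref{eq:Riesz} (coefficients against $\widetilde{\bPsi}$), which is indeed what your admissibility check $-\ra\in(-\widetilde{\gamma},\gamma)$ corresponds to.
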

\begin{proof} 
	Under 
	Assumptions~\ref{ass:cM-and-cA}\ref{ass:cM-and-cA_I}--\ref{ass:cM-and-cA_II}
	by Proposition~\ref{prop:C-and-P-ops} 
	$\cC=\cA^{-2}\in OPS^{-2\ra}_{1,0}(\cM)$ is 
	a self-adjoint, compact operator on $L^2(\cM)$. This implies 
	that the bi-infinite matrix $\bC$ is symmetric and compact 
	as an operator on $\ell^2(\cJ)$. 
	In addition, Assumption~\ref{ass:cM-and-cA}\ref{ass:cM-and-cA_II} 
	implies positivity of $\bC$: 
	by Proposition~\ref{prop:OPS_algebra_prop}\ref{item:range}
	and Proposition~\ref{prop:OPS_real_powers}, 
	the linear operator $\cC^{-1/2}\from L^2(\cM) \to H^{-\ra}(\cM)$
	is bounded. 
	Thus, there is  a constant $C_0 >0$ such that 
	\[ 
	\| v \|_{H^{-\ra}(\cM)}^2 
	=
	\|  \cC^{-1/2} \cC^{1/2} v\|_{H^{-\ra}(\cM)}^2 
	\leq 
	C_0
	\| \cC^{1/2} v\|_{L^2(\cM)}^2 
	=
	C_0
	\langle \cC v, v \rangle 
	\;\;  
	\forall 
	v \in H^{-\ra}(\cM). 
	\] 
	By 	
	writing 
	$v=\vvec^\top\bPsi \in H^{-\ra}(\cM)$ 
	for $v \in H^{-\ra}(\cM)$, 
	the 
	norm equivalences in \eqref{eq:Riesz} imply
	that there exists a constant $c_{-\ra}>0$ such that 
	\[
	c_{-\ra}^{-1} \| \bD^{-\ra} \vvec \|_2^2  
	\leq 
	\| v \|_{H^{-\ra}(\cM)}^2  
	\leq 
	c_{-\ra} \| \bD^{-\ra} \vvec \|_2^2 
	\]
	and we conclude that, for every $\vvec \in \ell_2(\cJ)$, 
	\[ 
	\| \bD^{-\ra} \vvec \|_2^2 
	\leq 
	c_{-\ra} 
	\| v \|_{H^{-\ra}(\cM)}^2 
	\leq 
	c_{-\ra}  C_0
	\langle \cC v, v \rangle
	=
	c_{-\ra}  C_0
	\vvec^\top \bC \vvec.  
	\] 
	As $\bPsi$ is a Riesz basis, 
	$\vvec \ne \zerovec$ holds if and only if $v=\vvec^\top\bPsi \ne 0$, 
	whence
	\[
	\vvec^\top \bC \vvec > 0 
	\quad 
	\Longleftrightarrow 
	\quad 
	\vvec \ne \zerovec  
	\qquad 
	\text{and} 
	\qquad 
	\vvec^\top \bC \vvec 
	\geq 
	\widetilde{c} 
	\| \bD^{-\ra} \vvec \|_2^2  
	\] 
	with $\widetilde{c} := c_{-\ra}^{-1} C_0^{-1}>0$ 
	follow. 
	Restricting this statement to sequences 
	$\vvec$ 
	which satisfy $v_\lambda = 0$ 
	for $\lambda \in \cJ\setminus\Lambda$, 
	we obtain that also 
	$\bC_\Lambda \in \bbR^{p\times p}$ is 
	symmetric positive definite, 
	where we recall that $p=\#(\Lambda)<\infty$.
	Furthermore,  
	\begin{equation}\label{eq:CpSPD} 
	\vvec^\top \bC_\Lambda \vvec 
	\geq 
	\widetilde{c} 
	\| \bD_\Lambda^{-\ra} \vvec \|_2^2  
	\quad 
	\forall 
	\vvec\in\bbR^p, 
	\end{equation}
	where the constant $\widetilde{c}>0$ 
	is independent of $\Lambda\subset\cJ$. 
	
	The assumed norm equivalences \eqref{eq:Riesz} of $\bPsi$ 
	show in particular stability 
	for $t=-\ra$ and for $t=0$,  
	and \eqref{eq:rgamtgam} holds with $-2\ra$ in place of $r$.
	Thus, \eqref{eq:DiagPC} with 
	$\bC_J$ in place of $\bB_J$ 
	and taking limit $J\to \infty$ 
	implies $\sigma(\bD^{\ra} \bC \bD^{\ra}) \subset [c_-,c_+] $ 
	and $\operatorname{cond}_2(\bD^{\ra} \bC \bD^{\ra}) \simeq 1$ 
	in~\ref{prop:precon:Cfull}. 
	From this, also 
	$\sigma(\bD^{\ra}_\Lambda \bC_\Lambda \bD^{\ra}_\Lambda) \subset [c_-,c_+]$ 
	in~\ref{prop:precon:CLambda} follows, 
	as the convex hull of the spectrum $\sigma(\bC_\Lambda)$ 
	is contained in that of $\bC$.
\end{proof}
In the next proposition we state the corresponding result 
for the precision operator $\cP$ of the GRF $\GP$.
\begin{proposition}\label{prop:precon:P} 
	Let $\GP$ be a GRF 
	indexed by a manifold $\cM$ 
	which is defined through 
	the white noise driven SPDE  
	\eqref{eq:WhNois}.  
	Assume that the manifold $\cM$ 
	and the coloring operator 
	$\cA\in OPS^{\ra}_{1,0}(\cM)$ 
	in \eqref{eq:WhNois}
	satisfy 
	Assumptions~\ref{ass:cM-and-cA}\ref{ass:cM-and-cA_I}--\ref{ass:cM-and-cA_II}. 
	Let $\bPsi$ be a Riesz basis for $L^2(\cM)$ 
	which, properly rescaled, is a MRA in $H^{s}(\cM)$ 
	for $0\leq s \leq \ra$ such that the norm equivalences \eqref{eq:Riesz}
	hold with 
	$\gamma > \ra$ and $\widetilde{\gamma} > 0$. 
	
	Then, the bi-infinite matrix representation $\bP$ 
	for the precision operator 
	\eqref{eq:def:precision} 
	in the MRA $\bPsi$, see \eqref{eq:BiInfMat}, 
	satisfies the following: 
	\begin{enumerate}[leftmargin=0.75cm]
		\item\label{prop:precon:Pfull}  
		The bi-infinite matrix representation $\bP$ 
		is symmetric  positive definite and it induces a self-adjoint, 
		positive, 
		unbounded operator  on $\ell^2(\cJ)$. 
		Furthermore, 
		there exist constants 
		$0<c_- \leq c_+ < \infty$ such that
		$\sigma(\bD^{-\ra} \bP \bD^{-\ra}) \subset [c_-,c_+]$ 
		and 
		$\operatorname{cond}_2(\bD^{-\ra} \bP \bD^{-\ra}) \simeq 1$, 
		where 
		$\bD^{-\ra}$ is defined according to \eqref{eq:def:bD}.  
		\item\label{prop:precon:PLambda}
		For every index set $\Lambda \subseteq \cJ$ with 
		$p=\#(\Lambda)<\infty$, 
		the $\Lambda$-section $\bP_\Lambda$
		of $\bP$ is symmetric, positive definite and 
		$\sigma(\bD^{-\ra}_\Lambda \bP_\Lambda \bD^{-\ra}_\Lambda) \subset [c_-,c_+]$. 
	\end{enumerate}
\end{proposition}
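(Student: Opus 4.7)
The plan is to follow the structure of the proof of Proposition \ref{prop:precon:C} for the covariance operator, but with the exponent in the diagonal preconditioner flipped from $\ra$ to $-\ra$, and with careful attention to the fact that $\cP=\cA^2$ is an unbounded (not compact) operator on $L^2(\cM)$. First, Proposition \ref{prop:C-and-P-ops} applied under Assumption \ref{ass:cM-and-cA} already supplies that $\cP \in OPS^{2\ra}_{1,0}(\cM)$ is self-adjoint, positive definite and unbounded on $L^2(\cM)$ with discrete spectrum accumulating only at $\infty$. Transferring this to the coordinate picture, the bi-infinite matrix $\bP = \cP(\bPsi)(\bPsi)$ is symmetric, and it induces a self-adjoint unbounded operator on $\ell^2(\cJ)$; the Rellich/compactness step used for $\bC$ is simply omitted here.

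For positive definiteness, the key observation is that, by Assumption \ref{ass:cM-and-cA}\ref{ass:cM-and-cA_II} and Proposition \ref{prop:OPS_algebra_prop}\ref{item:range}, the operator $\cA \colon H^{\ra}(\cM) \to L^2(\cM)$ is an isomorphism, so there is a constant $C_0>0$ with
\[
\| v \|_{H^{\ra}(\cM)}^2 \;=\; \| \cA^{-1} \cA v \|_{H^{\ra}(\cM)}^2 \;\leq\; C_0 \| \cA v \|_{L^2(\cM)}^2 \;=\; C_0\, \langle \cP v, v \rangle \quad \forall v \in H^{\ra}(\cM).
\]
Writing $v = \vvec^{\top}\bPsi$ and using the Riesz norm equivalence \eqref{eq:Riesz} at $t=\ra$ (admissible since $\gamma > \ra$ and $\widetilde{\gamma} > 0$) yields $\vvec^{\top} \bP \vvec \geq \widetilde{c}\, \| \bD^{\ra}\vvec \|_2^2$ for some $\widetilde{c}>0$ and all $\vvec\in \ell^2(\cJ)$. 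This immediately gives strict positive definiteness of $\bP$ and of every finite section $\bP_\Lambda$, and the bound transfers to $\bP_\Lambda$ by restriction to sequences supported in $\Lambda$, exactly as in \eqref{eq:CpSPD}.

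For the spectral equivalence, apply the diagonal preconditioning lemma (Proposition \ref{prop:DiagPC} in Appendix \ref{appendix:wavelets}) to the pseudodifferential operator $\cP$ of order $2\ra$. Its hypotheses require the norm equivalences \eqref{eq:Riesz} both for $t=\ra$ and for $t=-\ra$; the first needs $\ra \in (-\widetilde{\gamma},\gamma)$ and the second needs $-\ra \in (-\gamma,\widetilde{\gamma})$, and both are guaranteed by the standing assumptions $\gamma > \ra$ and $\widetilde{\gamma}>0$ (together with $\ra>0$). The lemma then produces constants $0<c_- \leq c_+ < \infty$ with $\sigma(\bD^{-\ra}\bP \bD^{-\ra}) \subset [c_-,c_+]$ and $\operatorname{cond}_2(\bD^{-\ra}\bP\bD^{-\ra}) \simeq 1$, proving \ref{prop:precon:Pfull}. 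Statement \ref{prop:precon:PLambda} for the finite section follows because the convex hull of $\sigma(\bD^{-\ra}_\Lambda \bP_\Lambda \bD^{-\ra}_\Lambda)$ is contained in $[c_-,c_+]$, by the same restriction argument used at the end of the proof of Proposition \ref{prop:precon:C}.

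The only genuinely new point — and the main place to be careful — is the replacement of compactness on $\ell^2(\cJ)$ by unboundedness: the natural domain of $\bP$ is the weighted space $\{ \vvec \colon \bD^{\ra}\vvec \in \ell^2(\cJ)\}$ corresponding to $H^{\ra}(\cM)$, and it is with respect to this domain that self-adjointness should be understood. Everything else is a mirror-image of the covariance argument, with the Sobolev exponent flipped in sign and with the coercivity constant $a_-$ now used directly rather than through its inverse.
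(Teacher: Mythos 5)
Your proposal is correct and is exactly the argument the paper intends: Proposition~\ref{prop:precon:P} is stated without proof as the ``corresponding result'' to Proposition~\ref{prop:precon:C}, and you have supplied the mirror-image details faithfully --- the coercivity estimate via the bounded inverse $\cA^{-1}\from L^2(\cM)\to H^{\ra}(\cM)$ together with $\|\cA v\|_{L^2(\cM)}^2=\langle\cP v,v\rangle$, the Riesz-basis equivalence at $t=\ra$ (admissible since $\gamma>\ra$), Proposition~\ref{prop:DiagPC} with $r=2\ra$, and the convex-hull restriction for finite sections. Your closing remark about the natural form domain $\{\vvec : \bD^{\ra}\vvec\in\ell^2(\cJ)\}$ correctly handles the one genuinely new point, namely replacing compactness by unboundedness.
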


\begin{remark}
	At first glance, the implementation of the preconditioning in 
	Proposition~\ref{prop:precon:C}\ref{prop:precon:CLambda} or 
	Proposition~\ref{prop:precon:P}\ref{prop:precon:PLambda} 
	requires knowledge of the order $\ra$ of the coloring operator 
	$\cA$ in \eqref{eq:WhNois}. 
	However, note that the diagonal entries
	of $\bC_\Lambda$ satisfy
	\[
	\langle\cC\psi_{j,k},\psi_{j,k}\rangle \simeq 2^{-2\ra j}.
	\]	
	Therefore, in wavelet coordinates, a diagonal scaling would be
	sufficient for preconditioning and even improves it. Nonetheless,
	in covariance estimation from data, the order $\ra$ could be estimated from
	the coefficient decay rate from i.i.d.\ realizations of $\GP$ in wavelet 
	coordinates. 
	We refer to Subsection \ref{subsct:decay} for a numerical illustration.
\end{remark} 

\subsection{Covariance and precision operator sparsity}
\label{subsec:results:sparse}

The GRF $\GP$ may be expanded in the MRA $\bPsi$, 
\begin{equation*} 
\GP = 
\zvec^\top\bPsi
=
\sum_{j\geq j_0}
\sum_{k\in\nabla_j}
z_{j,k} 
\psi_{j,k}
:=
\sum_{j\geq j_0}
\sum_{k\in\nabla_j}
\langle 
\GP,\widetilde{\psi}_{j,k}
\rangle 
\psi_{j,k}, 
\end{equation*}
or in the dual MRA $\widetilde{\bPsi}$, 
\begin{equation}\label{eq:GRF_rep_by_dual_MRA}
\GP = 
\widetilde{\zvec}^\top\widetilde{\bPsi}
=
\sum_{j\geq j_0}
\sum_{k\in\nabla_j}
\widetilde{z}_{j,k} 
\widetilde{\psi}_{j,k}
:=
\sum_{j\geq j_0}
\sum_{k\in\nabla_j}
\langle 
\GP, \psi_{j,k}
\rangle 
\widetilde{\psi}_{j,k}
.
\end{equation}
The latter MRA representation of the GRF $\GP$ 
is related to the bi-infinite covariance matrix $\Cmat$ via 
\begin{equation}\label{eq:Cov_z_coord_relation}
\Cmat_{\lambda,\lambda'}
=
\langle \cC \psi_{\lambda}, \psi_{\lambda'}
\rangle 
=
\bbE[ \langle \GP, \psi_{\lambda} \rangle \langle \GP, \psi_{\lambda'} \rangle ]
=
\bbE[ \widetilde{z}_{\lambda} \widetilde{z}_{\lambda'}   ]  
\quad \forall \lambda,\lambda'\in\cJ
,
\end{equation}
where we again 
used the notation $\lambda = (j,k)\in\cJ$ with $|\lambda|=j \geq j_0$ 
and $k\in \nabla_j$.
Note that for any $f=\fvec^\top \bPsi = \sum_{\lambda\in \cJ} f_\lambda \psi_\lambda$ 
the MRA coordinates of $\cC f$ are (formally) given by
\[
\cC f = \sum_{\lambda \in \cJ}(\Cmat \fvec)_\lambda \widetilde{\psi}_{\lambda}
.
\]

Also the white noise driven SPDE
\eqref{eq:WhNois} may be cast in the dual MRA coordinates 
$\GP = \widetilde{\zvec}^\top \widetilde{\bPsi} $, 
which implies that 
$\widetilde{\zvec} \sim 
\normal(0, \widetilde{\Amat}^{-1} \widetilde{\Mmat} \widetilde{\Amat}^{-1})$
and thus by~\eqref{eq:Cov_z_coord_relation}, 
\[
\Cmat = \widetilde{\Amat}^{-1} \widetilde{\Mmat} \widetilde{\Amat}^{-1}
.
\]
Here, we used the notation 
$\widetilde{\Amat} = \cA(\widetilde{\bPsi})(\widetilde{\bPsi})$
and 
$\widetilde{\Mmat} = \mathrm{Id}(\widetilde{\bPsi})(\widetilde{\bPsi})$
in $\bbR^{\bbN\times \bbN}$. 

\subsubsection{Matrix estimates}
\label{subsubsec:MatrixEst}

The significance of using MRAs $\bPsi, \widetilde{\bPsi}$ 
for the representation \eqref{eq:GRF_rep_by_dual_MRA}
is in the \emph{numerical sparsity} of the corresponding matrices 
that result after truncating the index set $\cJ$ to finite index sets $\Lambda$.
By numerical sparsity, we mean that for any $\varepsilon>0$ 
there exists a sparse matrix, which is $\varepsilon$-close to the in general fully populated matrix.

In the following, 
we use index sets of the form 
$\Lambda_J = \{(j,k):j_0\leq j\leq J, k\in\nabla_j\}$, $J\geq j_0$,
and define, throughout what follows,   
\begin{equation}\label{eq:p=p(J)}
p=p(J) = \#(\Lambda_J).
\end{equation}
The matrices will be denoted by 
$\bA_{p}:=\bA_{\Lambda_J}$, 
$\bC_{p}:=\bC_{\Lambda_J}$ and 
$\bP_{p}:=\bP_{\Lambda_J}$.
Specifically, when represented in the MRA $\bPsi$
the matrices $\bA_{p}$, $\bC_{p}$ and $\bP_{p}$ of size $p\times p$
corresponding to coloring, covariance and precision (pseudodifferential) 
operators $\cA$, $\cC$ and $\cP$ of the GRF $\GP$ 
can be replaced by compressed approximations 
${\bA}^{\varepsilon}_p$, ${\bC}^{\varepsilon}_p$ and ${\bP}^{\varepsilon}_p$
of the same size $p\times p$ with $\cO(p)$ nonvanishing entries 
while preserving
the consistency orders $\cO(p^{-a})$ of these matrices 
with respect to the
exact counterparts $\bA$, $\bC$ and $\bP$. 
Thus, the components $\widetilde{z}_\lambda$ 
of the random coefficient vectors 
in the dual representation \eqref{eq:GRF_rep_by_dual_MRA} of $\GP$
are generically nonzero, but numerically decorrelate in the sense that 
$\bbE[\widetilde{z}_\lambda \widetilde{z}_{\lambda'}]$
is negligible for most pairs $(\lambda,\lambda')$.
This facilitates fast approximate simulation of~$\GP$
and efficient matrix estimation of $\bC$, $\bP$, 
see Section~\ref{section:application}. 

Specifically, 
for a generic pseudodifferential operator $\cB \in OPS_{1,0}^r(\cM)$ 
the kernel estimates \eqref{eq:KAest}
combined with the cancellation property \eqref{eq:VanMom} of the 
MRA $\bPsi$ (and a related property of the dual basis $\widetilde{\bPsi}$)
imply that the majority of the $p^2$ entries
\[
[\bB_p]_{\lambda,\lambda'} 
= 
\cB(\psi_{j',k'})(\psi_{j,k}) 
=
\langle \cB \psi_{j',k'}, \psi_{j,k} \rangle ,
\quad 
\lambda = (j,k) \in \Lambda_J, \; 
\lambda' = (j',k') \in \Lambda_J, 
\]
are nonzero, in general, but 
negligibly small \cite{DHSWaveletBEM2007,DPSII,RSchneider98}.
The following result quantifies this smallness.
Recall that the singular support of a function $f$ on $\cM$, 
denoted by $\operatorname{sing}\operatorname{supp}(f)$,
is given by 
$\operatorname{sing}\operatorname{supp}(f) 
= \{x\in\cM : f \text{ is not smooth at } x\}$  
and define 
\begin{equation}\label{eq:Sjk} 
S_{j,k} 
:= 
\operatorname{conv} 
\operatorname{hull}(\operatorname{supp}(\psi_{j,k}))\subset \cM, 
\qquad 
S'_{j,k} := \operatorname{sing} \operatorname{supp}(\psi_{j,k})\subset \cM, 
\end{equation} 
where $(j,k)\in\cJ$, see \eqref{eq:cJindex}.
The next proposition 
presents asymptotic size bounds on the entries $ [\bB_p]_{\lambda,\lambda'} $ 
taken from \cite[Thms.~6.1,~6.3]{DHSWaveletBEM2007}. 
\begin{proposition}\label{prop:MatEst}
	Assume that $\cB \in OPS^r_{1,0}(\cM)$ and, 
	furthermore, that a pair of mutually biorthogonal 
	MRAs $\bPsi$,~$\widetilde{\bPsi}$ 
	with $n+r+2\widetilde{d}>0$ as defined above in local coordinates 
	are available on $\cM$, 
	where $\cM$ fulfills  
	Assumption~\ref{ass:cM-and-cA}\ref{ass:cM-and-cA_I}. 
	
	Then, the bi-infinite matrix representation 
	$\bB = \cB(\bPsi)(\bPsi)$ 
	of $\cB$ has entries which 
	admit the following estimates, 
	uniformly in $j\in \bbN$:  
	\begin{enumerate}[leftmargin=0.75cm,label={\normalfont(\roman*)}]
		\item\label{prop:MatEst1}
		For every $(j,k), (j',k')\in \cJ$ 
		such that 
		$S_{j,k} \cap S_{j',k'} = \emptyset$, 
		we have
		\[
		| \langle \cB \psi_{j',k'}, \psi_{j,k} \rangle |
		\lesssim 
		2^{-(j+j')(\widetilde{d}+n/2)} 
		\operatorname{dist} (S_{j,k}, S_{j',k'})^{-(n+r+2\widetilde{d})} .
		\]
		\item\label{prop:MatEst2} 
		For every $(j,k), (j',k')\in \cJ$ 
		such that 
		$\operatorname{dist} (S'_{j,k}, S_{j',k'}) \gtrsim 2^{-j'}$, 
		we have
		\[
		| \langle \cB\psi_{j',k'}, \psi_{j,k} \rangle |
		+
		| \langle \cB\psi_{j,k}, \psi_{j',k'} \rangle |
		\lesssim 
		2^{jn/2}  2^{-j'(\widetilde{d}+n/2)} 
		\operatorname{dist} (S'_{j,k}, S_{j',k'})^{-(r+\widetilde{d})}.
		\]
	\end{enumerate}
\end{proposition}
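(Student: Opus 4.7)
The plan is to reduce both bounds to local coordinates via a smooth partition of unity $\{\chi_i\}_{i=1}^M$ subordinate to the atlas $\{\widetilde{\gamma}_i\}$, and then combine the vanishing moment property \eqref{eq:VanMom} of the wavelets with the Schwartz kernel estimates \eqref{eq:KAest} of Proposition~\ref{prop:CZEst}. In both cases, I start from the identity
\[
\langle \cB\psi_{j',k'},\psi_{j,k}\rangle
=\int\!\!\int k_{\cB}(x,y)\,\psi_{j',k'}(y)\,\psi_{j,k}(x)\,\rd y\,\rd x,
\]
working chart-by-chart when necessary; since $\operatorname{diam}(\operatorname{supp}\psi_{j,k})\simeq 2^{-j}$, each wavelet is supported in a single chart for $j$ large enough, and a finite number of cross-chart pairings contribute harmlessly. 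I will also use the normalizations $\|\psi_{j,k}\|_{L^2}=1$, $\|\psi_{j,k}\|_{L^1}\lesssim 2^{-jn/2}$ and $\|\psi_{j,k}\|_{L^\infty}\lesssim 2^{jn/2}$ that follow from \eqref{eq:SupPsi} and $L^2$-normalization.

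For \ref{prop:MatEst1}, the disjointness of $S_{j,k}$ and $S_{j',k'}$ means that $k_{\cB}$ is smooth on the bidomain of integration by the second statement of Proposition~\ref{prop:CZEst}. I Taylor-expand $k_{\cB}(x,y)$ around a reference point $(x_0,y_0)\in S_{j,k}\times S_{j',k'}$ to order $\widetilde{d}$ in both $x$ and $y$. Every term of the expansion whose $x$-degree (respectively $y$-degree) is strictly less than $\widetilde{d}$ is a polynomial in $x$ (respectively $y$) and is annihilated by integration against $\psi_{j,k}$ (respectively $\psi_{j',k'}$) thanks to \eqref{eq:VanMom}. The surviving remainder contains a mixed derivative $\partial^\alpha_x\partial^\beta_y k_{\cB}$ with $|\alpha|=|\beta|=\widetilde{d}$; Proposition~\ref{prop:CZEst} bounds this by $\operatorname{dist}(S_{j,k},S_{j',k'})^{-(n+r+2\widetilde{d})}$ (using $n+r+2\widetilde{d}>0$). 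Combining the Taylor factor $2^{-j\widetilde{d}}2^{-j'\widetilde{d}}$ from the monomials $(x-x_0)^\alpha$, $(y-y_0)^\beta$ with the two $L^1$ bounds $2^{-jn/2}$ and $2^{-j'n/2}$ gives exactly the claimed factor $2^{-(j+j')(\widetilde{d}+n/2)}$.

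For \ref{prop:MatEst2}, Taylor expansion in $x$ is no longer legitimate because $\psi_{j,k}$ has singularities on $S'_{j,k}$ which may lie inside $\operatorname{supp}(\psi_{j,k})$. Instead, I apply vanishing moments only in the $y$-variable. Setting $F(y):=\int k_{\cB}(x,y)\psi_{j,k}(x)\,\rd x=(\cB^{*}\psi_{j,k})(y)$, the pseudolocal property of $\cB^{*}\in OPS^{r}_{1,0}(\cM)$ together with the smoothness of $\psi_{j,k}$ on $\cM\setminus S'_{j,k}$ yields that $F$ is smooth on $S_{j',k'}$ because $\operatorname{dist}(S'_{j,k},S_{j',k'})\gtrsim 2^{-j'}$. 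Then \eqref{eq:VanMom} gives
\[
|\langle \cB\psi_{j',k'},\psi_{j,k}\rangle|
\lesssim 2^{-j'(\widetilde{d}+n/2)}\sup_{y\in S_{j',k'},\,|\beta|=\widetilde{d}}|\partial^\beta_y F(y)|.
\]
To control $|\partial^\beta_y F(y)|$ for $y\in S_{j',k'}$, I differentiate under the integral and use \eqref{eq:KAest} with multi-indices $\alpha=0$, $|\beta|=\widetilde{d}$: for $x\in\operatorname{supp}(\psi_{j,k})$ and $y\in S_{j',k'}$, $|\partial^\beta_y k_{\cB}(x,y)|\lesssim \operatorname{dist}(x,y)^{-(n+r+\widetilde{d})}$. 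Bounding $|\psi_{j,k}|$ by $\|\psi_{j,k}\|_{L^\infty}\lesssim 2^{jn/2}$, integrating over $\operatorname{supp}(\psi_{j,k})$, and dyadically splitting this support into annuli around $y$ using $\operatorname{dist}(x,y)\gtrsim\operatorname{dist}(S'_{j,k},S_{j',k'})$ plus the geometric series in the small-distance regime, yields $|\partial^\beta_y F(y)|\lesssim 2^{jn/2}\operatorname{dist}(S'_{j,k},S_{j',k'})^{-(r+\widetilde{d})}$. Putting the two estimates together produces the claim. The companion term $|\langle \cB\psi_{j,k},\psi_{j',k'}\rangle|$ is handled identically after replacing $\cB$ by its (formal) adjoint, which lies in the same class $OPS^r_{1,0}(\cM)$ and whose Schwartz kernel obeys \eqref{eq:KAest} with the roles of $x$ and $y$ interchanged.

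The main difficulty is the bookkeeping in \ref{prop:MatEst2}: tracking the asymmetric scaling $2^{jn/2}$ (as opposed to $2^{-jn/2}$ in~\ref{prop:MatEst1}) requires carefully balancing the $L^\infty$ estimate on $\psi_{j,k}$ against the integrable singularity of $\partial^\beta_y k_{\cB}$ near the diagonal, and justifying the integrability of the annular decomposition around the singular set $S'_{j,k}$ at scale $\simeq 2^{-j'}$. The passage from Euclidean charts to $\cM$ is then routine using compactness of $\cM$ and a standard partition-of-unity argument, with the constants $c_{\alpha,\beta}$ absorbing the finitely many chart-dependent factors.
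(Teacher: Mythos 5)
The paper does not prove Proposition~\ref{prop:MatEst} at all: it imports both estimates verbatim from \cite[Thms.~6.1, 6.3]{DHSWaveletBEM2007}, so any comparison is with the argument in that reference. Your treatment of part \ref{prop:MatEst1} is the standard one and is correct: two applications of the vanishing-moment bound \eqref{eq:VanMom} (one in each variable, i.e.\ Taylor expansion to order $\widetilde{d}$ on the convex hulls $S_{j,k}$, $S_{j',k'}$) produce the factor $2^{-(j+j')(\widetilde{d}+n/2)}$ together with a mixed derivative $\partial^\alpha_x\partial^\beta_y k_{\cB}$ with $|\alpha|=|\beta|=\widetilde{d}$, which \eqref{eq:KAest} bounds by $\operatorname{dist}(S_{j,k},S_{j',k'})^{-(n+r+2\widetilde{d})}$.

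Part \ref{prop:MatEst2}, however, contains a genuine gap at the decisive step. You bound $\bigl|\partial^\beta_y F(y)\bigr|$ with $F=\cB^*\psi_{j,k}$ by integrating $\bigl|\partial^\beta_y k_{\cB}(x,y)\bigr|\,|\psi_{j,k}(x)|$ over $\operatorname{supp}(\psi_{j,k})$ and invoking $\operatorname{dist}(x,y)\gtrsim\operatorname{dist}(S'_{j,k},S_{j',k'})$. That lower bound is false in exactly the regime where the second compression is used: the hypothesis only keeps the \emph{singular} support $S'_{j,k}$ away from $S_{j',k'}$, while the tapering rule \eqref{eq:AprCompr} applies this estimate precisely when $\operatorname{dist}(S_\lambda,S_{\lambda'})\leq 2^{-\min\{j,j'\}}$, i.e.\ when the fine wavelet $\psi_{j',k'}$ sits \emph{inside} $\operatorname{supp}(\psi_{j,k})$ between two pieces of its singular support. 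There $\operatorname{dist}(x,y)$ ranges down to $0$, and since \eqref{eq:Fixaa'} forces $r+\widetilde{d}>d>0$, the integrand $\operatorname{dist}(x,y)^{-(n+r+\widetilde{d})}$ is not locally integrable near $x=y$; no annular decomposition rescues this. The reference handles the near field by using that, because $\operatorname{dist}(S'_{j,k},S_{j',k'})\gtrsim 2^{-j'}$, the wavelet $\psi_{j,k}$ coincides with a smooth (piecewise-polynomial) function on a neighborhood of $S_{j',k'}$ of that radius; one splits off a smoothly cut-off near-field piece, estimates $\cB^*$ applied to it via the symbolic calculus (or a Taylor-subtraction argument in which the kernel acts on $\psi_{j,k}(x)-T_y^{d-1}\psi_{j,k}(x)$, restoring local integrability), and only uses the pointwise kernel bound \eqref{eq:KAest} on the far field $\operatorname{dist}(x,y)\gtrsim\operatorname{dist}(S'_{j,k},S_{j',k'})$, where your computation is valid and yields the stated $2^{jn/2}\operatorname{dist}(S'_{j,k},S_{j',k'})^{-(r+\widetilde{d})}$. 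Without this near-field argument your proof of \ref{prop:MatEst2} does not close; the same issue recurs in your treatment of the companion term via the adjoint.
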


\subsubsection{Matrix compression}
\label{subsubsec:MatCompr}

Proposition~\ref{prop:MatEst} allows to compress the (densely populated)
matrices $\bC_p, \bP_p$ corresponding to 
the action of the covariance and precision operators $\cC$ 
and $\cP$  
on finite-dimensional subspaces
to $O(p)$ nonvanishing
entries while retaining optimal asymptotic error bounds afforded by
the regularity of $\GP$. 

We describe the compression schemes for 
a generic, elliptic pseudodifferential operator
$\cB \in OPS^{r}_{1,0}(\cM)$ of order $r \in \bbR$. 
Note that, for our purposes, we have 
$\cB \in \{\cA,\cC,\cP\}$, 
where the psudodifferential operators $\cA$, $\cC$, and $\cP$ 
are as introduced in 
Section~\ref{section:GRFs-manifolds}.
Furthermore, we write
$\lambda = (j,k) \in \Lambda_J$, $\lambda' = (j',k') \in \Lambda_J$. 
With these multi-indices we associate 
supports $S_\lambda,S_{\lambda'} \subset \cM$ as well as  
singular supports $S'_\lambda, S'_{\lambda'} \subset \cM$
as defined in 
\eqref{eq:Sjk}.

\begin{definition}\label{def:tapering} 
	The \emph{a-priori matrix compression} is 
	defined in terms of positive 
	\emph{block truncation (or ``tapering'') parameters} 
	$\{ \tau'_{jj'}, \tau_{jj'} :  j_0 \leq j,j' \leq J \}$
	as follows: 
	\begin{equation}\label{eq:AprCompr}
	[\bB^\eps_p]_{\lambda,\lambda'} 
	:= 
	\begin{cases} 
	0 & {\rm dist}(S_\lambda,S_{\lambda'}) > \tau_{jj'} \;\mbox{and}\; j,j'>j_0, 
	\\
	0 & {\rm dist}(S_\lambda,S_{\lambda'}) \leq 2^{-\min\{j,j'\}} \text{ and}
	\\
	&  {\rm dist}(S'_\lambda,S_{\lambda'}) > \tau'_{jj'} \;\mbox{if} \; j' > j \geq j_0,
	\\
	&  {\rm dist}(S_\lambda,S'_{\lambda'}) > \tau'_{jj'} \;\mbox{if} \; j > j' \geq j_0,
	\\
	\langle \cB \psi_{\lambda'}, \psi_\lambda \rangle 
	& \text{otherwise}. 
	\end{cases} 
	\end{equation}
	Here, with fixed, real-valued constants
	\begin{equation}\label{eq:Fixaa'}
	a,a'> 1 \;\;\text{sufficiently large} \;
	\text{and} 
	\quad 
	d < d' < \widetilde{d}+r ,
	\end{equation}
	the parameters $\tau_{jj'}$ and $\tau'_{jj'}$ in \eqref{eq:AprCompr} 
	are 
	\begin{equation}\label{eq:AprCompBjj}
	\begin{split} 
	\tau_{jj'} 
	&:= 
	a\max\left\{ 2^{-\min\{j,j'\}}, 2^{[2J(d'-r/2)-(j+j')(d'+\widetilde{d})] 
		/(2 \widetilde{d}+r ) } \right\} ,  
	\\
	\tau'_{jj'} 
	&:= 
	a'\max\left\{ 2^{-\max\{j,j'\}}, 2^{[2J(d'-r/2)-(j+j')d'-\max\{j,j'\}\widetilde{d}]/
		(\widetilde{d}+r)} 
	\right\}.
	\end{split}
	\end{equation}
	The operator corresponding to the tapered matrix $\bB^\varepsilon_p$ 
	will be denoted by $\cB_{p}^{\varepsilon}$.
\end{definition}

The compression of (a $p \times p$ section of) 
the matrix $\bB = \cB(\bPsi)(\bPsi)$
is based 
\begin{enumerate*}[label=\textbf{\alph*)}]
	\item on \emph{a-priori accessible information} on the locations of supports 
	$S_\lambda, S_{\lambda'}\subset \cM$ and 
	of singular supports $S'_\lambda,S'_{\lambda'}\subset \cM$,
	respectively, 
	and 
	\item on sufficiently large
	(with respect to the order~$r$ of $\cB$ 
	and $n=\dim(\cM)$) 
	polynomial exactness orders $d$, $\widetilde{d}$ of the MRAs and 
	norm equivalences $\gamma,\widetilde{\gamma}$ in \eqref{eq:Riesz}.  
\end{enumerate*}
In particular, the second relation in 
\eqref{eq:Fixaa'} imposes an implicit constraint on the MRAs 
$\bPsi, \widetilde{\bPsi}$ in that 
the order $\widetilde{d}$ of 
exactness of $\widetilde{\bPsi}$
is greater than the order $d$ of exactness of $\bPsi$ 
reduced by the order~$r$ of $\cB$, i.e.,  
$\widetilde{d} > d - r$.

\begin{remark}\label{rmk:CPMoments}
	For a coloring operator $\cA\in OPS^{\ra}_{1,0}(\cM)$ 
	with $\ra>0$, the covariance operator satisfies $\cC \in OPS^{-2\ra}_{1,0}(\cM)$ 
	(see Proposition~\ref{prop:C-and-P-ops})
	so that 
	\emph{optimal numerical covariance matrix compression} 
	requires MRAs with $\widetilde{d} > d + 2\ra$ 
	(or $\widetilde{d} >d+2\beta\bar{r}$
	if $\cA = \cL^\beta$ with $\cL\in OPS^{\bar{r}}_{1,0}$ 
	and $\beta > 0$).
	Correspondingly, due to $\cP \in OPS^{2\ra}_{1,0}(\cM)$, 
	\emph{optimal precision matrix compression} requires MRAs 
	with $\widetilde{d} > d - 2\ra$,
	a much less restrictive requirement 
	on the MRAs $\bPsi, \widetilde{\bPsi}$.
	Proposition~\ref{prop:MatEst} thus implies that in one common MRA
	the precision matrix $\bP_p$ of the precision operator $\cP$ 
	affords stronger compression than 
	the corresponding covariance matrix $\bC_p$, 
	and that the
	dual system $\widetilde{\bPsi}$ should have a correspondingly larger
	number $\widetilde{d}$ of vanishing moments.
\end{remark}

For a GRF $\GP$ defined via the SPDE \eqref{eq:WhNois}
with a coloring operator $\cA \in OPS^{\ra}_{1,0}(\cM)$, 
most of the $p$ coefficients of~$\GP$ 
have numerically negligible correlation 
when represented in the MRA $\widetilde{\bPsi}$. 
That is to say,  
MRA representations provide \emph{spatial numerical decorrelation} 
of the GRF~$\GP$.
By Propositions~\ref{prop:precon:C} and \ref{prop:MatEst}, 
when represented in suitable MRAs, 
the Galerkin-projected 
covariance matrices $\{\bC_p\}_{p\geq 1}$ 
of~$\GP$ furthermore are 
numerically sparse and well-conditioned,   
uniformly with respect to the level of spatial resolution $\cO(2^{-J})$ of $\GP$ 
accessed by mesh level $J$, where we recall that $p=\#(\Lambda_J)$
and $\Lambda_J=\{(j,k) : j_0\leq j\leq J, k\in \nabla_j  \}$.

\subsubsection{Consistency and convergence}
\label{subsubsec:ConsConv}

The matrix compression in \eqref{eq:AprCompr}, 
\eqref{eq:Fixaa'}, and \eqref{eq:AprCompBjj} 
results in a family $\{ {\bB}^\eps_{p(J)} \}_{J \geq j_0}$ 
of compressed matrices ${\bB}^\eps_{p(J)} \in \bbR^{p(J)\times p(J)}$ and,
via the basis $\bPsi$, 
in  
associated perturbed operators $\cB^\eps_{p(J)}$ 
where $p(J) = \#(\Lambda_J)$. 
It turns out that the consistency error in 
$\cB_{p(J)} - \cB^\eps_{p(J)}$
can be quantified.
The assertions of the next proposition 
are proven in \cite[Thms.~9.1 \& 10.1]{DHSWaveletBEM2007}.
\begin{proposition}\label{prop:ConsA}
	Suppose that $\cM$ fulfills 
	Assumption~\ref{ass:cM-and-cA}\ref{ass:cM-and-cA_I}
	and let $\bPsi, \widetilde{\bPsi}$ be MRAs on $\cM$ 
	which satisfy $d<\widetilde{d} + r$.
	In addition, let 
	$\cB \in OPS^r_{1,0}(\cM)$ 
	for some $r\in \bbR$, 
	and assume that $\cB$ is self-adjoint and elliptic. 
	
	Then, for $r/2 \leq t,t' \leq d$ and 
	for every $w \in H^{t}(\cM)$, $v \in H^{t'}(\cM)$, 
	the consistency estimate
	\begin{equation}\label{eq:ConsA}
	\bigl| \bigl\langle \bigl(\cB - \cB^\eps_{p(J)}\bigr) Q_J w, Q_J v \bigr\rangle \bigr| 
	\lesssim
	\eps 2^{J(r-t-t')} \| w \|_{{H^t(\cM)}} \| v \|_{H^{t'}(\cM)}
	\end{equation}
	holds, where $\lesssim$ is uniform with respect to $J$, and where
	\begin{equation}\label{eq:epsaa'}
	\eps := a^{-2(d+r/2)} + (a')^{-(\widetilde{d}+r)}  .
	\end{equation}
	If, moreover, $\eps>0$ is sufficiently small (independently of $J$)
	(or, equivalently, 
	the parameters $a, a' > 1$ in \eqref{eq:epsaa'} are sufficiently large),
	the family of compressed operators $\{ \cB^\eps_{p(J)} \}_{J \geq j_0}$ 
	is uniformly stable: 
	There exists a constant $c>0$, independent of $J$, such that 
	\[
	\forall w_J\in V_J: \quad 
	\bigl| \bigl\langle {\cB}^\eps_{p(J)} w_J, w_J \bigr\rangle \bigr| 
	\geq c \| w_J \|_{H^{r/2}(\cM)}^2 .
	\]
\end{proposition}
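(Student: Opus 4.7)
The plan is to reduce the bilinear form estimate \eqref{eq:ConsA} to entrywise decay bounds in wavelet coordinates (Proposition~\ref{prop:MatEst}) and then sum carefully across scales, exploiting the calibration of the tapering parameters \eqref{eq:AprCompBjj} together with the Riesz-basis norm equivalences \eqref{eq:Riesz}.

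First, expand $Q_J w = \sum_{\lambda\in\Lambda_J} w_\lambda \psi_\lambda$ and analogously for $Q_J v$, so that
\[
\bigl\langle (\cB - \cB^\eps_{p(J)}) Q_J w, Q_J v \bigr\rangle
=
\sum_{j,j'=j_0}^{J} \sum_{(k,k') \in \cT_{jj'}} w_{j,k} v_{j',k'} \,\langle \cB \psi_{j',k'}, \psi_{j,k} \rangle,
\]
where $\cT_{jj'}$ is precisely the index set of block $(j,j')$ entries that are set to zero by the rules in \eqref{eq:AprCompr}. These indices fall into two regimes: a \emph{far-field} regime in which $S_{j,k}\cap S_{j',k'}=\emptyset$ and $\operatorname{dist}(S_{j,k},S_{j',k'})>\tau_{jj'}$, to which Proposition~\ref{prop:MatEst}\ref{prop:MatEst1} applies, and a \emph{near-field} regime in which the singular supports are far apart, $\operatorname{dist}(S'_{j,k},S_{j',k'})>\tau'_{jj'}$ (or symmetrically), to which Proposition~\ref{prop:MatEst}\ref{prop:MatEst2} applies.

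For each fixed scale pair $(j,j')$, I would apply a Schur-test/Cauchy--Schwarz argument, using the geometric volume bound $\#\{k':\operatorname{dist}(S_{j,k},S_{j',k'})\in[\rho,2\rho]\} \lesssim (2^{j'}\rho)^{n-1}\cdot 2^{j'}$, to integrate the entry bounds of Proposition~\ref{prop:MatEst} against the coefficient vectors. This produces block estimates of the form
\[
|E_{jj'}^{\mathrm{far}}|
\lesssim
2^{-(j+j')(\widetilde d+n/2)}\,\tau_{jj'}^{-(\widetilde d + r/2 + n/2 - \cdot)}\,\|\wvec_j\|_2 \|\vvec_{j'}\|_2,
\]
and similarly in the near-field with $\tau'_{jj'}$. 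Substituting the explicit choices \eqref{eq:AprCompBjj}---whose max structure precisely balances the threshold $2^{-\min\{j,j'\}}$ against an exponential in $J$ and $j+j'$---both regimes yield a bound of the form $\eps\, 2^{-(j+j')d'}\,2^{J(2d'-r)} \|\wvec_j\|_2 \|\vvec_{j'}\|_2$ with $\eps$ as in \eqref{eq:epsaa'}, the algebraic miracle being that the powers of $a$ and $a'$ decouple from the scales. Finally, since $d<d'<\widetilde d+r$ and since the norm equivalence \eqref{eq:Riesz} reads $2^{jt}\|\wvec_j\|_2 \lesssim \|w\|_{H^t(\cM)}$, the double sum over $j,j'\leq J$ is a geometric series in $2^{-(d'-t)j}$ and $2^{-(d'-t')j'}$ respectively, saturating at $j=j'=J$ and producing the factor $2^{J(r-t-t')}$ after combining with $2^{J(2d'-r)}$.

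For uniform stability, ellipticity and self-adjointness of $\cB\in OPS^r_{1,0}(\cM)$ give a Gårding inequality $\langle\cB w_J,w_J\rangle \geq c_0 \|w_J\|_{H^{r/2}(\cM)}^2$ on the conforming subspace $V_J$. Applying \eqref{eq:ConsA} with $t=t'=r/2$ shows that the compression perturbs this bilinear form by at most $C\eps\,\|w_J\|_{H^{r/2}(\cM)}^2$, so choosing $a,a'$ large enough that $C\eps \leq c_0/2$ preserves coercivity uniformly in $J$. The main obstacle is the bookkeeping in the block summation: one must verify that the exponents produced by inserting \eqref{eq:AprCompBjj} into Proposition~\ref{prop:MatEst} separate cleanly into a universal $\eps$-factor times the geometric structure in $j,j',J$, and that the near-field and far-field contributions are simultaneously controlled under the single smallness threshold on $(a,a')$.
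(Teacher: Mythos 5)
The paper does not actually prove this proposition; it states immediately beforehand that the assertions are proven in \cite[Thms.~9.1 \& 10.1]{DHSWaveletBEM2007}, and your sketch is essentially a reconstruction of the argument in that reference: block decomposition in wavelet coordinates, far-/near-field splitting according to \eqref{eq:AprCompr}, the entrywise bounds of Proposition~\ref{prop:MatEst} combined with a Schur-type counting argument per block, insertion of the tapering parameters \eqref{eq:AprCompBjj}, geometric summation over scales via the Riesz-basis property \eqref{eq:Riesz}, and stability by treating the compression as an $\cO(\eps)$ perturbation of a coercive form at $t=t'=r/2$. So the route is the intended one.

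Two concrete points need repair. First, your displayed block bound $\eps\,2^{-(j+j')d'}2^{J(2d'-r)}\|\wvec_j\|_2\|\vvec_{j'}\|_2$ has the exponents reversed: the second argument of the max in \eqref{eq:AprCompBjj} is calibrated precisely so that $\tau_{jj'}^{-(2\widetilde{d}+r)}\,2^{-(j+j')\widetilde{d}} \simeq a^{-(2\widetilde{d}+r)}\,2^{(j+j'-2J)d'}\,2^{Jr}$, i.e.\ the correct block estimate is of the form $\eps\,2^{(j+j'-2J)d'}2^{Jr}\|\wvec_j\|_2\|\vvec_{j'}\|_2$, \emph{decaying} as one moves away from the finest level. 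Only with this sign does the double sum $\sum_{j,j'\le J}2^{(j-J)(d'-t)}2^{(j'-J)(d'-t')}$ converge (using $d'>d\ge t,t'$) with dominant contribution at $j=j'=J$ and yield the factor $2^{J(r-t-t')}$; as written, your bound grows like $2^{J(2d'-r)}$, and your subsequent claim that the sums "saturate at $j=j'=J$" contradicts your own formula "geometric series in $2^{-(d'-t)j}$", which would saturate at $j=j_0$. Second, ellipticity and self-adjointness of $\cB$ alone yield only a G\aa{}rding inequality with a compact lower-order remainder, not $\langle\cB w,w\rangle\ge c_0\|w\|^2_{H^{r/2}(\cM)}$; for the uniform stability claim one needs in addition positivity/invertibility as in Assumption~\ref{ass:cM-and-cA}\ref{ass:cM-and-cA_II} (which holds for the operators $\cA$, $\cC$, $\cP$ to which the proposition is applied), or more generally uniform stability of the uncompressed Galerkin scheme on $\{V_J\}_{J\geq j_0}$. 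Granting that, your perturbation argument at $t=t'=r/2$, where \eqref{eq:ConsA} gives a $J$-independent $\cO(\eps)$ bound, is exactly the standard one.
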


We apply these results to the
representations of $\cC$ and $\cP$ in the MRA $\bPsi$. 
They afford \emph{optimal compressibility} of their equivalent, 
bi-infinite matrix representations \eqref{eq:BiInfMat}
\emph{provided} the biorthogonal pair of MRAs 
$\bPsi, \widetilde{\bPsi}$
has sufficient regularity and vanishing moments:
Whereas for the diagonal preconditioning results in Section \ref{subsec:results:precon}
only stability in $H^{t}(\cM)$ was required 
($t$ as specified in~\eqref{eq:Riesz} and in Proposition~\ref{prop:precon:C}
or Proposition~\ref{prop:precon:P}, respectively), 
the \emph{numerical compressibility of 
	the bi-infinite matrices $\bC$ and $\bP$}\footnote{We 
	emphasize that the bi-infinite matrices $\bC$ and $\bP$ in \eqref{eq:BiInfMat}
	are in general densely populated. Sparsity can therefore only be 
	asserted up to a numerical compression error which is bounded in 
	Proposition~\ref{prop:ConsA}.}
is based on additional properties of the 
MRAs $\bPsi, \widetilde{\bPsi}$ 
quantified by parameters 
$d,\widetilde{d},\gamma,\widetilde{\gamma}$ 
from Section~\ref{subsec:results:MRAs}.

\begin{proposition}\label{prop:cov-compr}
	Let $\cM$ satisfy 
	Assumption~\ref{ass:cM-and-cA}\ref{ass:cM-and-cA_I}  
	and let  
	the coloring operator $\cA \in OPS^{\ra}_{1,0}(\cM)$
	fulfill Assumption~\ref{ass:cM-and-cA}\ref{ass:cM-and-cA_II}
	for some $\ra>n/2$. 
	In addition, 
	let~$\bPsi$ be a MRA 
	such that \eqref{eq:VjOrdReg}--\eqref{eq:Riesz} hold 
	with $\widetilde{\gamma}>\ra$ and $\gamma>0$. 
	Let $\cC = \cA^{-2}$  be the covariance operator 
	of the GRF $\GP$ in the SPDE~\eqref{eq:WhNois}. 
	Denote the tapered
	covariance matrix by ${\bC}^{\varepsilon}_{p(J)}$, 
	with tapering \eqref{eq:AprCompr} 
	and covariance tapering parameters 
	$\{ \tau_{jj'} (\cC) , \tau'_{jj'}(\cC) : j_0 \leq j,j' \leq J\}$, 
	defined as in \eqref{eq:Fixaa'}--\eqref{eq:AprCompBjj} 
	with $-2\ra$ in place of $r$. 
	
	Then, there exists $\varepsilon_0>0$ such that, 
	for every $\varepsilon\in(0,\varepsilon_0)$,  
	there are parameter choices $a,a' > 0$ in \eqref{eq:Fixaa'}, 
	which are independent of $p(J)$, such that: 
	\begin{enumerate}[leftmargin=0.75cm,label={\normalfont (\roman*)}]
		\item\label{item:item_i_Cov_compr}
		For every $J\geq j_0$, the tapered matrix $\bC^{\varepsilon}_{p(J)}$
		is symmetric, positive definite. 
		\item\label{item:item_ii_Cov_compr}
		Diagonal preconditioning  
		renders $\bC^{\varepsilon}_{p(J)}$ uniformly well-conditioned:
		There are constants 
		$0<\widetilde{c}_-\leq \widetilde{c}_+ < \infty$ 
		such that 
		\[
		\forall J\geq j_0: 
		\quad 
		\sigma\bigl( \bD^{\ra}_{p(J)} {\bC}^{\varepsilon}_{p(J)} \bD^{\ra}_{p(J)} \bigr) 
		\subset 
		[\widetilde{c}_-,\widetilde{c}_+] .
		\]
		\item\label{item:item_iii_Cov_compr}
		The tapered covariance matrices $\{ \bC^{\varepsilon}_{p(J)} \}_{J\geq j_0}$ 
		are optimally sparse 
		in the sense that, as $J\to \infty$, 
		the number of non-zero entries of $\bC^{\varepsilon}_{p(J)}$ 
		is $\cO(p(J))$.  
		\item\label{item:item_iv_Cov_compr}
		Let 
		$\cC^\varepsilon_{p(J)}$ be the operator corresponding to the 
		tapered covariance matrix~$\bC^\varepsilon_{p(J)}$ 
		and assume that   
		\begin{equation}\label{eq:CovCmpPar}
		-\ra \leq t,t' \leq d < \widetilde{d} - 2\ra .   
		\end{equation}
		Then, for every $J\geq j_0$ and all $v\in H^{t'}(\cM),w\in H^{t}(\cM)$, 
		\[
		\bigl| \bigl\langle \bigl(\cC - \cC^{\varepsilon}_{p(J)} \bigr)Q_J w,Q_J v \bigr\rangle \bigr|
		\lesssim 
		\varepsilon 2^{J(-2\ra-t-t')} \| w \|_{H^t(\cM)} \| v \|_{H^{t'}(\cM)} 
		\] 
		holds, where $Q_J$
		is the projector in \eqref{eq:def_projector_Q_j}. 
	\end{enumerate}
\end{proposition}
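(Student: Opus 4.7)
The plan is to derive the four claims as consequences of the already-stated results on matrix entry decay, consistency, and preconditioning, with the parameters $a,a'$ in \eqref{eq:Fixaa'} providing the small-parameter mechanism.

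First, I would dispatch part~\ref{item:item_iv_Cov_compr}. By Proposition~\ref{prop:C-and-P-ops}, $\cC=\cA^{-2}\in OPS^{-2\ra}_{1,0}(\cM)$ is self-adjoint and elliptic, so I apply Proposition~\ref{prop:ConsA} with $\cB=\cC$ and $r=-2\ra$. The hypothesis $r/2\leq t,t'\leq d$ of that proposition becomes $-\ra\leq t,t'\leq d$, while $d<\widetilde d+r$ becomes $d<\widetilde d-2\ra$; both are granted by \eqref{eq:CovCmpPar}. The estimate \eqref{eq:ConsA} then yields exactly the claimed bound, and the small parameter $\varepsilon$ in \eqref{eq:epsaa'} can be made as small as required by choosing $a,a'$ large.

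Next, I would derive parts~\ref{item:item_i_Cov_compr} and~\ref{item:item_ii_Cov_compr} together. Symmetry of $\bC^{\varepsilon}_{p(J)}$ follows from the symmetry of the tapering rule \eqref{eq:AprCompr}--\eqref{eq:AprCompBjj} in $(\lambda,\lambda')$ and the self-adjointness of $\cC$. For the spectral bound, I specialize~\ref{item:item_iv_Cov_compr} to $t=t'=-\ra$. Writing $w_J=\vvec^\top\bPsi$, $v_J=\wvec^\top\bPsi$ with $\vvec,\wvec$ supported on $\Lambda_J$ and invoking the norm equivalences \eqref{eq:Riesz} with $t=-\ra$ (permitted by $\widetilde\gamma>\ra$), the consistency estimate translates into the matrix bound
\[
\bigl\| \bD^{\ra}_{p(J)}\bigl(\bC_{p(J)}-\bC^{\varepsilon}_{p(J)}\bigr)\bD^{\ra}_{p(J)} \bigr\|_2
\lesssim \varepsilon,
\]
with an implied constant independent of $J$. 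Combining this with Proposition~\ref{prop:precon:C}\ref{prop:precon:CLambda}, which gives $\sigma(\bD^{\ra}_{p(J)}\bC_{p(J)}\bD^{\ra}_{p(J)})\subset[c_-,c_+]$, and using a standard perturbation argument for self-adjoint matrices, I fix $\varepsilon_0>0$ (and correspondingly large $a,a'$) so that the perturbation is bounded by $c_-/2$; then the preconditioned tapered matrix has spectrum in $[c_-/2,c_++c_-/2]=:[\widetilde c_-,\widetilde c_+]$, proving~\ref{item:item_ii_Cov_compr}. Positive definiteness in~\ref{item:item_i_Cov_compr} is immediate since $\bD^{\ra}_{p(J)}$ is positive diagonal and the spectrum of the preconditioned matrix lies in $(0,\infty)$.

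Finally, for~\ref{item:item_iii_Cov_compr}, I would count nonvanishing entries per block using \eqref{eq:AprCompBjj}. Each wavelet $\psi_\lambda$ with $|\lambda|=j$ has support of diameter $\simeq 2^{-j}$, so the number of wavelets $\psi_{\lambda'}$ at level $j'$ with $\operatorname{dist}(S_\lambda,S_{\lambda'})\leq \tau_{jj'}$ is bounded by $C(2^{j'}(\tau_{jj'}+2^{-j}+2^{-j'}))^n$, and analogously for the singular-support condition with~$\tau'_{jj'}$. Inserting the explicit form of $\tau_{jj'},\tau'_{jj'}$ with $r=-2\ra$ and summing over $j,j'\in\{j_0,\ldots,J\}$ and $k\in\nabla_j$ yields a total entry count of $\cO(p(J))$; this is the same geometric-series computation that underlies the standard wavelet BEM compression estimates \cite{DHSWaveletBEM2007}, where the choice of the exponents in \eqref{eq:AprCompBjj} is precisely calibrated so that both the per-level contributions and their sum are linear in $p(J)=\#(\Lambda_J)\simeq 2^{nJ}$.

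The main obstacle I anticipate is verifying that the perturbation bound in the preconditioned $\ell^2$-norm has the correct scaling to be absorbed by the lower spectral constant of Proposition~\ref{prop:precon:C}; the conversion between operator and matrix norms through the Riesz equivalence at $t=-\ra$ must be done carefully so that the tapering constants $a,a'$—and not $J$—govern the size of the perturbation. Once this scaling is secured, the remaining geometric counting for~\ref{item:item_iii_Cov_compr} is routine but tedious.
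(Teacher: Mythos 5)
Your proposal is correct and follows essentially the same route as the paper's proof: part (iv) via Proposition~\ref{prop:ConsA} with $r=-2\ra$, parts (i)--(ii) by converting the $t=t'=-\ra$ consistency estimate through the Riesz basis property into the preconditioned perturbation bound $\bigl\|\bD^{\ra}_{p(J)}(\bC_{p(J)}-\bC^{\eps}_{p(J)})\bD^{\ra}_{p(J)}\bigr\|_2\lesssim\eps$ and absorbing it into the spectral bounds of Proposition~\ref{prop:precon:C}\ref{prop:precon:CLambda}, and part (iii) by the standard support-counting argument (which the paper simply cites from \cite{DHSWaveletBEM2007} and \cite{RSchneider98}). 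The scaling concern you flag at the end is precisely what the paper's estimate \eqref{eq:Ceps-err} resolves, with the constant depending only on $a,a'$ and not on $J$.
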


\begin{proof}
	Throughout this proof, we write $p=p(J)$, see also~\eqref{eq:p=p(J)}.
	
	Proof of \ref{item:item_iv_Cov_compr}: 
	The consistency estimate will follow from \eqref{eq:ConsA} 
	in Proposition~\ref{prop:ConsA} once the assumptions of that 
	proposition are verified. 
	As Assumptions~\ref{ass:cM-and-cA}\ref{ass:cM-and-cA_I}--\ref{ass:cM-and-cA_II} 
	hold, 
	$\cA \in OPS^{\ra}_{1,0}(\cM)$ 
	is self-adjoint, positive and 
	$\cC = \cA^{-2} \in OPS^{-2\ra}_{1,0}(\cM)$
	satisfies the assumptions of Proposition \ref{prop:ConsA} 
	with $r$ replaced by $-2\ra$. 
	Since by assumption also the MRAs $\bPsi, \widetilde{\bPsi}$ satisfy 
	\eqref{eq:VjOrdReg}--\eqref{eq:Riesz} with $-2\ra$ in place of $r$, 
	the tapering scheme \eqref{eq:AprCompr}--\eqref{eq:AprCompBjj}
	with covariance tapering parameters 
	$\tau_{jj'}(\cC), \tau'_{jj'}(\cC)$ 
	corresponding to these orders will allow using 
	Proposition \ref{prop:ConsA}. This implies assertion~\ref{item:item_iv_Cov_compr}.
	The moment conditions on the MRA $\bPsi$ in Remark \ref{rmk:CPMoments}
	also imply the sparsity assertion~\ref{item:item_iii_Cov_compr}
	(see \cite[Thm. 11.1]{DHSWaveletBEM2007}, \cite[Thm. 8.2.10]{RSchneider98}).
	
	To prove positive definiteness for the tapered covariance matrix
	${\bC}^{\varepsilon}_{p(J)}$, we use 
	positive definiteness of the finite section $\bC_{p(J)}$, 
	see \ref{prop:precon:CLambda} of 
	Proposition~\ref{prop:precon:C}, combined with item 
	\ref{item:item_iv_Cov_compr}.
	Namely, choosing in the tapering coefficients 
	$\tau_{j j'}(\cC), \tau_{j j'}'(\cC)$ the parameter 
	$\eps > 0$ sufficiently small, it follows from 
	\ref{item:item_iv_Cov_compr} with $t=t'=-\ra$ and  
	the $H^{-\ra}(\cM)$ Riesz basis property of $\bPsi$ that
	there exists a constant $C>0$, independent of $J$ and $p=p(J)$, such that, 
	for every  
	$\eps\in(0,\eps_0)$, 
	\begin{equation}\label{eq:Ceps-err}
	\forall \vvec\in \bbR^{p(J)}:
	\quad 
	\bigl| \vvec^\top \bigl( \bC_{p(J)} - {\bC}^{\varepsilon}_{p(J)} \bigr) \vvec \bigr|
	\leq 
	C \eps \, \bigl\| \bD_{p(J)}^{-\ra} \vvec \bigr\|_2^2 .
	\end{equation} 
	We therefore find, for 
	$\vvec\in \bbR^{p(J)} \setminus\{0\}$ with $v = \vvec^\top\bPsi\in H^{-\ra}(\cM)$, 
	\begin{align*} 
	\vvec^\top \bC^{\varepsilon}_{p(J)} \vvec
	&=
	\vvec^\top \bC_{p(J)} \vvec 
	+ 
	\vvec^\top \bigl( \bC^{\varepsilon}_{p(J)} - \bC_{p(J)} \bigr) \vvec 
	\geq 
	(\widetilde{c} - C \eps) 
	\bigl\| \bD^{-\ra}_{p(J)} \vvec \bigr\|_2^2 
	>0 , 
	\end{align*} 
	provided that $\eps>0$ is so small that $\widetilde{c} - C \eps> 0$. 
	Here $\widetilde{c}>0$ is the constant in \eqref{eq:CpSPD}, 
	which is independent of $p$. 
	This proves \ref{item:item_i_Cov_compr}.
	
	To show \ref{item:item_ii_Cov_compr}, 
	we again combine \ref{prop:precon:CLambda} 
	of Proposition~\ref{prop:precon:C}
	with \ref{item:item_iv_Cov_compr}. 
	By \ref{prop:precon:CLambda} 
	there exists $c_{-}, c_{+} > 0$ such that 
	$\sigma\bigl( \bD_{p(J)}^{\ra} \bC_{p(J)} \bD_{p(J)}^{\ra} \bigr) \subset [c_-,c_+]$. 
	Furthermore, by \eqref{eq:Ceps-err}  
	$\bigl\| \bD^{\ra}_{p(J)} \bigl( \bC_{p(J)} - \bC^{\eps}_{p(J)} \bigr) \bD^{\ra}_{p(J)} \bigr\|_2 
	\leq c_-/2$ 
	for sufficiently small $\eps>0$.  
	Thus, we obtain assertion \ref{item:item_ii_Cov_compr} 
	with 
	$\widetilde{c}_{-} \geq c_-/2$ and 
	$\widetilde{c}_{+} \leq c_+ + c_-/2$.
\end{proof}

Along the same lines, one proves the following result for
the precision operator. 

\begin{proposition}\label{prop:prec-compr}
	Let $\cM$ satisfy Assumption~\ref{ass:cM-and-cA}\ref{ass:cM-and-cA_I} 
	and let 
	the coloring operator $\cA \in OPS^{\ra}_{1,0}(\cM)$
	fulfill Assumption~\ref{ass:cM-and-cA}\ref{ass:cM-and-cA_II}
	for some $\ra>n/2$. 
	In addition, let~$\bPsi$ be a MRA 
	such that \eqref{eq:VjOrdReg}--\eqref{eq:Riesz} hold 
	with $\gamma>\ra$ and $\widetilde{\gamma}>0$. 
	Let $\cP = \cA^2$  be the precision operator 
	of the GRF $\GP$ in the SPDE~\eqref{eq:WhNois}. 
	Denote the tapered
	precision matrix by ${\bP}^{\varepsilon}_{p(J)}$, 
	with tapering \eqref{eq:AprCompr} 
	and precision tapering parameters 
	$\{\tau_{j j'}(\cP), \tau'_{j j'}(\cP) : j_0 \leq j,j' \leq J\}$, 
	defined as in \eqref{eq:Fixaa'}--\eqref{eq:AprCompBjj} 
	with $2\ra$ in place of $r$. 
	
	Then, there exists $\varepsilon_0>0$ such that, 
	for every $\eps\in(0,\eps_0)$, 
	there are parameter choices $a,a' > 0$ in \eqref{eq:Fixaa'}, 
	which are independent of $p(J)$, such that 
	\begin{enumerate}[leftmargin=0.75cm,label={\normalfont (\roman*)}] 
		\item\label{item:item_i_prec_compr}
		For every $J\geq j_0$, 
		the tapered matrix $\bP^{\varepsilon}_{p(J)}$
		is symmetric, positive definite. 
		\item\label{item:item_ii_prec_compr}
		Diagonal preconditioning  
		renders $\bP^{\varepsilon}_{p(J)}$ uniformly well-conditioned:
		There are constants 
		$0<\widetilde{c}_-\leq \widetilde{c}_+ < \infty$ 
		such that 
		\[
		\forall J\geq j_0: 
		\quad 
		\sigma\bigl(\bD^{-\ra}_{p(J)} {\bP}^{\varepsilon}_{p(J)} \bD^{-\ra}_{p(J)} \bigr) 
		\subset 
		[\widetilde{c}_-,\widetilde{c}_+] .
		\] 
		\item\label{item:item_iii_prec_compr}
		The tapered precision matrices 
		$\{ \bP^{\varepsilon}_{p(J)} \}_{J\geq j_0}$ 
		are optimally sparse in the sense that, as $J\to \infty$
		the number of non-zero entries of $\bP^\eps_{p(J)}$ is $\cO(p(J))$. 
		\item\label{item:item_iv_prec_compr} 
		Let 
		$\cP^\varepsilon_{p(J)}$ be the operator corresponding to the 
		tapered precision matrix~$\bP^\varepsilon_{p(J)}$ 
		and assume that 
		\begin{equation}\label{eq:PrecCmpPar}
		\ra \leq t,t' \leq  d < \widetilde{d} + 2\ra . 
		\end{equation}
		Then, for every $J\geq j_0$ 
		and all $v\in H^{t'}(\cM),w\in H^{t}(\cM)$, 
		\[ 
		\bigl| \bigl\langle \bigl(\cP - \cP^{\varepsilon}_{p(J)} \bigr) Q_J w, Q_J v \bigr\rangle \bigr|
		\lesssim 
		\varepsilon 2^{J(2\ra-t-t')} \| w \|_{H^t(\cM)} \| v \|_{H^{t'}(\cM)}.
		\] 
		holds, where 
		$Q_J$ is the projector in \eqref{eq:def_projector_Q_j}. 
	\end{enumerate}
\end{proposition}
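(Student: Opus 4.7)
The plan is to mirror the proof of Proposition~\ref{prop:cov-compr} essentially line by line, replacing the covariance operator $\cC = \cA^{-2} \in OPS^{-2\ra}_{1,0}(\cM)$ by the precision operator $\cP = \cA^2 \in OPS^{2\ra}_{1,0}(\cM)$ (Proposition~\ref{prop:C-and-P-ops}), reversing the signs of all Sobolev exponents, and switching the diagonal scaling from $\bD^{\ra}$ to $\bD^{-\ra}$. The four claims should again be addressed in the order \ref{item:item_iv_prec_compr}, \ref{item:item_iii_prec_compr}, \ref{item:item_i_prec_compr}, \ref{item:item_ii_prec_compr}, since the last two rely on the consistency estimate produced in \ref{item:item_iv_prec_compr}.

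For \ref{item:item_iv_prec_compr}, I would invoke Proposition~\ref{prop:ConsA} with $\cB = \cP$ and $r = 2\ra$. The ellipticity, self-adjointness and positivity of $\cP$ follow from Assumption~\ref{ass:cM-and-cA}\ref{ass:cM-and-cA_II} together with Proposition~\ref{prop:C-and-P-ops}. The range condition $r/2 \leq t,t' \leq d < \widetilde{d} + r$ from Proposition~\ref{prop:ConsA} translates to exactly \eqref{eq:PrecCmpPar}, while the MRA hypotheses $\gamma > \ra$ and $\widetilde{\gamma} > 0$ together with the tapering parameters $\tau_{jj'}(\cP), \tau'_{jj'}(\cP)$ being those of \eqref{eq:Fixaa'}--\eqref{eq:AprCompBjj} with $r=2\ra$ make Proposition~\ref{prop:ConsA} directly applicable. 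This yields the stated consistency bound with the factor $\eps\, 2^{J(2\ra-t-t')}$. Claim \ref{item:item_iii_prec_compr} on optimal sparsity $\cO(p(J))$ follows from the combinatorial counting for tapering patterns defined in \eqref{eq:AprCompr}, which is the content of \cite[Thm.~11.1]{DHSWaveletBEM2007} (equivalently \cite[Thm.~8.2.10]{RSchneider98}); only the linear growth of the nonzero pattern must be verified, and the moment conditions guaranteeing this are encoded in $d < \widetilde{d} + 2\ra$, cf.\ Remark~\ref{rmk:CPMoments}.

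For \ref{item:item_i_prec_compr} and \ref{item:item_ii_prec_compr}, I would specialize the consistency estimate \ref{item:item_iv_prec_compr} to $t = t' = \ra$. This gives, uniformly in $J$,
\[
\bigl| \vvec^\top \bigl(\bP_{p(J)} - \bP^\eps_{p(J)}\bigr) \wvec \bigr|
\lesssim \eps \, \|v\|_{H^{\ra}(\cM)} \|w\|_{H^{\ra}(\cM)}
\]
for $v = \vvec^\top\bPsi$, $w = \wvec^\top\bPsi$ with indices restricted to $\Lambda_J$. Since $\gamma > \ra$ is assumed, the Riesz basis property \eqref{eq:Riesz} at $t = \ra$ is available, and converts this into
\[
\bigl| \vvec^\top \bigl(\bP_{p(J)} - \bP^\eps_{p(J)}\bigr) \wvec \bigr|
\lesssim \eps \, \bigl\| \bD^{\ra}_{p(J)} \vvec \bigr\|_2 \bigl\| \bD^{\ra}_{p(J)} \wvec \bigr\|_2,
\]
which is the precision-side analog of \eqref{eq:Ceps-err}. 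Combining this with the lower bound $\vvec^\top \bP_{p(J)} \vvec \geq \widetilde{c}\, \|\bD^{\ra}_{p(J)} \vvec\|_2^2$ coming from Proposition~\ref{prop:precon:P}\ref{prop:precon:PLambda}, and choosing $\eps$ small enough that $\widetilde{c} - C\eps > 0$, yields symmetry and strict positive definiteness of $\bP^\eps_{p(J)}$, proving \ref{item:item_i_prec_compr}. For \ref{item:item_ii_prec_compr}, the same perturbation bound states that
\[
\bigl\| \bD^{-\ra}_{p(J)} \bigl(\bP_{p(J)} - \bP^\eps_{p(J)}\bigr) \bD^{-\ra}_{p(J)} \bigr\|_2
\lesssim \eps,
\]
so that the spectrum of $\bD^{-\ra}_{p(J)} \bP^\eps_{p(J)} \bD^{-\ra}_{p(J)}$ lies in a small neighborhood of $\sigma(\bD^{-\ra}_{p(J)} \bP_{p(J)} \bD^{-\ra}_{p(J)}) \subset [c_-,c_+]$, which in turn is contained in $[\widetilde{c}_-,\widetilde{c}_+]$ for appropriately chosen constants.

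The main obstacle, or at least the only point requiring care, is not technical depth but bookkeeping of dualities. In the covariance proof the natural energy space is $H^{-\ra}(\cM)$ and hence the Riesz-basis hypothesis had to be stated with $\widetilde{\gamma} > \ra$; for the precision operator the energy space is $H^{\ra}(\cM)$, so one must use $\gamma > \ra$ instead, and the diagonal rescaling consequently flips from $\bD^{\ra}$ to $\bD^{-\ra}$. Once these substitutions are tracked, the structure of the argument from Proposition~\ref{prop:cov-compr} carries over without modification, and no new estimates are required beyond those already supplied by Propositions~\ref{prop:precon:P}, \ref{prop:MatEst}, and \ref{prop:ConsA}.
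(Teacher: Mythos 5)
Your proposal is correct and follows exactly the route the paper intends: the paper gives no separate proof for Proposition~\ref{prop:prec-compr}, stating only that it is proven ``along the same lines'' as Proposition~\ref{prop:cov-compr}, and your line-by-line transposition (with $r=2\ra$, energy space $H^{\ra}(\cM)$, hypothesis $\gamma>\ra$, and scaling $\bD^{-\ra}$) supplies precisely the bookkeeping that this remark leaves implicit. The specialization $t=t'=\ra$ to obtain the precision-side analog of \eqref{eq:Ceps-err}, and its use for positive definiteness and for the spectral perturbation bound, match the structure of the covariance argument.
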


\begin{remark}\label{rmk:CPcompr}
	\begin{enumerate}[leftmargin=0.75cm,label={\normalfont (\roman*)}]  
		\item
		Propositions~\ref{prop:cov-compr} 
		and~\ref{prop:prec-compr} state that the matrix representations
		of both covariance and precision operator of the GRF $\GP$ 
		in suitable wavelet bases can be optimally compressed. 
		We emphasize that in Proposition~\ref{prop:cov-compr}, the 
		moment conditions \eqref{eq:CovCmpPar} 
		on the MRA $\bPsi$ for optimal covariance matrix compression
		are considerably stronger than \eqref{eq:PrecCmpPar}
		imposed for optimal precision matrix compression 
		in Proposition~\ref{prop:prec-compr}. 
		Note also that 
		in Propositions~\ref{prop:cov-compr} and~\ref{prop:prec-compr} 
		possibly different MRAs
		for covariance and precision matrix compression are admitted.
		With respect to \emph{one common MRA $\bPsi$}
		the compressibility of the precision operator matrix is  
		higher than the compressibility of the covariance operator. 
		This is consistent with the fact 
		that a Gaussian Whittle--Mat\'ern field 
		with precision operator 
		$\cP = ( -\Delta_\cM + \kappa^2 )^{2\beta}$ 
		(see Appendix~\ref{appendix:ColShft})
		satisfies a Markov property whenever $2\beta\in\bbN$,  
		compare~e.g.~\cite[Chap.~3]{Rozanov1982}. 
		\item 
		The results are robust with respect to the parameters $a,a'>1$ in \eqref{eq:Fixaa'}:
		Once $a,a'>1$ are sufficiently large, increasing these values in the 
		parameter choices \eqref{eq:AprCompBjj} will not affect the asymptotic statements
		in Propositions~\ref{prop:cov-compr} and~\ref{prop:prec-compr}. 
		Increasing $a,a'$ will, however, change the constants in the
		asymptotic error bounds, e.g., 
		the constant implied in $\cO(p(J))$ will increase with $a,a'$.
		\item
		In the case of the Whittle--Mat\'ern coloring, where 
		$\cA = (\cL + \kappa^2)^\beta$, 
		see Example~\ref{ex:whittle-matern} and Appendix~\ref{appendix:ColShft},
		a shift function $\kappa^2\in C^\infty(\cM)$, 
		which takes large values 
		$\kappa^2(x) \geq \kappa_-^2 \gg 0$ 
		(corresponding to small spatial correlation lengths), 
		might allow quantitative improvements in the matrix compression, 
		see Subsection~\ref{subsct:influence-length} for a numerical illustration. 
		\item 
		For a fixed order $\ra>n/2$ of the coloring operator $\cA$, 
		the tapering pattern 
		\eqref{eq:AprCompr}--\eqref{eq:AprCompBjj} 
		is universal, i.e., independent of the particular 
		(pseudodifferential) operators $\cP$ and $\cC$
		and contains explicit a-priori 
		information about the locations of the $\cO(p(J))$ many ``relevant'' 
		entries of ${\bC}^\varepsilon_p$, ${\bP}^\varepsilon_p$.
		It may be employed in constructing oracle estimators in 
		graphical LASSO algorithms (e.g., \cite{JJSvdG,UhlerGGrMod} 
		and the references there) to infer $\bP_p$ from 
		(multilevel) estimates for~$\bC_p$.
	\end{enumerate}
\end{remark}

\section{Applications: simulation, estimation, and prediction}
\label{section:application}

\subsection{Efficient numerical simulation of colored GRFs}
\label{subsec:simulation}

As a first application of the results from Section~\ref{section:results} 
we consider the problem of sampling from the GRF $\GP$ 
which solves the white noise equation \eqref{eq:WhNois}. 
We recall from 
\eqref{eq:GRF_rep_by_dual_MRA}--\eqref{eq:Cov_z_coord_relation}
that the GRF $\GP$ and the SPDE \eqref{eq:WhNois}  
may equivalently be cast in coordinates 
corresponding to the dual MRA $\widetilde{\bPsi}$: 
\begin{equation}\label{eq:GRFWavMat}
\GP = \sum_{\lambda \in \cJ} \langle \GP, \psi_{\lambda}\rangle \widetilde{\psi}_\lambda  
\quad 
\Longleftrightarrow
\quad 
\widetilde{\bA} 
\widetilde{\zvec} 
= 
\wvec 
. 
\end{equation}
Here, $\widetilde{\bA}$ denotes the bi-infinite matrix
$\cA(\widetilde{\bPsi})(\widetilde{\bPsi})$ 
and 
the coefficient sequences $\widetilde{\zvec}, \wvec$ have entries 
$\widetilde{z}_\lambda = \langle \GP, \psi_{\lambda}\rangle$ and  
$w_\lambda = \langle \white, \widetilde{\psi}_{\lambda}\rangle$, 
respectively. 
By the properties of Gaussian white noise, 
the random vector $\wvec$ is   
$\normal(\zerovec,\widetilde{\Mmat})$-distributed, 
where $\widetilde{\Mmat} = \mathrm{Id}(\widetilde{\bPsi})(\widetilde{\bPsi})$ 
denotes the Gramian with respect to the dual MRA $\widetilde{\bPsi}$. 
For a sequence $\xivec$ of i.i.d.\ $\normal(0,1)$-distributed 
random variables we therefore conclude that 
\begin{equation}\label{eq:zvec-distr} 
\textstyle 
\wvec \overset{d}{=} \sqrt{\widetilde{\Mmat}} \, \xivec 
\quad 
\text{and} 
\quad 
\widetilde{\zvec} \overset{d}{=} 
\widetilde{\bA}^{-1} \sqrt{\widetilde{\Mmat}} \, \xivec,  
\quad 
\widetilde{\zvec} 
\sim 
\normal(\zerovec, \bC), 
\quad 
\bC = \widetilde{\bA}^{-1} \widetilde{\Mmat} \widetilde{\bA}^{-1}. 
\end{equation}
We now consider the vector 
$\widetilde{\zvec}_{p} \in \bbR^{p}$, 
where the subscript $p=p(J)$ corresponds to 
the finite index set 
$\Lambda(J)$ as in \eqref{eq:p=p(J)}. 
As a result of the distributional equalities in \eqref{eq:zvec-distr}, 
sampling from $\widetilde{\zvec}_{p}$ can be realized efficiently 
in essentially (up to $\log$ factors) 
linear computational cost  
by approximating the matrix square root 
of the well-conditioned mass matrix $\widetilde{\Mmat}_{p}$ 
as suggested in \cite{Hale2008} 
and by preconditioning the compressed 
matrix~$\widetilde{\bA}^\eps_{p}$. 
(Note that an analogous preconditioning result as in \eqref{eq:DiagPC}  
of Proposition~\ref{prop:DiagPC} can also be obtained for 
the dual MRA $\widetilde{\bPsi}$.)
A similar approach employing MRAs 
has already been discussed in \cite[Sec.~5]{HKS1}. 

In what follows, we discuss a different viewpoint. 
A common scenario in applications is that 
the coloring operator $\cA$ is not explicitly available, but 
the kernel related to the covariance operator $\cC$ 
via the Schwartz kernel theorem (see Section~\ref{section:GRFs-manifolds}) 
is known. 
In this case, it is in principle possible 
to determine all entries for every finite section $\bC_p$ 
of the bi-infinite covariance matrix $\bC = \cC(\bPsi)(\bPsi)$ 
but not of $\widetilde{\bA}$. 
For this reason, in order to sample from  
$\widetilde{\zvec}_p \sim\normal(\zerovec,\bC_p)$, 
we will focus on approximating the matrix square root $\sqrt{\bC_p}$ 
of the covariance matrix. 

To this end, we first note the following: 
By letting $\Imat_p\in\bbR^{p\times p}$ denote the 
identity matrix and $\xivec_p \in \bbR^p$ 
be a random vector with distribution 
$\xivec\sim\normal(\zerovec,\Imat_p)$, 
we obtain 
\begin{equation}\label{eq:def:zvecp} 
\textstyle 
\widetilde{\zvec}_p 
\overset{d}{=} 
\bD_p^{-\ra}
\sqrt{ \bD_p^{\ra} \bC_p \bD_p^{\ra} } 
\, 
\xivec_p, 
\qquad 
\widetilde{\zvec}_p 
\sim\normal(\zerovec, \bC_p), 
\end{equation}
where $\ra>n/2$ is the order of $\cA\in OPS_{1,0}^{\ra}(\cM)$ and 
$\bD^{\ra}_p$ denotes the 
finite $\Lambda_J \times \Lambda_J$ section 
of the diagonal matrix $\bD^{\ra}$ 
defined in \eqref{eq:def:bD}. 
We let $\bC^\eps_p$ be the tapered covariance matrix 
with tapering \eqref{eq:AprCompr}--\eqref{eq:AprCompBjj} 
(with $-2\ra$ in place of $r$) and 
define the matrices 
\begin{equation}\label{eq:def:Rmats}
\Rmat_p :=  \bD_p^{\ra} \bC_p \bD_p^{\ra} \in \bbR^{p\times p}, 
\qquad 
\Rmat_p^\eps :=  \bD_p^{\ra} \bC_p^\eps \bD_p^{\ra} \in \bbR^{p\times p}, 
\end{equation}
as well as the approximation  
\begin{equation}\label{eq:def:zvecpeps} 
\textstyle 
\widetilde{\zvec}_p^\eps 
:= 
\bD_p^{-\ra}
\sqrt{ \bD_p^{\ra} \bC_p^\eps \bD_p^{\ra} }
\, 
\xivec_p
= 
\bD_p^{-\ra} 
\sqrt{ \Rmat_p^\eps } 
\, 
\xivec_p , 
\qquad 
\widetilde{\zvec}_p^\eps 
\sim\normal(\zerovec, \bC_p^\eps). 
\end{equation}

Note that $\Rmat_p$ is well-conditioned, uniformly in $J$,  
and, for $\eps\in(0,\eps_0)$ sufficiently small,  
also the compressed (sparse) matrix 
$\Rmat_p^\eps$ is uniformly well-conditioned,  
see  
Proposition~\ref{prop:precon:C}\ref{prop:precon:CLambda} and
Proposition~\ref{prop:cov-compr}\ref{item:item_ii_Cov_compr}--\ref{item:item_iii_Cov_compr}, 
respectively. 
In particular, 
\begin{equation}\label{eq:spectral-c-tildes}
\exists\, \widetilde{c}_-, \widetilde{c}_+>0: 
\qquad 
\forall J \geq j_0: 
\quad 
\sigma\bigl( \Rmat^\eps_p \bigr) \subset [\widetilde{c}_-,\widetilde{c}_+]. 
\end{equation}
Therefore, the contour integral method 
suggested in \cite{Hale2008} 
to approximate the matrix square root 
will converge exponentially in the number of 
quadrature nodes of the contour integral. 
Specifically, for fixed $K\in\bbN$, 
we consider (see~\cite[Eq.~(4.4) and comments below]{Hale2008}) 
the approximation 
\begin{equation}\label{eq:def:Smat} 
{\textstyle\sqrt{\Rmat^\eps_p }} 
\approx 
\Smat_{K} 
:=
\frac{2 E \sqrt{\widetilde{c}_- }}{
	\pi K} 
\, \Rmat^\eps_p \, 
\sum_{k=1}^{ K } 
\frac{\operatorname{dn}\left( 
	t_k | 1 - \widehat{\varkappa}_{\mathrm{R}}^{-1} \right)}
{\operatorname{cn}^2\left(t_k | 
	1 - \widehat{\varkappa}_{\mathrm{R}}^{-1}  
	\right)}  
\left(\Rmat^\eps_p   + w_k^2 \Imat_p \right)^{-1} .  
\end{equation} 
Here,  
$\operatorname{sn}, \operatorname{cn}$ 
and $\operatorname{dn}$ are the Jacobian elliptic 
functions~\cite[Ch.~16]{Abramowitz1964}, 
$E$ is the complete elliptic integral 
of the second kind
associated with the parameter 
$\widehat{\varkappa}_{\mathrm{R}}^{-1}$ 
\cite[Ch.~17]{Abramowitz1964}, 
$\widehat{\varkappa}_{\mathrm{R}} := \widetilde{c}_+/\widetilde{c}_-$, 
and, for $k\in\{1,\ldots, K \}$, 
\[
w_k 
:= 
\sqrt{\widetilde{c}_- } \,  
\frac{
	\operatorname{sn}\left(t_k | 
	1 - \widehat{\varkappa}_{\mathrm{R}}^{-1} \right)}{
	\operatorname{cn}\left(t_k | 
	1 - \widehat{\varkappa}_{\mathrm{R}}^{-1} \right)} 
\quad 
\text{and} 
\quad 
t_k := 
\frac{\bigl(k-\tfrac{1}{2} \bigr) E}{K}. 
\]
Employing the approximation $\Smat_K$ from \eqref{eq:def:Smat} 
in \eqref{eq:def:zvecpeps} 
finally yields a computable approximation 
for $\widetilde{\zvec}_p$ in \eqref{eq:def:zvecp}, 
\begin{equation}\label{eq:def:zvecpepsK} 
\widetilde{\zvec}_{p, K}^\eps  
:= 
\bD^{-\ra}_p 
\Smat_K 
\xivec_p, 
\qquad 
\widetilde{\zvec}_{p, K}^\eps   
\sim 
\normal(\zerovec, \bD^{-\ra}_p \Smat_K^2 \bD^{-\ra}_p ). 
\end{equation} 

\begin{theorem}\label{thm:sampling} 
	Suppose that the manifold $\cM$ and 
	the operator $\cA \in OPS^{\ra}_{1,0}(\cM)$ 
	satisfy 
	Assumptions~\ref{ass:cM-and-cA}\ref{ass:cM-and-cA_I}--\ref{ass:cM-and-cA_II}   
	for some $\ra > n/2$. 
	Let $\cC = \cA^{-2}$  be the covariance operator 
	of the GRF $\GP$ that solves the SPDE~\eqref{eq:WhNois}, and
	let $\widetilde{\zvec} = \langle \GP, \Psi \rangle$
	be the coordinates of $\GP$ 
	when cast in the dual MRA $\widetilde{\Psi}$, see \eqref{eq:GRF_rep_by_dual_MRA}. 
	For $p=p(J)$, see \eqref{eq:p=p(J)}, denote the tapered
	covariance matrix by ${\bC}^{\varepsilon}_p$, 
	with tapering \eqref{eq:AprCompr}--\eqref{eq:AprCompBjj}, 
	where $\eps\in(0,\eps_0)$ is sufficiently small 
	such that \ref{item:item_i_Cov_compr}--\ref{item:item_iv_Cov_compr} of 
	Proposition~\ref{prop:cov-compr}
	hold. 
	
	\begin{enumerate}[leftmargin = 1cm]
		\item\label{thm:sampling-i}  
		Let $\Rmat_p^\eps \in \bbR^{p\times p}$ be defined as in  
		\eqref{eq:def:Rmats} and 
		let $\widetilde{c}_{-}, \widetilde{c}_{+}>0$ be 
		the constants in \eqref{eq:spectral-c-tildes}. 
		Then, the family of matrices  
		$\{ \Smat_K\}_{K\in\bbN}$ defined by \eqref{eq:def:Smat} 
		satisfies 
		\[
		\textstyle 
		\exists c,C>0  	
		\quad 
		\forall K\in\bbN: 
		\quad 
		\bigl\| \sqrt{\Rmat^\eps_p} - \Smat_K \bigr\|_2 
		\leq 
		C e^{-cK} , 
		\]
		where the constants $c,C>0$ depend on 
		$\varkappa_{\mathrm{R}} = \widetilde{c}_{+}/\widetilde{c}_{-}$, 
		but not on $p$ and $K$. 
		\item\label{thm:sampling-ii}  
		Let the $\bbR^p$-valued random vectors 
		$\widetilde{\zvec}_p, \widetilde{\zvec}_p^\eps, 
		\widetilde{\zvec}_{p,K}^\eps$ be defined as in 
		\eqref{eq:def:zvecp}, \eqref{eq:def:zvecpeps} 
		and \eqref{eq:def:zvecpepsK}, respectively.  
		Then, there exist constants $C,c>0$ such that 
		for every $p, K\in \bbN$, 
		$\eps \in (0,\eps_0)$, 
		and $0\leq s < \ra - n/2$ 
		we have  
		\begin{align} 
		\bigl( \bbE\bigl[ 
		\| \widetilde{\zvec} - \widetilde{\zvec}_{p, K}^\eps \|_2^2 
		\bigr] \bigr)^{1/2}
		&\leq  C \bigl( 2^{-sJ} + \eps + e^{-cK} \bigr). 
		\label{eq:thm:sampling-err-z} 
		\end{align} 
		In \eqref{eq:thm:sampling-err-z}, 
		the integers $J$ and $p$ are related as in \eqref{eq:p=p(J)}. 
	\end{enumerate}
\end{theorem}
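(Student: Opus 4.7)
I will prove part~\ref{thm:sampling-i} by direct appeal to the contour quadrature analysis of \cite{Hale2008}, and part~\ref{thm:sampling-ii} by a three-term triangle inequality decomposition.

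For part~\ref{thm:sampling-i}, the formula \eqref{eq:def:Smat} is precisely the sinc-type quadrature rule obtained by applying a conformal change of variables, mapping the spectral interval $[\widetilde{c}_-,\widetilde{c}_+]$ of $\Rmat_p^\eps$ to a symmetric slit via Jacobi elliptic functions, to a Cauchy integral representation of the matrix square root. The standard estimate of \cite{Hale2008} then yields $\bigl\|\sqrt{\Rmat_p^\eps} - \Smat_K\bigr\|_2 \leq C e^{-cK}$ with $c$ of order $\pi^2/\log(4\varkappa_{\mathrm{R}})$ and $C$ depending only on $\widetilde{c}_-,\widetilde{c}_+$. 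Uniformity of $c,C$ in $p$ is inherited from the spectral bound \eqref{eq:spectral-c-tildes}, which is itself $p$-independent by Proposition~\ref{prop:cov-compr}\ref{item:item_ii_Cov_compr}.

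For part~\ref{thm:sampling-ii}, I couple the approximations through a single $\xivec_p \sim \normal(\zerovec,\Imat_p)$ chosen so that $\widetilde{\zvec}_p = \bD_p^{-\ra}\sqrt{\Rmat_p}\,\xivec_p$ coincides with the restriction of the exact coefficient vector $\widetilde{\zvec}$ to $\Lambda_J$; explicitly, set $\xivec_p := (\sqrt{\Rmat_p})^{-1}\bD_p^{\ra}\,\widetilde{\zvec}|_{\Lambda_J}$. Writing
\[
\widetilde{\zvec} - \widetilde{\zvec}_{p,K}^\eps
= \bigl(\widetilde{\zvec} - \widetilde{\zvec}_p\bigr)
+ \bD_p^{-\ra}\bigl(\sqrt{\Rmat_p} - \sqrt{\Rmat_p^\eps}\bigr)\xivec_p
+ \bD_p^{-\ra}\bigl(\sqrt{\Rmat_p^\eps} - \Smat_K\bigr)\xivec_p,
\]
the first term is controlled via the $H^s(\cM)$ norm equivalence \eqref{eq:Riesz}: for $0 \leq s < \ra - n/2$,
\[
\bbE\bigl\|\widetilde{\zvec} - \widetilde{\zvec}_p\bigr\|_2^2
\leq 2^{-2sJ}\,\bbE\!\sum_{j>J,\,k\in\nabla_j} 2^{2js}|\langle\GP,\psi_{j,k}\rangle|^2
\lesssim 2^{-2sJ}\,\bbE\|\GP\|_{H^s(\cM)}^2 \lesssim 2^{-2sJ},
\]
where the last inequality uses \eqref{eq:ZPathRegp}.

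For the remaining two summands, which both have the structure $\bD_p^{-\ra}F\xivec_p$, the identity $\bbE\|\bD_p^{-\ra}F\xivec_p\|_2^2 = \|\bD_p^{-\ra}F\|_{\rm HS}^2 \leq \|F\|_2^2\,\tr(\bD_p^{-2\ra})$ reduces matters to a spectral-norm estimate on $F$ together with the trace bound $\tr(\bD_p^{-2\ra}) = \sum_{j_0\leq j\leq J}|\nabla_j|\,2^{-2\ra j} \lesssim \sum_{j\geq j_0} 2^{(n-2\ra)j}$, which is finite uniformly in $p$ because $\ra > n/2$ and $|\nabla_j| \lesssim 2^{nj}$. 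For the quadrature residual $F = \sqrt{\Rmat_p^\eps} - \Smat_K$ the required spectral bound is part~\ref{thm:sampling-i}. For the tapering residual $F = \sqrt{\Rmat_p} - \sqrt{\Rmat_p^\eps}$ I first apply Proposition~\ref{prop:cov-compr}\ref{item:item_iv_Cov_compr} with $t = t' = -\ra$ and the $H^{-\ra}(\cM)$-Riesz property to obtain $\|\Rmat_p - \Rmat_p^\eps\|_2 = \|\bD_p^{\ra}(\bC_p - \bC_p^\eps)\bD_p^{\ra}\|_2 \lesssim \eps$, and then invoke operator-Lipschitz continuity of the square root on spectra bounded away from zero, via the integral representation $\sqrt{A} - \sqrt{B} = \pi^{-1}\int_0^\infty (A+t\Imat)^{-1}(A-B)(B+t\Imat)^{-1}t^{-1/2}\,\rd t$ combined with \eqref{eq:spectral-c-tildes}, to conclude $\|\sqrt{\Rmat_p}-\sqrt{\Rmat_p^\eps}\|_2 \lesssim \eps$. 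Summing the three contributions via the triangle inequality yields \eqref{eq:thm:sampling-err-z}. The main delicacy is keeping the Lipschitz constant of $\sqrt{\,\cdot\,}$ and the trace sum $p$-independent, which hinges precisely on the uniform preconditioning \eqref{eq:spectral-c-tildes} of the tapered matrix and on the summability afforded by $\ra > n/2$.
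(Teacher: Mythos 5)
Your proposal is correct and follows essentially the same route as the paper: part (i) is delegated to the contour--quadrature analysis of Hale et al., and part (ii) uses the identical three-term split with the Hilbert--Schmidt/trace argument $\tr(\bD_p^{-2\ra})\lesssim 1$ (valid since $\ra>n/2$), the consistency bound of Proposition~\ref{prop:cov-compr}\ref{item:item_iv_Cov_compr} with $t=t'=-\ra$, and operator-Lipschitz continuity of the matrix square root on spectra bounded away from zero (the paper cites a Lipschitz lemma where you use the resolvent integral representation; note the weight there should be $t^{1/2}$ rather than $t^{-1/2}$, a harmless slip since both integrals converge). Your explicit coupling $\xivec_p := (\sqrt{\Rmat_p})^{-1}\bD_p^{\ra}\,\widetilde{\zvec}|_{\Lambda_J}$ is a welcome clarification of a point the paper leaves implicit, namely that $\widetilde{\zvec}_p$ must be realized as the restriction of $\widetilde{\zvec}$ for the term (A) computation to apply.
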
 

\begin{proof}
	Part~\ref{thm:sampling-i} is proven in \cite[Thm.~4.1]{Hale2008}. 
	
	To show \eqref{eq:thm:sampling-err-z} of~\ref{thm:sampling-ii}, 
	we first split the error as follows, 
	\begin{align*} 
	\bigl( \bbE\bigl[ 
	\| \widetilde{\zvec} - \widetilde{\zvec}_{p, K}^\eps \|_2^2 
	\bigr] \bigr)^{\frac{1}{2}}
	&\leq 
	\bigl( \bbE\bigl[ 
	\| \widetilde{\zvec} - \widetilde{\zvec}_{p} \|_2^2 
	\bigr] \bigr)^{\frac{1}{2}}
	+ 
	\bigl( \bbE\bigl[ 
	\| \widetilde{\zvec}_p - \widetilde{\zvec}_p^\eps \|_2^2 
	\bigr] \bigr)^{\frac{1}{2}}
	+ 
	\bigl( \bbE\bigl[ 
	\| \widetilde{\zvec}_p^\eps - \widetilde{\zvec}_{p, K}^\eps \|_2^2 
	\bigr] \bigr)^{\frac{1}{2}}
	\\
	&=: 
	\text{(A)} 
	+ 
	\text{(B)} 
	+ 
	\text{(C)}. 
	\end{align*}
	To bound term (C), we note the identity 
	\[ 
	\textstyle 
	\bbE\bigl[ 
	\| \widetilde{\zvec}^\eps_p - \widetilde{\zvec}_{p, K}^\eps \|_2^2 
	\bigr] 
	= 
	\bbE\Bigl[ 
	\bigl\| \bD_p^{-\ra}\bigl(\sqrt{\Rmat_p^\eps} - \Smat_K \bigr) \xivec_p \bigr\|_2^2 
	\Bigr] 
	= 
	\bigl\| \bD_p^{-\ra} \bigl( \sqrt{\Rmat_p^\eps} - \Smat_K \bigr) \bigr\|_{\mathrm{HS}}^2 , 
	\]
	which follows from the fact that $\xi_1,\ldots,\xi_p$ 
	are i.i.d.\ $\normal(0,1)$-distributed. 
	Since 
	\[
	\textstyle 
	\bigl\| \bD_p^{-\ra} \bigl( \sqrt{\Rmat_p^\eps} - \Smat_K \bigr) \bigr\|_{\mathrm{HS}}
	= 
	\bigl\| \bigl( \sqrt{\Rmat_p^\eps} - \Smat_K \bigr) \bD_p^{-\ra} \bigr\|_{\mathrm{HS}}
	\leq 
	\bigl\| \sqrt{\Rmat_p^\eps} - \Smat_K \bigr\|_{2} 
	\bigl\| \bD_p^{-\ra} \bigr\|_{\mathrm{HS}} , 
	\]
	the estimate 
	$\text{(C)} \leq C e^{-cK}$ 
	follows from part~\ref{thm:sampling-i} 
	if $\| \bD_p^{-\ra} \|_{\mathrm{HS}}\lesssim 1$. 
	Indeed, the assumption  
	$\dim(V_j) = \cO(2^{nj})$  
	combined with 
	the identity $V_{j+1} = W_j \oplus V_j$ yields that 
	$\#(\nabla_j) = \dim(W_j) = \cO\bigl((2^n - 1)2^{nj} \bigr)$,  
	for all $j\geq j_0$, and by definition \eqref{eq:def:bD}
	\[ 
	\bigl\| \bD_p^{-\ra} \bigr\|_{\mathrm{HS}}^2 
	= 
	\sum_{\lambda\in\Lambda_J} 
	2^{-2\ra|\lambda|} 
	= 
	\sum_{j=j_0}^J 
	\sum_{k\in\nabla_j} 
	2^{-2\ra j} 
	\lesssim 
	(2^n - 1) 
	\sum_{j=j_0}^J 
	2^{ - (2\ra-n) j } . 
	\]
	Since $\ra>n/2$ is assumed, we conclude that 
	\[
	\bigl\| \bD_p^{-\ra} \bigr\|_{\mathrm{HS}}^2 
	\lesssim
	\sum_{j=0}^\infty 
	2^{ - (2\ra - n) j } 
	= 
	\bigl( 1 - 2^{-(2\ra-n)} \bigr)^{-1} < \infty, 
	\]
	where the constant implied in $\lesssim$ 
	is independent of $J$ and, thus, of $p=p(J)$. 
	
	Similar arguments yield the bound 
	\[
	\textstyle 
	\text{(B)} 
	= 
	\bigl\| \bD_p^{-\ra} \bigl(\sqrt{\Rmat_p} - \sqrt{\Rmat_p^\eps} \bigr) \bigr\|_{\mathrm{HS}} 
	\leq 
	\bigl\|  \sqrt{\Rmat_p} - \sqrt{\Rmat_p^\eps}  \bigr\|_{2} 
	\bigl\| \bD_p^{-\ra} \bigr\|_{\mathrm{HS}}. 
	\]
	We recall from Proposition~\ref{prop:precon:C}\ref{prop:precon:CLambda}
	and Proposition~\ref{prop:cov-compr}\ref{item:item_ii_Cov_compr} that 
	\[
	\sigma(\Rmat_p) 
	= 
	\sigma\bigl( \bD^{\ra}_p \bC_p \bD_p^{\ra} \bigr) 
	\subset [c_-, c_+] 
	\quad 
	\text{and}
	\quad 
	\sigma\bigl(\Rmat_p^\eps \bigr) 
	= 
	\sigma\bigl( \bD^{\ra}_p \bC^\eps_p \bD_p^{\ra} \bigr) 
	\subset [\widetilde{c}_-, \widetilde{c}_+] . 
	\]
	This allows us to apply a Lipschitz-type estimate 
	for the matrix square root 
	(see, e.g., \cite[Lem.~2.2]{Lipschitz_cont_matrices}), 
	which gives 
	\[
	\textstyle 
	\bigl\|  \sqrt{\Rmat_p} - \sqrt{\Rmat_p^\eps}  \bigr\|_{2} 
	\leq 
	\tfrac{1}{ \sqrt{c_-} + \sqrt{\widetilde{c}_{-}} }
	\bigl\|  \Rmat_p - \Rmat_p^\eps \bigr\|_{2} 
	= 
	\tfrac{1}{ \sqrt{c_-} + \sqrt{\widetilde{c}_{-}} } 
	\bigl\|  \bD_p^{\ra} ( \bC_p - \bC_p^\eps) \bD_p^{\ra} \bigr\|_{2} . 
	\]
	For the norm on the right-hand side, we then obtain 
	\[ 
	\bigl\|  \bD_p^{\ra} ( \bC_p - \bC_p^\eps) \bD_p^{\ra} \bigr\|_{2} 
	=
	\sup_{ \substack{ \xvec \in \bbR^p, \\ \|\xvec\|_2=1 }} 
	\bigr|\xvec^\T \bD_p^{\ra} ( \bC_p - \bC_p^\eps) \bD_p^{\ra} \xvec \bigr|  
	=
	\sup_{ \substack{ \vvec \in \bbR^p, \\ \vvec\neq 0}} 
	\frac{ \bigl| \vvec^\T ( \bC_p - \bC_p^\eps) \vvec \bigr| }{\| \bD_p^{-\ra}\vvec \|_2^2 } 
	\lesssim \eps , 
	\]
	where the last estimate has already  
	been observed in \eqref{eq:Ceps-err} in the proof 
	of Proposition~\ref{prop:cov-compr}\ref{item:item_i_Cov_compr}. 
	Thus, $\text{(B)} \lesssim \eps$. 
	
	Finally, for term (A) we find, for any $s\in[0,\ra-n/2)$, that  
	\[
	\text{(A)}^2   
	= 
	\bbE 
	\Biggl[ 
	\sum_{j > J} \sum_{k\in\nabla_j} | \langle \GP, \psi_{j,k} \rangle |^2 
	\Biggr] 
	\leq 
	2^{-2Js}
	\bbE 
	\Biggl[ 
	\sum_{j \geq j_0} \sum_{k\in\nabla_j} 2^{2js} | \langle \GP, \psi_{j,k} \rangle |^2 
	\Biggr] . 
	\]
	For this reason, 
	regularity of the GRF $\GP$ in $L^2(\Omega;H^s(\cM))$, 
	see \eqref{eq:ZPathRegp}, combined with 
	the second of the norm equivalences in \eqref{eq:Riesz}  
	(recalling the approximation property $\tilde{\gamma} > \ra - n/2$ 
	of the dual basis $\widetilde{\Psi}$)
	show that $\text{(A)} \lesssim  2^{-sJ} 
	\bigl( \bbE\bigl[ \| \GP \|_{H^s(\cM)}^2 \bigr] \bigr)^{1/2}$ 
	for every $s\in[0,\ra-n/2)$. 
	This completes the proof of~\ref{thm:sampling-ii}. 
\end{proof}

\subsection{Multilevel Monte Carlo covariance estimation}
\label{sec:MLMCCovEst}

The estimation of covariance matrices $\Sigmamat_p\in \bbR^{p\times p}$
of Gaussian random variables $\zvec$ taking values in $\bbR^p$ 
from $M$ i.i.d.\ realizations 
of 
$\zvec$ has received attention in recent years 
(e.g.\ \cite{BickLevina08a,BickLevina08b,RBLZ08} and the references there).
Focus in these references has been on incorporating a-priori structural assumptions
on $\Sigmamat_p$, such as bandedness etc. 
Here, we utilize the 
compression patterns 
from Subsection~\ref{subsec:results:sparse} 
(which are universal for pseudodifferential coloring $\cA$
by our results in Section~\ref{section:results}). 

To this end, we estimate blocks of finite sections 
$\bC_{\Lambda_J}$, $\bP_{\Lambda_J}$ 
for the bi-infinite matrix representations \eqref{eq:BiInfMat} 
which resolve the GRF $\GP$ at finite spatial (multi) resolution level $J$, 
i.e., at spatial resolution $\cO(2^{-J})$.
We will directly analyze a multilevel estimator.
The number $p$ of parameters 
(in the usual terminology as, e.g., in \cite{RBLZ08,BickLevina08a,BickLevina08b})
in the truncated MRA representation 
\eqref{eq:GRF_rep_by_dual_MRA}
of samples of $\GP$ is then 
$p = \#(\Lambda_J) = \cO(2^{nJ})$. 

We suppose that we are given $M$ approximate, i.i.d.\ 
samples of the GRF $\GP$ at 
various levels of spatial resolution 
with $p = \cO(2^{nJ})$ parameters at the highest resolution level $J$. 
A plain Monte Carlo approach to sample the corresponding 
covariance matrix would result in computational cost $\cO(Mp)$.
The goal of multilevel Monte Carlo (MLMC) estimation 
is to reduce this computational cost while 
keeping the accuracy consistent:
we aim at a sampling strategy reducing
the cost of $\cO(Mp)$ in certain cases to $\cO(\max\{M,p\})$
with asymptotically the same accuracy.

According to Proposition~\ref{prop:cov-compr}, 
the covariance operator 
$\cC$ of the random field $\GP$ in \eqref{eq:WhNois} satisfies
\[ 
\forall v\in H^{t'}(\cM),w\in H^{t}(\cM):
\quad 
\left| \langle (\cC - \cC^\varepsilon_p)Q_Jw,Q_Jv \rangle \right|
\lesssim 
\varepsilon 2^{J(-2\ra-t-t')} \| w \|_t \| v \|_{t'} . 
\] 
The matrix corresponding to the tapered covariance 
operator $\cC^\varepsilon_p$ may be represented as 
$\Cmat^\varepsilon_p 
= \bbE\bigl[ (\widetilde{\zvec}_p \widetilde{\zvec}_p^\top)^\varepsilon \bigr]$,
with the GRF $\GP$ 
being cast in the dual MRA, 
$\GP = \widetilde{\zvec}^\top \widetilde{\bPsi} 
= 
\sum_{j\geq j_0} \sum_{k\in\nabla_j} \widetilde{z}_{j,k} \widetilde{\psi}_{j,k}$
and $\widetilde{\zvec}_p$ denotes the truncated coefficient vector of $\GP$, 
see~\eqref{eq:GRF_rep_by_dual_MRA}.
In the MLMC sampling algorithm we exploit that 
in wavelet coordinates, the blocks
of the covariance matrix 
need to be approximated with block-dependent threshold 
accuracy in order to obtain a consistent approximation 
of the covariance operator~$\cC$.
For $J\geq j_0$, define the MLMC estimator by 
\[ 
\Cmat_p^\varepsilon 
\approx 
E^{*}_J(\Cmat_p^\varepsilon )
:=
\sum_{j,j'=j_0}^J
E_{M_{j,j'}}(\Cmat^\varepsilon _{\rm global}(j,j')) 
.
\] 
Here, $\Cmat^\varepsilon(j,j')$ 
is the section of $\Cmat^\varepsilon $ corresponding to 
$\{(j,k) : k\in\nabla_j\} \times \{(j',k') : k'\in \nabla_{j'}\}$ 
and 
$\Cmat^\varepsilon _{\rm global}(j,j')$ 
is the respective global matrix with zeros at indices that are not in
$\{(j,k) : k\in\nabla_j\} \times \{(j',k') : k'\in \nabla_{j'}\}$.
Furthermore, 
for $j,j' \in \{ j_0,\ldots,J \}$, 
$E_{M_{j,j'}}$ 
denotes a Monte Carlo estimator with $M_{j,j'}$ samples.
More specifically, 
the Monte Carlo estimator 
$E_{M_{j,j'}}(\Cmat^\varepsilon _{\rm global}(j,j'))$
is realized by $M_{j,j'}$ i.i.d.\  
samples of the coefficient vector $\widetilde{\zvec}$ 
at discretization levels $j,j'$ of spatial resolution, 
i.e.,
\[ 
\Cmat^\varepsilon _p(j,j')
\approx
E_{M_{j,j'}}(\Cmat^\varepsilon (j,j')) 
:=
\frac{1}{M_{j,j'}}
\sum_{i=1}^{M_{j,j'}}
\bigl( \widetilde{\zvec}_{i}(j) \widetilde{\zvec}_{i}(j')^\top \bigr)^\varepsilon 
,
\] 
where $\widetilde\zvec(j'')$
is the restriction of the coordinate vector to the coordinates with indices
in $\{(j'',k): k\in \nabla_{j''}\}$.
The operator that corresponds to 
the MLMC estimator $E_J^*(\bC_p^\varepsilon)$ will be denoted 
by $E_J^*(\cC_p^\varepsilon)$, i.e., 
\[
\forall \lambda, \lambda' \in \Lambda_{J}:
\quad
\langle E_J^*(\cC_p^\varepsilon) \psi_\lambda, \psi_{\lambda'} \rangle
= 
(E_J^*(\bC_p^\varepsilon))_{\lambda,\lambda'}. 
\] 
Recall that $\Bmat^\varepsilon $ is the tapered version  
of some 
matrix $\Bmat$,  
as defined in Definition~\ref{def:tapering}.  
We suppose that we are given samples, 
which are independent realizations of $\GP$ 
at multiple scales of resolution, 
expressed in terms of the coordinate vector
\[ 
\bigl\{
\widetilde{\zvec}_{i}^{j_0}: i = 1, \ldots, M_{0}
\bigr\}, 
\ldots
,
\bigl\{
\widetilde{\zvec}_{i}^{J}: i = 1, \ldots, M_{J}
\bigr\}, 
\] 
where $\widetilde{\zvec}^{j}$ denotes the truncation 
of the coordinate vector $\widetilde{\zvec}$
to coordinates with indices in 
$\{(j',k'): j_0\leq j'\leq j, k'\in\nabla_{j'}\}$. 
In this setting, the sample numbers $M_{j,j'}$ are given by
\begin{equation}\label{eq:Mjj'}
M_{j,j'}
:=
\widetilde{M}_{\max\{j,j'\}}, 
\quad 
\text{where} 
\quad 
\widetilde{M}_{j}
:=
\sum_{j'= j}^J
M_{j'}
. 
\end{equation}
\begin{proposition}\label{prop:mlmc_error_est}
	Suppose  
	Assumptions~\ref{ass:cM-and-cA}\ref{ass:cM-and-cA_I}--\ref{ass:cM-and-cA_II} 
	hold for some $\ra>n/2$.
	Let further the assumptions of Proposition~\ref{prop:cov-compr}
	hold with wavelet and dual wavelet parameters 
	$d,\widetilde{d}$ such that $d < \widetilde{d} - 2\ra$.
	
	Then,
	for any $\beta< \ra - n/2$ and $-\ra \leq t,t' \leq d$, 
	there exists a constant $C>0$
	such that the multilevel Monte Carlo estimator 
	$E_J^*(\cC^\varepsilon_p)$ 
	with sample numbers \eqref{eq:Mjj'} 
	satisfies the error bound 
	\begin{align*}
	\Biggl\| 
	\sup_{ u\in H^t(\cM) \setminus\{0\} }
	&\sup_{ v\in H^{t'}(\cM)\setminus\{0\} }
	\frac{|\langle({\cC}^\varepsilon_p - E^{*}_J({\cC}^\varepsilon_p))Q_J u , Q_J v \rangle|}{
		\|u\|_{H^t(\cM)} \|v\|_{H^{t'}(\cM)} } 
	\Biggr\|_{L^2(\Omega)}
	\\
	&\leq 
	\frac{2C}{1-2^{-(\min\{t,t'\}+\beta)}}
	\sum_{j=j_0}^{J}
	\frac{1}{\sqrt{\widetilde{M}_{j}}} 
	2^{-j(\min\{t,t'\}+\beta)}
	\left\|\GP \right\|^2_{L^4(\Omega;H^\beta(\cM))} 
	.
	\end{align*}
\end{proposition}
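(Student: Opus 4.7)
The plan is to reduce the operator-type functional on the left-hand side to a weighted matrix spectral norm via the Riesz norm equivalences \eqref{eq:Riesz}, and then to exploit the multilevel block structure of the estimator by a Frobenius-type variance bound per scale pair. Writing $Q_J u = \sum_{\lambda\in\Lambda_J} u_\lambda \psi_\lambda$ with $u_\lambda = \langle u, \widetilde{\psi}_\lambda\rangle$, analogously for $Q_J v$, and setting $\mathbf{E} := \bC^\varepsilon_{p} - E^*_J(\bC^\varepsilon_{p}) \in \bbR^{p\times p}$, one has $\langle(\cC^\varepsilon_p-E^*_J(\cC^\varepsilon_p))Q_Ju,Q_Jv\rangle = \vvec^\top \mathbf{E}\,\uvec$. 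Combined with $\|u\|_{H^t(\cM)}\simeq \|\bD_p^{t}\uvec\|_2$ and $\|v\|_{H^{t'}(\cM)}\simeq \|\bD_p^{t'}\vvec\|_2$ from \eqref{eq:Riesz}, the supremum on the left-hand side is identified, up to Riesz constants, with the weighted spectral norm $\|\bD_p^{-t'}\mathbf{E}\,\bD_p^{-t}\|_2$. Since $\mathbf{E}$ splits into scale-pair blocks $\mathbf{E}(j,j')$ supported on rows of level $j$ and columns of level $j'$, on which $\bD_p^{-t'}$ and $\bD_p^{-t}$ act as the scalars $2^{-t'j}$ and $2^{-tj'}$, triangle inequalities in the spectral and $L^2(\Omega)$ norms yield
\[
\bigl\|\,\|\bD_p^{-t'}\mathbf{E}\,\bD_p^{-t}\|_2\,\bigr\|_{L^2(\Omega)}
\leq \sum_{j,j'=j_0}^J 2^{-t'j - tj'}\,\bigl\|\,\|\mathbf{E}(j,j')\|_2\,\bigr\|_{L^2(\Omega)}.
\]

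For each block I would estimate $\|\,\|\mathbf{E}(j,j')\|_2\,\|_{L^2(\Omega)}$ by passing to the Frobenius norm, $\|\cdot\|_2\leq \|\cdot\|_{\mathrm{HS}}$, and using the i.i.d.\ structure of the $M_{j,j'}$ samples. With $\mathbf{Y}_i:=(\widetilde{\zvec}_i(j)\widetilde{\zvec}_i(j')^\top)^{\varepsilon}$, orthogonality of centered independent summands in $L^2(\Omega;\mathrm{HS})$, the Frobenius-contractiveness of the tapering map, and the rank-one identity $\|zw^\top\|_{\mathrm{HS}}=\|z\|_2\|w\|_2$ together give
\[
\bbE\bigl[\|\mathbf{E}(j,j')\|_{\mathrm{HS}}^2\bigr]
\leq \tfrac{1}{M_{j,j'}}\,\bbE\bigl[\|\widetilde{\zvec}_1(j)\|_2^2\,\|\widetilde{\zvec}_1(j')\|_2^2\bigr].
\]
Cauchy-Schwarz in $\Omega$ decouples the two factors into an $L^4(\Omega;\ell^2)$ product, and the per-level Riesz inequality $2^{2j\beta}\|\widetilde{\zvec}(j)\|_2^2 \leq \sum_{j''\geq j_0}2^{2j''\beta}\|\widetilde{\zvec}(j'')\|_2^2\lesssim \|\GP\|_{H^\beta(\cM)}^2$ ($\bbP$-a.s.), valid for $\beta<\ra-n/2$ where $\GP\in L^4(\Omega;H^\beta(\cM))$ by \eqref{eq:ZPathRegp}, produces $\|\widetilde{\zvec}(j)\|_{L^4(\Omega;\ell^2)}\lesssim 2^{-j\beta}\|\GP\|_{L^4(\Omega;H^\beta(\cM))}$ and hence
\[
\bigl\|\,\|\mathbf{E}(j,j')\|_2\,\bigr\|_{L^2(\Omega)}
\lesssim \tfrac{1}{\sqrt{M_{j,j'}}}\,2^{-(j+j')\beta}\,\|\GP\|_{L^4(\Omega;H^\beta(\cM))}^2.
\]

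For the final assembly, I would substitute $M_{j,j'}=\widetilde{M}_{\max\{j,j'\}}$ and reindex the resulting double sum by $m:=\max\{j,j'\}$, splitting it into the contribution with $j=m$ (summed over $j_0\leq j'\leq m$) and with $j'=m$ (summed over $j_0\leq j\leq m-1$). Each inner sum is geometric in the off-axis level; under $\min\{t,t'\}+\beta>0$ it is bounded by $(1-2^{-(\min\{t,t'\}+\beta)})^{-1}$ times the dominant factor $2^{-m(\min\{t,t'\}+\beta)}$, since $\max\{2^{-m(t+\beta)},2^{-m(t'+\beta)}\}\leq 2^{-m(\min\{t,t'\}+\beta)}$. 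Adding the two contributions yields the prefactor $2$ and the claimed single-sum bound over $m$. The main technical obstacle is the block-wise variance estimate: replacing the spectral by the Frobenius norm could \emph{a priori} lose dimension-dependent factors, but the rank-one identity $\|\widetilde{\zvec}_i(j)\widetilde{\zvec}_i(j')^\top\|_{\mathrm{HS}}=\|\widetilde{\zvec}_i(j)\|_2\|\widetilde{\zvec}_i(j')\|_2$ together with the geometric $L^4(\Omega)$-decay in level from \eqref{eq:Riesz} renders the HS upper bound tight enough to reproduce exactly the weights $2^{-(j+j')\beta}$, which then combine with the norm-equivalence weights $2^{-t'j-tj'}$ into the asserted $\min\{t,t'\}+\beta$ exponent via the $\max\{j,j'\}$ reindexing.
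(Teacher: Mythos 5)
Your proposal is correct and follows essentially the same route as the paper's proof: reduce the bilinear-form error to a sum of weighted block spectral norms (the paper obtains this step by citing the consistency estimates of Dahmen--Harbrecht--Schneider, you derive it directly from the norm equivalences), pass to the Frobenius norm, use the rank-one identity and the per-level Riesz decay $\|\widetilde{\zvec}(j)\|_2\lesssim 2^{-j\beta}\|\GP\|_{H^\beta(\cM)}$ together with Cauchy--Schwarz in $\Omega$, and reindex by $\max\{j,j'\}$ to sum the geometric series. The only cosmetic difference is that you make explicit the centering/orthogonality of the Monte Carlo summands and the Frobenius-contractiveness of the tapering map, which the paper leaves implicit.
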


\begin{proof}
	By the estimate in~\cite[Equation~(9.3)]{DHSWaveletBEM2007} 
	(also exploiting the estimates~\cite[Equations~(4.3) and~(4.2)]{DHSWaveletBEM2007})
	\begin{align*} 
	\text{(I)} 
	&:=
	\Biggl\| 
	\sup_{ u\in H^t(\cM) \setminus\{0\} }
	\sup_{ v\in H^{t'}(\cM)\setminus\{0\} }
	\frac{|\langle({\cC}^\varepsilon _p - E^{*}_J(\cC^\varepsilon _p))Q_J u , Q_J v \rangle|}
	{\|u\|_{H^t(\cM)} \|v\|_{H^{t'}(\cM)}}
	\Biggr\|_{L^2(\Omega)}
	\\
	&\leq 
	\Biggl\| 
	\sum_{j,j'=j_0}^{J}
	2^{- jt } 2^{- j't' }
	\bigl\| \bC^\varepsilon_p(j,j') - E_{M_{j,j'}}(\bC^\varepsilon(j,j')) \bigr\|_2 
	\Biggr\|_{L^2(\Omega)} 
	\\
	&\leq 
	\sum_{j,j'=j_0}^{J}
	2^{- jt } 2^{- j't' }
	\Bigl\| 
	\bigl\| \bC^\varepsilon_p(j,j') - E_{M_{j,j'}}(\bC^\varepsilon(j,j')) \bigr\|_{\rm HS} 
	\Bigr\|_{L^2(\Omega)} 
	\\
	&\leq 
	\sum_{j,j'=j_0}^{J}
	\frac{1}{\sqrt{M_{j,j'}}}
	2^{-jt} 2^{-j't'} 
	\Bigl\| 
	\bigl\|\widetilde{\zvec}(j) \widetilde{\zvec}(j')^\top \bigr\|_{\mathrm{HS}} 
	\Bigr\|_{L^2(\Omega)}, 
	\end{align*} 
	where we used that the operator matrix norm 
	with respect to the Euclidean norm  
	is upper bounded by the 
	Hilbert--Schmidt (or Frobenius) norm. 
	The Frobenius norm satisfies that 
	$\|w(w')^\top\|_{{\mathrm{HS}}} \leq \|w\|_2 \|w'\|_2$ 
	for all $w\in \bbR^m$, $w'\in \bbR^{m'}$, $m,m'\in\bbN$.
	Also note that by~\eqref{eq:Riesz}, 
	$\|\widetilde{\zvec}(j)\|_2 \lesssim 2^{-j\beta} \|\GP\|_{H^\beta(\cM)}$.
	Thus, 
	\[ 
	\text{(I)} 
	\leq 
	C
	\sum_{j,j'=j_0}^{J}
	\frac{1}{\sqrt{M_{j,j'}}}
	2^{-j(t+\beta)} 2^{-j'(t'+\beta)} \left\|\GP \right\|^2_{L^4(\Omega;H^\beta(\cM))}.
	\] 
	Furthermore, 
	\[   
	\sum_{j,j'=j_0}^{J}
	\frac{1}{\sqrt{M_{j,j'}}}
	2^{-j(t+\beta)} 2^{-j'(t'+\beta)}
	=
	\sum_{\bar{j}=j_0}^{J}
	\frac{1}{\sqrt{\widetilde{M}_{\bar{j}}}}
	\sum_{j,j': \max\{j,j'\} = \bar{j}} 2^{-j(t+\beta)} 2^{-j'(t'+\beta)}
	\] 
	and
	\begin{align*}
	\sum_{j,j': \max\{j,j'\}=\bar{j}}
	2^{-j(t+\beta)} 
	&\, 2^{-j'(t'+\beta)}
	\leq
	2^{-\bar{j}(t+\beta)} 
	\sum_{j'=0}^{\bar{j}} 2^{-j'(t'+\beta)}
	+
	2^{-\bar{j}(t'+\beta)} 
	\sum_{j=0}^{\bar{j}} 2^{-j(t+\beta)}
	\\
	&\leq 
	\frac{2^{-\bar{j}(t+\beta)}}{1-2^{-(t'+\beta)}} 
	+
	\frac{2^{-\bar{j}(t'+\beta)}}{1-2^{-(t+\beta)}}
	\leq 2 \frac{2^{-\bar{j}(\min\{t,t'\}+\beta)}}{1-2^{-(\min\{t,t'\}+\beta)}} 
	.
	\end{align*}
	In conclusion the asserted estimate follows, i.e.,
	\[ 
	\text{(I)} 
	\leq 
	\frac{2C}{1-2^{-(\min\{t,t'\}+\beta)}}
	\sum_{\bar{j}=j_0}^{J}
	\frac{1}{\sqrt{\widetilde{M}_{\bar{j}}}} 2^{-\bar{j}(\min\{t,t'\}+\beta)}
	\left\|\GP \right\|^2_{L^4(\Omega;H^\beta(\cM))} 
	.\qquad \qedhere
	\]
\end{proof}

The required computational cost of the estimator $E^*_J$ is 
\begin{equation}\label{eq:comp_cost_mlmc}
\textstyle 
{\rm work} 
=
\cO 
\biggl(\sum\limits_{j=j_0}^J \widetilde{M}_{j} 2^{jn} \biggr) 
\end{equation}
and by Propositions~\ref{prop:cov-compr} and~\ref{prop:mlmc_error_est} 
the accuracy is
\begin{equation}\label{eq:error_mlmc}
\textstyle 
{\rm error} 
= 
\cO
\biggl( 
2^{-J\alpha_0}
+
\sum\limits_{j=j_0}^J
\widetilde{M}_j^{-1/2}
2^{-j \alpha}
\biggr),
\end{equation}
where $\alpha\leq \alpha_0 \leq 2\ra + t + t'$ 
and 
$\alpha= \ra - n/2 - \varepsilon_0 + \min\{t,t'\}$
and where we inserted 
$\beta = \ra - n/2 - \varepsilon_0$ 
for arbitrary small $\varepsilon_0 > 0$.
It remains to choose the sample numbers $\widetilde{M}_j$ 
and equivalently the sample numbers 
$M_j$, $j=j_0,\ldots,J$, in such a way to optimize accuracy versus computational cost.
This has been considered in the context of multilevel integration methods
and GRFs, e.g.,~\cite{HS_2019}.
Following this reference, 
we choose the following sample numbers
\begin{equation*}
\widetilde{M}_j
=
\Bigl\lceil 
\widetilde{M}_{0} 
2^{-j(n+\alpha)2/3}
\Bigr\rceil, 
\quad j=j_0,\ldots,J,
\end{equation*}
and 
\begin{equation*}
\widetilde{M}_{j_0}
=
\begin{cases}
2^{J2\alpha_0}, & \text{if } 2\alpha>n, \\
2^{J2\alpha_0} J^2,  & \text{if } 2\alpha = n, \\
2^{J(2\alpha_0 +2n/3 - 4\alpha/3)}, & \text{if } 2\alpha< n.
\end{cases}
\end{equation*}
The overall computational cost is 
\begin{equation*}
{\rm work} 
=
\begin{cases}
\cO(2^{J2\alpha_0}), & \text{if } 2\alpha>n, \\
\cO(2^{J2\alpha_0} J^3), & \text{if } 2\alpha = n, \\
\cO(2^{J(n-2(\alpha_0 - \alpha))}), & \text{if } 2\alpha< n. 
\end{cases}
\end{equation*}

The proof of the following theorem is postponed 
to Appendix~\ref{appendix:sample_numbers}.
\begin{theorem}\label{thm:mlmc_cov_estimation}
	Let the assumptions of Proposition~\ref{prop:mlmc_error_est}
	be satisfied.
	In addition, let 
	$ \alpha_0 \in [\alpha, 2\ra +t +t']$
	for $\alpha < \ra-n/2 +\min\{t,t'\}$.
	
	An error threshold $\varepsilon>0$ may be achieved, 
	i.e.,
	\[ 
	\Biggl\| 
	\sup_{ u\in H^t(\cM) \setminus\{0\} }
	\sup_{ v\in H^{t'}(\cM)\setminus\{0\} }
	\frac{|\langle({\cC}^\varepsilon_p - E^{*}_J({\cC}^\varepsilon_p))Q_J u , Q_J v \rangle|}{
		\|u\|_{H^t(\cM)} \|v\|_{H^{t'}(\cM)} } 
	\Biggr\|_{L^2(\Omega)}
	= 
	\cO(\varepsilon)
	\] 
	with computational cost 
	\[ 
	{\rm work} =
	\begin{cases}
	\cO(\varepsilon^{-2}) & \text{if } 2\alpha>n, \\
	\cO(\varepsilon^{-2} |\log(\varepsilon^{-1})|) & \text{if } 2\alpha=n ,\\
	\cO(\varepsilon^{-(n/\alpha_0 - 2(1-\alpha/\alpha_0))}) & \text{if } 2\alpha<n . \\
	\end{cases}
	\] 
\end{theorem}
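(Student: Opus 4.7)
The proof is a standard multilevel Monte Carlo work--accuracy balance: I would combine the deterministic compression bias from Proposition~\ref{prop:cov-compr}\ref{item:item_iv_Cov_compr} with the stochastic error bound of Proposition~\ref{prop:mlmc_error_est}, then use the prescribed sample allocation to convert an accuracy threshold into a total-cost estimate. Inserting $\beta = \ra - n/2 - \varepsilon_0$ into Proposition~\ref{prop:mlmc_error_est} (with $\varepsilon_0 > 0$ arbitrarily small) and adding the truncation error yields
\[
\mathrm{err} \ \lesssim\ 2^{-J\alpha_0} \ +\ \sum_{j=j_0}^{J} \widetilde{M}_j^{-1/2}\, 2^{-j\alpha},
\qquad \alpha = \ra - n/2 - \varepsilon_0 + \min\{t,t'\}.
\]
The first step is to force the deterministic truncation term below $\varepsilon$ by choosing $J$ as the smallest integer with $2^{-J\alpha_0}\lesssim \varepsilon$, i.e., $2^J \simeq \varepsilon^{-1/\alpha_0}$, so that $J \simeq \alpha_0^{-1}\log_2(\varepsilon^{-1})$.

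The second step is to substitute the prescribed sample numbers $\widetilde{M}_j \simeq \widetilde{M}_{j_0}\, 2^{-j(n+\alpha)2/3}$ into both the stochastic error sum and the work expression~\eqref{eq:comp_cost_mlmc}; each reduces to the same geometric series with common ratio $2^{(n-2\alpha)/3}$, namely
\[
\sum_{j=j_0}^J \widetilde{M}_j^{-1/2}\, 2^{-j\alpha} \ \simeq \ \widetilde{M}_{j_0}^{-1/2} \sum_{j=j_0}^J 2^{j(n-2\alpha)/3},
\qquad
\sum_{j=j_0}^J \widetilde{M}_j\, 2^{jn} \ \simeq \ \widetilde{M}_{j_0} \sum_{j=j_0}^J 2^{j(n-2\alpha)/3}.
\]
Splitting the cases $2\alpha > n$, $2\alpha = n$ and $2\alpha < n$ gives, respectively, a bounded sum, a sum of order $J$, and a sum of order $2^{J(n-2\alpha)/3}$. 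In each regime I would plug in the stipulated value of $\widetilde{M}_{j_0}$, verify that the stochastic term is $\cO(\varepsilon)$, and then read off the total work from the corresponding geometric sum.

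Finally, translating $J$ back into $\log_2(\varepsilon^{-1})/\alpha_0$ converts the $J$-scale work bounds into the $\varepsilon$-scale asymptotics stated in the theorem. The main obstacle is the undersampled regime $2\alpha < n$: here both geometric sums diverge as $J\to\infty$, so a naive allocation fails, and one must justify that $\widetilde{M}_j \propto 2^{-j(n+\alpha)2/3}$ is (near-)optimal. This is best argued by a Lagrange-multiplier analysis of the constrained minimization $\min \sum_j \widetilde{M}_j 2^{jn}$ subject to $\sum_j \widetilde{M}_j^{-1/2} 2^{-j\alpha} \le \varepsilon$, whose stationarity condition forces precisely the $\tfrac{2}{3}$-exponent; the resulting level-wise balance then yields the rate $\varepsilon^{-(n/\alpha_0 - 2(1-\alpha/\alpha_0))}$. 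The critical case $2\alpha = n$ contributes the extra logarithmic factor from the length-$J$ degenerate geometric sum, and the remaining work--versus--$\varepsilon$ algebra is elementary.
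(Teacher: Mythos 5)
Your proposal is correct and follows essentially the same route as the paper's proof in Appendix~\ref{appendix:sample_numbers}: both substitute the geometric sample allocation $\widetilde{M}_j \simeq \widetilde{M}_{j_0}2^{-j(n+\alpha)2/3}$ (justified by the same Lagrange-multiplier stationarity condition), reduce error and work to the common geometric sum with ratio $2^{(n-2\alpha)/3}$, split the three regimes, equilibrate via the choice of $\widetilde{M}_{j_0}$, and convert $J\simeq \alpha_0^{-1}\log_2(\varepsilon^{-1})$ into the stated $\varepsilon$-asymptotics. No gaps.
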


\begin{remark} 
	The results of Proposition~\ref{prop:cov-compr} on the compression 
	of the covariance matrix can, of course, also be used  
	in combination with \emph{single-level} Monte Carlo estimation 
	by computing only those entries of the sample covariance matrix  
	which are needed according to the tapering scheme 
	\eqref{eq:AprCompr}--\eqref{eq:AprCompBjj}. 
\end{remark}

\begin{remark}	
	The MLMC convergence results of this section hold in the 
	root mean squared sense. 
	Bounds that hold in probability could also be derived.
	For the case of \emph{single-level} Monte Carlo estimation with $M$ samples, 
	a computational cost estimate of $\cO(M p  ) $ 
	follows readily by~\cite[Lem.~A.3]{BickLevina08a}.
	Specifically~\cite[Lem.~A.3]{BickLevina08a} 
	(where convergence in probability is derived based on \cite{SauStat91})
	may be applied to the preconditioned compressed covariance matrix 
	$\bD_p^{\ra} \bC_p^\eps \bD_p^{\ra} 
	= 
	(\bD_p^{\ra} \bC_p \bD_p^{\ra})^\eps 
	= 
	\bbE\bigl[ (\bD_p^{\ra}\,\widetilde{\zvec}_p) (\bD_p^{\ra}\,\widetilde{\zvec}_p)^\top \bigr]$. 
	This matrix satisfies the assumptions of~\cite[Lem.~A.3]{BickLevina08a}, 
	since it is uniformly 
	well-conditioned. This is a consequence of Proposition~\ref{prop:cov-compr}(ii).
	Similarly, the use of wavelet coordinates will imply $p$-uniform
	bounds in several classes of regression methods. 
	Bounds of covariance estimators that hold in probability may be of interest 
	when certified bounds on the condition number of the estimator are required. 
	For example when the estimator of the covariance matrix is further 
	used inside an iterative solver for linear systems
	to approximate the precision matrix.
\end{remark} 

\subsection{Spatial prediction in statistics}
\label{subsec:krging}

Optimal linear prediction of random fields
which is also known as ``kriging'',
is a widely used methodology 
in spatial statistics for interpolating spatial data 
subject to uncertainty (see, e.g., \cite{stein99} and the references there).
We note that the kriging predictor can be regarded 
as an orthogonal projection in $L^2(\Omega)$ 
onto the finite-dimensional subspace 
generated by the observations. 
Thus, the theory for kriging without observation noise
may be formulated in an infinite-dimensional setting, 
with a separable Hilbert space as state space of a 
GRF, see e.g.\ \cite{owhadi2015conditioning}.  
For the computational algorithm 
discussed in this section 
we shall consider the GRF $\GP$ 
defined through the SPDE \eqref{eq:WhNois} 
and its (bi-infinite) covariance matrix $\bC\in\bbR^{\bbN\times\bbN}$ 
represented in the MRA $\bPsi$, 
which is truncated to a finite dimension $p$, see \eqref{eq:p=p(J)}, 
$\bC\approx\bC_p\in\bbR^{p\times p}$. 

A typical model in applications is 
to assume that $\GP$ is observed at $K$ distinct
spatial locations $\{x_i\}_{i=1}^K\subset \cM$ under 
i.i.d.\ centered Gaussian measurement noise: 
\[
y_i = \GP(x_i) + \eta_i , 
\quad 
i=1,\ldots,K, 
\qquad 
\eta_i \sim \normal(0,\sigma^2) 
\quad 
\text{i.i.d.} 
\]
One is now interested in predicting the field $\GP$ 
at an unobserved location $x_*\in\cM$ 
(or at several locations)
conditioned on the observations $\{y_i\}_{i=1}^K$. 
In other words, one needs to calculate the 
posterior mean $\bbE[\GP(x_*)|y_1 ,\ldots,y_K]$. 
However, this task turns out to be computationally challenging 
as, assuming a finite spatial resolution of dimension $p$ 
for approximating the GRF $\GP$, 
direct approaches to solve the arising linear systems of equations
entail computational costs 
which are cubic either in $K$ or in $p$ or in both. 

In this section we address how 
the multiresolution representation of the covariance 
and of the precision matrices of $\GP$ in the MRA $\bPsi$
allow an \emph{approximate, compressed kriging process} 
whereby the matrices and vectors are numerically sparse
due to the cancellation properties of the MRAs.
For $p\in \bbN$, 
we truncate the bi-infinite covariance matrix $\bC$ 
of the GRF $\GP$ in the MRA $\bPsi$ to the 
``finite-section'' matrix
$\bC_{p}\in\bbR^{p\times p}$ 
using the index set $\Lambda_J\subset  \mathcal{J}$, see~\eqref{eq:p=p(J)}, 
where $p= \# (\Lambda_J)$. 
Also, we consider an abstract setting with functionals 
$g_1,\ldots,g_K$ 
which gives us the model 
\[
\yvec = \mathbf{G} \widetilde{\zvec} + \boldsymbol{\eta}, 
\] 
where  
$\yvec = (y_1,\ldots,y_K)^\T$ 
is the random vector corresponding to the observations, 
$\mathbf{G} \in\bbR^{K\times p}$ 
is the observation matrix with entries 
$G_{i(j,k)} := \langle g_i, \widetilde \psi_{j,k} \rangle$, 
and $\widetilde{\zvec}$, $\boldsymbol{\eta}$ 
are centered multivariate Gaussian 
distributed random vectors with covariance matrices 
$\bC_{p}\in\bbR^{p\times p}$ and 
$\sigma^2 \mathbf{I}\in\bbR^{K\times K}$, respectively.
We recall that the GRF 
$\GP = \widetilde{\zvec}^\top \widetilde{\bPsi}$
is represented in the dual MRA $\widetilde{\bPsi}$, 
see~\eqref{eq:GRF_rep_by_dual_MRA}. 
Admissible choices for the functionals $g_i$ 
are local averages around points $x_i\in\cM$.
The joint distribution  
of $\widetilde{\zvec}$ and $\yvec$ is thus given by 
\[
\begin{pmatrix} 
\widetilde{\zvec} \\
\yvec
\end{pmatrix} 
\sim \normal \left(
\begin{pmatrix} 
\zerovec \\
\zerovec
\end{pmatrix}, 
\begin{pmatrix} 
\bC_{p} & \bC_{p} \mathbf{G}^\T \\
\mathbf{G}\bC_{p} & \mathbf{G} \bC_{p} \mathbf{G}^\T + \sigma^2 \mathbf{I} 
\end{pmatrix}
\right). 
\]
Then, the law of the posterior 
$\widetilde{\zvec}| \yvec$ is again Gaussian and 
the kriging predictor is given by the posterior mean, namely, 
\begin{equation}
\label{eq:krig-pred_0} 
\boldsymbol{\mu}_{\widetilde{\zvec}|\yvec} 
= 
\bC_{p} \mathbf{G}^\T 
\left( \mathbf{G} \bC_{p} \mathbf{G}^\T +\sigma^2 \mathbf{I} \right)^{-1} 
\yvec . 
\end{equation}

In what follows, we will address how  
the  posterior mean in \eqref{eq:krig-pred_0}
can be approximately realized with low computational cost
when represented in 
the MRA~$\widetilde{\bPsi}$ exploiting wavelet compression techniques.

We will proceed in two steps. 
First, we will analyze the computational cost for approximately computing the posterior mean. 
Secondly, 
we estimate the consistency error incurred by the compression of the covariance matrix.

The main challenge is the efficient numerical 
evaluation of 
$\left( 
\mathbf{G} 
\bC_p \mathbf{G}^\T +\sigma^2 \mathbf{I} \right)^{-1} 
\yvec $.
It will be approximated numerically 
by the conjugate gradient (CG) method
applied to approximately solve the linear system
to find $\vvec$
such that $( 
\mathbf{G} 
\bC_p \mathbf{G}^\T 
+\sigma^2 \mathbf{I} )  \vvec = \yvec$.
It is well-known that after $N$ iterations of CG 
to approximately solve the linear system $\bA \wvec = \fvec$
by $\wvec^N \in {\mathbb R}^K$
for a SPD matrix $\bA$ starting from the initial guess being 
the zero vector, it holds \cite[Thm.\ 10.2.6]{GolubvLoan4th}
with $\| \wvec \|_{\mathbf{A}}^2 := \wvec^\top \mathbf{A} \wvec$ that
\begin{equation}
\label{eq:est_CG}
\bigl\| \wvec - \wvec^N \bigr\|_{{\mathbf{A}}}
\leq 
2
\left( \frac{ \sqrt{ \operatorname{cond}_2(\mathbf{A})} - 1}{ 
	\sqrt{ \operatorname{cond}_2(\mathbf{A}) } + 1} \right)^N
\|\wvec\|_{\mathbf{A}}.
\end{equation}
To estimate the condition number of the matrix 
$\mathbf{A} := \mathbf{G} \bC_p \mathbf{G}^\top + \sigma^2 \mathbf{I} $, 
we observe 
\[ 
\forall \vvec \in \bbR^K: \quad 
\vvec^\top
\mathbf{G} \bC_p \mathbf{G}^\top
\vvec 
\geq 0
\;
.
\] 
On the other hand, 
by Proposition~\ref{prop:precon:C}\ref{prop:precon:CLambda}
\[ 
\forall \vvec \in\bbR^K: \quad 
\vvec^\top
\mathbf{G} \bC_p \mathbf{G}^\top
\vvec 
\leq 
c_+
\vvec^\top
\mathbf{G}  \mathbf{G}^\top
\vvec 
.
\] 
We assume that $g_i\in L^2(\cM)$, $i=1,\ldots,K$, 
and that they have \emph{disjoint supports}, 
i.e., $\mu({\rm supp}(g_i) \cap   {\rm supp}(g_{i'}))=0$
for any $i,i' = 1,\ldots,K$ such that $i\neq i'$. 
We obtain with~\eqref{eq:Riesz} that, 
for every $\vvec\in \bbR^K$, 
\begin{align*} 
\vvec^\top
\mathbf{G}  \mathbf{G}^\top
\vvec
&=
\sum_{i,i'=1}^K
\sum_{j,k}
v_i 
\langle g_i , \widetilde{\psi}_{j,k} \rangle  
\langle g_{i'} , \widetilde{\psi}_{j,k} \rangle 
v_{i'}
\\
& = 
\sum_{j,k}
\biggl|
\sum_{i=1}^K 
v_i 
\langle  g_i , \widetilde{\psi}_{j,k}
\rangle 
\biggr|^2
=
\sum_{j,k}
\biggl| 
\biggl\langle 
\sum_{i=1}^K 
v_ig_i , \widetilde{\psi}_{j,k}
\biggr\rangle 
\biggr|^2
\simeq 
\biggl\|
\sum_{i=1}^K 
v_ig_i 
\biggr\|_{L^2(\cM)}^2. 
\end{align*} 
The disjoint support property 
of $g_1,\ldots,g_K$ implies that
\[ 
\biggl\|
\sum_{i=1}^K 
v_i g_i 
\biggr\|_{L^2(\cM)}^2
=
\sum_{i=1}^K
v_i^2
\|g_i\|_{L^2(\cM)}^2
\leq
\|\vvec \|^2_2
\max_{i=1,\ldots,K}
\bigl\{ \|g_i\|_{L^2(\cM)}^2 \bigr\}. 
\] 
Thus, there exists a constant $C>0$ that depends 
neither on $K$ nor on $p$ 
such that for every $\vvec\in \bbR^K$
\[ 	
\vvec^\top
\mathbf{G}  
\bC_p \mathbf{G}^\top
\vvec
\leq 
C
\|\vvec \|^2_2
\max_{i=1,\ldots,K}
\bigl\{ \|g_i\|_{L^2(\cM)}^2 \bigr\}.
\] 
We conclude that, for every $\vvec\in \bbR^K$, 
\begin{equation}
\label{eq:est_spectrum_kriging}
\sigma^2 \|\vvec \|_2^2
\leq
\vvec^\top 
(
\mathbf{G}  
\bC_p \mathbf{G}^\top
+ 
\sigma^2\mathbf{I}
)
\vvec
\leq 
\Bigl(C\max_{i=1,\ldots,K}
\bigl\{ \|g_i\|_{L^2(\cM)}^2 \bigr\}
+\sigma^2 
\Bigr)
\|\vvec \|^2_2, 
\end{equation}
which implies that
\begin{equation}
\label{eq:cond_number_kriging}
\operatorname{cond}_2
\bigl(\mathbf{G}  
\bC_p \mathbf{G}^\top
+ 
\sigma^2\mathbf{I} \bigr)
\leq 
\frac{ C \max_{i=1,\ldots,K}
	\bigl\{ \|g_i\|_{L^2(\cM)}^2 \bigr\}
}{\sigma^2} + 1
.
\end{equation}
The argument applies verbatim to the compressed matrix $\bC^\varepsilon_p$, 
i.e., 
\begin{equation}
\label{eq:cond_number_kriging_eps}
\operatorname{cond}_2
\bigl(\mathbf{G}  
\bC_p^{\varepsilon} \mathbf{G}^\top
+ 
\sigma^2\mathbf{I} \bigr)
\leq 
\frac{C\max_{i=1,\ldots,K}
	\bigl\{ \|g_i\|_{L^2(\cM)}^2 \bigr\}
}{\sigma^2} + 1
.
\end{equation}
Let us denote by
\begin{equation}
\label{eq:krig-pred_0_compr}
\boldsymbol{\mu}_{\widetilde{\zvec}|\yvec}^{\varepsilon}
:=
\bC_{p}^\varepsilon \mathbf{G}^\T 
\left( 
\mathbf{G} 
\bC_{p}^\varepsilon \mathbf{G}^\T 
+\sigma^2 \mathbf{I} 
\right)^{-1} 
\yvec
\end{equation}
the posterior mean that results
from the compressed covariance matrix $\bC_p^\varepsilon$.

Furthermore, let 
$\vvec^N$ be 
the result of $N$ iterations of CG to approximately solve 
the linear system 
$(\mathbf{G} 
\bC_p^{\varepsilon} \mathbf{G}^\T 
+\sigma^2 \mathbf{I} )  \vvec = \yvec$.
Then, by~\eqref{eq:cond_number_kriging}, \eqref{eq:est_spectrum_kriging}
and by~\eqref{eq:est_CG}
\[ 
\bigl\| \vvec - \vvec^N \bigr\|_2
\leq 
2
\kappa
\left(
\frac{\sqrt{\kappa}-1}{\sqrt{\kappa}+1} 
\right)^N
\|\vvec\|_2
,
\] 
where $\kappa := \sigma^{-2}C\max_{i=1,\ldots,K}
\{ \|g_i\|_{L^2(\cM)}^2\}
+1 $.
Furthermore, we denote by 
$\boldsymbol{\mu}_{\widetilde{\zvec}|\yvec}^{\varepsilon,N} 
: = \bC_p^\varepsilon \mathbf{G}^\top \vvec^N$
and observe that 
\begin{equation}
\label{eq:kriging_CG_error}
\bigl\| 
\boldsymbol{\mu}^\varepsilon_{\widetilde{\zvec}|\yvec} 
- 
\boldsymbol{\mu}_{\widetilde{\zvec}|\yvec}^{\varepsilon,N} 
\bigr\|_2
\leq 
2
\kappa
\left(
\frac{\sqrt{\kappa}-1}{\sqrt{\kappa}+1}\right)^N
\| \bC_p^\varepsilon  \mathbf{G}^\top \|_2
\| \vvec\|_2
.
\end{equation} 
\begin{theorem}\label{thm:SprseKrgCst}
	The computational cost of 
	$\boldsymbol{\mu}_{\widetilde{\zvec}|\yvec}^{\varepsilon,N}$
	to achieve a consistency error for any $\delta\in (0,1)$
	\[
	\bigl\| 
	\boldsymbol{\mu}^\varepsilon_{{\widetilde{\zvec}}|\yvec} 
	- 
	\boldsymbol{\mu}_{\widetilde{\zvec}|\yvec}^{\varepsilon,N} 
	\bigr\|_2 
	= \cO(\delta)
	\] 
	is 
	$\cO ( (K\log(p) +p) \sigma^{-1}\log(\delta^{-1} \sigma^{-2}) )$, 
	where 
	\[
	N\gtrsim  \sigma^{-1}\log(\delta^{-1} \sigma^{-2}).
	\]
\end{theorem}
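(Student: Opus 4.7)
The plan is to combine the CG iteration count with a per-iteration cost analysis that exploits the numerical sparsity of $\bC_p^\varepsilon$ (from Proposition~\ref{prop:cov-compr}\ref{item:item_iii_Cov_compr}) and a corresponding sparsity of the observation matrix~$\mathbf{G}$. The ingredients needed are already essentially at hand in \eqref{eq:cond_number_kriging_eps} and \eqref{eq:kriging_CG_error}; the work is to trace the constants through.

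\textbf{Step 1 (iteration count).} From \eqref{eq:cond_number_kriging_eps}, the condition number $\kappa$ of $\mathbf{G}\bC_p^\varepsilon\mathbf{G}^\top + \sigma^2\mathbf{I}$ satisfies $\kappa \lesssim \sigma^{-2}$ (with constants depending only on the functionals $g_i$ and on $c_+$), so $\sqrt{\kappa}\lesssim \sigma^{-1}$ and
\[
\left(\tfrac{\sqrt{\kappa}-1}{\sqrt{\kappa}+1}\right)^N \leq \exp(-2N/\sqrt{\kappa}) \lesssim \exp(-cN\sigma).
\]
Using that $\|\vvec\|_2 \leq \sigma^{-2}\|\yvec\|_2$ by the lower bound in \eqref{eq:est_spectrum_kriging} and that $\|\bC_p^\varepsilon\mathbf{G}^\top\|_2$ is bounded independently of $p$ (by the uniform spectral bound from Proposition~\ref{prop:cov-compr}\ref{item:item_ii_Cov_compr} together with the control of $\mathbf{G}\mathbf{G}^\top$ already used in deriving \eqref{eq:cond_number_kriging}), the prefactor $2\kappa\|\bC_p^\varepsilon\mathbf{G}^\top\|_2\|\vvec\|_2$ in \eqref{eq:kriging_CG_error} grows only polynomially in $\sigma^{-1}$. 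Requiring the right-hand side of \eqref{eq:kriging_CG_error} to be $\mathcal{O}(\delta)$ therefore yields $N\gtrsim \sigma^{-1}\log(\delta^{-1}\sigma^{-2})$, as asserted.

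\textbf{Step 2 (per-iteration cost).} Each CG step requires one matrix-vector product with $\mathbf{A}=\mathbf{G}\bC_p^\varepsilon\mathbf{G}^\top + \sigma^2\mathbf{I}$. By Proposition~\ref{prop:cov-compr}\ref{item:item_iii_Cov_compr}, $\bC_p^\varepsilon$ has $\mathcal{O}(p)$ nonzero entries, so $\bC_p^\varepsilon \uvec$ costs $\mathcal{O}(p)$ for any $\uvec\in\bbR^p$. For $\mathbf{G}$, recall $G_{i,(j,k)} = \langle g_i, \widetilde\psi_{j,k}\rangle$. Under the local averaging assumption on the $g_i$ (fixed $\mathcal{O}(1)$-sized supports, disjoint), the vanishing-moment/localization property \eqref{eq:SupPsi} implies that for each $i$, at each level~$j\leq J$, only $\mathcal{O}(1)$ dual wavelets $\widetilde\psi_{j,k}$ intersect $\operatorname{supp}(g_i)$. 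Summing over the $J = \mathcal{O}(\log p)$ levels gives $\mathcal{O}(\log p)$ nonzero entries per row of $\mathbf{G}$, hence $\mathcal{O}(K\log p)$ nonzeros in total. Therefore multiplying by $\mathbf{G}$ or $\mathbf{G}^\top$ costs $\mathcal{O}(K\log p)$, the diagonal shift $\sigma^2\mathbf{I}$ is trivial, and a single application of $\mathbf{A}$ costs $\mathcal{O}(K\log p + p)$.

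\textbf{Step 3 (total cost and postprocessing).} Multiplying steps 1 and 2 yields a CG cost of $\mathcal{O}\bigl((K\log p + p)\,\sigma^{-1}\log(\delta^{-1}\sigma^{-2})\bigr)$ to obtain $\vvec^N$; the final evaluation $\boldsymbol{\mu}_{\widetilde\zvec|\yvec}^{\varepsilon,N} = \bC_p^\varepsilon\mathbf{G}^\top\vvec^N$ contributes another $\mathcal{O}(K\log p + p)$, which is absorbed. Combined with the error estimate from Step~1 this is the claim. The main obstacle is Step~2: establishing the $\mathcal{O}(K\log p)$ sparsity of $\mathbf{G}$ rigorously requires invoking the localization \eqref{eq:SupPsi} of the dual MRA $\widetilde{\bPsi}$ together with the assumption that the $g_i$ are localized (e.g.\ supported in balls of fixed radius), so that the number of dual wavelets on each level meeting a fixed support is uniformly bounded; everything else reduces to collecting constants from Propositions~\ref{prop:precon:C} and~\ref{prop:cov-compr} and the CG estimate \eqref{eq:est_CG}.
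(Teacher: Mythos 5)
Your proposal is correct and follows essentially the same route as the paper's proof: bound the CG iteration count via the condition-number estimate \eqref{eq:cond_number_kriging_eps} and the error bound \eqref{eq:kriging_CG_error}, then count the $\cO(p)$ nonzeros of $\bC_p^\varepsilon$ and the $\cO(K\log p)$ nonzeros of $\mathbf{G}$ to get the per-iteration cost. You merely fill in two steps the paper leaves as assertions (the "elementary manipulations" giving $N\gtrsim\sigma^{-1}\log(\delta^{-1}\sigma^{-2})$, and the level-wise localization argument for the sparsity of $\mathbf{G}$), which is a welcome elaboration rather than a different argument.
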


\begin{proof}
	By elementary manipulations, we observe that the made choice on the number of iterations
	in CG $N$, 
	guarantees the claimed consistency error by~\eqref{eq:kriging_CG_error}.
	
	The matrix $\bC_p^\varepsilon$ has $\cO(p)$ non-zero entries and
	the matrix $\mathbf{G}$ has $\cO(K\log(p))$ many non-zero entries.
	This implies that the application of the matrix $\mathbf{G} 
	\bC_p^{\varepsilon} \mathbf{G}^\top$ to a vector 
	has computational cost $\cO(K\log(p) + p)$,
	which is required $N$ times to compute $\vvec^N$.
	The computational cost of the application 
	of $\bC_p^\varepsilon \mathbf{G}^\top$ is again $\cO(p+ K\log(p))$. 
	Thus, the claimed estimate on the computational cost follows.
\end{proof}
\begin{remark} \label{rmk:OK+p}
	As single-scale basis functions have supports proportional
	to the step size, the observation matrix $\mathbf{G}$ has only
	$\cO(K)$ many, nonzero entries when computed with 
	respect to the single-scale basis $\{ \widetilde{\phi}_{j,k}\}$ 
	in comparison with $\cO(K + \log(p))$ many nonzero entries 
	when computed with 
	respect to the wavelet basis $\{\widetilde{\psi}_{j,k}\}$. 
	Denoting
	by ${\bf T}_{\widetilde{\phi}\to\widetilde{\psi}}$
	the dual fast wavelet transform, 
	both versions of the observation matrix are interconnected by
	${\bf G}_{\widetilde{\phi}}{\bf T}_{\widetilde{\phi}\to\widetilde{\psi}}^{\T} 
	= {\bf G}_{\widetilde{\psi}}$. 
	Consequently, as the fast wavelet transform
	is of linear complexity, computing the action of 
	$\mathbf{G}_{\widetilde{\psi}}  
	\bC_p^{\varepsilon} \mathbf{G}_{\widetilde{\psi}}^\top$
	to a vector via
	\begin{equation}\label{eq:kriging_matrix}
	{\bf G}_{\widetilde{\phi}}{\bf T}_{\widetilde{\phi}\to\widetilde{\psi}}^{\T}
	\bC_p^{\varepsilon} 
	{\bf T}_{\widetilde{\phi}\to\widetilde{\psi}}\mathbf{G}_{\widetilde{\phi}}^\T
	\end{equation}
	reduces the complexity from $\cO (K\log(p)+p)$ to $\cO (K+p)$.
	An illustration of this matrix product is found in Figure~\ref{fig:kriging},
	see Subsection~\ref{subsct:kriging} for the details.
\end{remark}
We estimate the error 
$\|\boldsymbol{\mu}_{{\widetilde{\zvec}} | \yvec} 
- \boldsymbol{\mu}_{{\widetilde{\zvec}} |\yvec}^{\varepsilon}\|_2 $
incurred by using the 
compressed covariance matrix $\bC_p^\varepsilon$. 
\begin{proposition}
	Let the assumptions of Proposition~\ref{prop:cov-compr} hold.
	Recall that $p=p(J)$ for $J\geq j_0$.
	
	Then, 
	there exists a constant $C>0$ independent of $J$
	such that
	\[ 
	\bigl\|
	\boldsymbol{\mu}_{{\widetilde{\zvec}}|\yvec}  
	- 
	\boldsymbol{\mu}_{{\widetilde{\zvec}} |\yvec}^{\varepsilon} 
	\bigr\|_2
	\leq 
	C 
	\sigma^{-4}
	\|\mathbf{y}\|_2 
	\, 
	2^{-2\ra J}.
	\] 
\end{proposition}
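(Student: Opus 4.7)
The plan is to compare the two kriging predictors through a resolvent-type identity and then control the resulting pieces using the consistency estimate of Proposition~\ref{prop:cov-compr}\ref{item:item_iv_Cov_compr} together with the spectral bound \eqref{eq:est_spectrum_kriging}. Writing $\mathbf{A} := \mathbf{G}\bC_p\mathbf{G}^\T+\sigma^2\mathbf{I}$ and $\mathbf{A}^\varepsilon := \mathbf{G}\bC_p^\varepsilon\mathbf{G}^\T+\sigma^2\mathbf{I}$, I would decompose
\[
\boldsymbol{\mu}_{\widetilde{\zvec}|\yvec} - \boldsymbol{\mu}_{\widetilde{\zvec}|\yvec}^{\varepsilon}
= (\bC_p-\bC_p^\varepsilon)\mathbf{G}^\T\mathbf{A}^{-1}\yvec
+ \bC_p^\varepsilon\mathbf{G}^\T(\mathbf{A}^\varepsilon)^{-1}\mathbf{G}(\bC_p^\varepsilon-\bC_p)\mathbf{G}^\T\mathbf{A}^{-1}\yvec,
\]
which follows from the identity $\mathbf{A}^{-1}-(\mathbf{A}^\varepsilon)^{-1}=(\mathbf{A}^\varepsilon)^{-1}(\mathbf{A}^\varepsilon-\mathbf{A})\mathbf{A}^{-1}$ and $\mathbf{A}^\varepsilon-\mathbf{A}=\mathbf{G}(\bC_p^\varepsilon-\bC_p)\mathbf{G}^\T$.

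The crucial estimate is $\|\bC_p-\bC_p^\varepsilon\|_2\lesssim 2^{-2\ra J}$. To establish it, I would apply Proposition~\ref{prop:cov-compr}\ref{item:item_iv_Cov_compr} with $t=t'=0$: for any $\vvec,\wvec\in\bbR^p$, setting $v:=\vvec^\T\bPsi$, $w:=\wvec^\T\bPsi$, one has $\vvec^\T(\bC_p-\bC_p^\varepsilon)\wvec=\langle(\cC-\cC_p^\varepsilon)Q_J v, Q_J w\rangle$, so by \ref{item:item_iv_Cov_compr} and the $L^2$-Riesz basis property $\|v\|_{L^2(\cM)}\simeq\|\vvec\|_2$ in \eqref{eq:Riesz}, one obtains $|\vvec^\T(\bC_p-\bC_p^\varepsilon)\wvec|\lesssim 2^{-2\ra J}\|\vvec\|_2\|\wvec\|_2$, yielding the claimed operator-norm bound.

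The remaining ingredients are all uniform in $J$: $\|\mathbf{G}\|_2=\|\mathbf{G}^\T\|_2\lesssim \max_i\|g_i\|_{L^2(\cM)}$ from the disjoint-support computation preceding \eqref{eq:est_spectrum_kriging}; $\|\mathbf{A}^{-1}\|_2,\|(\mathbf{A}^\varepsilon)^{-1}\|_2\leq\sigma^{-2}$ from the lower bound in \eqref{eq:est_spectrum_kriging} (and its analogue \eqref{eq:cond_number_kriging_eps} for the compressed version); and $\|\bC_p^\varepsilon\|_2\leq\|\bC_p\|_2+\|\bC_p-\bC_p^\varepsilon\|_2\lesssim 1$, where $\|\bC_p\|_2\leq c_+$ uniformly by Proposition~\ref{prop:precon:C}\ref{prop:precon:CLambda}. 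Combining these, the first term of the decomposition is bounded by $C\sigma^{-2}2^{-2\ra J}\|\yvec\|_2$ and the second by $C\sigma^{-4}2^{-2\ra J}\|\yvec\|_2$; absorbing both into the larger factor gives the stated $\cO(\sigma^{-4}\|\yvec\|_2\,2^{-2\ra J})$ bound.

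The main technical step is the first one: converting the functional-analytic consistency estimate of Proposition~\ref{prop:cov-compr}\ref{item:item_iv_Cov_compr}, which involves the projector $Q_J$ and Sobolev norms, into a clean spectral-norm bound on the finite matrix $\bC_p-\bC_p^\varepsilon$. The remaining estimates are purely algebraic manipulations with uniform constants, and the only place $\sigma$ enters is through the two $\sigma^{-2}$ factors from $\mathbf{A}^{-1}$ and $(\mathbf{A}^\varepsilon)^{-1}$.
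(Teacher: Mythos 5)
Your proposal is correct and follows essentially the same route as the paper: the paper's proof consists of the resolvent identity $\|(\mathbf{A}+\sigma^2\mathbf{I})^{-1}-(\mathbf{B}+\sigma^2\mathbf{I})^{-1}\|_2\leq\sigma^{-4}\|\mathbf{A}-\mathbf{B}\|_2$ followed by the remark that the claim then follows from Proposition~\ref{prop:cov-compr} and the two formulas for the posterior means, which is exactly your decomposition written out. Your version is in fact more complete, since you make explicit the step the paper leaves to the reader --- converting the consistency estimate of Proposition~\ref{prop:cov-compr}\ref{item:item_iv_Cov_compr} with $t=t'=0$ and the $L^2$ Riesz basis property into the spectral-norm bound $\|\bC_p-\bC_p^{\varepsilon}\|_2\lesssim 2^{-2\ra J}$, which is where the rate in the statement actually comes from.
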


\begin{proof}
	The result is an elementary bound obtained from the $2$-norm of SPD matrices
	in terms of their spectrum.
	For any two symmetric, 
	positive semi-definite matrices $\mathbf{A},\mathbf{B}\in \bbR^{K\times K}$ 
	it holds that 
	\[ 
	\|
	(\mathbf{A} + \sigma^2 \mathbf{I})^{-1}
	-
	(\mathbf{B} + \sigma^2 \mathbf{I})^{-1}
	\|_2
	\leq 
	\sigma^{-4}
	\|\mathbf{A}-\mathbf{B}\|_2.
	\] 
	This can be seen by
	\[ 
	\mathbf{A}_\sigma^{-1} - \mathbf{B}_\sigma^{-1}
	=
	\mathbf{A}_\sigma^{-1}\mathbf{B}_\sigma \mathbf{B}_\sigma^{-1} - \mathbf{B}_\sigma^{-1}
	=
	(\mathbf{A}_\sigma^{-1}\mathbf{B}_\sigma - \mathbf{I})\mathbf{B}_\sigma^{-1}
	\] 
	and 
	\[ 
	\mathbf{A}_\sigma^{-1}\mathbf{B}_\sigma - \mathbf{I}
	=
	\mathbf{A}_\sigma^{-1}\mathbf{B}_\sigma - \mathbf{A}_\sigma^{-1}\mathbf{A}_\sigma
	=
	\mathbf{A}_\sigma^{-1} (\mathbf{B}_\sigma - \mathbf{A}_\sigma), 
	\] 
	which implies
	\[  
	\|\mathbf{A}_\sigma^{-1} - \mathbf{B}_\sigma^{-1}\|_2
	\leq 
	\|\mathbf{A}_\sigma^{-1}\|_2 
	\|\mathbf{B}_\sigma - \mathbf{A}_\sigma\|_2 
	\|\mathbf{B}_\sigma^{-1}\|_2 
	= 
	\|\mathbf{A}_\sigma^{-1}\|_2 
	\|\mathbf{B}_\sigma^{-1}\|_2  
	\|\mathbf{A} - \mathbf{B} \|_2
	, 
	\] 
	where $\mathbf{A}_\sigma := \mathbf{A} +\sigma^2\mathbf{I}$
	and $\mathbf{B}_\sigma := \mathbf{B} +\sigma^2\mathbf{I}$.
	The assertion of the proposition now follows by 
	Proposition~\ref{prop:cov-compr} with \eqref{eq:krig-pred_0} 
	and \eqref{eq:krig-pred_0_compr}.
\end{proof}

\section{Numerical experiments}\label{sec:numexp}

\subsection{Preliminary remarks and settings}

For the numerical illustration of our results,
we shall consider the boundary of the domain shown  
in Figure~\ref{fig:domain}. 
It is given by the $2\pi$-periodic, analytic parametrization
\[
\gamma:[0,2\pi]\to\Gamma = \partial\Omega,\quad
\gamma(\phi) = g(\phi)\begin{bmatrix}\cos(\phi)\\\sin(\phi)\end{bmatrix},
\]
where 
\[
g(\phi) 
= 
\alpha_0 + \frac{1}{100}\sum_{k=1}^5 
\bigl( \alpha_{-k} \sin(k\phi) + \alpha_k \cos(k\phi) \bigr)  
\]
is a finite Fourier series with the following coefficients:

\begin{center}{\small 
		\begin{tabular}{llllll} 
			$\alpha_{-5} = 2.2$, 
			&
			$\alpha_{-4} = 0.56$, 
			& 
			$\alpha_{-3} = 0.14$, 
			&
			$\alpha_{-2} = 1.1$, 
			& 
			$\alpha_{-1} = 1.4$, 
			& 
			$\alpha_0 = 50$,
			\\
			$\alpha_5 =0.89$, 
			& 
			$\alpha_4 =-1.5$,  
			& 
			$\alpha_3 =-1.2$, 
			& 
			$\alpha_2 =-1.5$, 
			&
			$\alpha_1 = -0.57$. 
			& 
		\end{tabular}
}\end{center}

The covariance kernels under consideration are from 
the Mat\'ern family \cite{HW,matern1960}, namely\footnote{Here, 
	we represented the Mat\'ern kernel 
	$k_\nu(z) = \frac{2^{1-\nu} \sigma^2}{\Gamma(\nu)} \bigl( \sqrt{2\nu} \frac{z}{\ell} \bigr)^{\nu} 
	K_{\nu}\bigl(\sqrt{2\nu} \frac{z}{\ell} \bigr)$,  
	with $\sigma^2=1$, as a product of an exponential and a polynomial 
	which is possible for $\nu = q - 1/2$ with $q\in\bbN$.}
\begin{gather*}
k_{1/2}(z) = \exp\biggl( -\frac{z}{\ell} \biggr), \quad
k_{3/2}(z) = \biggl( 1+\frac{\sqrt{3}z}{\ell} \biggr) \exp\biggl(-\frac{\sqrt{3}z}{\ell} \biggr),\\
k_{5/2}(z) = \biggl(1+\frac{\sqrt{5}}{\ell}z+\frac{5}{3\ell}z^2 \biggr) \exp\biggl(-\frac{\sqrt{5}z}{\ell}\biggr),
\end{gather*}
where $z=\|x-y\|_2$ for $x,y\in \Gamma$ 
and where the non-dimensional quantity $\ell>0$ denotes the spatial correlation legth. 
These covariance operators are pseudodifferential 
operators of order $r = -2$, $r = -4$, and $r = -6$. 

\begin{figure}[hbt]
	\includegraphics[width=0.45\textwidth]{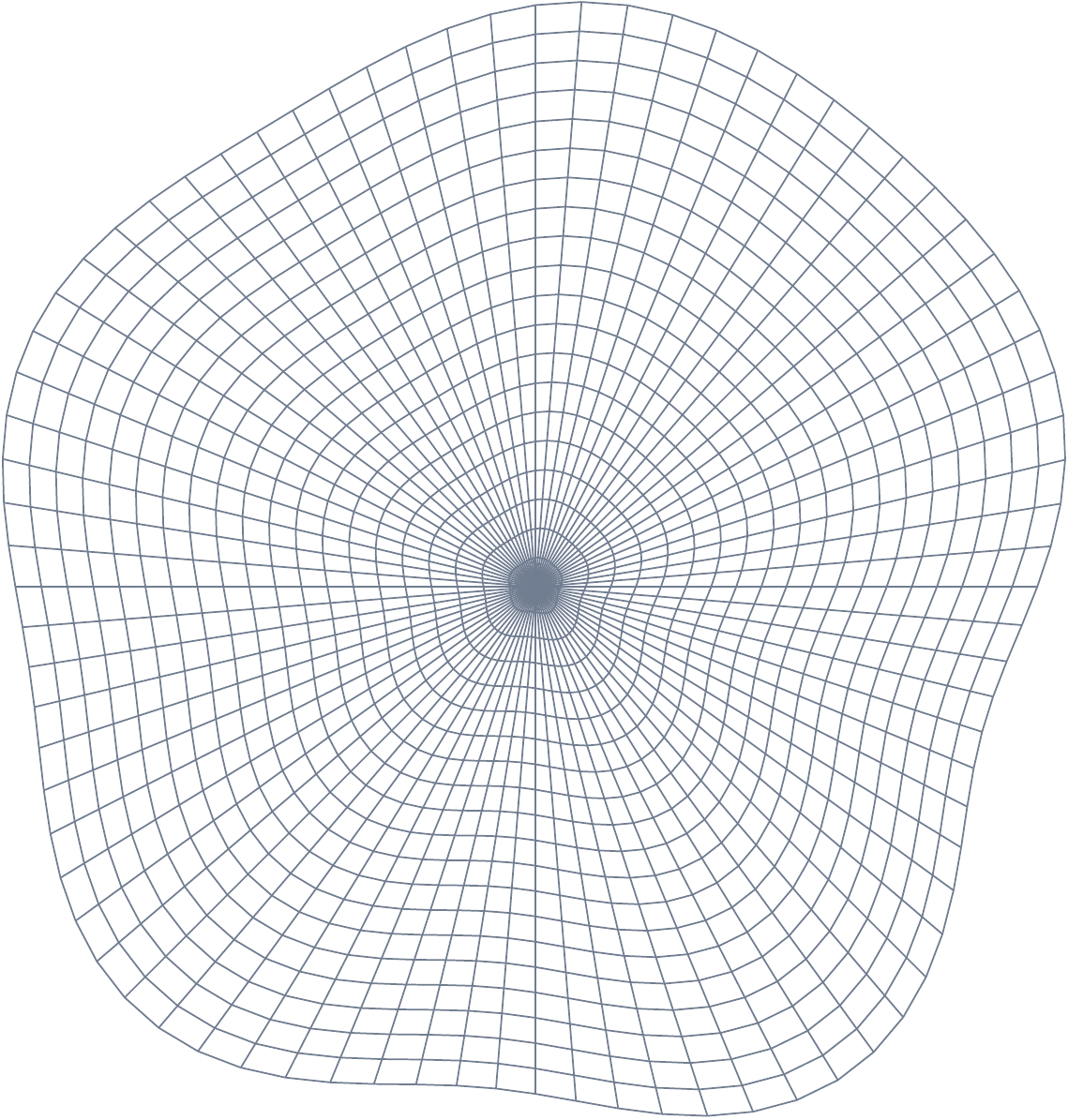}
	\caption{\label{fig:domain}
		The domain under consideration, with a co-ordinate grid.
		Its boundary is used for the numerical tests.}
\end{figure}

We discretize the covariance operators by the (periodic) biorthogonal 
spline wave\-lets $\bPsi^{(d,\widetilde{d})}$ constructed in \cite{CDF}. 
This class of wavelet bases has two parameters, namely the order 
$d$ of the underlying spline space and the number of vanishing 
moments $\widetilde{d}\ge d$, where $d+\widetilde{d}$ is even. 
When $\widetilde{d}$ increases, then the dual wavelet functions
become more regular, enabling preconditioning of 
pseudodifferential operators of negative order. 

For computing the compressed covariance matrix,
the domain is scaled to unit diameter.
Then, we choose $a=a'=2$ and $d' = d + (\widetilde{d}-d+r)/4$ 
in \eqref{eq:Fixaa'}. 
This choice turned out to be robust for different applications. 
The $p\times p$ compressed covariance matrix can be assembled 
in cost which scales linearly with $p$
if exponentially convergent 
$hp$--quadrature methods are employed for the computation of 
matrix entries, cf.~\cite{CvS11_178,HS1,HS2}. 
Further matrix operations such as matrix-vector multiplications
admit additional \emph{a-posteriori compression} which is here
applied. This was found to reduce 
the number of nonzero entries by an additional  
factor between $2$ and $5$, see \cite[Thm.~8.3]{DHSWaveletBEM2007}. 
The pattern of the compressed system matrix shows the 
typical {\em ``finger-band'' structure}, 
see also Figure~\ref{fig:matern}.
\begin{figure}[hbt]
	\includegraphics[trim={90 30 90 25},clip,width=0.45\textwidth]{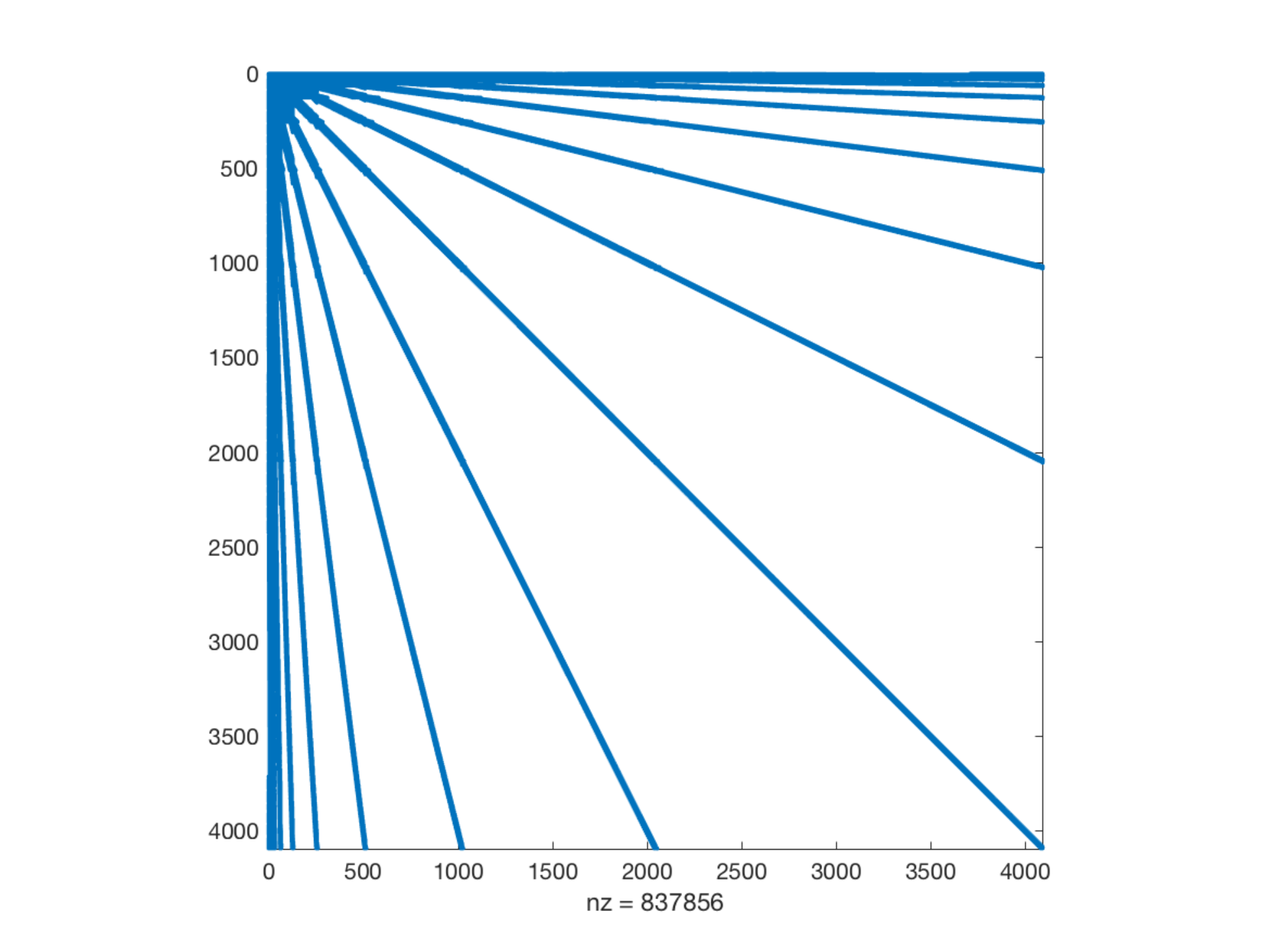}\quad
	\includegraphics[trim={90 30 90 25},clip,width=0.45\textwidth]{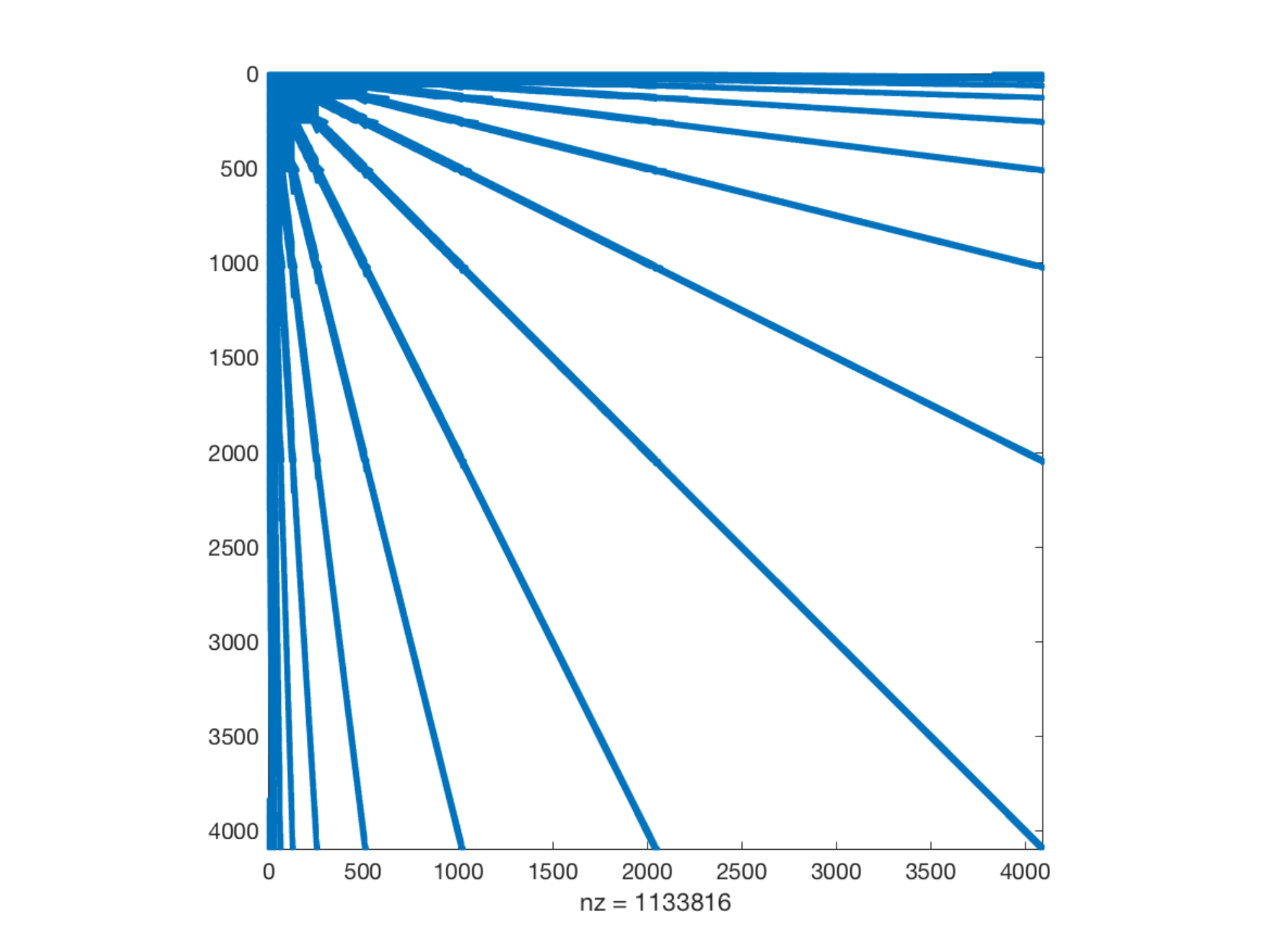}
	\caption{\label{fig:matern}A-priori compression pattern for $p = 4096$ wavelets 
		in case of the Mat\'ern covariance kernel $k_{1/2}$ and $\bPsi^{(2,6)}$ (\emph{left}) 
		and 
		in case of the Mat\'ern covariance kernel $k_{3/2}$ and $\bPsi^{(2,8)}$ (\emph{right}).
		In the left and right matrix, only 5.0\,\% and 6.8\,\% of the matrix coefficients are 
		relevant, respectively.}
\end{figure}

\subsection{Condition numbers and compression rates}

We choose first the correlation length $\ell = 1$ and focus
on the covariance operators for $k_{1/2}$ and $k_{3/2}$.
From the inequality $d < \widetilde{d} + r$ for achieving  
optimal compression rates, we conclude that we 
need at least $\widetilde{d} = 6$ vanishing moments to 
discretize $k_{1/2}$ and $\widetilde{d} = 8$ vanishing 
moments to discretize $k_{3/2}$. In our experiments, we 
also include the borderline case of $\widetilde{d} = d-r$ vanishing 
moments, which leads only to a loglinear compression rate. 

\begin{table}[hbt]
	\begin{tabular}{|cc|c|cc||cc|cc|cc|}\hline
		\multicolumn{9}{|c|}{$k_{1/2}$}\\\hline
		$p$ & $J$ & single-scale & nnz & $\bPsi^{(2,4)}$ &  nnz & $\bPsi^{(2,6)}$ & nnz & $\bPsi^{(2,8)}$ \\\hline
		32 & 5 & $2.6\cdot 10^3$ & 100 & $2.4\cdot 10^2$ & 100 & $1.8\cdot 10^2$ & 100 & $6.6\cdot 10^2$\\
		64 & 6 & $1.1\cdot 10^4$  & 80 & $2.7\cdot 10^2$ & 88 & $1.9\cdot 10^2$ & 98 & $6.7\cdot 10^2$\\
		128 & 7 & $4.5\cdot 10^4$ & 60 & $3.1\cdot 10^2$ & 65 & $1.9\cdot 10^2$ & 71 & $6.8\cdot 10^2$\\
		256 & 8 & $1.9\cdot 10^5$ & 40 & $3.4\cdot 10^2$ & 42 & $1.9\cdot 10^2$ & 48 & $6.8\cdot 10^2$\\
		512 & 9 & $7.6\cdot 10^5$ & 25 & $3.7\cdot 10^2$ & 26 & $1.9\cdot 10^2$ & 30 & $6.8\cdot 10^2$\\
		1024 & 10 & $3.1\cdot 10^6$ & 16 & $3.9\cdot 10^2$ & 16 & $1.9\cdot 10^2$ & 18 & $6.8\cdot 10^2$\\
		2048 & 11 & $1.2\cdot 10^7$ & 9.4 & $4.0\cdot 10^2$ & 9.0 & $1.9\cdot 10^2$ & 10 & $6.8\cdot 10^2$\\
		4096 & 12 & $5.0\cdot 10^7$ & 5.0 & $4.2\cdot 10^2$ & 5.0 & $1.9\cdot 10^2$ & 5.7 & $6.8\cdot 10^2$\\\hline
	\end{tabular}
	\caption{\label{tab:Matern1/2}Condition numbers and compression rates 
		in case of the Mat\'ern covariance kernel $k_{1/2}$. 
		The compression
		rates validate the asymptotically linear behaviour. The condition numbers
		stay bounded for $\bPsi^{(2,6)}$ and $\bPsi^{(2,8)}$, 
		whereas for $\bPsi^{(2,4)}$
		a slight increase is observed.}
\end{table}

\begin{table}[hbt]
	\begin{tabular}{|cc|c|cc||cc|cc|cc|}\hline
		\multicolumn{9}{|c|}{$k_{3/2}$}\\\hline
		$p$ & $J$ & single-scale &  nnz & $\bPsi^{(2,6)}$ & nnz & $\bPsi^{(2,8)}$ & nnz & $\bPsi^{(2,10)}$  \\\hline
		32 & 5 & $3.2\cdot 10^5$ & 100 & $2.3\cdot 10^3$ & 100 & $1.9\cdot 10^4$ & 100 & $1.9\cdot 10^4$  \\
		64 & 6 & $5.8\cdot 10^6$  & 91 & $3.3\cdot 10^3$ & 98 & $2.3\cdot 10^4$ & 100 & $2.0\cdot 10^4$  \\
		128 & 7 & $1.1\cdot 10^8$ & 69 & $4.9\cdot 10^3$ & 75 & $2.5\cdot 10^4$ & 79 &  $2.0\cdot 10^4$  \\
		256 & 8 & $1.9\cdot 10^9$ & 48 & $6.9\cdot 10^3$ & 51 & $2.6\cdot 10^4$ & 55 & $2.0\cdot 10^4$  \\
		512 & 9 & $3.3\cdot 10^{10}$ & 31 & $1.0\cdot 10^4$ & 33 & $2.6\cdot 10^4$ & 36 & $2.0\cdot 10^4$  \\
		1024 & 10 & $5.4\cdot 10^{11}$ & 19 & $1.3\cdot 10^4$ & 20 & $2.7\cdot 10^4$ & 21 & $2.0\cdot 10^4$  \\
		2048 & 11 & $8.8\cdot 10^{12}$ & 11 & $1.8\cdot 10^4$ & 12 & $2.7\cdot 10^4$ & 12 & $2.1\cdot 10^4$  \\ 
		4096 & 12 & $1.4\cdot 10^{14}$ & 6.7 & $2.5\cdot 10^4$ & 6.8 & $2.8\cdot 10^4$ & 7.0 & $2.8\cdot 10^4$  \\ \hline
	\end{tabular}
	\caption{\label{tab:Matern3/2}Condition numbers and compression 
		rates in case of the Mat\'ern covariance kernel $k_{3/2}$.
		The numerical compression rates validate the asymptotically linear behaviour. 
		The numerical condition numbers
		stay bounded for $\bPsi^{(2,8)}$ and ${\bPsi}^{(2,10)}$, 
		whereas for $\bPsi^{(2,6)}$ a slight increase is observed.
	}
\end{table}

The numerical results are listed in Table~\ref{tab:Matern1/2} 
for the Mat\'ern covariance kernel $k_{1/2}$ and in Table\ 
\ref{tab:Matern3/2} for the Mat\'ern covariance kernel $k_{3/2}$. 
We find therein the condition numbers and the a-priori compression 
rates for the discretization by $p = 2^J$ piecewise linear hat 
functions and wavelets, respectively. 
It is seen from the~column 
labeled ``single-scale'' that the condition number grows 
indeed by the factor $2^{|r|}$ in case of the discretization 
by piecewise linear hat functions. 
In contrast, the condition
numbers in case of the discretization by wavelets is 
bounded for all choices of $\widetilde{d}$ except for the
borderline case $\widetilde{d} = d-r$, where the condition numbers
still grow, although quite moderately. 

The compression rates, measured by the percentage of
the number of nonzero coefficients (nnz) relative to $p^2$, 
are also good in the (borderline) case $\widetilde{d} = d-r$, 
although then only a loglinear compression rate can generally 
be expected. 
This is caused by wavelets with fewer vanishing moments
tending to have smaller supports 
as is known to hold for the wavelets $ \psi^{(d,\tilde d)}$ 
from \cite{CDF} which are presently used.
Hence, we obtain less matrix 
coefficients in the system matrix which correspond to wavelets 
with overlapping supports. 
The compression pattern of the system 
matrices and $p = 4096$ wavelets are displayed for the Mat\'ern 
covariance kernel $k_{1/2}$ and $\bPsi^{(2,6)}$ on the left and in 
case of the Mat\'ern covariance kernel $k_{3/2}$ and $\bPsi^{(2,8)}$ 
on the right panel of Figure~\ref{fig:matern}.

\subsection{Influence of the correlation length on the compression rates}
\label{subsct:influence-length}

We should finally comment on the dependence of the compression on 
the correlation length. Since we do not consider the correlation length 
in the a-priori compression, it has no effect on this compression. 
Nonetheless, the correlation length has a considerable effect 
on the a-posteriori compression. This can be
seen in Figure~\ref{fig:corlength}, where we plotted the compression 
rates versus the correlation length in case of the Mat\'ern covariance kernels
$k_{1/2}$ and $k_{3/2}$ for a fixed level of resolution and wavelet
basis (we use $\bPsi^{(2,6)}$ for $k_{1/2}$ and $\bPsi^{(2,8)}$ for $k_{3/2}$). 
While the a-priori compression rates are fixed, the a-posteriori compression
improves as the correlation length decreases. This effect is clear as the 
correlation becomes more and more local, thus, the far-field interaction
gets negligible.

\begin{figure}[hbt]
	\includegraphics[trim={7 7 7 7},width=0.45\textwidth]{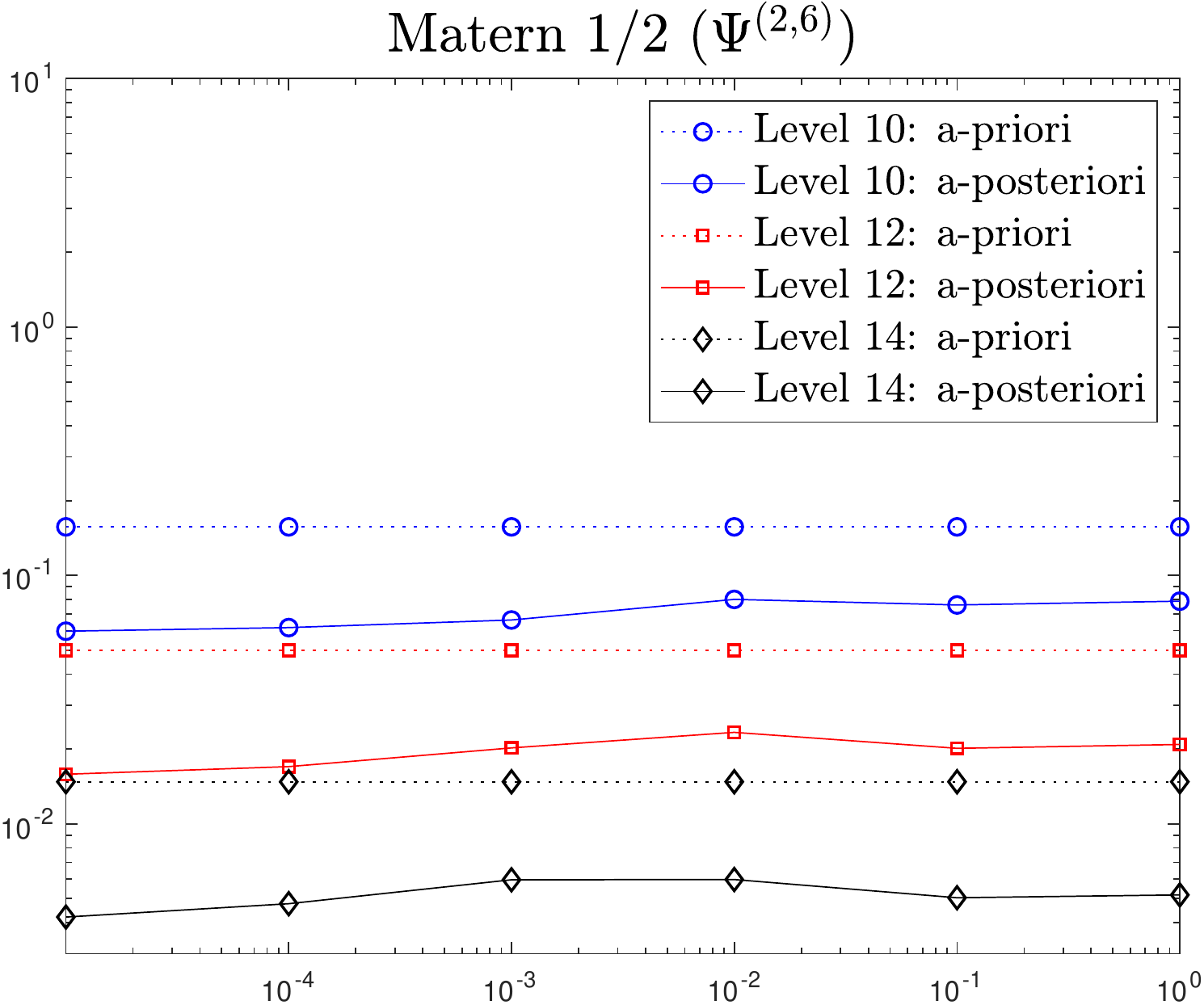}\quad
	\includegraphics[trim={7 7 7 7},width=0.45\textwidth]{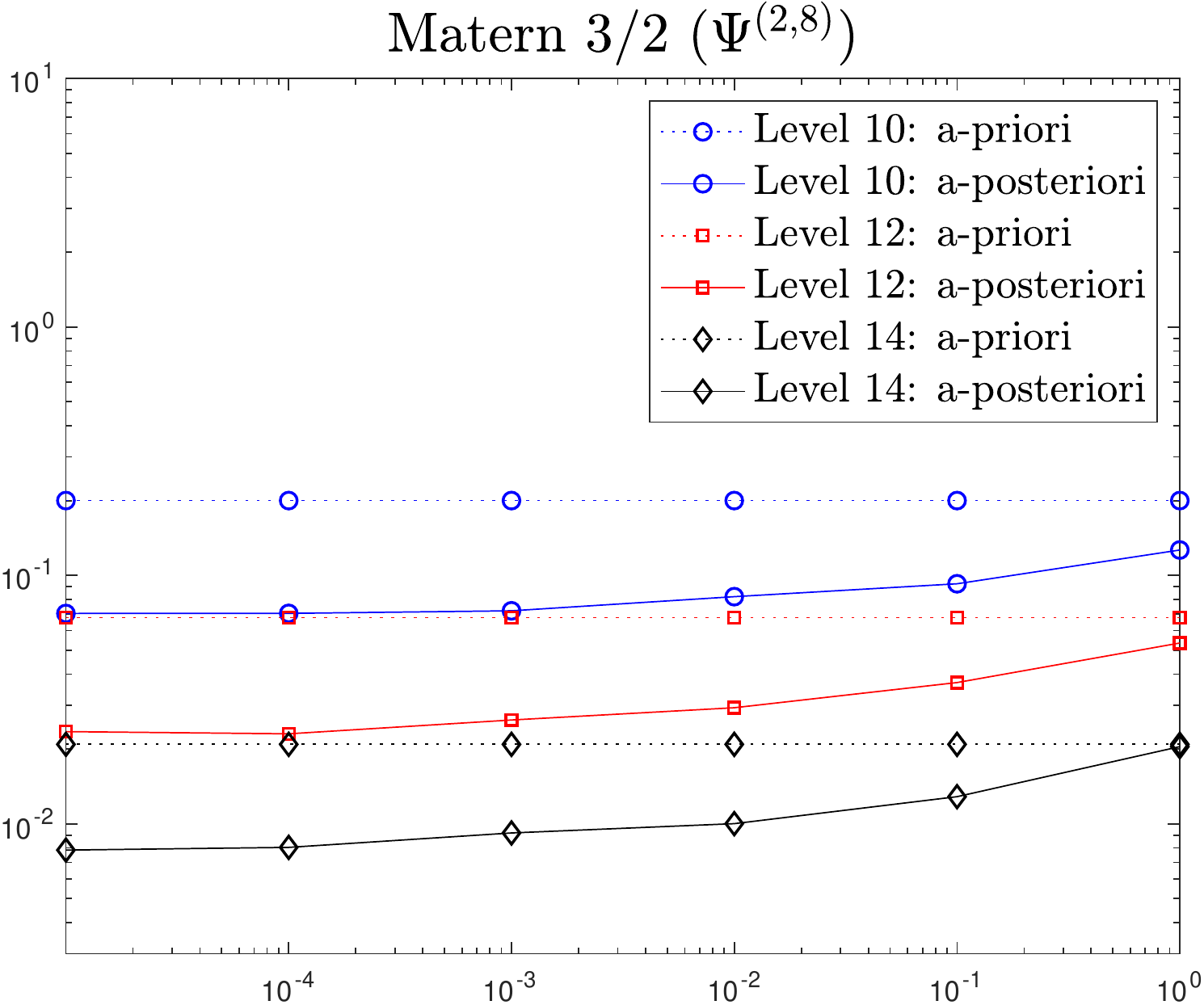}
	\caption{\label{fig:corlength}Influence of the correlation length on the 
		compression rates (measured by the number of nonzero matrix entries
		in percent) in case of the Mat\'ern covariance kernel $k_{1/2}$ and 
		$\bPsi^{(2,6)}$ (\emph{left}) and in case of the Mat\'ern covariance 
		kernel $k_{3/2}$ and $\bPsi^{(2,8)}$ (\emph{right}). While the a-priori 
		compression is unchanged, \emph{a-posteriori compression
			improves as the spatial correlation length decreases}.}
\end{figure}

\subsection{Decay of the diagonal entries}\label{subsct:decay}

We next consider the behavior of the diagonal of the covariance
matrices in wavelet coordinates. In Figure~\ref{fig:decay}, we plotted
the diagonal entries for the Mat\'ern covariance kernels
$k_{1/2}$, $k_{3/2}$, and $k_{5/2}$. We clearly see the 
transition between the levels at the abscissa values $2^j$.
And indeed, if we compute the mean of the diagonal entries
per level, the jump size between subsequent levels is precisely
$4$, $16$, and $64$, which corresponds to $2^{-r}$ with $r$
being the operator order. As a consequence, the knowledge of 
the diagonal entries (or their mean) of just two subsequent levels
is sufficient to estimate the coloring  operator order, 
which, in turn, determines the path regularity and the tapering pattern.
\begin{figure}[hbt]
	\begin{center}
		\includegraphics[width=0.5\textwidth]{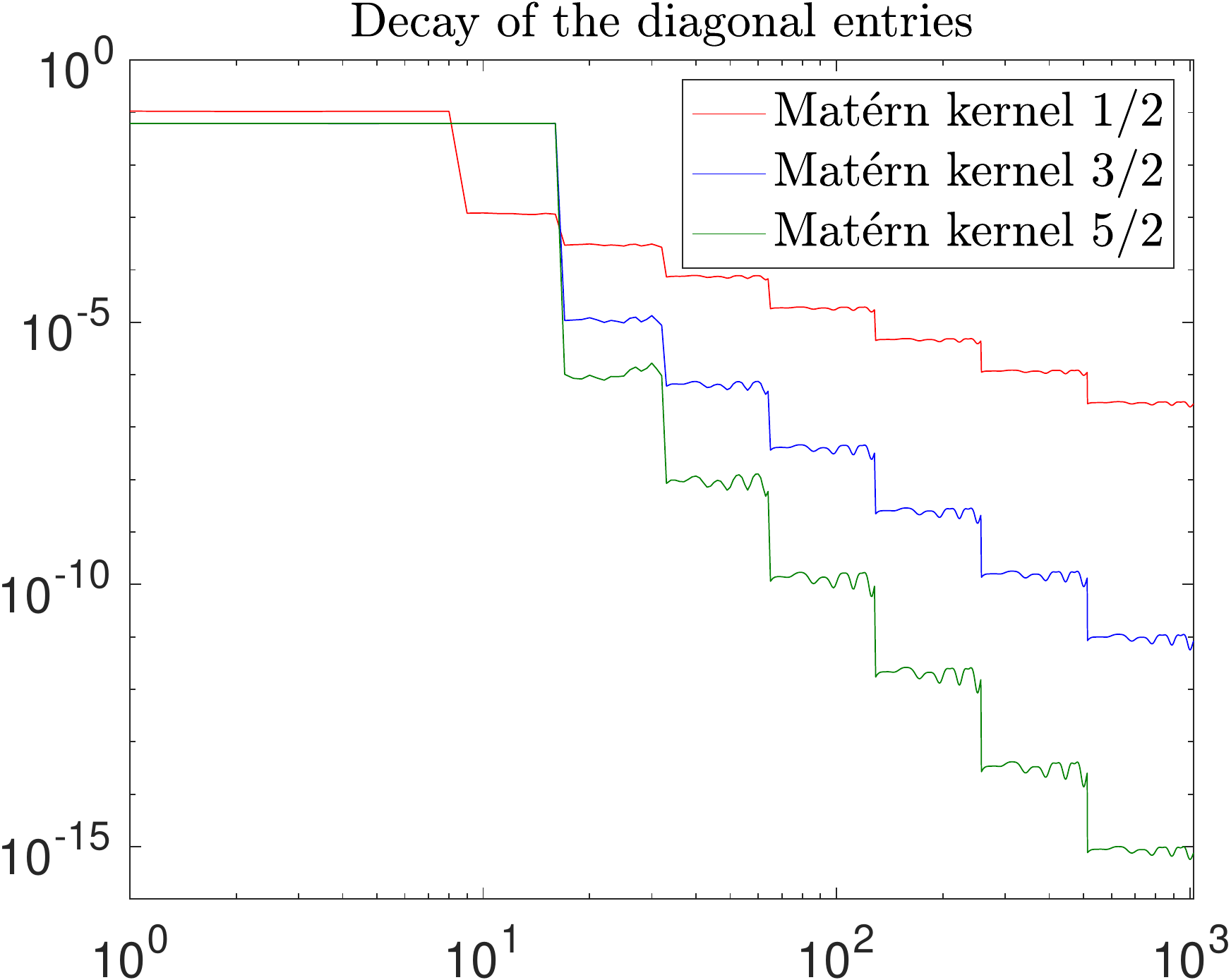}
		\caption{\label{fig:decay}
			Size of the diagonal coefficients in case of the Mat\'ern 
			covariance kernels $k_{1/2}$, $k_{3/2}$, and $k_{5/2}$ for
			$\ell=1$ and $n=1$. We clearly observe the decay relative to the level, 
			which depends on the order of the covariance operator. The jump 
			between the level is of relative height  $4$, $16$, and $64$ 
			and reflects the operator order. 
			Jump height is $2^{|r|}$, where $r = -(2\nu + 1)$.
		}
	\end{center}
\end{figure}
%
\subsection{Fast simulation}
\label{sec:FastSim}

We shall next illustrate the efficient numerical simulation of GRF samples
on the algorithm from Subsection~\ref{subsec:simulation}.
We apply this algorithm to compute the square root of the compressed
covariance matrix. To this end, we employ again the Mat\'ern 
covariance kernel $k_{1/2}$ and $\bPsi^{(2,6)}$ as well as the Mat\'ern 
covariance kernel $k_{3/2}$ and $\bPsi^{(2,8)}$. 
The correlation length is chosen as $\ell=1$. 
We numerically evaluate the $\| \cdot \|_2$ norm error
between the exact matrix square root 
(computed by using the \verb+sqrtm+-function from {\sc Matlab}\footnote{Release 2018b}) 
and the approximation by 
\eqref{eq:def:Smat} in dependence on the parameter $K$. A 
sensitive input parameter is $\widehat{\varkappa}_{\mathrm{R}}^{-1}$,
which is the ratio between the smallest and largest eigenvalue. Therefore,
we use its exact value on one hand and its over- or underestimation by 
a factor of two on the other hand, which accounts for numerical approximation. 
The results are displayed in Figure~\ref{fig:elliptic}.
\begin{figure}[hbt]
	\includegraphics[width=0.5\textwidth]{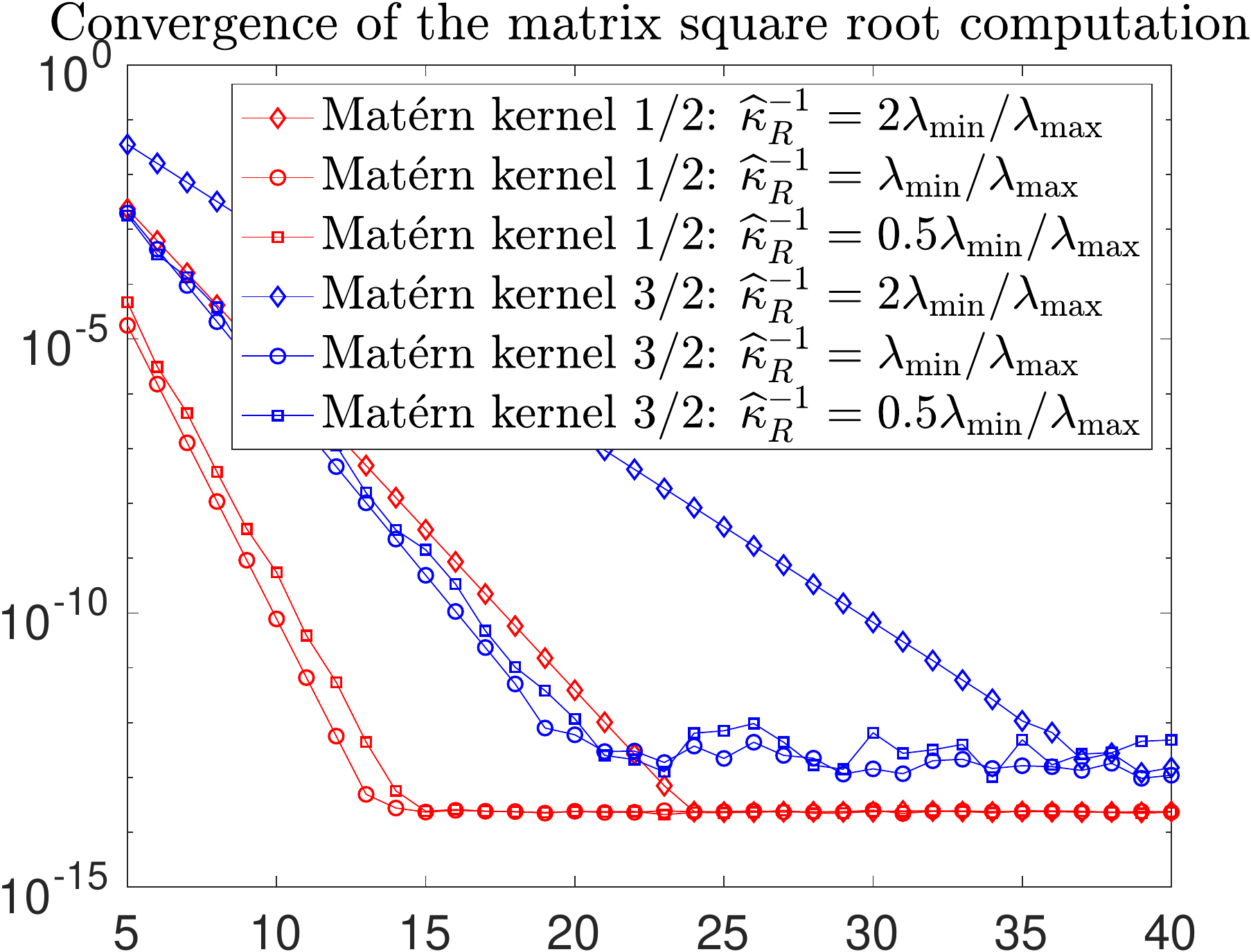}
	\caption{\label{fig:elliptic}Convergence of the approximation \eqref{eq:def:Smat}
		of the matrix square root of the diagonally scaled covariance matrix in case 
		of the Mat\'ern covariance kernel $k_{1/2}$ and $\bPsi^{(2,6)}$ and in case 
		of the Mat\'ern covariance kernel $k_{3/2}$ and $\bPsi^{(2,8)}$. 
		The convergence rate is independent on the discretization level.}
\end{figure}
It tuns out that the convergence heavily depends on $\widehat{\varkappa}_{\mathrm{R}}^{-1}$
and, thus, on the condition number of the matrix under consideration. Especially,
underestimation of $\widehat{\varkappa}_{\mathrm{R}}^{-1}$ seems to be harmless
while overestimation slows down convergence considerably. Nonetheless, in any
case, we achieve after for $K=40$ machine precision. Although the computations
have been only carried out for fixed discretization level (namely, for $p=1024$
wavelets), we obtain exactly the same plots for other values of $p$ as the
condition number of the covariance matrix stays constant in accordance 
with Tables~\ref{tab:Matern1/2} and \ref{tab:Matern3/2}.

\subsection{Covariance estimation}\label{subsct:cov-estimation}

We shall next illustrate the multilevel Monte Carlo estimation of 
the covariance matrix. To that end, we consider the $\ell^2$-difference
between the original (uncompressed) covariance matrix and its 
approximation by the multilevel Monte Carlo method, using 
wavelet matrix compression. As test case, we consider the 
Mat\`ern kernel $k_{1/2}$, which is of order $-2$. Consequently, 
it holds $r=1$, $n=1$, and $t=t'=0$ in Section~\ref{sec:MLMCCovEst}.
The wavelet basis used to discretize 
the covariance matrix is $\bPsi^{(2,6)}$.
For the Monte Carlo sampling, we choose the fixed number of 
$\widetilde{M}_J = 100$ samples on the finest level $J$ of 
spatial resolution
and increase the number $\tilde{M}_j$ of MC samples
by the factor $2^{2(n+\alpha)/3} = 2$ 
when passing from spatial 
resolution level $j$ to $j-1$, $1\leq j \leq J$.

There, 
we chose the borderline case $\alpha = 1/2$. 
This essentially yields the convergence order $2^{J/2}$ of the 
multilevel Monte Carlo method.

\begin{figure}[hbt]
	\includegraphics[trim={60 40 45 30},clip,height=0.4\textwidth,width=0.45\textwidth]{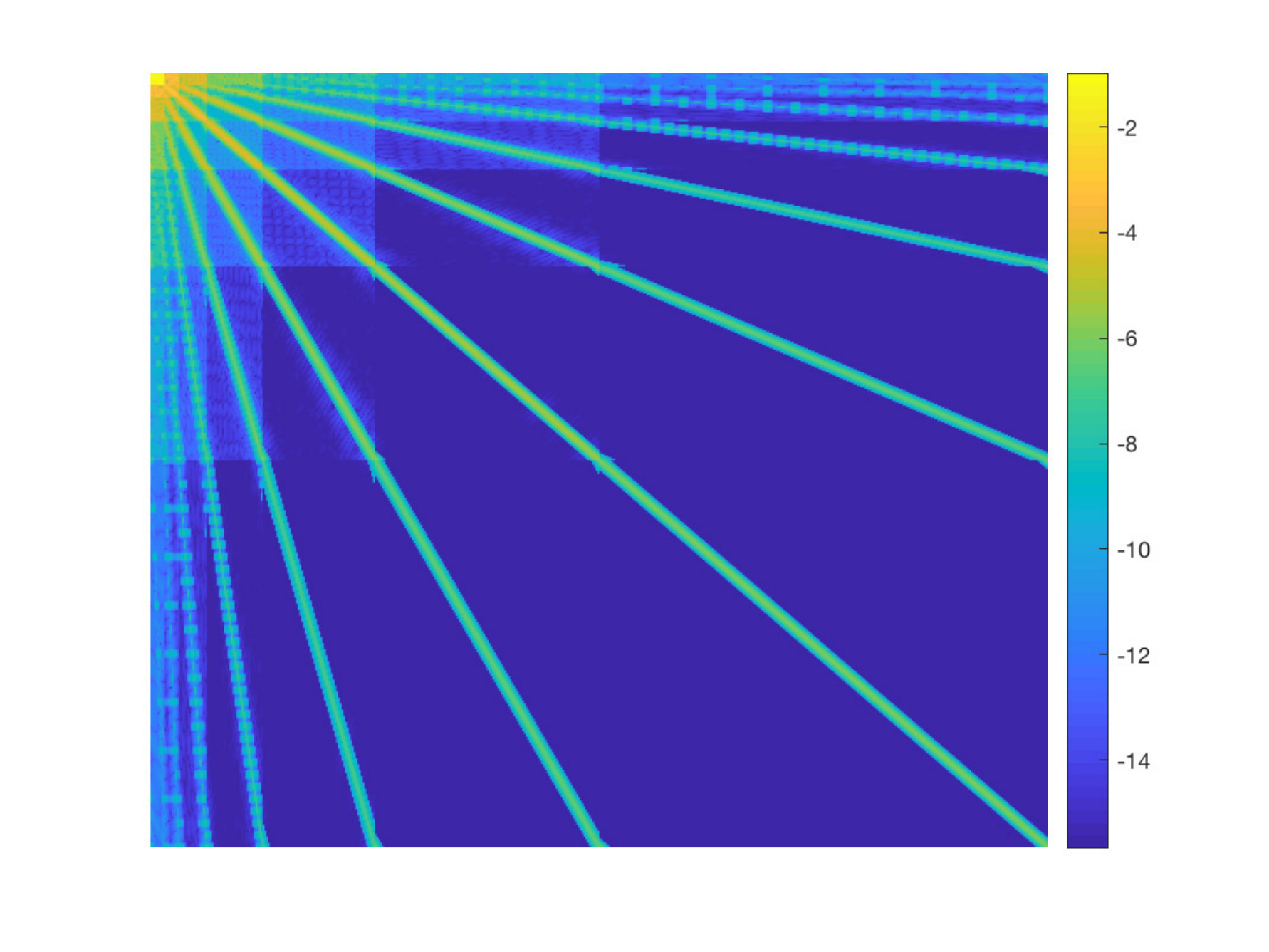}
	\quad 
	\includegraphics[trim={60 40 45 30},clip,height=0.4\textwidth,width=0.45\textwidth]{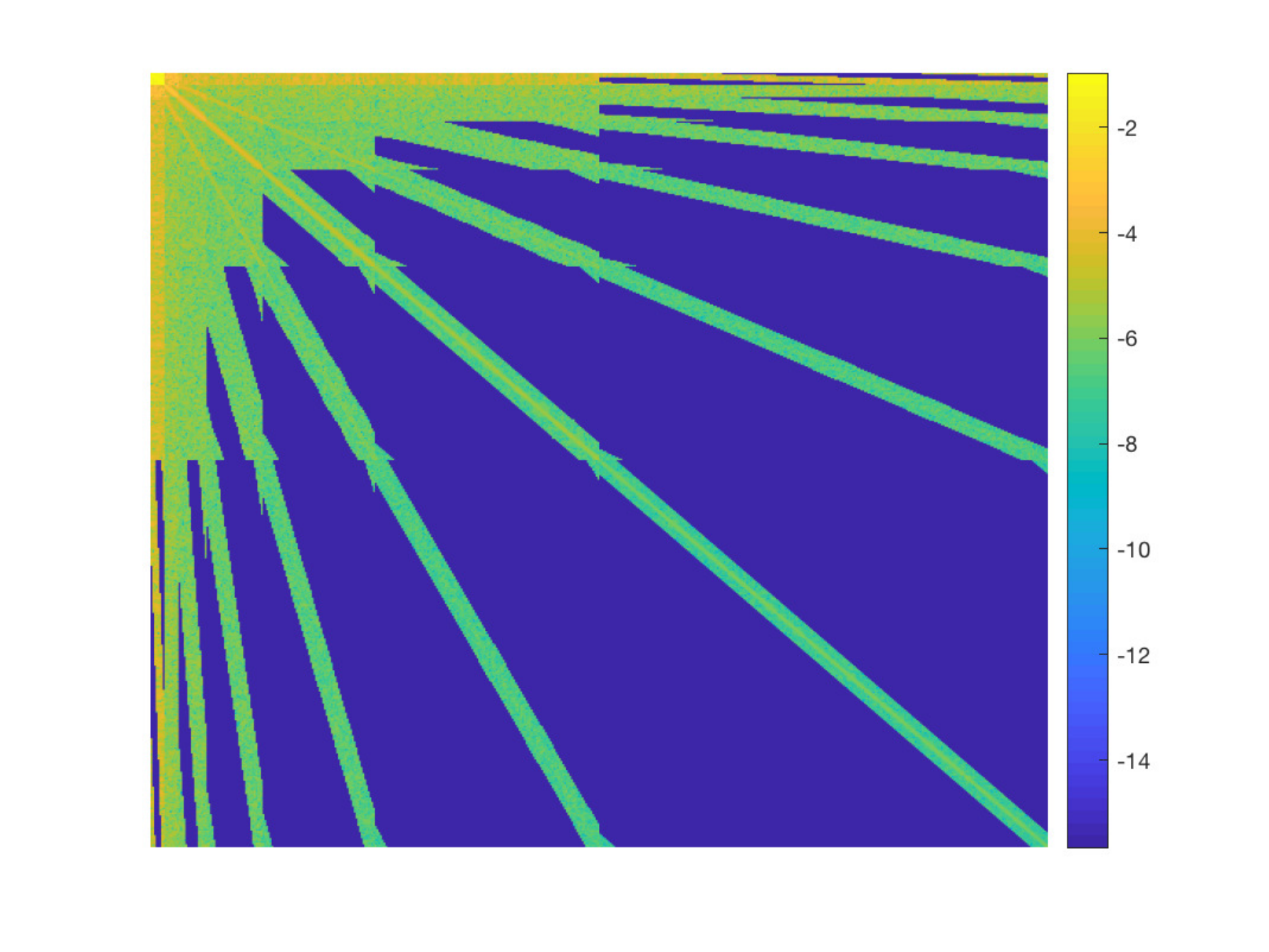}
	\caption{\label{fig:MLMC}
		Truth covariance matrix (\emph{left}) in wavelet representation
		and its multilevel Monte Carlo estimation (\emph{right}) for $p = 512$ parameters.
		Spatial dimension $d=1$, Mat\`{e}rn-covariance kernel $k_{1/2}$, 
		spatial correlation length $\ell = 1$, wavelet $\bPsi^{(2,6)}$.
	}
\end{figure}

\begin{table}
	\begin{tabular}{|cc|c|cc|}\hline
		$p$ & $J$ & $\widetilde{M_j} $ & \multicolumn{2}{c|}{$\ell^2$-error} \\\hline
		8 & 3 & 51200 & $1.1\cdot 10^{-1}$ & --- \\
		16 & 4 & 25600 & $5.4\cdot 10^{-2}$ & (2.1) \\
		32 & 5 & 12800 & $4.9\cdot 10^{-2}$ & (1.1) \\
		64 & 6 & 6400 & $2.9\cdot 10^{-2}$ &(1.7) \\
		128 & 7 & 3200 & $1.9\cdot 10^{-2}$ & (1.5) \\
		256 & 8 & 1600 & $1.3\cdot 10^{-2}$ & (1.4) \\
		512 & 9 & 800 & $1.1\cdot 10^{-2}$ & (1.3) \\
		1024 & 10 & 400 & $8.5\cdot 10^{-3}$ & (1.2) \\
		2048 & 11 & 200 & $5.3\cdot 10^{-3}$ & (1.6) \\
		4096 & 12 & 100 & $2.9\cdot 10^{-3}$ & (1.3) \\\hline
	\end{tabular}
	\caption{\label{tab:MLMC} 
		MLMC Covariance estimation.
		Sample sizes $\widetilde{M}_j$ and accuracy 
		of the multilevel Monte Carlo covariance estimation, with
		$\widetilde{M}_J=100$, and with $\widetilde{M}_j= \widetilde{M}_J 2^{J-j}$;
		presented here for $J=12$.
		Estimation error in operator norm with respect to 
		the (densely populated), exact covariance matrix $\bC_p$ in wavelet coordinates.
	}
\end{table}

The results are presented in Table~\ref{tab:MLMC}. Here, one figures out 
the sample numbers $\widetilde{M_j}$ per level $j$ in case of discretization
level $J=12$. For smaller levels, one just has to remove the largest 
numbers accordingly. We moreover tabulated the $\ell^2$-error between 
the (uncompressed) covariance matrix and its estimate, where the given
numbers correspond to the mean of 10 runs. The convergence
order is like expected, as validated by the contraction factor
$1.41$ between the levels, which is approximately observed.
In Figure~\ref{fig:MLMC}, one finds the original covariance 
matrix of size $512 \times 512$ on the left and its Monte Carlo 
estimate on the right.
In the (single-level) MC estimate, no a-priori (oracle) information
on the sparsity pattern has been provided. Still,
the compression pattern has clearly been identified.

\subsection{Sparse approximate kriging}\label{subsct:kriging}

We next consider the kriging approach presented
in Subsection~\ref{subsec:krging} and especially in
Remark~\ref{rmk:OK+p}. 
To this end, we consider 
that we have given $K=32$ locally supported functionals 
$g_i$ which are equidistantly distributed at the boundary 
$\Gamma$ of the computational domain under consideration.
Then, using $p = 512$ piecewise linear ansatz functions and
wavelets $\bPsi^{(2,6)}$, we obtain the matrix representation 
\eqref{eq:kriging_matrix} which is illustrated in Figure~\ref{fig:kriging}.
\begin{figure}[hbt]
	\includegraphics[width=0.95\textwidth]{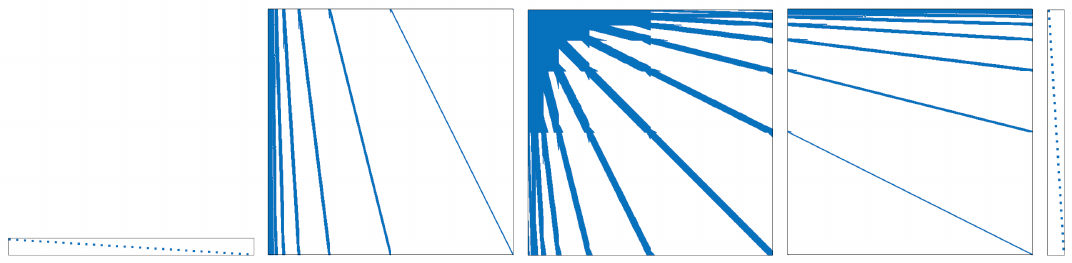}
	\caption{\label{fig:kriging}
		Sparse factorization of the approximate kriging matrix
		${\bf G \bC^\eps_p G}^\top
		=
		{\bf G}_{\tilde\phi}{\bf T}_{\tilde\phi\to\tilde\psi}^{\T}
		\bC_p^{\varepsilon}{\bf T}_{\tilde\phi\to\tilde\psi}\mathbf{G}_{\tilde\phi}^\T
		$ according to 
		Theorem~\ref{thm:SprseKrgCst}, Remark~\ref{rmk:OK+p} and
		\eqref{eq:kriging_matrix}.
	}
\end{figure}
We emphasize that the matrix which arises from the 
fast wavelet transform (second and fourth matrix in
Figure~\ref{fig:kriging}) has obviously $\mathcal{O}(p\log p)$
nonzero matrix coefficients. However, its application to a 
vector can be realized numerically
in $\mathcal{O}(p)$ operations
with a very small constant that depends 
on the filter length of the wavelets.
%
\subsection{Computations of a GRF on $\bbS^2$}\label{subsct:sphere}

After having illustrated the theoretical findings on a manifold
in two spatial dimensions, we shall also demonstrate the wavelet
compression for GRFs in spatial dimension $n=2$. 
We consider the simulation of the centered 
GRF $\GP$ on the unit sphere $\cM = \mathbb{S}^2 \subset \bbR^3$. 
The covariance kernel under consideration is assumed to be 
the Mat\`ern kernel $k_{1/2}$, 
defined in terms of the geodesic distance on $\bbS^2$, with 
unit geodesic correlation length. 
We apply 
piecewise constant wavelets with three vanishing moments, as 
constructed in \cite{HS3}. Since \eqref{eq:Fixaa'} is violated and
the wavelets are also not suitable for preconditioning since $\widetilde{\gamma} 
= 1/2$, we perform the matrix compression as for an operator of order 0. 
This is justified since also the Karhunen-Lo\'eve expansion is
computed with respect to $L^2(\mathbb{S}^2)$. 
As pointed out in \cite{HM}, 
the Cholesky decomposition of the compressed covariance 
matrix can efficiently computed with nested dissection reordering, 
compare Figure~\ref{fig:dissection}. 
Here, we see the original 
matrix pattern of the compressed covariance operator on the 
left, its reordered version in the middle, and the resulting 
Cholesky factor on the right. Indeed, the number of nonzero
matrix coefficients of the Cholesky factor is only about 4--5 times 
higher than that of the compressed covariance operator 
(compare Table~\ref{tab:sphere}).
This appears to be consistent with \cite[Proposition 1]{RothmLevina2010}.

\begin{figure}[hbt]
	\begin{center}
		\includegraphics[trim={115 45 100 30},clip,width=0.3\textwidth]{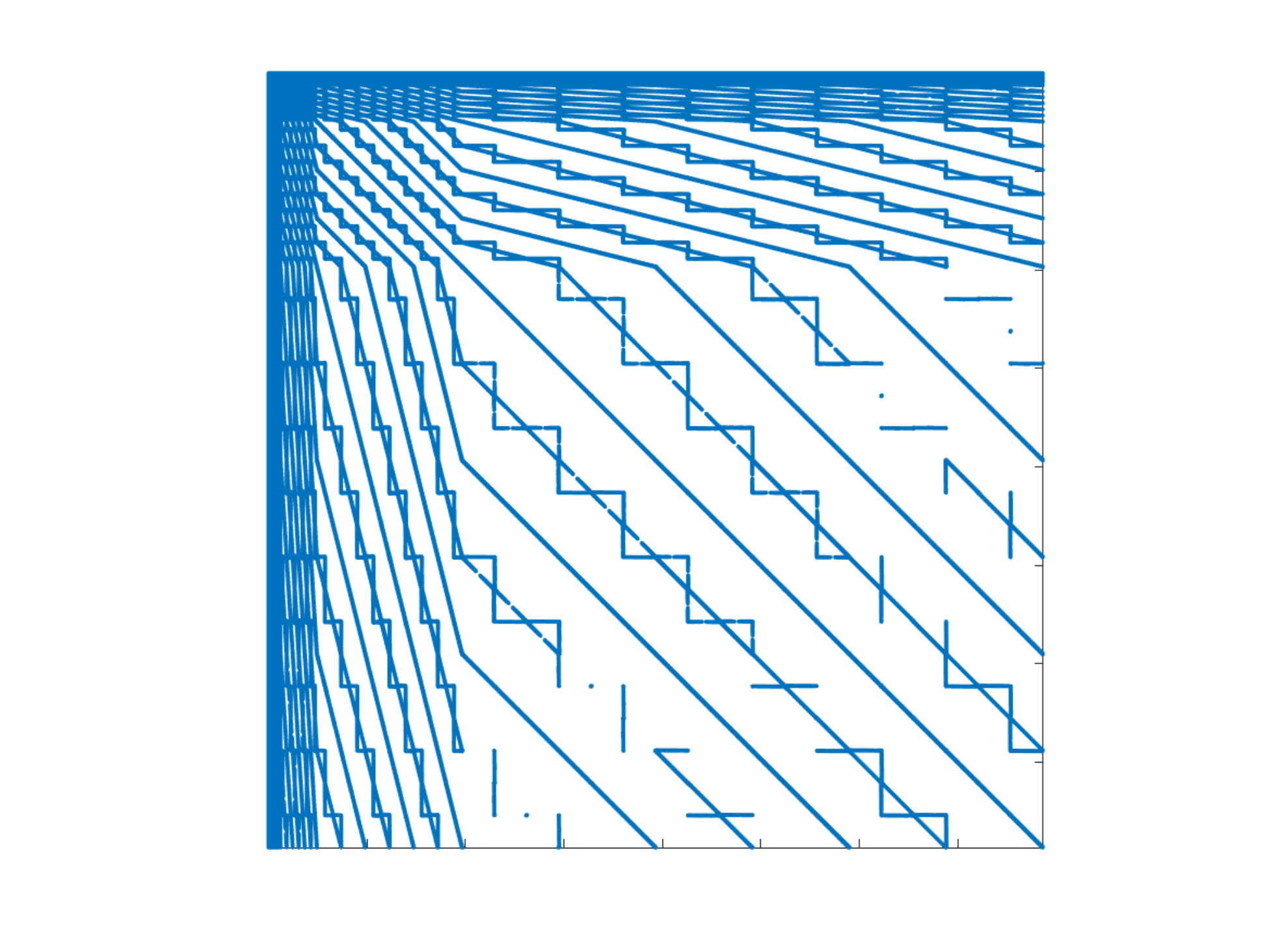}\quad
		\includegraphics[trim={115 45 100 30},clip,width=0.3\textwidth]{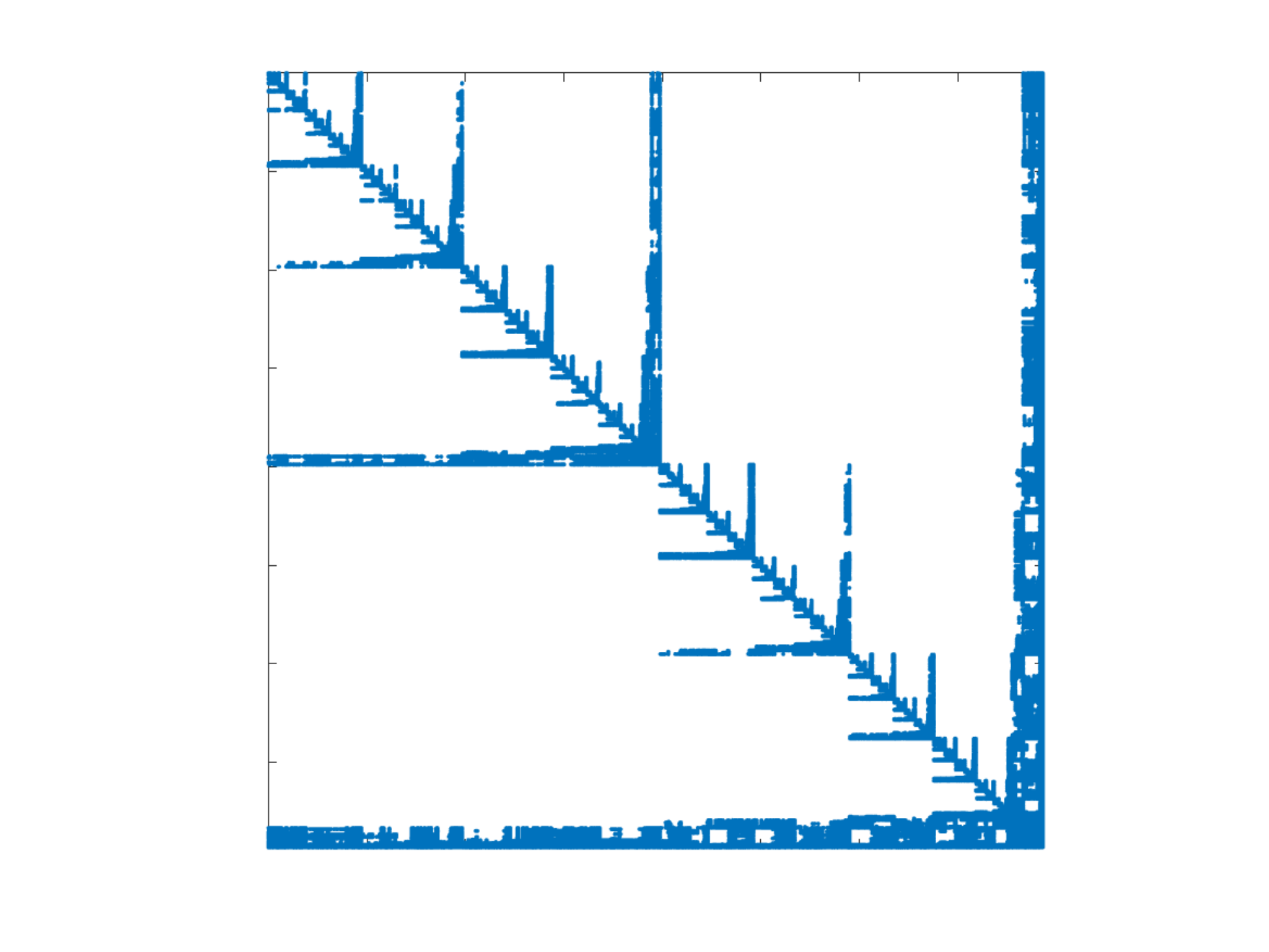}\quad
		\includegraphics[trim={115 45 100 30},clip,width=0.3\textwidth]{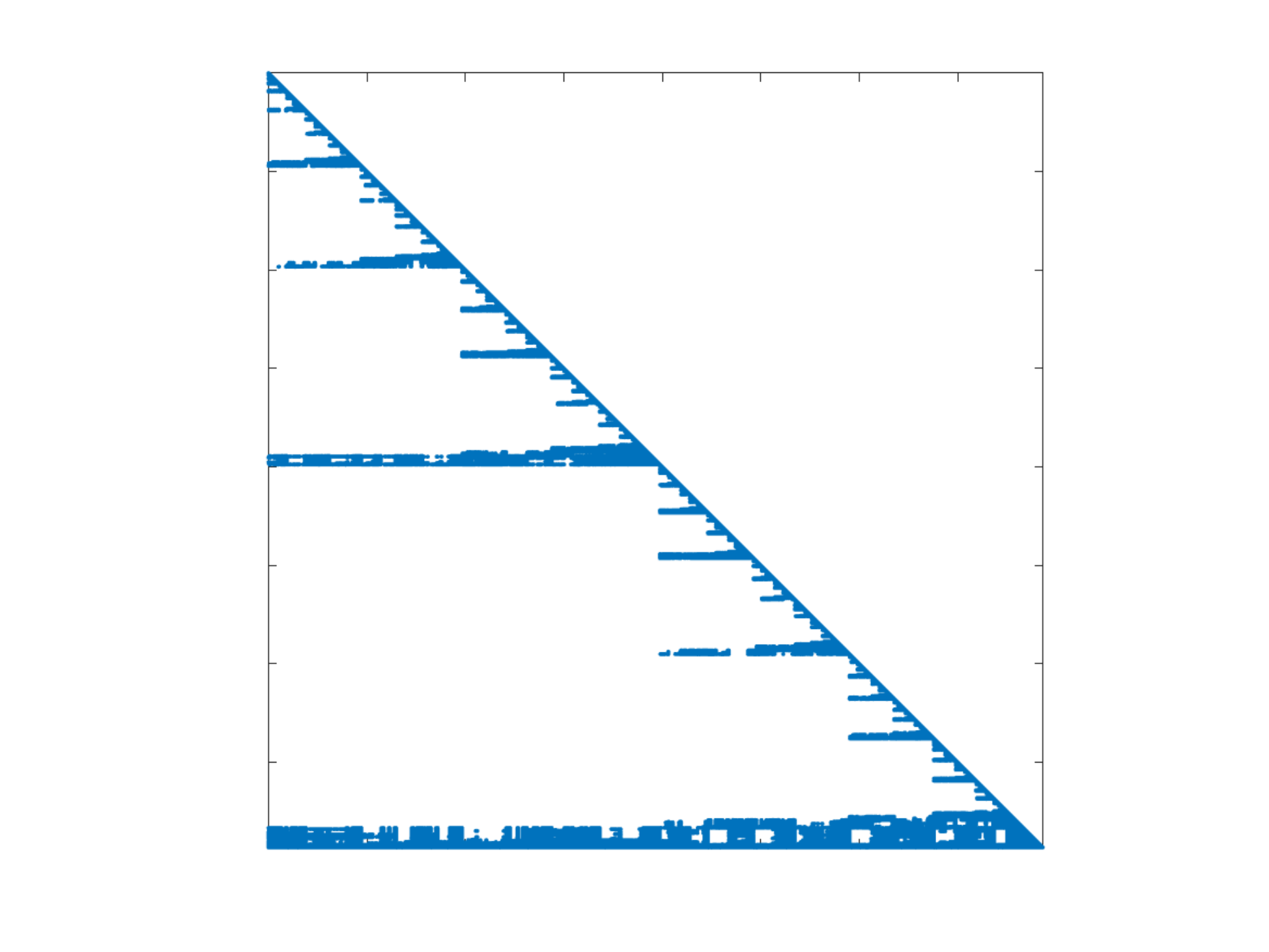}
		\caption{\label{fig:dissection}
			GRF on $\cM = \bbS^2$: sparsity pattern of the compressed 
			covariance operator in wavelet coordinates (\emph{left}),
			its nested dissection, ``skyline'' reordering (\emph{middle}), 
			and sparsity pattern of the exact Cholesky factor (\emph{right}) 
			of the compressed, reordered covariance matrix $\bC_p \in \bbR^{p\times p}$
			for $p=393216$. Consistent with \cite[Proposition 1]{RothmLevina2010},
			see also \cite[Chap.\ 4.2]{GeorgeLiu81}.
		}
	\end{center}
\end{figure}
The efficient drawing of numerically approximated 
random samples proceeds as follows. 
Let ${\bf C}_J^\psi$ denote the compressed covariance operator, 
${\bf C}_J^\psi = {\bf L}_J^\psi\big({\bf L}_J^\psi\big)^\top$ its
Cholesky decomposition, and ${\bf G}_J^\phi$ the mass matrix
with respect to the piecewise constant single-scale basis, which 
is a diagonal matrix. Then, for a uniformly normally distributed 
random vector ${\bf X}(\omega)$, the random vector ${\bf Y}(\omega) 
= \big({\bf G}_J^\phi\big)^{-1}{\bf T}_{\psi\to\phi}{\bf L}_J^\psi{\bf X}(\omega)$ 
represents the sought Gaussian random field on the unit sphere $\bbS^2$, 
expressed 
with respect to the piecewise constant single-scale basis. As can be 
seen in the last column of Table~\ref{tab:sphere}, the computation 
time per sample is very small. Four realizations can be found in 
Figure~\ref{fig:sphere}.

\begin{table}[hbt]
	\begin{center}
		\begin{tabular}{|cc|c|c|c|c|c|}\hline
			\multicolumn{7}{|c|}{Sphere}\\\hline
			$p$ & $J$ & nnz(${\bf C}_J$) & cpu(${\bf C}_J$) & nnz(${\bf L}_J$) & 
			cpu(${\bf L}_J$) & cpu(sample) \\\hline
			6144 & 5 & 4.70 & 18 & 10.3 & 0.65 & 0.0017 \\
			24576 & 6 & 1.22 & 113 & 4.43 & 5.1 & 0.015 \\
			98304 & 7 & 0.43 & 692 & 1.68 & 26 & 0.096 \\
			393216 & 8 & 0.12 & 4108 & 0.59 & 151 & 0.46 \\
			1572864 & 9 & 0.03 & 23374 & 0.20 & 865 & 2.7 \\\hline 
		\end{tabular}
		\caption{\label{tab:sphere}Compression rates and computing
			times in case of the Mat\'ern covariance kernel $k_{1/2}$ on 
			the sphere. 
			Once the Cholesky decomposition has been computed,
			each sample can be computed extremely fast. 
		}
	\end{center}
\end{table}

\begin{figure}[hbt]
	\begin{center}
		\includegraphics[trim={180 105 160 95},clip,width=0.22\textwidth]{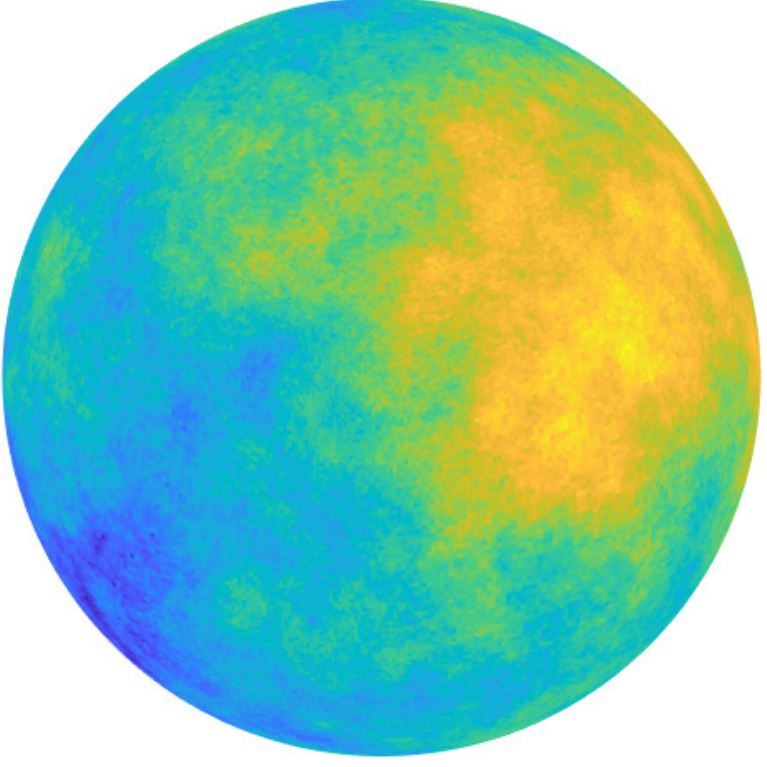}\quad
		\includegraphics[trim={180 105 160 95},clip,width=0.22\textwidth]{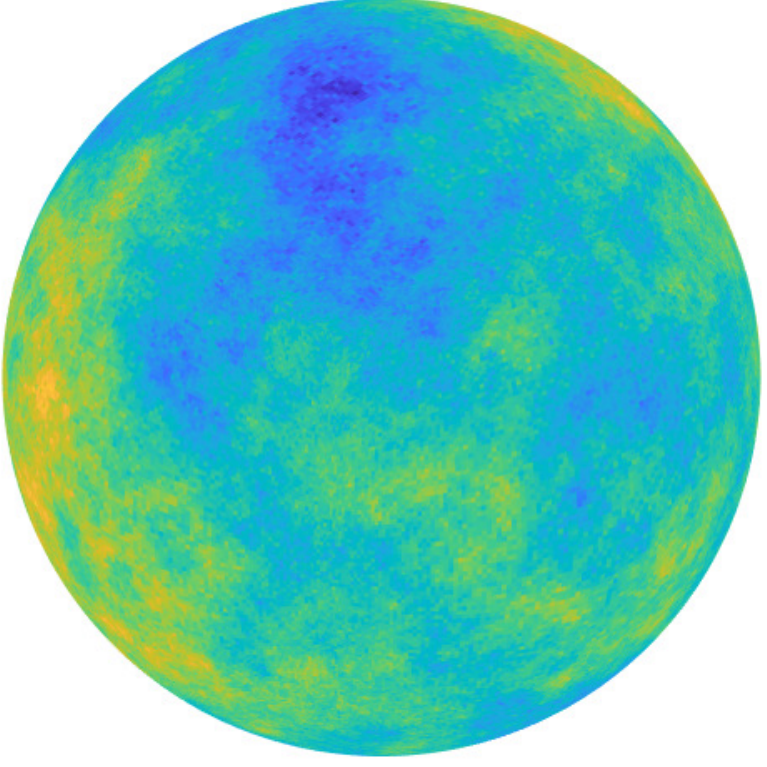}\quad
		\includegraphics[trim={180 105 160 95},clip,width=0.22\textwidth]{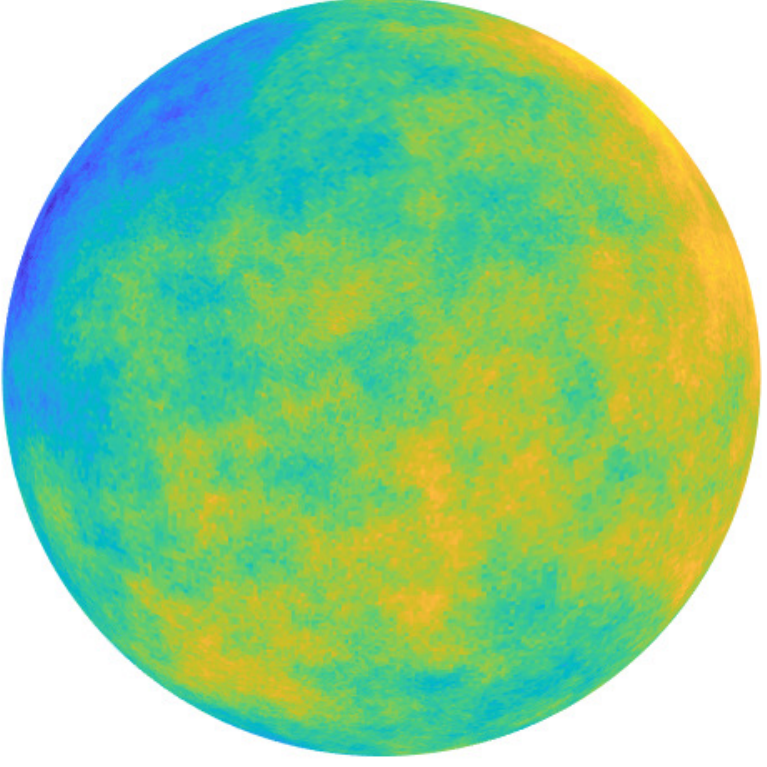}\quad
		\includegraphics[trim={180 105 160 95},clip,width=0.22\textwidth]{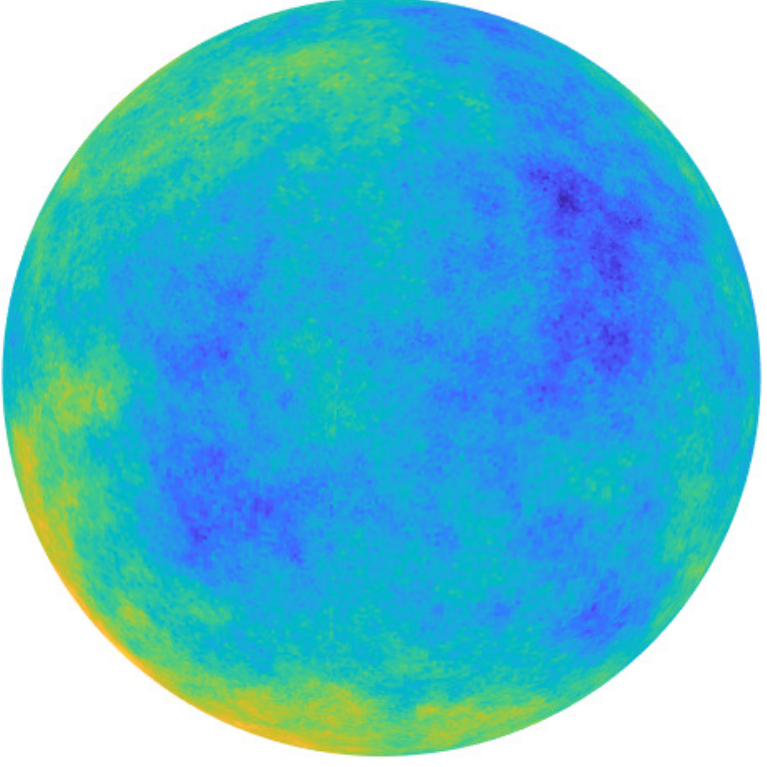}
		\caption{\label{fig:sphere}Four realizations of a Gaussian
			random field on $\bbS^2$ for the Mat\`ern covariance $k_{1/2}$ with
			respect to the geodesic distance.}
	\end{center}
\end{figure}

All the computations have been carried out on a compute server 
with dual 20-core Intel Xeon E5-2698 v4 CPU at 2.2 GHz and 768 
GB RAM. The computation of the compressed covariance operator 
has been done with the help of a C-program on a single core in 
line with \cite{HS2}, while nested dissection and the Cholesky factorization 
have been computed by using {\sc Matlab}\footnote{Release 2018b}.

Let us emphasize that, according to \cite{FemWvltManif_2003,TW}, 
wavelets with the same properties are available also in case 
of unstructured triangulations. 
Furthermore, 
the efficient assembly of the system matrix in 
case of such wavelets has been presented in \cite{AHK}. 
Therefore, 
the algorithm proposed here 
can be expected to perform efficiently also in practical situations, e.g., for high-dimensional
graphical models, 
where the present hypotheses may not hold or may be difficult to verify.

\section{Conclusions}\label{sec:Concl}

For a GRF $\GP$ indexed by  
a smooth manifold  $\cM$ 
which is obtained by ``coloring'' white noise 
$\cW$ with an elliptic, self-adjoint pseudodifferential 
operator $\cA$ as in \eqref{eq:WhNois}, 
we proved that in suitable wavelet coordinates in $L^2(\cM)$
precision and covariance operators $\cP$ and $\cC$ of $\GP$
both admit numerical approximations that are optimally sparse. 
This is to say, 
for any number $p\in \bbN$ of leading wavelet coordinates
of $\GP$, the $p\times p$ sections 
$\bP_p$ and $\bC_p$ of \emph{equivalent, bi-infinite matrix representations}
$\bP, \bC \in \bbR^{\bbN\times\bbN}$ of $\cC$ and $\cP$ 
admit
sparse approximations ${\bP}_p^{\varepsilon}$ and ${\bC}_p^{\varepsilon}$
with $\mathcal{O}(p)$ nonzero entries that are optimally consistent 
with $\cC$ and $\cP$.
The location of the $\mathcal{O}(p)$ ``essential'' entries of 
${\bP}_p^{\varepsilon}$ and ${\bC}_p^{\varepsilon}$ 
is universal (for the pseudodifferential colorings under consideration)
and can either be given a-priori, based on regularity of $\GP$, 
or numerically estimated a-posteriori 
from the decay of the wavelet coefficients, see Figure~\ref{fig:decay}. 
This a-posteriori compression is facilitated by a 
wavelet representation of the GRF $\GP$, since 
sample-wise smoothness of $\GP$ in Sobolev and Besov scales 
on $\cM$ is encoded in the decay (of the components) of 
the random coefficient sequence $\widetilde{\zvec}$ 
corresponding to the MRA. 
We furthermore have proven that diagonal preconditioning renders the 
condition numbers of both, ${\bP}_p^{\varepsilon}$ and ${\bC}_p^{\varepsilon}$,
bounded uniformly with respect to the number of parameters, $p$.

The theory and the algorithms do not rely on stationarity of the GRFs.
Furthermore, 
the assumption of self-adjointness on the coloring operator $\cA$ 
was made only for ease of presentation.
In the general case $\cA^*\neq \cA$ 
the covariance operator $\cC=(\cA^{*} \cA)^{-1}$ 
will still be self-adjoint and 
the GRF $\cZ = \cA^{-1}\cW$ colored by $\cA$ 
has the same distribution 
as a GRF colored by the self-adjoint operator 
$|\cA| = (\cA^* \cA)^{1/2}$. 

The \emph{wavelet-based} numerical covariance compression
and preconditioning
is in line with work to exploit a-priori structural hypotheses
on covariance matrix sparsity for
the efficient numerical approximation of (samples of) 
GRFs and of algorithms to estimate their
covariance operators and functions.
As one possible extension of the present analysis, 
the a-priori known locations of the $\mathcal{O}(p)$ many
nonzero entries of ${\bP}_p^{\varepsilon}$ and ${\bC}_p^{\varepsilon}$ 
may be leveraged in oracle versions of  
covariance estimation methodologies such as (group) LASSO.
These are well established for 
statistical inference in high-dimensional, graphical models
(e.g.\ \cite[Condition~A1]{JJSvdG}), where 
numerical constructions of MRAs have been proposed 
e.g.\ in \cite{CoifEtAlDifWav06}.

The hierarchic nature of wavelet MRAs
naturally facilitates \emph{novel, multilevel 
	versions of established covariance estimation algorithms}
as described in \cite{BickLevina08a,BickLevina08b,RBLZ08,JJSvdG} and the references there.
They amount to \emph{sampling the GRF $\GP$ represented in the MRA with
	a number of samples which depends on the spatial resolution level, with 
	large numbers of low-resolution samples, and 
	only few samples at the highest spatial resolution}.
We presented one such multilevel estimation algorithm and 
proved its asymptotically optimal, linear complexity
for all pseudodifferential colorings under consideration.
We introduced a novel, numerically sparse multilevel algorithm  
for kriging, i.e., for the spatial prediction given data. 
This is only a first application of the present results to methodologies 
in spatial statistics and more are conceivable. 
For instance, the computational benefits of 
wavelet representations may be exploited 
for computationally challenging tasks such as 
statistical inference of parameters. 

\appendix

\section{Pseudodifferential operators on manifolds}
\label{appendix:PDO_review}

We consider orientable manifolds 
satisfying Assumption~\ref{ass:cM-and-cA}\ref{ass:cM-and-cA_I}. 
In the case that $\cM$ is not orientable,
there exists a covering manifold 
$\widetilde{\cM}$ of $\cM$ with two sheets such that
$\widetilde{\cM}$ is orientable \cite[Thm.~I.58]{Aubin1998Riemannian}
and of dimension $n \geq 1$.

\subsection{Surface differential calculus}
\label{sec:DiffGeo}

A \emph{tangent vector} at $x\in \cM$ is a mapping
$X\from f\to X(f)\in \bbR$
which is defined on the set of functions $f$ that are differentiable
in a neighborhood of $x$ which satisfies
\begin{enumerate*}[label=\alph*)]
	\item  for $\lambda, \mu\in \bbR$, $X(\lambda f+ \mu g) = \lambda X(f) + \mu X(g)$,
	\item  $X(f)=0$ if $f$ is flat,
	\item  $X(fg) = f(x)X(g) + g(x)X(f)$.
\end{enumerate*} 
The \emph{tangent space} $T_x(\cM)$ to $\cM$ at $x\in \cM$ 
is the set of tangent vectors at $x$.
In any coordinate system $\{ x^i \}$ in $\cM$ at $x$, 
the vectors $\partial/\partial x^i$ defined by 
$(\partial/\partial x^i)_x(f) 
= 
[\partial(f\circ \varphi^{-1})/\partial x^i]_{\varphi(x)}$ belong to $T_x(\cM)$, 
and form a basis of the \emph{tangent vector space} at $x\in \cM$.
Here, $\varphi$ is any diffeomorphism on a neighborhood of $x$.
Its dual vector space is denoted by $T_x^*(\cM)$.
The \emph{tangent space $T(\cM)$ to $\cM$} is $\bigcup_{x\in \cM} T_x(\cM)$,
the \emph{dual tangent space $T^*(\cM)$} is $\bigcup_{x\in \cM} T^*_x(\cM)$.
The tangent space $T(\cM)$ carries a vector fiber bundle structure.
More generally, for $r,s\in \bbN$, the \emph{fiber bundle $T^r_s(\cM)$ of $(r,s)$ tensors} 
is $\bigcup_{x\in \cM} T_x(\cM)^{\otimes^r} \otimes T_x^*(\cM)^{\otimes^s}$.

The manifold $\cM$  
gives rise to the compact metric space 
$(\cM, \operatorname{dist}(\,\cdot\,, \,\cdot\,) )$, 
where the distance 
$\operatorname{dist}(\,\cdot\,, \,\cdot\,)$ 
can be chosen, for example, 
as the geodesic distance in $\cM$ of two points $x,x'\in \cM$, 
see \cite[Prop.~I.35]{Aubin1998Riemannian}.

\subsubsection{Coordinate charts and triangulations}
\label{sec:manif:Charts}

Provided that the manifold $\cM$ of dimension $n$ 
satisfies Assumption \ref{ass:cM-and-cA}\ref{ass:cM-and-cA_I}, 
it can be locally represented 
as parametric surface consisting of smooth coordinate patches. 
Specifically,
denote by $\Box = [0,1]^n$ the unit cube. 
Then we assume that $\cM$ 
is partitioned into a finite number $M$ of closed patches $\cM_i$ 
such that
\begin{equation*} 
\cM = \bigcup_{i=1}^M \cM_i,
\quad 
\cM_i = \gamma_i(\Box), \;\; i=1,\ldots,M \;.
\end{equation*}
Here, each $\gamma_i \from\Box \to \cM_i$ 
is assumed to be a smooth diffeomorphism.
We also assume that there exist smooth extensions 
$\widetilde{\cM}_i\supset \cM_i$
and $\widetilde{\gamma}_i \from \widetilde{\Box} \to \widetilde{\cM}_i$
such that $\widetilde{\gamma}_i\vert_\Box = \gamma_i$, 
where $\widetilde{\Box} = (-1,2)^n$.
Note that, 
in the notation of Section~\ref{section:GRFs-manifolds}, 
$G = \widetilde{\Box}$. 

The intersections $\cM_i \cap \cM_j$ for $i\ne j$ are either 
assumed to be empty or to be diffeomorphic to $[0,1]^k$ for some
$0\leq k <n$. 
We assume the charts $\gamma_i$ to be \emph{$C^0$-compatible} 
in the sense that 
for every $\hat{x} \in \cM_i \cap \cM_{i'}$ exists a bijective mapping
$\Theta\from\Box\to \Box$ such that 
$(\gamma_{i'}\circ \Theta)(x) = \hat{x}$
for $x=(x_1,\ldots,x_n)\in \Box$ 
with $\gamma_i(x) = \hat{x}$.
Note that $C^0$-compatibility admits 
$\cM = \partial G$ for certain polytopal 
domains $G$. 
In the case that $\cM = \partial G$ is smooth, 
we shall assume that the extensions 
satisfy $\widetilde{\cM}_i \subset \cM$ 
and that the charts $\gamma_i$ are smoothly compatible.

In the construction of MRAs on $\cM$, we shall require triangulations of $\cM$.
We shall introduce these in the Euclidean parameter domain $\Box$ and lift them to 
the coordinate patches $\cM_i$ on $\cM$ via the charts $\gamma_i$.

A mesh of refinement level $j$ on $\cM$ is obtained by dyadic subdivisions of 
depth~$j$ of $\Box$ into $2^{jn}$ subcubes $C_{j,k} \subseteq \Box$, where 
the multi-index $k = (k_1,\ldots,k_n) \in \bbN_0^n$ tags the location of $C_{j,k}$
with $0\leq k_m < 2^j$. 
With this construction in each co-ordinate patch, and taking into account
the inter-patch compatibility of the charts $\gamma_i$,
this results in a regular quadrilateral triangulation of $\cM$ consisting of
$2^{jn}M$ cells $\Gamma_{i,j,k} := \gamma_i(C_{j,k}) \subset \cM_i \subset \cM$.

\subsubsection{Sobolev spaces}
\label{sec:manifolds:SobSpc}

Let $\cM$ denote a compact manifold as in 
Section~\ref{sec:manif:Charts}.
Sobolev spaces on $\cM$ are invariantly defined 
in the usual fashion, i.e.,
in local coordinates of a smooth atlas 
$\{\widetilde{\gamma}_i\}_{i=1}^{M}$
of coordinate charts on $\cM$.

As in Assumption~\ref{ass:cM-and-cA}\ref{ass:cM-and-cA_I},
we assume that 
$\cM$ has dimension $n\in \bbN$, $\partial \cM = \emptyset$,
and is equipped with a (surface) measure $\mu$. 
It is given in terms of the first fundamental form on $\cM$ which, on $\cM_i$, 
is given by 
\begin{equation}\label{eq:Ki}
K_i(x) = 
\bigl( \partial_j \widetilde\gamma_i (x) \cdot \partial_{j'} \widetilde\gamma_i(x) \bigr) _{j,j'=1}^n , 
\quad x\in \widetilde\gamma_i^{-1}(\cM_i).
\end{equation}
The matrix $K_i$ in \eqref{eq:Ki} is symmetric and 
positive definite uniformly in $x\in \widetilde\cM_i$.
The $L^2(\cM)$ inner product on $\cM$ can then be expressed in the local chart
coordinates via
\begin{equation*} 
\begin{aligned}
( v , w )_{L^2(\cM)}
&:= \int_{\cM} v(x)w(x) \, \rd\mu(x) 
\\
&=  
\sum_{i=1}^M 
\int_{\widetilde\Box} 
((\chi_i v) \circ\widetilde\gamma_i)(x) ((\chi_iw) \circ \widetilde\gamma_i)(x) 
\sqrt{\det(K_i(x))} 
\, \rd x,
\end{aligned}
\end{equation*}
where $\{\chi_i\}_{i=1}^M$ denotes a smooth partition of unity 
which is subordinate to the atlas $\{\widetilde\gamma_i\}_{i=1}^M$.
For $1\leq p \leq \infty$, $L^p(\cM)$ shall denote the usual
space of real-valued, strongly measurable maps 
$v\from \cM\to \bbR$ which
are $p$-integrable with respect to $\mu$. 

Sobolev spaces on $\cM$ are invariantly 
defined by lifting their Euclidean versions 
on $\widetilde\Box$ to $\widetilde\cM_i$ via $\widetilde\gamma_i$.  
For $s\geq0$, the respective norm on $H^s(\cM)$ may be defined by 
\begin{equation*}
\| v\|_{H^s(\cM)}
:=  
\sum_{i=1}^M
\|
(\chi_i v)\circ \widetilde{\gamma}_i
\|_{H^s(\widetilde\Box)}. 
\end{equation*}
This definition is equivalent to the definition 
of $H^s(\cM)$ and $\norm{\,\cdot\,}{H^s(\cM)}$ 
in~\eqref{eq:def:Sobolev_sp}. 
For further details the reader is referred 
to~\cite[pp.~30--31 of Appendix~B]{HLS_2018} 
and the references therein. 
(Note that the proof of this equivalence 
on the sphere $\cM=\bbS^2$ as elaborated in \cite{HLS_2018} 
exploits only compactness and smoothness of $\bbS^2$. 
Thus, it can be generalized to any manifold as considered in this work.)

For $s<0$, the spaces $H^s(\cM)$ are defined by duality, 
here and throughout identifying $L^2(\cM)$ with its dual space. 

\subsection{(Pseudo)differential operators}
\label{sec:PDO}

We review basic definitions and notation from the 
H\"ormander--Kohn--Nirenberg 
calculus of pseudodifferential operators,  
to the extent that they are needed 
in our analysis of covariance kernels and operators. 

\subsubsection{Basic definitions}
\label{sec:PDOBasic}

Let $G$ be an open, bounded subset of $\bbR^n$, 
$r$, $\rho$, $\delta \in \bbR$ 
with $0\leq \rho\leq\delta\leq1$. 
The H\"ormander symbol class $S^r_{\rho,\delta}(G)$
consists of all $b\in C^\infty(G\times \bbR^n)$ such that, 
for all $K\subset\subset G$
and for any $\alpha,\beta \in \bbN_0^n$, 
there is a constant $C_{K,\alpha,\beta}>0$ with
\begin{equation}\label{eq:PDOest}
\forall x\in K, 
\;\;
\forall \xi\in \bbR^n :
\quad
\bigl| \partial^\beta_x \partial^\alpha_\xi b(x,\xi) \bigr| 
\leq 
C_{K,\alpha,\beta} (1+|\xi|)^{r-\rho|\alpha|+\delta|\beta|}.
\end{equation}
When the set $G$ is clear from the context, we write $S^r_{\rho,\delta}$.
In what follows, we shall restrict ourselves to the particular case
$\rho = 1$, $\delta = 0$, 
and consider $S^r_{1,0}$.
In addition, we write $S^{-\infty}_{1,0} := \bigcap_{r\in\bbR} S^r_{1,0}$.
A symbol $b\in S^r_{1,0}$ gives rise to a 
\emph{pseudodifferential operator}~$B$ via the relation  \eqref{eq:OPS-symbol}.

When $b\in S^r_{1,0}$, the operator $B$ is said to belong to 
$OPS^r_{1,0}(G)$ and it is (in a suitable topology) 
a continuous operator 
$B\from C_0^\infty(G) \to C^\infty(G)$, 
cf.~\cite[Thm.~II.1.5]{METaylorPDOs1981}.
We write $OPS^{-\infty}(G) = \bigcap_{r\in\bbR} OPS^{r}_{1,0}(G)$. 
We say that the operator $B \in OPS^r_{1,0}(G)$ 
is \emph{elliptic of order $r\in\bbR$} 
if, for each compact $K\subset\subset G$,  
there exist constants $C_K>0$ and $R>0$
such that 
\[
\forall x\in K, 
\;\; 
\forall |\xi|\geq R:
\quad 
|b(x,\xi)| \geq C_K \bigl( 1+|\xi|^2 \bigr)^r . 
\]
%

\subsubsection{(Pseudo)differential operators on manifolds}
\label{sec:PDOM}

We suppose Assumption~\ref{ass:cM-and-cA}.
Having introduced the 
class $OPS^r_{1,0}(G)$ for an Euclidean domain $G$, 
the operator class $OPS^r_{1,0}(\cM)$ is defined by 
the usual
``lifting to $\cM$ in local coordinates'' as described, 
e.g., in \cite[Sec.~II.5]{METaylorPDOs1981}. 
The definition is based on the behavior of $OPS^r_{1,0}(G)$
under smooth diffeomorphic changes of coordinates which we consider first. 

Let $G, \cO \subset \bbR^n$ be open and let  
$\gamma \from G \to \cO$ be a diffeomorphism. 
Consider $B \in OPS^r_{1,0}(G)$, so that 
$B\from C_0^\infty(G) \to C^\infty(G)$. 
We define the transported operator 
$\widetilde{B}$ by
\[
\widetilde{B}\from C^\infty_0(\cO) \to C^\infty(\cO) , 
\quad 
u \mapsto  B(u\circ\gamma)\circ \gamma^{-1} .
\]
For $r\in \bbR$, we then consider $OPS^r_{1,0}(\cM)$, 
the H\"ormander class of pseudodifferential operators on $\cM$
(investigated earlier by Kohn and Nirenberg \cite{KohnNirenbg65}).
We alert the reader to the use of the notation $OPS^r(\cM)$ for the 
so-called \emph{classical pseudo- differential operators} 
which afford (pseudohomogeneous) symbol expansions and
comprise a strict subset of $OPS^r_{1,0}(\cM)$, 
see, e.g., \cite{Seeley67}. 

Pseudodifferential operators in $OPS^{r}_{1,0}(\cM)$ 
on manifolds $\cM$ are defined in local coordinates.
A linear operator $\cB \from C^\infty(\cM)\to C^\infty(\cM)$ is a
pseudodifferential operator of order $r\in \bbR$ on $\cM$, 
$\cB \in OPS^r_{1,0}(\cM)$, 
if for any finite, smooth partition of unity 
$\bigl\{ \chi_i \in C_0^\infty(\widetilde{\cM}_i): i=1,\ldots,\bar{m} \bigr\}$
with respect to any atlas 
$\bigl\{ \bigl(\widetilde{\cM}_i, \widetilde\gamma_i \bigr) \bigr\}_{i=1}^{\bar{m}}$
of $\cM$ all
\emph{transported operators} satisfy
\begin{equation}\label{eq:appendix_OPS_transported}
f\mapsto B_{i,i'}f 
=
f \mapsto 
\bigl[ \bigl( 
\cB[ \chi_i ( f\circ\widetilde{\gamma}_i^{-1} )] 
\bigr) 
\chi_{i'} \bigr]\circ\widetilde{\gamma}_{i'}
\in 
OPS^{r}_{1,0}(\widetilde\gamma_i^{-1}(\widetilde{\cM}_i))
\quad 
\forall i,i'=1,\ldots,\bar{m}
.
\end{equation}

The class of all such operators is denoted $OPS^r_{1,0}(\cM)$. 
Importantly, 
$OPS^r_{1,0}(\cM)$ defined in this way
does not depend on the choice of 
the atlas of $\cM$ and is invariantly defined 
\cite[Sec.~II.5]{METaylorPDOs1981}, \cite[Def.~18.1.20]{HorIII}. 

\subsubsection{Principal symbols}
\label{sec:PSym}

For a bounded, open set $\cO\subset \bbR^n$,
the principal symbol $b_0(x,\xi)$ of $B\in OPS^r_{1,0}(\cO)$ is 
the equivalence class in $S^r_{1,0}(\cO)/S^{r-1}_{1,0}(\cO)$ 
(see, e.g., \cite[p.\ 49]{METaylorPDOs1981}).
Any member of the equivalence class will be called a principal symbol 
of $B$. 
For $\cB\in OPS^r_{1,0}(\cM)$, its principal symbol $b_0(x,\xi)$ is
invariantly (with respect to the choice of atlas on $\cM$) defined on $T^*(\cM)$
(see \cite[Eq.~(5.6)]{METaylorPDOs1981}).

\subsubsection{Pseudodifferential calculus}
\label{sec:PDOCalc}

The symbol class $S^r_{1,0}$ admits a 
\emph{symbolic calculus} (e.g., \cite[Prop.~II.1.3]{METaylorPDOs1981}).
In the sequel, we assume all pseudodifferential operators
to be \emph{properly supported}, 
see~\cite[Def.~II.3.6]{METaylorPDOs1981}. 
This is not restrictive, as every 
$\cB\in OPS^r_{1,0}(\cM)$ can be written as $\cB = \cB_1 + \cR$, 
where $\cB_1\in OPS^r_{1,0}(\cM)$ is properly supported
and where $\cR\in OPS^{-\infty}(\cM)$ \cite[Prop.~18.1.22]{HorIII}. 
\begin{proposition}
	\label{prop:OPS_algebra_prop}
	Let $r,t\in \bbR$ and 
	$\cA\in OPS^r_{1,0}(\cM)$, $\cB\in OPS^t_{1,0}(\cM)$
	be properly supported. Then, it holds
	\begin{enumerate}
		\item\label{item:sum} $\cA + \cB \in OPS^{\max\{ r , t \}}_{1,0}(\cM)$,
		\item\label{item:composition} $\cA\cB \in OPS^{ r+t }_{1,0}(\cM)$,
		\item\label{item:range} $\forall s\in \bbR: \; \cA\from H^{s}(\cM) \to H^{s-r}(\cM)$ is continuous.
	\end{enumerate}
\end{proposition}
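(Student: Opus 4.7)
The plan is to reduce each assertion to statements about symbols in local coordinates and then invoke the standard H\"ormander--Kohn--Nirenberg pseudodifferential calculus recapped in Section~\ref{sec:PDO}. Fix an atlas $\{(\widetilde{\cM}_i,\widetilde{\gamma}_i)\}_{i=1}^{M}$ with subordinate partition of unity $\{\chi_i\}_{i=1}^M$, and recall that membership in $OPS^{m}_{1,0}(\cM)$ is tested through the transported operators $B_{i,i'}\in OPS^m_{1,0}(G)$ from \eqref{eq:appendix_OPS_transported}. Since both $\cA$ and $\cB$ are properly supported, so are $\cA+\cB$ and $\cA\cB$, which ensures that these transported operators are well defined on $C_0^\infty$ and behave as required.

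For \ref{item:sum}, I would take symbols $a\in S^{r}_{1,0}(G)$ and $b\in S^{t}_{1,0}(G)$ for the localized pieces and note that the symbol estimate \eqref{eq:PDOest} gives
\[
\bigl|\partial_x^\beta\partial_\xi^\alpha(a+b)(x,\xi)\bigr|
\leq
C_{K,\alpha,\beta}\bigl((1+|\xi|)^{r-|\alpha|}+(1+|\xi|)^{t-|\alpha|}\bigr)
\lesssim
(1+|\xi|)^{\max\{r,t\}-|\alpha|},
\]
so $a+b\in S^{\max\{r,t\}}_{1,0}(G)$; this holds in every coordinate chart, whence $\cA+\cB\in OPS^{\max\{r,t\}}_{1,0}(\cM)$.

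For \ref{item:composition}, I would use the Kohn--Nirenberg composition formula (see, e.g., \cite[Thm.~II.3.4 and Prop.~II.5.1]{METaylorPDOs1981} or \cite[Thm.~18.1.8]{HorIII}): for two properly supported symbols $a\in S^{r}_{1,0}$ and $b\in S^{t}_{1,0}$ on $G$, the composition $B_a B_b$ is a pseudodifferential operator with symbol $a\#b\in S^{r+t}_{1,0}$ possessing the asymptotic expansion $a\#b\sim\sum_{\alpha\in\bbN_0^n}\tfrac{1}{\alpha!}\partial_\xi^\alpha a\cdot D_x^\alpha b$, where the $\alpha$-th term lies in $S^{r+t-|\alpha|}_{1,0}$. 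Because this property is invariant under smooth changes of coordinates, applying it chart by chart to the transported operators and exploiting the fact that the non-diagonal contributions (which arise from cross-chart terms $B_{i,i'}B_{j,j'}$ with $i\neq j'$ whose supports of $\chi_i,\chi_j$ are disjoint) smooth away to $OPS^{-\infty}$ by the proper support hypothesis gives $\cA\cB\in OPS^{r+t}_{1,0}(\cM)$.

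For \ref{item:range}, the key input is $L^2$-boundedness of $OPS^{0}_{1,0}$, i.e.\ the Calder\'on--Vaillancourt theorem (\cite[Thm.~II.2.1]{METaylorPDOs1981}, \cite[Thm.~18.1.11]{HorIII}), transferred to $\cM$ via localization. For the shifted Laplace--Beltrami operator $\Lambda:=(1-\Delta_{\cM})^{1/2}$ one has $\Lambda^{s}\in OPS^{s}_{1,0}(\cM)$ for every $s\in\bbR$ (Proposition~\ref{prop:OPS_real_powers}), and the norm $\|v\|_{H^s(\cM)}=\|\Lambda^{s}v\|_{L^2(\cM)}$ as recalled in \eqref{eq:def:Sobolev_sp}. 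By item~\ref{item:composition} applied iteratively, $\Lambda^{s-r}\cA\Lambda^{-s}\in OPS^{0}_{1,0}(\cM)$, and hence it extends to a bounded operator on $L^2(\cM)$. Continuity of $\cA:H^{s}(\cM)\to H^{s-r}(\cM)$ then follows from the commutative diagram
\[
H^{s}(\cM)\xrightarrow{\Lambda^{s}}L^2(\cM)\xrightarrow{\Lambda^{s-r}\cA\Lambda^{-s}}L^2(\cM)\xrightarrow{\Lambda^{-(s-r)}}H^{s-r}(\cM).
\]
The principal difficulty is, as usual, the $L^2$-boundedness step: Calder\'on--Vaillancourt is nontrivial, but it is a classical result which we simply invoke; once available, the rest of the argument is a routine reduction through the symbolic calculus and the identification of Sobolev norms via $\Lambda^{s}$.
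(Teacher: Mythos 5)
Your proposal is correct and, for parts \ref{item:sum} and \ref{item:composition}, follows essentially the same route as the paper: reduce to the transported operators \eqref{eq:appendix_OPS_transported} in local charts and invoke the standard symbolic calculus (the paper cites \cite[Thm.~II.4.4]{METaylorPDOs1981} for the composition where you spell out the Kohn--Nirenberg expansion; both treatments gloss over the cross-chart terms, though note that what kills them is pseudolocality of properly supported operators --- smoothness of the Schwartz kernel off the diagonal --- rather than disjointness of the cutoff supports as such). The only genuine divergence is in part \ref{item:range}: the paper simply cites the Sobolev-continuity theorem \cite[Thm.~II.6.5]{METaylorPDOs1981}, whereas you reprove it by conjugating with the order-reducing operators $\Lambda^{s}=(1-\Delta_{\cM})^{s/2}$ and appealing to $L^2$-boundedness of $OPS^0_{1,0}(\cM)$; this is precisely the proof of the cited theorem, so it buys self-containedness at the cost of more machinery. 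Two small caveats: for symbols in $S^0_{1,0}$ the $L^2$-boundedness is the classical and comparatively elementary result \cite[Thm.~18.1.11]{HorIII}, so invoking the full Calder\'on--Vaillancourt theorem is overkill; and your use of Proposition~\ref{prop:OPS_real_powers} to place $\Lambda^{s}$ in $OPS^{s}_{1,0}(\cM)$ reverses the logical order of the appendix --- this is not mathematically circular, since the fact ultimately rests on Seeley's theorem \cite{Seeley67}, which is independent of the present proposition, but you should cite Seeley directly rather than a result stated later in the paper.
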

\begin{proof} 
	The symbol class $S^{r}_{1,0}$ is constructed
	such that by~\eqref{eq:PDOest}, 
	$S^{r}_{1,0} \subseteq S^{\max\{r, t\}}_{1,0}$
	and also $S^{t}_{1,0} \subseteq S^{\max\{r,t\}}_{1,0}$. 
	Assertion~\ref{item:sum} follows from the construction 
	of the class $OPS^{r}_{1,0}(\cM)$ via an atlas, 
	see Section~\ref{sec:PDOM}.
	
	Recall the atlas $\{ \widetilde{\gamma}_i \}_{i=1}^{M}$ 
	of $\cM$ with subordinate smooth partition of unity $\{ \chi_i \}_{i=1}^{M}$.
	The transported operators $A_{i,i'}$ and $B_{i,i'}$ 
	defined according to~\eqref{eq:appendix_OPS_transported}
	belong to $OPS^r_{1,0}(\widetilde\Box)$ and to $OPS^t_{1,0}(\widetilde\Box)$.
	By~\cite[Thm.~II.4.4]{METaylorPDOs1981}, 
	$A_{j,j'} B_{j,j'} \in OPS^{r +t}_{1,0}(\widetilde\Box)$.
	Thus, claim~\ref{item:composition} 
	holds by the construction 
	of the class $OPS^{r+t}_{1,0}(\cM)$
	in Subsection~\ref{sec:PDOM}.
	
	Finally, the third assertion~\ref{item:range} 
	follows from~\cite[Thm.~II.6.5]{METaylorPDOs1981}, 
	which is elucidated on~\cite[p.~53 of Sec.~II.7]{METaylorPDOs1981}.
\end{proof}

\begin{proposition}
	\label{prop:OPS_real_powers}
	Let $r\in \bbR$ and $\cA\in OPS^r_{1,0}(\cM)$ 
	be self-adjoint, positive definite, and elliptic, 
	i.e., 
	there exists a constant $a_- > 0$ such that
	\[ 
	\forall w \in H^{r/2}(\cM): \quad 
	\langle \cA w, w\rangle \geq a_- \|w\|_{H^{r/2}(\cM)}^2 .
	\] 
	Then, for every $\beta\in \bbR$, $\cA^\beta \in OPS^{\beta r}_{1,0}(\cM)$. 
\end{proposition}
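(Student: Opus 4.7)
My plan is to define $\cA^\beta$ by the spectral theorem and then verify $\cA^\beta \in OPS^{\beta r}_{1,0}(\cM)$ separately for $\beta \in \bbN_0$, for negative integer $\beta$, and for non-integer real $\beta$. First, by combining Proposition~\ref{prop:OPS_algebra_prop}\ref{item:range} with the assumed coercivity, I will show that $\cA$ admits a densely defined, self-adjoint, strictly positive extension on $L^2(\cM)$; compactness of the Sobolev embeddings on the closed manifold $\cM$ then forces its spectrum to be discrete, contained in a half-line $[\lambda_1,\infty)$ with $\lambda_1>0$, and to accumulate only at $\infty$. Thus $\cA^\beta$ is unambiguously defined for every $\beta \in \bbR$.

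For $\beta \in \bbN_0$, iteration of Proposition~\ref{prop:OPS_algebra_prop}\ref{item:composition} gives the claim directly. For $\beta = -1$, the coercivity forces the principal symbol of $\cA$ to be bounded below (in absolute value) by $c\,|\xi|^r$ on each coordinate chart, so $\cA$ is elliptic, and the standard parametrix construction produces $\cQ \in OPS^{-r}_{1,0}(\cM)$ with $\cQ \cA - \mathrm{Id},\,\cA \cQ - \mathrm{Id} \in OPS^{-\infty}(\cM)$. Since $\cA$ is boundedly invertible as a map $H^{r}(\cM) \to L^2(\cM)$, the difference $\cA^{-1} - \cQ$ is smoothing, whence $\cA^{-1} \in OPS^{-r}_{1,0}(\cM)$; composition then extends the claim to all $\beta \in \bbZ$.

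The principal case $\beta \in \bbR \setminus \bbZ$ is handled via Seeley's theorem on complex powers of elliptic operators. Writing $\beta = k + s$ with $k \in \bbZ$ and $s \in (0,1)$ reduces matters to $\cA^s$, which I represent by the Dunford--Riesz contour integral
\begin{equation*}
\cA^s
=
\frac{1}{2\pi i} \int_{\Gamma} \lambda^s \, (\lambda - \cA)^{-1} \, \rd \lambda ,
\end{equation*}
where $\Gamma$ is a contour encircling $\sigma(\cA) \subset [\lambda_1,\infty)$ and avoiding a branch cut for $\lambda \mapsto \lambda^s$. To localize this into the symbol calculus, I will construct a parameter-dependent parametrix for $\lambda - \cA$, showing that, as $\lambda$ varies over $\Gamma$, $(\lambda - \cA)^{-1}$ lies in a parameter-dependent H\"ormander class with symbol seminorms uniform in $\lambda$ and decaying sufficiently fast in $|\lambda|$. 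Substituting this representation into the integral and verifying absolute convergence in the seminorms of $S^{sr}_{1,0}$ then yields $\cA^s \in OPS^{sr}_{1,0}(\cM)$, and the integer shift by $k$ concludes the proof.

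The main obstacle is precisely this parameter-dependent parametrix construction with uniform symbol estimates along $\Gamma$. One must first establish parameter ellipticity of $\lambda - \cA$ along $\Gamma$ (which is where self-adjointness and the coercivity enter: together they place the principal symbol of $\cA$ on the positive real axis, uniformly away from a conic neighborhood of $\Gamma \subset \bbC \setminus [\lambda_1,\infty)$ after deformation), and then carry out an iterative symbolic correction whose error terms decay fast enough in $|\lambda|$ for the Cauchy integral to converge in the symbol topology. This is the technical heart of Seeley's construction, which I would cite from the standard pseudodifferential operator literature rather than reproduce.
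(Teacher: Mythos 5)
Your proposal is correct and takes essentially the same route as the paper, whose entire proof is two lines: coercivity with $a_->0$ makes $\cA$ invertible, and the claim follows from Seeley's theorem on complex powers of elliptic operators --- precisely the result whose internal mechanism (parameter-dependent parametrix for $\lambda-\cA$ fed into the Dunford--Riesz contour integral) you sketch before likewise deferring to the literature. The additional scaffolding you supply (integer powers by composition, $\cA^{-1}$ via a parametrix plus the smoothing-remainder argument) is sound; just note that your spectral picture (discrete spectrum accumulating only at $\infty$) tacitly assumes $r>0$, which is the only case the paper actually uses.
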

\begin{proof}
	Since $a_- >0$, $\cA$ is invertible. The assertion follows from~\cite[Thm.~3]{Seeley67}.
\end{proof}

\section{Multiresolution bases on manifolds}
\label{appendix:wavelets} 

In this section we briefly explain how 
the single-scale 
basis $\bPhi_j$, the dual single-scale basis $\widetilde{\bPhi}_j$ 
as well as the biorthogonal complement bases 
$\bPsi_j$ and $\widetilde{\bPsi}_j$ in 
\eqref{eq:bPhij}, \eqref{eq:bPhij-tilde} and \eqref{eq:bPsij}
can be constructed on a manifold 
$\cM$ which satisfies Assumption~\ref{ass:cM-and-cA}\ref{ass:cM-and-cA_I}. 
Furthermore, we collect some of their basic properties. 

We recall from \eqref{eq:bPhij}
that, for $j>j_0$, 
the subspaces $V_j \subset V_{j+1} \subset \ldots \subset L^2(\cM)$ 
are spanned by
\emph{single-scale} bases $\bPhi_j := \{ \phi_{j,k}: k\in \Delta_j \}$,
where $\Delta_j$ denote suitable index sets describing spatial localization
of the $\phi_{j,k}$. 
Furthermore, the subspaces are of cardinality 
$\dim(V_j) = \cO( 2^{nj} )$.
We assume elements 
$\phi_{j,k}\in V_j$ to be normalized in $L^2(\cM)$,
and their supports to scale according 
to $\operatorname{diam} (\operatorname{supp} \phi_{j,k}) \simeq 2^{-j}$.
We associate with these bases so-called \emph{dual single-scale bases} 
$\widetilde{\bPhi}_j := \{ \widetilde{\phi}_{j,k}: k\in \Delta_j \}$, 
for which one has 
$\langle \phi_{j,k}, \widetilde{\phi}_{j,k'}\rangle = \delta_{k,k'}$
for $k,k'\in \Delta_j$.
Such dual systems of one-scale bases on $\cM$ can be lifted 
in charts $\cM_j$ via parametrizations $\gamma_j$ 
from tensor products of univariate systems in 
the parameter domains $\Box\subset \bbR^n$.
For example, for primal bases $\bPhi_j$ obtained from tensorized, 
univariate B-splines of order $d$ in $\Box$ with 
dual bases of order $\widetilde{d}$ such that 
$d + \widetilde{d}$ is even,
the $\bPhi_j$ and $\widetilde{\bPhi}_j$ have approximation orders
$d$ and $\widetilde{d}$, respectively, see \eqref{eq:VjOrdReg}.  
The respective regularity indices $\gamma$ and $\widetilde{\gamma}$, 
see \eqref{eq:VjOrdReg}, satisfy 
$\gamma = d - 1/2$, whereas 
$\widetilde{\gamma} \sim \widetilde{d}$.
We refer to \cite{FemWvltManif_2003,DSWavManif99,FemWvltQuant_2009} 
for detailed constructions.

The biorthogonality of the systems 
$\bPhi_j$, $\widetilde{\bPhi}_j$ allows 
to introduce canonical projectors 
$Q_j$ and $Q^*_j$ for $j\in \bbN$ 
with $j > j_0$: 
\begin{equation*} 
Q_j v 
:= 
\sum_{k\in \Delta_j} \langle v,\widetilde{\phi}_{j,k}\rangle {\phi}_{j,k},
\qquad 
Q^*_j v 
:=  \sum_{k\in \Delta_j} \langle v, \phi_{j,k}\rangle \widetilde{\phi}_{j,k},
\end{equation*}
associated with corresponding \emph{multiresolution sequences} 
$\{ V_j \}_{j>j_0}$ and $\{ \widetilde{V}_j \}_{j>j_0}$.

The $L^2(\cM)$-boundedness of $Q_j$ implies the 
\emph{Jackson} and \emph{Bernstein} inequalities,
\begin{equation*} 
\| v - Q_j v \|_{H^s(\cM)} \lesssim 2^{-j(t-s)} \| v \|_{H^t(\cM)} 
\quad 
\forall v\in H^t(\cM),
\end{equation*}
for all $-\widetilde{d} \leq s\leq t \leq d$, 
$s<\gamma$, $-\widetilde{\gamma} < t$, and
\begin{equation*} 
\| Q_j v \|_{H^s(\cM)} \lesssim 2^{j(s-t)} \| Q_j v \|_{H^t(\cM)} 
\quad 
\forall v\in H^t(\cM),  
\end{equation*}
for all $t\leq s \leq \gamma$, 
with constants implied in $\lesssim$ 
which are uniform with respect to $j$.

To define MRAs, we start by introducing 
index sets $\nabla_j := \Delta_{j+1}\backslash \Delta_j$, $j>j_0$.
Given single-scale bases $\bPhi_j$, $\widetilde{\bPhi}_j$, 
the \emph{biorthogonal complement bases} 
$\bPsi_j$ and 
$\widetilde{\bPsi}_j$ 
in \eqref{eq:bPsij}
satisfying the \emph{biorthogonality relation}
\eqref{eq:Biorth}
can be constructed such that 
\eqref{eq:SupPsi} holds. 
We refer to 
\cite{FemWvltManif_2003,FemWvltQuant_2009,RekStevP2WavD2018} 
for particular constructions.

With the convention 
$Q_{j_0} = Q^*_{j_0} = 0$, one has for $v_J \in V_J$ 
and for $\widetilde{v}_J \in \widetilde{V}_J$ that 
\begin{align*} 
v_J 
&\textcolor{white}{:}= 
\sum_{j=j_0}^{J-1} (Q_{j+1}-Q_j)v_J, 
& 
\widetilde{v}_J 
&\textcolor{white}{:}= 
\sum_{j=j_0}^{J-1} (Q^*_{j+1} - Q^*_{j})\widetilde{v}_J,
\\
(Q_{j+1}-Q_j)v 
&:= 
\sum_{k\in \nabla_j} 
\langle v, \widetilde{\psi}_{j,k} \rangle \psi_{j,k},
& 
(Q^*_{j+1} - Q^*_{j}) v
&:= 
\sum_{k\in \nabla_j} 
\langle v, \psi_{j,k} \rangle \widetilde{\psi}_{j,k} .
\end{align*}
From this observation, a 
second wavelet basis $\widetilde{\bPsi}$ such that
$\bPsi$ and $\widetilde{\bPsi}$ are mutually biorthogonal in
$L^2(\cM)$ is now
obtained from the union of the coarse 
single-scale basis and complement bases, i.e., 
\[
\bPsi = \bigcup_{j\geq j_0} \bPsi_{j} 
\quad 
\text{and} 
\quad 
\widetilde{\bPsi} = \bigcup_{j\geq j_0} \widetilde{\bPsi}_j
\]
where we use the convention 
$\bPsi_{j_0} := \bPhi_{j_0+1}$, 
$\widetilde{\bPsi}_{j_0} := \widetilde{\bPhi}_{j_0+1}$ 
and assume that all basis functions 
are normalized in $L^2(\cM)$. 
The bases $\bPsi$ and $\widetilde{\bPsi}$ 
are called the primal and dual MRAs, respectively. 

The key to the preconditioning 
results 
for the covariance and precision matrices 
in Subsection~\ref{subsec:results:precon}
is the effect of 
diagonal preconditioning 
for pseudodifferential operators in MRAs. 
To address this, we let 
$\cB \in OPS^r_{1,0}(\cM)$ 
be a pseudodifferential operator which 
satisfies Assumption~\ref{ass:cM-and-cA}\ref{ass:cM-and-cA_II}, 
so that 
$\cB \from H^{r/2}(\cM) \to H^{-r/2}(\cM)$
is an isomorphism.
Assume that $\gamma > 0$. 
By \eqref{eq:Riesz},
$\bPsi$ is a Riesz basis for $L^2(\cM)$, so that 
the corresponding finite section matrices
\[
\bB_{J} 
= 
\bigl( \langle \cB \psi_{j',k'} , \psi_{j,k}\rangle 
\bigr)_{j_0\leq j,j'\leq J, \, k\in \nabla_j, \, k'\in \nabla_{j'}}
\]
are ill-conditioned,  
$\operatorname{cond}_{2}(\bB_J) \simeq 2^{|r|J}$.
Stability of the Galerkin projection in $H^{r/2}(\cM)$ 
\emph{and} the Riesz-basis property \eqref{eq:Riesz} 
in $H^{r/2}(\cM)$
imply the following result on diagonal preconditioning 
of $\bB_J$.

\begin{proposition}\label{prop:DiagPC}
	For $r\in\bbR$, 
	define the diagonal matrix 
	$\bD^r_J \in \bbR^{p(J) \times p(J)}$ 
	by 
	\[
	\bD^r_J = 
	\operatorname{diag} 
	\bigl( 
	2^{r|\lambda|} : 
	\lambda \in \Lambda_J \bigr) , 
	\]
	where $|\lambda|=j$ for $\lambda=(j,k)$ and, 
	as in \eqref{eq:p=p(J)}, 
	\[
	\Lambda_J 
	:= 
	\{ (j,k) : 
	j_0\leq j,j' \leq J, 
	k \in \nabla_j\}, 
	\qquad 
	p(J) := \#(\Lambda_J). 
	\]
	Suppose that the manifold and the operator 
	$\cB \in OPS^r_{1,0}(\cM)$ satisfy 	
	Assumptions~\ref{ass:cM-and-cA}\ref{ass:cM-and-cA_I} 
	and~\ref{ass:cM-and-cA_II}, respectively.  
	Furthermore, assume that~\eqref{eq:Riesz} holds  
	with 
	\begin{equation}\label{eq:rgamtgam}
	r/2 \in  
	(-\widetilde{\gamma}, \gamma)
	,
	\end{equation}
	Then, 
	for every $J\in \bbN$, the diagonal matrices $\bD^r_J$ define 
	uniformly spectrally equivalent preconditioners for $\bB_J$,
	i.e., 
	\begin{equation}\label{eq:DiagPC}
	\operatorname{cond}_{2} \bigl( \bD_J^{-r/2} \bB_J \bD_J^{-r/2} \bigr) 
	\simeq 1,
	\end{equation}
	with constants implied in $\simeq$ independent of $J$.
\end{proposition}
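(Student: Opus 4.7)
My plan is to derive the spectral equivalence \eqref{eq:DiagPC} from two ingredients: the $H^{r/2}(\cM)$ norm equivalence afforded by the Riesz-basis property \eqref{eq:Riesz}, which is applicable thanks to \eqref{eq:rgamtgam}, and a two-sided quadratic-form bound on $\cB$ in the $H^{r/2}(\cM)$-norm. For the latter, Assumption~\ref{ass:cM-and-cA}\ref{ass:cM-and-cA_II} (read with $r$ in place of $\ra$) supplies the lower bound $\langle \cB w, w\rangle \geq b_- \|w\|_{H^{r/2}(\cM)}^2$, while Proposition~\ref{prop:OPS_algebra_prop}\ref{item:range} applied to $\cB \in OPS^r_{1,0}(\cM)$ yields continuity $\cB \from H^{r/2}(\cM) \to H^{-r/2}(\cM)$, so that by the duality pairing
\[
|\langle \cB w, w \rangle| \leq \|\cB w\|_{H^{-r/2}(\cM)} \|w\|_{H^{r/2}(\cM)} \leq b_+ \|w\|_{H^{r/2}(\cM)}^2
\]
for some constant $b_+ > 0$ independent of $w$.

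Next, I would transfer these two-sided bounds to the matrix $\bB_J$. Given $\vvec \in \bbR^{p(J)}$ with entries $v_\lambda$ indexed by $\lambda \in \Lambda_J$, set $w := \sum_{\lambda \in \Lambda_J} v_\lambda \psi_\lambda$; then by construction $\vvec^\top \bB_J \vvec = \langle \cB w, w\rangle$. Applying the Riesz-basis norm equivalence \eqref{eq:Riesz} at $t = r/2$ (which is legitimate by \eqref{eq:rgamtgam}) to this finite expansion gives
\[
\|w\|_{H^{r/2}(\cM)}^2 \simeq \sum_{\lambda \in \Lambda_J} 2^{r|\lambda|} |v_\lambda|^2 = \bigl\| \bD_J^{r/2} \vvec \bigr\|_2^2,
\]
with constants independent of $J$. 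Combining the two bounds on $\langle \cB w, w\rangle$ with this equivalence yields
\[
\widetilde{c}_- \bigl\| \bD_J^{r/2} \vvec \bigr\|_2^2 \leq \vvec^\top \bB_J \vvec \leq \widetilde{c}_+ \bigl\| \bD_J^{r/2} \vvec \bigr\|_2^2 \qquad \forall \vvec \in \bbR^{p(J)},
\]
for constants $0 < \widetilde{c}_- \leq \widetilde{c}_+ < \infty$ independent of $J$.

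Finally, the substitution $\vvec = \bD_J^{-r/2} \wvec$ (recall that $\bD_J^{r/2}$ is a positive diagonal matrix and hence invertible) converts this into
\[
\widetilde{c}_- \|\wvec\|_2^2 \leq \wvec^\top \bD_J^{-r/2} \bB_J \bD_J^{-r/2} \wvec \leq \widetilde{c}_+ \|\wvec\|_2^2 \qquad \forall \wvec \in \bbR^{p(J)},
\]
so that $\sigma\bigl(\bD_J^{-r/2} \bB_J \bD_J^{-r/2}\bigr) \subset [\widetilde{c}_-, \widetilde{c}_+]$ and \eqref{eq:DiagPC} follows. I do not anticipate a genuine obstacle; the one subtlety is merely bookkeeping, namely verifying that the norm equivalence \eqref{eq:Riesz} is indeed valid at the single exponent $t=r/2$ under the stated hypothesis \eqref{eq:rgamtgam}, and that the self-adjointness and positivity of $\cB$ on $H^{r/2}(\cM)$ provide both directions of the quadratic-form comparison.
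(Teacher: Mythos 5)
Your proposal is correct and follows essentially the same route as the paper's proof: the two-sided bound $\langle \cB w, w\rangle \simeq \|w\|^2_{H^{r/2}(\cM)}$ (coercivity from Assumption~\ref{ass:cM-and-cA}\ref{ass:cM-and-cA_II} plus boundedness of $\cB\from H^{r/2}(\cM)\to H^{-r/2}(\cM)$), transferred to the matrix level via the Riesz-basis property \eqref{eq:Riesz} at $t=r/2$. You merely spell out the details that the paper leaves implicit.
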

\begin{proof}
	Under
	Assumptions~\ref{ass:cM-and-cA}\ref{ass:cM-and-cA_I}--\ref{ass:cM-and-cA_II}
	the operator 
	$\cB \in OPS^{r}_{1,0}(\cM)$ 
	defines an isomorphism  
	between $H^{r/2}(\cM)$ and $H^{-r/2}(\cM))$, 
	see Proposition~\ref{prop:OPS_algebra_prop}\ref{item:range} 
	and Proposition~\ref{prop:OPS_real_powers}, 
	and the norm equivalence
	\[	
	\| v \|^2_{H^{r/2}(\cM)} 
	\simeq 
	\langle \cB v, v \rangle \quad
	\forall v\in H^{r/2}(\cM) 
	\]
	holds. 
	Here, $\langle\,\cdot\,, \,\cdot\,\rangle$ 
	denotes the $(H^{-r/2}(\cM),H^{r/2}(\cM))$
	duality pairing. 
	The assertion then follows from the 
	Riesz basis property \eqref{eq:Riesz}.
\end{proof}

\section{Coloring of Whittle--Mat\'{e}rn type}
\label{appendix:ColShft}

Three essential characteristics of the 
covariance structure of a random field  
are given by its smoothness, 
the correlation length,  
and the marginal variance. 
A convenient approach to 
define models, 
for which   
these important properties 
can be parametrized, 
i.e., controlled in terms of certain numerical parameters, 
is to generalize 
the Mat\'{e}rn covariance family. 
Such a parametrization in turn   
facilitates for instance likelihood-based 
inference in spatial statistics. 

Specifically, let us consider    
the white noise equation 
\eqref{eq:WhNois} for an  
elliptic, self-adjoint coloring pseudodifferential operator $\cA$ 
which is  
a fractional power $\beta>0$
of an elliptic ``base (pseudo)differential coloring operator'' 
$\cL \in OPS^{\bar{r}}_{1,0}(\cM)$ of order $\bar{r}>0$, 
shifted by  
the multiplication operator with respect to a nonnegative 
length-scale function $\kappa\from \cM \to \bbR$,
i.e.,  
\begin{equation}\label{eq:WhitMat}
\cA = \bigl( \cL+\kappa^2 \bigr)^\beta 
\quad \text{for some}\; \beta > 0.
\end{equation}
Here, $\beta>0$ and $\cL$ are such that 
the resulting coloring operator $\cA$ fulfills 
Assumption~\ref{ass:cM-and-cA}\ref{ass:cM-and-cA_II}.
In particular, $\kappa\in C^\infty(\cM)$. 

For a linear, second-order (so that $\bar{r}=2$) elliptic
(surface) differential operator 
$\cL$ on $\cM$ 
in divergence form, 
models of this type 
have been developed, e.g., 
in \cite{Bolin2011,Lindgren2011}. 
Moreover, computationally efficient methods 
to sample from such random fields or 
to employ the models in statistical applications, 
involving for instance inference or spatial predictions, have been 
discussed recently, e.g.,  
in \cite{Bolin2020,bkk-strong,CK2020,HKS1}. 
The following proposition extends and unifies these approaches,
admitting rather general operators $\cL$ 
(which, in the classic Mat\'{e}rn case, 
see~\cite{matern1960,Whittle1963},  
is the  
Laplace--Beltrami operator
$\cL = -\Delta_\cM \in OPS^{2}_{1,0}(\cM)$, 
with $\bar{r}=2$ and 
constant correlation length parameter $\kappa > 0$).

\begin{proposition}\label{prop:PrecCovOp-kappa-shift}
	Suppose that the manifold 
	$\cM$ satisfies Assumption~\ref{ass:cM-and-cA}\ref{ass:cM-and-cA_I} 
	and that  
	$\cL\in OPS^{\bar{r}}_{1,0}(\cM)$ for some $\bar{r}>0$ 
	is self-adjoint and positive.  
	Let $\beta > 0$ be such that $\bar{r}\beta > n/2$ 
	and let $\GP_\beta$ denote the 
	GRF solving the white noise equation \eqref{eq:WhNois} with 
	coloring operator $\cA = (\cL+\kappa^2 )^\beta$ on $\cM$, 
	where $\kappa\from\cM\to\bbR$ is smooth. 
	Then, 
	the covariance operator $\cC_\beta$ of the GRF $\GP_\beta$ 
	is a self-adjoint operator,
	(strictly) positive definite, compact operator on $L^2(\cM)$, 
	with finite trace. 
	Furthermore, 
	the covariance operator of $\GP_\beta$ is given by 
	\[
	\cC_\beta = \bigl(\cL+\kappa^2 \bigr)^{-2\beta} \in OPS^{-2\bar{r}\beta}_{1,0}(\cM) .
	\] 
	It defines an isomorphism 
	between $H^{s}(\cM)$ and $H^{s+2\bar{r}\beta}(\cM)$ 
	for all $s\in\bbR$. 
	
	The associated precision operator $\cP_\beta$ satisfies, 
	for all $s\in\bbR$, 
	\[ 
	\cP_\beta = \bigl( \cL+\kappa^2 \bigr)^{2\beta} 
	\in OPS^{2\bar{r}\beta}_{1,0}(\cM)  
	\] 
	and, for any $s\in\bbR$, 
	it defines an isomorphism 
	between $H^s(\cM)$ and $H^{s-2\bar{r}\beta}(\cM)$. 
	
	A GRF $\GP_\beta$ defined as in \eqref{eq:WhNois}
	with  coloring operator 
	$\cA = (\cL+\kappa^2 )^{\beta}$
	admits the regularity
	\[
	\GP_{\beta} 
	\in H^{s}(\cM), 
	\quad 
	\text{$\bbP$-a.s.},
	\quad\text{for}\quad s < \bar{r}\beta - n/2 .
	\]
\end{proposition}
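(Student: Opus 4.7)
The plan is to reduce the proposition to an application of Proposition~\ref{prop:C-and-P-ops} with the coloring operator $\cA = (\cL + \kappa^2)^\beta$ playing the role of the generic coloring operator there, and with order $\ra = \bar{r}\beta > n/2$. The bulk of the work is therefore to verify that $\cA$ satisfies Assumption~\ref{ass:cM-and-cA}\ref{ass:cM-and-cA_II}; once this is done, all claims on $\cC_\beta$ and $\cP_\beta$, including their pseudodifferential order, mapping properties, self-adjointness, positivity, compactness, and finite trace, follow by direct quotation.

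First I would argue that $\cL + \kappa^2 \in OPS^{\bar{r}}_{1,0}(\cM)$ is self-adjoint and positive. Since $\kappa \in C^\infty(\cM)$, the multiplication operator $f \mapsto \kappa^2 f$ lies in $OPS^{0}_{1,0}(\cM)$ (as noted in Example~\ref{ex:whittle-matern}); hence, by Proposition~\ref{prop:OPS_algebra_prop}\ref{item:sum}, $\cL + \kappa^2 \in OPS^{\max\{\bar{r},0\}}_{1,0}(\cM) = OPS^{\bar{r}}_{1,0}(\cM)$. Self-adjointness is inherited from the self-adjointness of $\cL$ and the real-valuedness of $\kappa^2$. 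Positivity of $\cL + \kappa^2$ on $H^{\bar{r}/2}(\cM)$ follows from the assumed positivity of $\cL$ combined with the pointwise bound $\kappa^2 \geq 0$, since $\langle (\cL + \kappa^2) w, w\rangle \geq \langle \cL w, w\rangle \gtrsim \|w\|_{H^{\bar{r}/2}(\cM)}^2$.

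Next I would invoke Proposition~\ref{prop:OPS_real_powers} to lift these properties to fractional powers: because $\cL + \kappa^2$ is self-adjoint, positive, and elliptic of order $\bar{r}$, it admits a well-defined positive power $\cA = (\cL + \kappa^2)^\beta \in OPS^{\bar{r}\beta}_{1,0}(\cM)$. Self-adjointness and positivity of $\cA$ on $H^{\bar{r}\beta/2}(\cM)$ follow via the spectral calculus for the self-adjoint positive operator $\cL+\kappa^2$ on $L^2(\cM)$, combined with the Riesz isomorphism $H^{\bar{r}\beta/2}(\cM) \cong (\cL+\kappa^2)^{-\beta/2}(L^2(\cM))$ provided by Proposition~\ref{prop:OPS_algebra_prop}\ref{item:range}. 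This verifies Assumption~\ref{ass:cM-and-cA}\ref{ass:cM-and-cA_II} with $\ra = \bar{r}\beta > n/2$.

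With this in hand, Proposition~\ref{prop:C-and-P-ops} immediately yields $\cC_\beta = \cA^{-2} = (\cL+\kappa^2)^{-2\beta} \in OPS^{-2\bar{r}\beta}_{1,0}(\cM)$ and $\cP_\beta = \cA^2 = (\cL+\kappa^2)^{2\beta} \in OPS^{2\bar{r}\beta}_{1,0}(\cM)$, together with their isomorphism properties on the Sobolev scale, self-adjointness, positivity, compactness of $\cC_\beta$ and discreteness of $\sigma(\cP_\beta)$. The trace-class property requires $2\bar{r}\beta/n > 1$, which is precisely the assumption $\bar{r}\beta > n/2$. Finally, the pathwise regularity $\GP_\beta \in H^s(\cM)$, $\bbP$-a.s., for $s < \bar{r}\beta - n/2$ is a direct consequence of \eqref{eq:ZPathReg} applied with this specific $\ra = \bar{r}\beta$. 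The only step requiring a little care is the positivity bound in Step~1 when $\kappa$ vanishes on a nonempty subset of $\cM$; here, I expect the main obstacle is a clean bookkeeping argument, but since $\cL$ alone already delivers the coercivity on $H^{\bar{r}/2}(\cM)$ and $\kappa^2 \geq 0$ only improves the lower bound, no genuine difficulty arises.
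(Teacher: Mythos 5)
Your proposal is correct and follows essentially the same route as the paper: identify $\cL+\kappa^2\in OPS^{\bar{r}}_{1,0}(\cM)$ via Proposition~\ref{prop:OPS_algebra_prop}\ref{item:sum} and the fact that multiplication by $\kappa^2\in C^\infty(\cM)$ is of order zero, obtain $\cA=(\cL+\kappa^2)^\beta\in OPS^{\bar{r}\beta}_{1,0}(\cM)$ from Proposition~\ref{prop:OPS_real_powers}, and then transfer everything from Proposition~\ref{prop:C-and-P-ops} with $\ra=\bar{r}\beta$. The paper's proof is terser (it simply states that the remaining claims follow by the same arguments as in Proposition~\ref{prop:C-and-P-ops}), whereas you spell out the verification of Assumption~\ref{ass:cM-and-cA}\ref{ass:cM-and-cA_II} for $\cA$; this is a welcome elaboration, not a deviation.
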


\begin{proof}
	We first note that 
	by Proposition~\ref{prop:OPS_algebra_prop}\ref{item:sum} 
	$\cL+\kappa^2 \in OPS^{\bar{r}}_{1,0}(\cM)$, 
	since the multiplication operator 
	with the function $\kappa^2\in C^\infty(\cM)$ 
	is an element of $OPS^0_{1,0}(\cM)$ 
	By Proposition~\ref{prop:OPS_real_powers}
	$\cA = (\cL+\kappa^2)^\beta \in OPS^{\bar{r}\beta}_{1,0}(\cM)$. 
	Therefore all results 
	follow by the same arguments 
	as used in the proof 
	of Proposition~\ref{prop:C-and-P-ops}.
\end{proof}

Proposition~\ref{prop:PrecCovOp-kappa-shift} 
shows that the covariance and precision 
operator of the GRF $\GP_\beta$ 
defined by the white noise 
driven SPDE \eqref{eq:WhNois} 
with coloring operator of Whittle--Mat\'ern type, 
$\cA=(\cL+\kappa^2)^\beta$, 
satisfy 
$\cC_\beta\in OPS^{-2\bar{r}\beta}_{1,0}(\cM)$ 
and 
$\cP_\beta\in OPS^{2\bar{r}\beta}_{1,0}(\cM)$. 
For this reason, 
all results of Subsections~\ref{subsec:results:precon} 
and~\ref{subsec:results:sparse}
on optimal preconditioning 
and matrix compression 
are applicable  
for covariance operators  
$\cB=\cC_\beta$ 
and precision operators 
$\cB = \cP_\beta$ 
of Whittle--Mat\'ern type, 
where the order $r\in\bbR$ 
is given by $-2\bar{r}\beta$ 
and $2\bar{r}\beta$, respectively. 

\begin{remark}\label{rem:kappa-plateau} 
	The coefficient $\beta>0$ in the Whittle--Mat\'{e}rn like coloring 
	operator $\cA = (\cL + \kappa^2 )^\beta$ 
	and the order $\bar{r}>0$ 
	of the base operator 
	$\cL \in OPS^{\bar{r}}_{1,0}(\cM)$ 
	govern the spatial regularity of 
	the GRF $\GP_\beta$ 
	(in $L^p(\Omega)$-sense 
	and $\bbP$-a.s.). 
	The shift $\kappa^2$ does not influence the 
	the smoothness, but controls the spatial correlation 
	length of $\GP_\beta$. 
	Allowing for a function-valued shift $\kappa^2\in C^\infty(\cM)$ 
	thus corresponds to models 
	with a spatially varying correlation length 
	which form an important extension of 
	the classical Mat\'ern model. 
	
	As noted in Proposition~\ref{prop:PrecCovOp-kappa-shift}  above, 
	the corresponding Whittle--Mat\'{e}rn like covariance operator $\cC_\beta$ 
	is a self-adjoint, positive definite, compact operator on the 
	Hilbert space $L^2(\cM)$. 
	By the spectral theorem and by the (assumed) nondegeneracy of $\cC$, 
	there exists a countable system 
	$\{ e_j \}_{j\in\bbN}$ of eigenvectors for $\cC_\beta$  
	which forms an orthonormal basis for $L^2(\cM)$. 
	The corresponding positive eigenvalues 
	$\{\lambda_j(\cC_\beta) \}_{j\in\bbN}$
	accumulate only at zero and we may 
	assume that they are in non-increasing order. 
	This gives rise to a \emph{Karhunen--Lo\`{e}ve expansion} of the
	centered GRF $\GP_\beta$, 
	\begin{equation}\label{eq:KLGP}
	\GP_\beta (x,\omega) 
	= 
	\sum_{j \in \bbN} 
	\sqrt{\lambda_j(\cC_\beta)} \, e_j(x) \xi_j(\omega) ,
	\end{equation}
	with equality in $L^2(\Omega; L^2(\cM))$. 
	Here, 
	$\{ \xi_j \}_{j\in\bbN}$ are i.i.d.\ $\normal(0,1)$-distributed random variables. 

	Partial sums of the Karhunen--Lo\`eve 
	expansion \eqref{eq:KLGP} are of great importance for
	deterministic numerical approximations of PDE models in UQ which take 
	$\GP_\beta$ as a model for a distributed uncertain input data, 
	see, e.g., \cite{CST13,GKNSSS15,HKS1} and the references there. 
	The error in a $J$-term truncation of the expansion \eqref{eq:KLGP} is governed
	by the eigenvalue decay $\lambda_j(\cC_\beta)\to 0$ as $j\to \infty$.
	Assuming that \emph{$\kappa >0$ is constant on $\cM$}, 
	we find by using the spectral asymptotics 
	$ \lambda_j(\cL) = c' j^{\bar{r}/n} + o( j^{\bar{r}/n} )$
	for $\cL$  
	\cite[Thm.\ XII.2.1]{METaylorPDOs1981} 
	as well as  
	the spectral mapping theorem that 
	\[ 
	\forall j\in \bbN: 
	\quad 
	\lambda_j(\cC_\beta) 
	= 
	\bigl( \kappa^2 +\lambda_j(\cL) \bigr)^{-2\beta}
	=
	\kappa^{-4\beta}
	\bigl( 1+\kappa^{-2} c'j^{\bar{r}/n} + o\bigl( j^{\bar{r}/n} \bigr) \bigr)^{-2\beta} .
	\] 
	This shows that the asymptotic behavior 
	$\lambda_j(\cC_\beta) \simeq j^{-2\beta\bar{r}/n}$,  
	which is expected
	from \cite[Thm. XII.2.1]{METaylorPDOs1981}  
	applied for the operator 
	$\cC_\beta\in OPS^{-2\beta\bar{r}}_{1,0}(\cM)$, 
	is only visible for 
	$j > J^* = J^*(\kappa,\cL) = \cO\bigl( \kappa^{2n/\bar{r}} \bigr)$, 
	where the constant implied in $\cO( \,\cdot\, )$ is independent of
	the value of $\beta>0$. 
	For $1\leq j \leq J^*$, one expects 
	an eigenvalue ``plateau'' 
	\begin{equation}\label{eq:WM-EVplateau} 
	\lambda_j(\cC_\beta) \simeq \kappa^{-4\beta}, 
	\quad 
	1\leq j \leq J^* =  \cO( \kappa^{2n/\bar{r}} ). 
	\end{equation}
	Since in models of Whittle--Mat\'{e}rn type 
	with $\cL \in OPS^{\bar{r}}_{1,0}(\cM)$ 
	the (nondimensional) spatial correlation 
	length $\bar{\lambda}$ is $\kappa^{-2/\bar{r}}$,
	\eqref{eq:WM-EVplateau}
	indicates that for small values of $\bar{\lambda}$,
	the plateau in the spectrum of $\cC_\beta$ 
	scales as 
	$J^* = \cO( \kappa^{2n/\bar{r}} ) = \bar{\lambda}^{-n}$. 
	Due to 
	$\lambda_j(\cP_\beta) 
	= \lambda_j(\cC_\beta^{-1}) 
	= 1/\lambda_j(\cC_\beta)$,
	analogous statements hold for the precision operator 
	$\cP_\beta$.
\end{remark}

\section{Proof of 
	\texorpdfstring{Theorem~\ref{thm:mlmc_cov_estimation}}{Theorem~4.3}}
\label{appendix:sample_numbers}

The idea of this proof is similar to techniques 
in~\cite{HS_2019} and the references therein.

\begin{proof}[Proof of Theorem~\ref{thm:mlmc_cov_estimation}]
	We recall the asymptotic estimates of the computational work and error
	of the MLMC covariance estimation from~\eqref{eq:comp_cost_mlmc}
	and~\eqref{eq:error_mlmc}
	\begin{equation}\tag{\ref{eq:comp_cost_mlmc}}
	{\rm work} 
	=
	\cO 
	\left(\sum_{j=j_0}^J \widetilde{M}_{j} 2^{jn}\right)
	\end{equation}
	and
	\begin{equation}\tag{\ref{eq:error_mlmc}}
	{\rm error} 
	= 
	\cO
	\left( 
	2^{-J\alpha_0}
	+
	\sum_{j=j_0}^J
	\widetilde{M}_j^{-1/2}
	2^{-j \alpha}
	\right).
	\end{equation}
	We seek to find sample numbers $\widetilde{M}_{j}$, $j=0,\ldots,J$
	that optimize the computational work to achieve a certain accuracy.
	We consider $\widetilde{M}_j$ as a continuous variable and 
	seek to find stationary points of the 
	Lagrange multiplier function
	\begin{equation*}
	\xi \mapsto 
	g(\xi):=
	2^{-J\alpha_0}
	+
	\sum_{j=j_0}^J
	\widetilde{M}_j^{-1/2}
	2^{-j \alpha}
	+ 
	\xi 
	\sum_{j=j_0}^J \widetilde{M}_{j} 2^{jn}
	.
	\end{equation*}
	Hence, we seek $\widetilde{M}_j$, $j=j_0,\ldots,J$
	such that $\partial g(\xi)/\partial \widetilde{M}_j =0$, 
	$j=j_0,\ldots,J$.
	This results in the conditions
	$\widetilde{M}_j = 2^{-j(n+\alpha)2/3}$, $j=j_0 + 1,\ldots,J$, 
	and we thus choose 
	\begin{equation*}
	\widetilde{M}_j
	=
	\lceil 
	\widetilde{M}_{j_0}
	2^{-j(n+\alpha)2/3}
	\rceil,
	\quad 
	j=j_0 + 1,\ldots,J
	,
	\end{equation*}
	where $\widetilde{M}_{j_0}$ 
	is still to be determined.
	This yields
	\begin{equation}\label{eq:app:work_est1}
	{\rm work} 
	=
	\cO 
	\left(
	\widetilde{M}_{j_0}
	\sum_{j=j_0}^J 
	E_j
	\right)
	\end{equation}
	and
	\begin{equation*}
	{\rm error} 
	= 
	\cO
	\left(2^{-J\alpha_0}
	+
	\widetilde{M}_{j_0}^{-1/2}
	\sum_{j=j_0}^J 
	E_j
	\right)
	,
	\end{equation*}
	where $E_j = 2^{-j\alpha 2/3 + jn/3}$, $j=j_0,\ldots,J$.
	It holds that 
	\begin{equation*}
	\sum_{j=j_0}^J 
	E_j
	=
	\begin{cases}
	\cO(1) & \text{if } 2\alpha > n, \\
	\cO(J) & \text{if } 2\alpha = n, \\
	\cO(2^{J(n/3 - \alpha2/3)}) &\text{if } 2\alpha <n.
	\end{cases}
	\end{equation*}
	We choose $\widetilde{M}_{j_0}$ to equilibrate the error contributions 
	in 
	$2^{-J\alpha_0}
	+
	\widetilde{M}_{j_0}^{-1/2}
	\sum_{j=0}^J 
	E_j$,
	which leads us to 
	\begin{equation*}
	\widetilde{M}_{j_0}
	=
	\begin{cases}
	2^{2J\alpha_0} & \text{if } 2\alpha >n ,\\
	2^{2J\alpha_0}J^2 & \text{if } 2\alpha = n ,\\
	2^{J(2\alpha_0 +2n/3 - 4\alpha/3)} & \text{if } 2\alpha <n .
	\end{cases}
	\end{equation*}
	By inserting the corresponding value of $\widetilde{M}_{j_0}$
	and of $\sum_{j=j_0}^J E_j$ 
	into \eqref{eq:app:work_est1},
	we obtain that
	\begin{equation*}
	{\rm work} =
	\begin{cases}
	\cO(2^{J2\alpha_0}) & \text{if } 2\alpha>n , \\
	\cO(2^{J2\alpha_0} J^3 ) & \text{if } 2\alpha = n , \\
	\cO(2^{J(n-2(\alpha_0 - \alpha))}) & \text{if } 2\alpha < n \;. 
	\end{cases}
	\end{equation*}
	The assertion now follows by expressing the computational work
	as a function of $\varepsilon$ with the choice $\varepsilon = 2^{2J\alpha_0}$.
\end{proof}

\bibliographystyle{abbrv}
\bibliography{GRF-Cov-bib}

\begin{thebibliography}{10}

\bibitem{Abramowitz1964}
M.~Abramowitz and I.~A. Stegun.
\newblock {\em Handbook of mathematical functions with formulas, graphs, and
  mathematical tables}, volume~55 of {\em National Bureau of Standards Applied
  Mathematics Series}.
\newblock For sale by the Superintendent of Documents, U.S. Government Printing
  Office, Washington, D.C., 1964.

\bibitem{AHK}
D.~Alm, H.~Harbrecht, and U.~Kr\"amer.
\newblock The {$\mathcal{H}^2$}-wavelet method.
\newblock {\em J. Comput. Appl. Math.}, 267:131--159, 2014.

\bibitem{Aubin1998Riemannian}
T.~Aubin.
\newblock {\em Some nonlinear problems in {R}iemannian geometry}.
\newblock Springer Monographs in Mathematics. Springer-Verlag, Berlin, 1998.

\bibitem{Balakrishnan1981}
A.~V. Balakrishnan.
\newblock {\em Applied functional analysis}, volume~3 of {\em Applications of
  Mathematics}.
\newblock Springer-Verlag, New York-Berlin, second edition, 1981.

\bibitem{banerjee2008gaussian}
S.~Banerjee, A.~E. Gelfand, A.~O. Finley, and H.~Sang.
\newblock Gaussian predictive process models for large spatial data sets.
\newblock {\em J.\ R.\ Stat.\ Soc.\ Ser.\ B Stat.\ Methodol.}, 70(4):825--848,
  2008.

\bibitem{BickLevina08b}
P.~J. Bickel and E.~Levina.
\newblock Covariance regularization by thresholding.
\newblock {\em Ann. Statist.}, 36(6):2577--2604, 2008.

\bibitem{BickLevina08a}
P.~J. Bickel and E.~Levina.
\newblock Regularized estimation of large covariance matrices.
\newblock {\em Ann. Statist.}, 36(1):199--227, 2008.

\bibitem{Bolin2020}
D.~Bolin and K.~Kirchner.
\newblock The rational {SPDE} approach for {G}aussian random fields with
  general smoothness.
\newblock {\em J. Comput. Graph. Statist.}, 29(2):274--285, 2020.

\bibitem{bkk-strong}
D.~Bolin, K.~Kirchner, and M.~Kov\'{a}cs.
\newblock Numerical solution of fractional elliptic stochastic {PDE}s with
  spatial white noise.
\newblock {\em IMA J. Numer. Anal.}, 40(2):1051--1073, 2020.

\bibitem{Bolin2011}
D.~Bolin and F.~Lindgren.
\newblock Spatial models generated by nested stochastic partial differential
  equations, with an application to global ozone mapping.
\newblock {\em Ann.\ Appl.\ Stat.}, 5(1):523--550, 2011.

\bibitem{PDOGevrey1967}
L.~Boutet~de Monvel and P.~Kr\'{e}e.
\newblock Pseudo-differential operators and {G}evrey classes.
\newblock {\em Ann. Inst. Fourier (Grenoble)}, 17(fasc., fasc. 1):295--323,
  1967.

\bibitem{CST13}
J.~Charrier, R.~Scheichl, and A.~L. Teckentrup.
\newblock Finite element error analysis of elliptic {PDE}s with random
  coefficients and its application to multilevel {M}onte {C}arlo methods.
\newblock {\em SIAM J. Numer. Anal.}, 51(1):322--352, 2013.

\bibitem{CvS11_178}
A.~Chernov, T.~von Petersdorff, and C.~Schwab.
\newblock Exponential convergence of hp quadrature for integral operators with
  {G}evrey kernels.
\newblock {\em {ESAIM Math. Mod. \& Num. Anal.}}, 45:387--422, 2011.

\bibitem{CDF}
A.~Cohen, I.~Daubechies, and J.-C. Feauveau.
\newblock Biorthogonal bases of compactly supported wavelets.
\newblock {\em Comm. Pure Appl. Math.}, 45(5):485--560, 1992.

\bibitem{CoifEtAlDifWav06}
R.~R. Coifman and M.~Maggioni.
\newblock Diffusion wavelets for multiscale analysis on graphs and manifolds.
\newblock In {\em Wavelets and splines: {A}thens 2005}, Mod. Methods Math.,
  pages 164--188. Nashboro Press, Brentwood, TN, 2006.

\bibitem{CK2020}
S.~G. Cox and K.~Kirchner.
\newblock Regularity and convergence analysis in {S}obolev and {H}\"older
  spaces for generalized {W}hittle--{M}at\'ern fields.
\newblock {\em Numer.\ Math.}, 146:819--873, 2020.

\bibitem{cressie2008fixed}
N.~Cressie and G.~Johannesson.
\newblock Fixed rank kriging for very large spatial data sets.
\newblock {\em J.\ R.\ Stat.\ Soc.\ Ser.\ B Stat.\ Methodol.}, 70(1):209--226,
  2008.

\bibitem{DHSWaveletBEM2007}
W.~Dahmen, H.~Harbrecht, and R.~Schneider.
\newblock Compression techniques for boundary integral
  equations---asymptotically optimal complexity estimates.
\newblock {\em SIAM J. Numer. Anal.}, 43(6):2251--2271, 2006.

\bibitem{DPSII}
W.~Dahmen, S.~Pr\"{o}ssdorf, and R.~Schneider.
\newblock Wavelet approximation methods for pseudodifferential equations. {II}.
  {M}atrix compression and fast solution.
\newblock {\em Adv. Comput. Math.}, 1(3-4):259--335, 1993.

\bibitem{DSWavManif99}
W.~Dahmen and R.~Schneider.
\newblock Wavelets on manifolds. {I}. {C}onstruction and domain decomposition.
\newblock {\em SIAM J. Math. Anal.}, 31(1):184--230, 1999.

\bibitem{datta2016hierarchical}
A.~Datta, S.~Banerjee, A.~O. Finley, and A.~E. Gelfand.
\newblock Hierarchical nearest-neighbor {G}aussian process models for large
  geostatistical datasets.
\newblock {\em J.\ Amer.\ Statist.\ Assoc.}, 111(514):800--812, 2016.

\bibitem{Doelz2017}
J.~D\"{o}lz, H.~Harbrecht, and C.~Schwab.
\newblock Covariance regularity and {$\mathcal{H}$}-matrix approximation for
  rough random fields.
\newblock {\em Numer. Math.}, 135(4):1045--1071, 2017.

\bibitem{StuartGrphLim2020}
M.~M. Dunlop, D.~Slep\v{c}ev, A.~M. Stuart, and M.~Thorpe.
\newblock Large data and zero noise limits of graph-based semi-supervised
  learning algorithms.
\newblock {\em Appl. Comput. Harmon. Anal.}, 49(2):655--697, 2020.

\bibitem{furrer2006covariance}
R.~Furrer, M.~G. Genton, and D.~Nychka.
\newblock Covariance tapering for interpolation of large spatial datasets.
\newblock {\em J.\ Comput.\ Graph.\ Statist.}, 15(3):502--523, 2006.

\bibitem{GeorgeLiu81}
A.~George and J.~W.~H. Liu.
\newblock {\em Computer solution of large sparse positive definite systems}.
\newblock Prentice-Hall, Inc., Englewood Cliffs, N.J., 1981.
\newblock Prentice-Hall Series in Computational Mathematics.

\bibitem{GolubvLoan4th}
G.~H. Golub and C.~F. Van~Loan.
\newblock {\em Matrix computations}.
\newblock Johns Hopkins Studies in the Mathematical Sciences. Johns Hopkins
  University Press, Baltimore, MD, fourth edition, 2013.

\bibitem{GKNSSS15}
I.~G. Graham, F.~Y. Kuo, J.~A. Nichols, R.~Scheichl, C.~Schwab, and I.~H.
  Sloan.
\newblock Quasi-{M}onte {C}arlo finite element methods for elliptic {PDE}s with
  lognormal random coefficients.
\newblock {\em Numer. Math.}, 131(2):329--368, 2015.

\bibitem{WHackbHmatBook}
W.~Hackbusch.
\newblock {\em Hierarchical matrices: algorithms and analysis}, volume~49 of
  {\em Springer Series in Computational Mathematics}.
\newblock Springer, Heidelberg, 2015.

\bibitem{Hale2008}
N.~Hale, N.~J. Higham, and L.~N. Trefethen.
\newblock Computing {${\bf A}^\alpha,\ \log({\bf A})$}, and related matrix
  functions by contour integrals.
\newblock {\em SIAM J. Numer. Anal.}, 46(5):2505--2523, 2008.

\bibitem{HW}
M.~Handcock and J.~Wallis.
\newblock An approach to statistical spatial-temporal modeling of
  meteorological fields.
\newblock {\em J. Amer. Statist. Assoc.}, 89(426):368--390, 1994.

\bibitem{HM}
H.~Harbrecht and M.~Multerer.
\newblock A fast direct solver for nonlocal operators in wavelet coordinates.
\newblock {\em J. Comput. Phys.}, 428:110056, 2021.

\bibitem{HS1}
H.~Harbrecht and R.~Schneider.
\newblock Wavelet {G}alerkin {S}chemes for {2D}-{BEM}.
\newblock In J.~E. et~al., editor, {\em Operator Theory: Advances and
  Applications}, volume 121, page 221–260, Basel, 2001. Birkh{\"a}user.

\bibitem{HS3}
H.~Harbrecht and R.~Schneider.
\newblock Biorthogonal wavelet bases for the boundary element method.
\newblock {\em Math. Nachr.}, 269–270:167–188, 2004.

\bibitem{HS2}
H.~Harbrecht and R.~Schneider.
\newblock Wavelet {G}alerkin schemes for boundary integral equations.
  {I}mplementation and quadrature.
\newblock {\em SIAM J. Sci. Comput.}, 27(4):1347--1370, 2006.

\bibitem{HeatonEtal2019}
M.~J. Heaton, A.~Datta, A.~O. Finley, and et~al.
\newblock A case study competition among methods for analyzing large spatial
  data.
\newblock {\em J. Agric. Biol. Environ. Stat.}, 24(3):398--425, 2019.

\bibitem{HKS1}
L.~Herrmann, K.~Kirchner, and C.~Schwab.
\newblock Multilevel approximation of {G}aussian random fields: fast
  simulation.
\newblock {\em Math. Models Methods Appl. Sci.}, 30(1):181--223, 2020.

\bibitem{HLS_2018}
L.~Herrmann, A.~Lang, and {\relax Ch}.~Schwab.
\newblock Numerical analysis of lognormal diffusions on the sphere.
\newblock {\em Stoch. Partial Differ. Equ. Anal. Comput.}, 6(1):1--44, 2018.

\bibitem{HS_2019}
L.~Herrmann and C.~Schwab.
\newblock Multilevel quasi-{M}onte {C}arlo integration with product weights for
  elliptic {PDE}s with lognormal coefficients.
\newblock {\em ESAIM Math. Model. Numer. Anal.}, 53(5):1507--1552, 2019.

\bibitem{higdon2002space}
D.~Higdon.
\newblock Space and space-time modeling using process convolutions.
\newblock In {\em Quantitative methods for current environmental issues}, pages
  37--56. Springer, London, 2002.

\bibitem{HorI}
L.~H{\"o}rmander.
\newblock {\em The analysis of linear partial differential operators. {I}}.
\newblock Classics in Mathematics. Springer-Verlag, Berlin, 2003.
\newblock Distribution theory and Fourier analysis, Reprint of the second
  (1990) edition [Springer, Berlin; MR1065993 (91m:35001a)].

\bibitem{HorIII}
L.~H\"{o}rmander.
\newblock {\em The analysis of linear partial differential operators. {III}}.
\newblock Classics in Mathematics. Springer, Berlin, 2007.
\newblock Pseudo-differential operators, Reprint of the 1994 edition.

\bibitem{JJSvdG}
J.~Jankov\'{a} and S.~van~de Geer.
\newblock Inference in high-dimensional graphical models.
\newblock In {\em Handbook of graphical models}, Chapman \& Hall/CRC Handb.
  Mod. Stat. Methods, pages 325--349. CRC Press, Boca Raton, FL, 2019.

\bibitem{katzfuss2017multi}
M.~Katzfuss.
\newblock A multi-resolution approximation for massive spatial datasets.
\newblock {\em J.\ Amer.\ Statist.\ Assoc.}, 112(517):201--214, 2017.

\bibitem{KohnNirenbg65}
J.~J. Kohn and L.~Nirenberg.
\newblock An algebra of pseudo-differential operators.
\newblock {\em Comm. Pure Appl. Math.}, 18:269--305, 1965.

\bibitem{Lindgren2011}
F.~Lindgren, H.~v. Rue, and J.~Lindstr\"{o}m.
\newblock An explicit link between {G}aussian fields and {G}aussian {M}arkov
  random fields: the stochastic partial differential equation approach.
\newblock {\em J. R. Stat. Soc. Ser. B Stat. Methodol.}, 73(4):423--498, 2011.
\newblock With discussion and a reply by the authors.

\bibitem{matern1960}
B.~Mat\'{e}rn.
\newblock Spatial variation.
\newblock {\em Meddelanden fr{\aa}n statens skogsforskningsinstitut}, 49(5),
  1960.

\bibitem{FemWvltManif_2003}
H.~Nguyen and R.~Stevenson.
\newblock Finite-element wavelets on manifolds.
\newblock {\em IMA J. Numer. Anal.}, 23(1):149--173, 2003.

\bibitem{FemWvltQuant_2009}
H.~Nguyen and R.~Stevenson.
\newblock Finite element wavelets with improved quantitative properties.
\newblock {\em J. Comput. Appl. Math.}, 230(2):706--727, 2009.

\bibitem{nychka2015multiresolution}
D.~Nychka, S.~Bandyopadhyay, D.~Hammerling, F.~Lindgren, and S.~Sain.
\newblock A multiresolution {G}aussian process model for the analysis of large
  spatial datasets.
\newblock {\em J. Comput. Graph. Statist.}, 24(2):579--599, 2015.

\bibitem{owhadi2015conditioning}
H.~Owhadi and C.~Scovel.
\newblock {Conditioning Gaussian measure on Hilbert space}.
\newblock {\em Journal of Mathematical and Statistical Analysis}, 1(1):205,
  2018.

\bibitem{RekStevP2WavD2018}
N.~Rekatsinas and R.~Stevenson.
\newblock A quadratic finite element wavelet {R}iesz basis.
\newblock {\em Int. J. Wavelets Multiresolut. Inf. Process.}, 16(4):1850033,
  17, 2018.

\bibitem{RBLZ08}
A.~J. Rothman, P.~J. Bickel, E.~Levina, and J.~Zhu.
\newblock Sparse permutation invariant covariance estimation.
\newblock {\em Electron. J. Stat.}, 2:494--515, 2008.

\bibitem{RothmLevina2010}
A.~J. Rothman, E.~Levina, and J.~Zhu.
\newblock A new approach to {C}holesky-based covariance regularization in high
  dimensions.
\newblock {\em Biometrika}, 97(3):539--550, 2010.

\bibitem{Rozanov1982}
Y.~A. Rozanov.
\newblock {\em Markov random fields}.
\newblock Applications of Mathematics. Springer-Verlag, New York-Berlin, 1982.
\newblock Translated from the Russian by Constance M. Elson.

\bibitem{SauStat91}
L.~Saulis and V.~A. Statulevi\v{c}ius.
\newblock {\em Limit theorems for large deviations}, volume~73 of {\em
  Mathematics and its Applications (Soviet Series)}.
\newblock Kluwer Academic Publishers Group, Dordrecht, 1991.
\newblock Translated and revised from the 1989 Russian original.

\bibitem{Lipschitz_cont_matrices}
B.~A. Schmitt.
\newblock Perturbation bounds for matrix square roots and {P}ythagorean sums.
\newblock {\em Linear Algebra Appl.}, 174:215--227, 1992.

\bibitem{RSchneider98}
R.~Schneider.
\newblock {\em Multiskalen- und {W}avelet-{M}atrixkompression}.
\newblock Advances in Numerical Mathematics. B. G. Teubner, Stuttgart, 1998.
\newblock Analysisbasierte Methoden zur effizienten L\"{o}sung gro\ss er
  vollbesetzter Gleichungssysteme. [Analysis-based methods for the efficient
  solution of large nonsparse systems of equations].

\bibitem{Seeley67}
R.~T. Seeley.
\newblock Complex powers of an elliptic operator.
\newblock In {\em Singular {I}ntegrals ({P}roc. {S}ympos. {P}ure {M}ath.,
  {C}hicago, {I}ll., 1966)}, pages 288--307. Amer. Math. Soc., Providence,
  R.I., 1967.

\bibitem{stein99}
M.~L. Stein.
\newblock {\em Interpolation of {S}patial {D}ata: {S}ome {T}heory for
  {K}riging}.
\newblock Springer Series in Statistics. Springer-Verlag, New York, 1999.

\bibitem{TW}
J.~Tausch and J.~White.
\newblock Multiscale bases for the sparse representation of boundary integral
  operators on complex geometry.
\newblock {\em SIAM Journal on Scientific Computing}, 24(5):1610--1629, 2003.

\bibitem{METaylorPDOs1981}
M.~E. Taylor.
\newblock {\em Pseudodifferential operators}, volume~34 of {\em Princeton
  Mathematical Series}.
\newblock Princeton University Press, Princeton, N.J., 1981.

\bibitem{UhlerGGrMod}
C.~Uhler.
\newblock Gaussian graphical models.
\newblock In {\em Handbook of graphical models}, Chapman \& Hall/CRC Handb.
  Mod. Stat. Methods, pages 217--238. CRC Press, Boca Raton, FL, 2019.

\bibitem{Whittle1963}
P.~Whittle.
\newblock Stochastic processes in several dimensions.
\newblock {\em Bull. Inst. Internat. Statist.}, 40:974--994, 1963.

\end{thebibliography}

\end{document}